\theoremstyle{plain}
\newtheorem{lemma}{Lemma}
\newtheorem{corollary}{Corollary}
\newtheorem{proposition}{Proposition}
\theoremstyle{definition}
\newtheorem{definition}{Definition}
\newtheorem{remark}{Remark}
\newcommand{\Z}{\mathbb{Z}}
\newcommand{\W}{\mathcal{W}} 
\newcommand{\U}{\mathcal{U}}
\newcommand{\Oo}{\mathcal{O}}
\newcommand{\C}{\mathbb{C}}
\numberwithin{equation}{section} 
\newcommand{\nn}{\nonumber \\}
 \newcommand{\res}{\mbox{\rm Res}}
\renewcommand{\hom}{\mbox{\rm Hom}}
\newcommand{\wt}{\mbox{\rm wt}\ }
\newcommand{\N}{\mathbb{N}}
\newcommand{\F}{\mathcal{F}}
\newcommand{\V}{\mathcal{V}}
\newcommand{\one}{\mathbf{1}}
\begin{document}
\title[Product-type classes for vertex algebra cohomology] 
{Product-type classes for vertex algebra cohomology of foliations on   
 complex curves} 
\author{A. Zuevsky}
\address{Institute of Mathematics \\ Czech Academy of Sciences\\ Praha}

\email{zuevsky@yahoo.com}






\begin{abstract}
We define a product of pairs of double complex spaces $C^n_m(V, \W, \F)$ 
for gradig-restricted vertex algebra cohomology of 
codimension one foliation on a complex curve. 
We introduce a vertex algebra counterpart of the classical 
cohomological class using the orthogonality conditions on elements of 
double complex spaces with respect to the product we introduced.  
\end{abstract}

\keywords{Riemann surfaces; vertex operator algebras; character functions; foliations; cohomology}
\vskip12pt  

\maketitle
\section{Introduction}
\label{introduction}
The theory of foliations involves a bunch of approaches \cite{Bott, BS, BH, CM, 
 Fuks1, Fuks2, GF1, GF2, GF3, Losik} and many others. 
In certain cases it is useful to express cohomology in terms of connections and 
use the language of connections in order to study leave spaces of foliations.   
Connections appear in conformal field theory \cite{FMS, BZF} in definitions of many notions and formulas. 
Vertex algebras, generalizations of ordinary Lie algebras, are essential in conformal field theory. 
The theory of vertex algebra characters is a rapidly developing 
field of studies. Algebraic nature of methods applied in this field helps to understand and compute  
the structure of vertex algebra characters. 
On the other hand, the geometric side of vertex algebra characters is in associating their formal parameters with 
local coordinates on a complex variety.   
Depending on geometry, one can obtain various consequences for a vertex algebra and its space 
of characters, and vice-versa, one can study geometrical property of a manifold by using 
algebraic nature of a vertex algebra attached. 
In this paper we use the vertex algebra cohomology theory \cite{Huang} in order to construct cohomological 
invariants for codimension one foliations 

There exist a few approches to definition and computation of cohomologies of vertex operator algebras. 
\cite{Huang, Li}.  
Taking into account the above definitions and construction, 
we aim to consideration of a characteristic classes theory for arbitrary codimension regular and singular 
 foliations vertex operator algebras. 
Losik \cite{Losik} defines a smooth structure on the leaf space
$M/\mathcal{F}$ of a foliation $\mathcal {F}$ of codimension $n$ on
a smooth manifold $\mathcal M$ that allows to apply to $M/\mathcal{F}$ the
same techniques as to smooth manifolds.
In \cite{Losik} characteristic classes for a
foliation as  elements of the cohomology of certain bundles over the
leaf space $M/\mathcal{F}$ are defined.  
It would be interesting also to develope intrinsic (i.e., purely coordinate independent) theory of a smooth manifold 
foliation cohomology involving vertex algebra bundles \cite{BZF}.
Similar to Losik's theory, we use bundles correlation functions over a foliated space. 
The idea of studies of cohomology of certain bundles on a smooth manifold $\mathcal M$ and making connection to 
a cohomology of $\mathcal M$ has first appeared in \cite{BS}. 
 This can have a relation with Losik's work \cite{Losik} proposing a new framework for singular spaces and 
 characteristic classes. 
In applications, one would be interested in applying techniques of this paper to case of higher-dimensional 
manifolds of codimension one \cite{BG, BGG}. In particular, the question of higher non-vanishing invariants, as 
well as the problem of distinguishing of compact and non-compact leaves for the Reeb foliation of the full torus, 
are also of high importance. 
It would be important to establish connection to chiral de-Rham complex on a smooth manifold introduced in \cite{MSV}. 
After a modification, one is able to introduce a vertex algebra cohomology of smooth manifolds on a similar basis as in 
this paper.  
One can mention a possibility to derive differential equations \cite{Huang0} 
for characters on 
separate leaves of foliation. Such equations are derived for various genuses and can be used in frames 
of Vinogradov theory \cite{vinogradov}. 
The structure of foliation (in our sense) can be also studied from the automorphic function theory point of view. 
Since on separate leaves one proves automorphic properties of characters, on can think about "global" automorphic 
properties for the whole foliation. 

The plan of the paper is the following. 
Section \ref{transversal} contains a description of the transversal basis  
and vertex algebra interpretation of the local geometry for a foliation on a smooth manifold. 
In Section \ref{product} we define a product of elements of two $\W_{z_1, \ldots, z_n}$-spaces. 
Spaces for double chain-cochain complexes associated to a vertex algebra are introduced in 
Section \ref{spaces}.
Product of double complex spaces is defined in Section \ref{productc}.
 In Section \ref{coboundary} coboundary operators are defined.
 It is shown that combining with the double complex 
spaces they determine chain-cochain double complex. 
The vertex algebra cohomology of a foliation on a smooth complex curve is introduced. 
A relation to Crainic-Moerdijk cohomology found. 
Properties of the product defined on double complex spaces are studied in Section \ref{pyhva}. 
Section \ref{gv} describes product-type cohomological invariants for a codimension one foliation 
on a smooth complex curve.  
In Appendixes we provide the material required for the construction of the vertex algebra cohomology 
of foliations.
In Appendix \ref{grading} we recall the notion of a quasi-conformal grading-restricted vertex algebra 
and its modules.
 In Appendix \ref{valued} the space of $\W$-valued rational sections of a vertex algebra bundle is described.  
In Appendix \ref{properties} properties of matrix elements for elements of the space $\W$ are listed. 
In Appendix  \ref{composable} maps composable with a number of vertex operators are defined. 
In Appendix \ref{connections} we describe the approach to cohomology in terms of connections. 
In Appendix \ref{sphere} geometrical procedure of sewing two Riemann spheres to form another 
Riemann sphere is recalled. 
Appendix \ref{rasto} contains proofs of Proposition \ref{katas}, Proposition \ref{pupa}  
Lemma \ref{functionformpropcor}, Lemma \ref{tarusa}. 
Appendix \ref{proof} contains proofs of Lemmas \ref{empty}, \ref{nezu}, \ref{subset}, and Proposition \ref{nezc}. 

A consideration of foliations of smooth manifold of arbitrary dimension will be given in \cite{preZ}.

\section{Transversal basis description of foliations and vertex algebra interpretation}
%
\label{transversal}
In this Section we recall \cite{CM} the notion of the basis of transversal sections for a foliation, and 
provide its vertex algebra setup. 
\subsection{Basis of transversal sections for a foliation} 
\label{basis}
This subsection reminds \cite{CM} the notion of basis of transversal sections for a codimension one foliation. 
 Let $\mathcal M$ be a complex curve 
 equipped with a foliation $\F$ of codimension one. 
\begin{definition}
A transversal section of $\F$ is an embedded 
 one-dimensional 
 submanifold $U\subset M$ which  
is everywhere transverse to the leaves of foliation.  
\end{definition}
\begin{definition}
If $\alpha$ is a path between two points $p_1$ and $p_2$ on the same leaf, 
and  
$U_1$ and $U_2$ are transversal sections through $p_1$ and $p_2$, 
 then $\alpha$ defines a transport
 along the leaves from a neighborhood of $p_1$ in $U_1$ to a neighborhood of $p_2$ in $U_2$.  
i.e., 
a germ of a diffeomorphism 
\[
hol\; (\alpha): (U_1, p_1)\hookrightarrow  (U_2, p_2),
\] 
which is called the holonomy of the path $\alpha$. 
\end{definition}
Two homotopic paths always define the same holonomy. 
\begin{definition}
If the above transport along $\alpha$ is defined in all of $U_1$ and embeds $U_1$ into $U_2$, this embedding
\[
h: U_1\hookrightarrow U_2, 
\]
is called the {holonomy embedding}.  
\end{definition}
A composition of paths 
 induces an 
operation of composition on 
holonomy embeddings. 
%
Transversal sections $U$ through $p$ as above should be thought of 
as neighborhoods of the leaf through $p$ in
the leaf space.
Then we have 
\begin{definition}
A {transversal basis} for $\mathcal M/\F$ as a 
family $\U$ of transversal sections $U\subset \mathcal M$ 
with the property that, if $U_p$ is any transversal section through a given point $p\in \mathcal M$, there exists a 
holonomy embedding 
\[
h: U\hookrightarrow U_p, 
\]
 with $U\in \U$ and $p\in h(U)$.
\end{definition}
%
A transversal section is a 
one-dimensional disk given by a chart of the foliation. 
Accordingly, we can construct 
a transversal basis $\U$ out of a basis $\widetilde{\U}$ of $\mathcal M$ by domains of foliation charts
\[
 \phi_{U}: \widetilde{U}\tilde{\hookrightarrow } \mathbb{R}
\times U,
\] 
$\widetilde{U}\in \widetilde{\U}$,
 with $U=\mathbb{R}$. 

\subsection{Vertex algebra interpretation of the transversal basis}
Let $\U$ be a family of transversal sections of $\F$.  
We consider a $(n,k)$-set of points, $n \ge 1$, $k \ge 1$,    
\begin{equation}
\label{points}
\left(p_1, \ldots, p_n; p'_1, \ldots, p'_k \right), 
\end{equation}
on a smooth complex curve 
$\mathcal M$. 
Let us denote the set of corresponding local coordinates for $n+k$ points on $\mathcal M$ as 
\[
\left(c_1(p_1), \ldots, c_n(p_n); c'_1(p'_a), \ldots, c'_k(p'_k) \right). 
\]
In what follows we consider points \eqref{points} as points on either the leaf space $\mathcal M/\F$ of $\F$,
 or on transversal sections $U_j$ of the transversal basis $\U$. 
Since $\mathcal M/\F$ is not in general a manifold, one has to be careful in considerations of chains of local coordinates 
along its leaves \cite{IZ, Losik}.
 For association of formal parameters of mappings with parameters of vertex operators 
taken at points of $\mathcal M/\F$ we will use in what follows
 either local coordinates on $\mathcal M$ or local  
coordinates on 
sections $U$ of a transversal basis $\U$ which are submanifolds of $\mathcal M$ of dimension equal to codimenion of 
foliation $\F$.  
In case of extremely singular foliations when it is not possible to use local coordinates of $\mathcal M$ in order to  
parameterize 
a point on $\mathcal M/\F$ we still be able to use local coordinates
 on transversal sections passing through this point on  
$\mathcal M/\F$.
In addition to that, note that the complexes considered below are constructed in such a way that one can always 
use coordinates on transversal sections only, avoiding any possible problems with localization of coordinates on leaves 
of $\mathcal M/\F$. 

For a $(n, k)$-set of a grading-restricted vertex algebra $V$ elements  
\begin{equation}
\label{vectors}
\left(v_1, \ldots, v_n; v'_1, \ldots, v'_k\right),   
\end{equation}
we consider linear maps 
\begin{equation} 
\label{maps}
\Phi: V^{\otimes n} \rightarrow \W_{z_1, \ldots, z_n},   
\end{equation}
 (see Appendix \ref{valued} for the definition of a 
$\W_{z_{1}, \dots, z_{n}}$ space), 
\begin{equation}
\label{elementw}
\Phi \left(dz_1^{\wt v_1} \otimes v_1, c_1(p_1); \ldots; dz_n^{\wt v_n} \otimes v_n,  c_n(p_1) \right),    
\end{equation}
where we identify, as it is usual in the theory of characters for vertex operator algebras on curves 
\cite{Y, Zhu, TUY, H2},   
 $n$ formal parameters   
$z_{1}, \ldots , z_{n}$ of $\W_{z_{1}, \ldots , z_{n}}$, 
with local coordinates $c_i(p_i)$ in vicinities of points $p_i$, $0 \le i \le n$,  
 on $\mathcal M$. 
 Elements $\Phi\in \W_{c_1(p_1), \ldots, c_n(p_n)}$ can be seen as 
  coordinate-independent $\overline{W}$-valued rational sections 
 of a vertex algebra bundle \cite{BZF} generalization.
Note that, according to \cite{BZF}, 
 they can be treated as 
\[
\left({\rm Aut}\; \Oo^{(1)}\right)^{\times n}_{z_1, \ldots, z_n} 
={\rm Aut}_{z_1}\; \Oo^{(1)} \times \ldots \times {\rm Aut}_{z_n}\; \Oo^{(1)},
\]
-torsors  
 of product of groups of independent coordinate transformations.    
The construction of vertex algebra cohomology of foliations in terms of connections is parallel to ideas of \cite{BS}. 
 Such a construction will be explained elsewhere \cite{preZ}. 

In what follows, according to definitions of Appendix \ref{valued}, 
when we write an element $\Phi$ of the space $\W_{z_1, \ldots, z_n}$, we actually have in mind 
  corresponding matrix element $\langle w', \Phi\rangle$ that 
  absolutely converges (on certain domain) to 
a rational form-valued function 
\begin{equation}
\label{def}
\langle w', \Phi\rangle = R(\langle w', \Phi\rangle).
\end{equation}
%
%
In notations, we will keep tensor products of vertex algebra elements with wight-powers of $z$-differentials 
when it is inevitable only. 

%
In Appendix \ref{proof} we prove, that 
 for arbitrary $v_i$, $v'_j \in V$, $1 \le i \le n$, $1 \le j \le k$, 
 points $p'_i$ with local coordinates $c_i(p'_i)$ on transversal sections $U_i\in \U$ of $\mathcal M/\F$, 
an element \eqref{elementw} 
 as well as the vertex operator 
 \begin{equation}
\label{poper}
\omega_W\left(dc_1({p'_i})^{\wt(v'_i)} \otimes v'_{i},  c_1({p'_1})\right)
= Y\left(  d(c_1(p'_i))^{\wt(v'_i)} \otimes v'_{i},  c_1({p'_i})\right), 
\end{equation}
 are invariant with respect to the action of 
$\left({\rm Aut}\; \Oo^{(1)}\right)^{\times n}_{z_1, \ldots, z_n}$. 
In \eqref{poper} we mean usual vertex operator (as defined in Appendix \ref{grading}) not affecting the tensor product 
with corresponding differential. 
We assume that the maps \eqref{maps} are composable 
(according to Definition \eqref{composabilitydef} of Appendix \ref{composable}), 
 with $k$   
vertex operators $\omega_W(v'_i, c_i(p'_i))$, $1 \le i \le k$ 
with $k$ vertex algebra elements from \eqref{vectors}, and 
formal parameters associated with local coordinates on  
$k$ transversal sections of $\mathcal M/\F$, of $k$ points from the set \eqref{points}. 

The composability of a map $\Phi$ with a number of vertex operators
consists of two conditions on $\Phi$.
 The first requires 
 existence of positive 
integers $N^n_m(v_i, v_j)$ depending on vertex algebra elements $v_i$ and $v_j$ only, while the second condition 
restricts orders of poles of corresponding sums \eqref{Inm} 
 and \eqref{Jnm}. 
Taking into account these conditions, we will see that 
the construction of spaces \eqref{ourbicomplex} 
depends on the choice 
of vertex algebra elements \eqref{vectors}. 

In Section \ref{spaces} we construct spaces for double complexes  
associated to a grading-restricted vertex algebra and 
 defined for
codimension one foliations on complex curves.  
In order to keep control on the construction, we consider a section $U_j$ of a transversal basis $\U$,
and mappings $\Phi$ that belong to the space $\W_{c(p_1), \ldots, c(p_n)}$, 
depending on points $p_1, \ldots, p_n$ of intersection of $U_j$ with leaves of $\mathcal M/\F$ of $\F$.
It is assumed that local coordinates $c(p_1), \ldots, c(p_n)$ of points $p_i$, $1 \le i \le n$, 
are taken on $\mathcal M$ along these leaves of $\mathcal M/\F$. 
We then  consider a collection of $k$ sections of $\U$.   
In order to define vertex algebra cohomology of $\mathcal M/\F$, 
mappings $\Phi$ are supposed to be composable with a number of vertex operators 
 with a number of vertex algebra elements, and 
formal parameteres identified with local coordinates of points $p'_1, \ldots, p'_k$ on each of 
$k$ transversal sections $U_j$, $1 \le j \le k$.    

\section{Product of $\W_{z_1, \ldots, z_n} 
$-spaces}
\label{product}
\subsection{Geometrical interpretation and definition of the $\epsilon$-
product for $\W_{z_1, \ldots, z_n}$-valued rational forms}
Recall Definition \ref{wspace} of $\W_{z_1, \ldots, z_n}$-spaces (Appendix \ref{valued}).  
The structure of $\W_{z_1, \ldots, z_n}$-spaces 
is quite complicated and it is a problem to introduce a product of elements of such spaces algebraically. 
In order to define an appropriate product of two $\W_{z_1, \ldots, z_n}$-spaces we first have to interpret 
it geometrically. 
 $\W_{z_1, \ldots, z_n}$-spaces must be associated with certain model spaces. 
Then a geometric product of such model spaces should be defined, and, finally,
an algebraic product of $\W$-spaces should be introduces. 
For two  
 $\W_{x_1, \ldots, x_k}$- and 
$\W_{y_{1}, \ldots,  y_{n}}$-spaces we first associate formal complex parameters  
in  sets 
$(x_1, \ldots, x_k)$ and $(y_{1}, \ldots, y_n)$ 
to parameters of two auxiliary 
spaces. 
 Then we describe a geometric procedure to 
form resulting model space 
by combining two original model spaces.  
Formal parameters of algebraic product 
$\W_{z_1, \ldots, z_{k+n}}$ of $\W_{x_1, \ldots, x_k}$  and  
$\W_{y_{1}, \ldots,  y_{n}}$ should be then identified with  
parameters of resulting auxiliary space.  
Note that  
according to our assumption, $(x_1, \ldots, x_k) \in F_k\C$, and $(y_{1}, \ldots, y_{n}) \in F_{n}\C$, i.e., 
belong to corresponding configuration space (Definition \ref{confug}, Appendix \ref{valued}).  
 As it follows from the definition of $F_n\C$, 
 coincidence of two 
formal parameters is excluded from $F_{k+n}\C$. 
In general, it might happens that some $r$ formal parameters of $(x_1, \ldots, x_k)$ 
coincide with formal parameters of 
$(y_{1}, \ldots, y_{n})$, i.e., $x_{i_l}=y_{j_l}$, $1\le i_l, j_l \le r$. 

%
 In Definition \ref{wprodu} 
of the product of $\W_{x_1, \ldots, x_k}$ and $\W_{y_{1}, \ldots, y_{n}}$  below  
we leave only one of two coinciding formal parameters,  
 i.e., we exclude one formal parameter from each coinciding pair. 
We require that the set of formal parameters
\begin{eqnarray}
\label{zsto}
(z_1, \ldots, z_{k+n-r}) =(x_1,  \ldots, {x}_{i_1}, \ldots, {x}_{i_r}, \ldots , x_k; y_1,  
 \ldots, \widehat {y}_{j_1},  \ldots, \widehat{y}_{j_r}, \ldots, y_n )
\end{eqnarray} 
would belong to $F_{k+n-r}\C$  
where $\; \widehat{.} \; $ denotes the exclusion of corresponding formal parameter for 
 $x_{i_l}=y_{j_l}$, $1 \le l \le r$.  
We denote this operation of formal parameters exclusion by 
$\widehat{R}\;\Phi(x_1, \ldots, x_k; y_{1}, \ldots, y_{n}; \epsilon)$. 
Thus,  
we require that the set of formal parameters $(z_1, \ldots, z_{k+n-r})$ 
for the resulting product 
 would belong to $F_{k+n-r}\C$. 
Note that instead of exclusion given by the right hand side of \eqref{zsto}, 
we could equivalently omit elements from $(x_1, \ldots, x_k)$ coinsiding with some elements 
of $(y_1, \ldots, y_n)$. 

Recall the notion of intertwining operator \eqref{wprop} given in Appendix \ref{grading}. 
Let us first give a formal algebraic definition of a product of $\W$-spaces. 
\begin{definition}
\label{wprodu}
For 
$\Phi(v_1, x_1; \ldots; v_{k}, x_k)  \in  \W_{x_1, \ldots, x_k}$, and 
$\Psi(v'_{1}, y_{1}; \ldots; v'_{n}, y_{n})  \in   \W_{y_1, \ldots, y_n}$ 
the $\epsilon$-product 
\begin{eqnarray}
\label{gendef} 
&& \Phi(v_1, x_1; \ldots; v_{k}, x_k) \cdot_\epsilon \Psi(v'_{1}, y_{1}; \ldots; v'_{n}, y_{n})  
\nn
&&
\qquad \qquad \mapsto 
\widehat{R} \; \Theta\left( v_1, x_1; \ldots; v_{k}, x_k; v'_{1}, y_{1}; \ldots; v'_{n}, y_{n}; \epsilon\right), 
\end{eqnarray}
 is defined by the form via \eqref{def} 
\begin{eqnarray}
\label{Z2n_pt_eps1q1}  
&& \langle w',  \widehat{R} \; \Theta (v_1, x_1; \ldots; v_{k}, x_k; v'_{1}, y_{1}; \ldots; v'_{n}, y_{n}; \epsilon) \rangle 
\nn 
& & \quad  =  
\sum_{l \in \Z }  \epsilon^l  
 \sum_{u\in V_l} 
 \langle w', Y^{W}_{WV}\left(  
\Phi (v_{1}, x_{1};  \ldots; v_{k}, x_{k}), \zeta_1\right)\; u \rangle  
\nn
& &
\quad    
 \langle w', Y^{W}_{WV}\left( 
\Psi(v'_{1}, y_{1}; \ldots; {v'}_{i_1}, \widehat{y}_{i_1}; 
\ldots; \ldots; {v'}_{j_r}, \widehat{y}_{j_r}; \ldots;  v'_{n}, y_{n})
 , \zeta_{2}\right) \; \overline{u} \rangle,      
\end{eqnarray}
parametrized by  
$\zeta_1$, $\zeta_2  \in \C$, where all monomials $(x_{i_l} - y_{j_l})$, 
are exluded 
for coinciding $x_{i_l} =y_{j_l}$, $1 \le l \le r$,  
 from 
\eqref{Z2n_pt_eps1q1}. 
The sum 
is taken over any $V_{l}$-basis $\{u\}$,  
where $\overline{u}$ is the dual of $u$ with respect to a non-degenerate bilinear form 
 $\langle .\ , . \rangle_\lambda$, \eqref{eq: inv bil form} over $V$, (see Appendix \ref{grading}). 
\end{definition}
By the standard reasoning \cite{FHL, Zhu}, 
 \eqref{Z2n_pt_eps1q1} does not depend on the choice of a basis of $u \in V_l$, $l \in \Z$.   
%
In the case when multiplied forms $\Phi$ and $\Psi$ do not contain $V$-elements, i.e., for $\Phi$, $\Psi \in \W$, 
  \eqref{Z2n_pt_eps1q1} defines the product $\Phi \cdot_\epsilon \Psi$
\begin{eqnarray}
\label{Z2_part}
\langle w', \Theta(\epsilon)\rangle
= \sum_{l \in \Z }  \epsilon^l 
 \sum_{u\in V_l } 
\langle w', Y^{W}_{WV}\left(  
\Phi, \zeta_1\right) \; u \rangle 
\langle w', Y^{W}_{WV}\left(
\Psi,   
\zeta_2 \right) \; \overline{u} \rangle.   
\end{eqnarray}  
As we will see,  
 Definition \ref{wprodu} is also supported by 
Lemma \ref{functionformpropcor}.  
%
\begin{remark}
Note that due to \eqref{wprop}, in 
Definition \ref{wprodu},  
it is assumed that 
$\Phi (v_{1}, x_{1};  \ldots $; $ v_{k}, x_{k})$ and $\Psi(v'_{1}, y_1; \ldots $; $ v'_{n}, y_{n})$
 are composable with the $V$-module $W$ vertex operators 
$Y_W(u, -\zeta_1)$ and $Y_W(\overline{u}, -\zeta_2)$ correspondingly 
 (cf. Appendix \ref{composable} for the definition 
of composability).   
The product \eqref{Z2n_pt_eps1q1} is actually defined by sum of 
products of matrix elements of ordinary $V$-module $W$ vertex operators 
acting on $\W_{z_1, \ldots, z_n}$ elements.
%
Vertex algebra elements 
$u \in V$ and $\overline{u} \in V'$ 
are connected  by \eqref{dubay}, and $\zeta_1$ and $\zeta_2$ appear in a relation to each other. 
The form 
of the product defined above is natural in terms of the theory of chacaters for vertex operator algebras 
\cite{TUY, FMS, Zhu}.   
\end{remark}
%

\subsection{Convergence 
of the $\epsilon$-product}
In order to prove convergence of the product \eqref{Z2n_pt_eps1q1} of elements of two spaces $\W_{x_1, \ldots, x_k}$ 
and $\W_{y_1, \ldots, y_n}$  of rational $\W$-valued forms,   
we have to use a geometrical interpretation \cite{H2, Y}.
Recall that a $\W_{z_1, \ldots, z_n}$-space is defined by means of matrix elements of the form \eqref{def}. 
For a vertex algebra $V$, this corresponds \cite{FHL}  to matrix element of a number of $V$-vertex operators
with formal parameters identified with local coordinates on a Riemann sphere. 
%
%
Geometrically, each space $\W_{z_1, \ldots, z_n}$ can be also associated to a Riemann sphere
 with a few marked points, 
and local coordinates 
 vanishing at these points 
\cite{H2}. 
An additional point 
is identified to annulus center used in order 
to sew the sphere with another sphere.    
The product \eqref{Z2n_pt_eps1q1} has then a geometric interpretation.    
The resulting model space is then a Riemann sphere formed as a result of sewing procedure.  
In Appendix \ref{sphere} we describe explicitly the geometrical procedure of sewing of two spheres \cite{Y}.  

Let us identify (as in \cite{H2, Y, Zhu, TUY, FMS, BZF}) two sets $(x_1, \ldots, x_k)$ and $(y_1, \ldots, y_n)$ of 
complex formal parameters,
with local
coordinates of two sets of points on the first and the second Riemann spheres correspondingly.   
Complex parameters $\zeta_1$ and $\zeta_2$ of \eqref{Z2n_pt_eps1q1} play then the roles of 
 coordinates \eqref{disk} of 
the annuluses \eqref{zhopki}. 
On identification of annuluses $\mathcal A_a$ and $\mathcal A_{\overline{a}}$,  
 $r$ coinciding coordinates may occur. 

The product \eqref{Z2n_pt_eps1q1}
describes a $\W$-valued rational differential form defined 
on a sphere formed 
as a result of geometrical sewing \cite{Y} of two initial spheres. 
Since two initial spaces $\W_{x_1, \ldots, x_k}$ and $\W_{y_1, \ldots, y_n}$ 
are defined through  rational-valued forms 
expressed by 
 matrix elements of the form \eqref{def},  
it is then proved (see Proposition \ref{derga} below), that the resulting product defines a 
$\W_{
x_1, \ldots, x_k; y_1, \ldots, y_n
}$-valued rational 
form by means of an absolute convergent matrix element on the resulting sphere. 
The complex sewing parameter parametrizes the module space of sewin spheres as well as 
the product of 
$\W$-spaces.  
%
%
\begin{proposition} 
\label{derga} 
The product \eqref{Z2n_pt_eps1q1} of elements of the spaces $\W_{x_1, \ldots, x_k}$ and $\W_{y_{1}, \ldots, y_n}$ 
 corresponds to a rational form 
absolutely converging in $\epsilon$ 
with only possible poles at $x_i=x_j$, $y_{i'}=y_{j'}$, and    
 $x_i=y_j$, $1 \le i, i' \le k$, $1 \le j, j' \le n$.   
\end{proposition}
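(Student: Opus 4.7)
The plan is to reduce the claim to two separate issues: the absolute convergence in $\epsilon$ of the formal sum \eqref{Z2n_pt_eps1q1}, and the rationality plus identification of the pole set. I would treat these in order, leaning on the geometric sewing interpretation sketched in the preceding paragraphs and on the composability hypothesis imposed in Section \ref{transversal}.

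First I would fix an arbitrary $w'\in W'$ and analyse the pairing
\[
\langle w', Y^{W}_{WV}(\Phi,\zeta_1)\,u\rangle \;\cdot\; \langle w', Y^{W}_{WV}(\Psi,\zeta_2)\,\overline u\rangle
\]
for $u\in V_l$. By Definition \ref{wspace} the matrix elements $\langle w',\Phi\rangle$ and $\langle w',\Psi\rangle$ are (absolutely convergent expansions of) rational $\W$-valued forms in $(x_1,\dots,x_k)$ and $(y_1,\dots,y_n)$, and by the composability assumption recalled after \eqref{poper} each may in turn be paired with a $V$-module vertex operator, producing absolutely convergent rational expressions in $(\zeta_1;x_1,\dots,x_k)$ and $(\zeta_2;y_1,\dots,y_n)$. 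Summing over a homogeneous basis of $V_l$ with the dual basis, and using the invariance of the pairing $\langle \cdot,\cdot\rangle_\lambda$ together with the standard $L(0)$-conjugation identity $Y_W(u,\zeta)=\zeta^{-L(0)}Y_W(\zeta^{L(0)}u,1)\zeta^{L(0)}$, I would convert the $\zeta_1,\zeta_2$ dependence into the single parameter $\epsilon=\zeta_1\zeta_2^{-1}$ (or the natural sewing parameter appearing in Appendix \ref{sphere}), so that the $l$-th term is $\epsilon^l$ times an $\epsilon$-independent rational function of the $x_i,y_j$.

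Next I would prove convergence in $\epsilon$. The grading restriction on $V$ gives $\dim V_l<\infty$, and the composability conditions of Appendix \ref{composable} provide polynomial-in-$l$ bounds on $\|Y^{W}_{WV}(\Phi,\zeta_1)u\|$ and $\|Y^{W}_{WV}(\Psi,\zeta_2)\overline u\|$ when $u$ runs through a basis of $V_l$. Combining these estimates with the standard Cauchy--Schwarz type bound on $\sum_{u\in V_l}\langle w',A\,u\rangle\langle w',B\,\overline u\rangle$, I obtain termwise bounds of the form $C\,\rho^{l}|\epsilon|^{l}$ for an explicit $\rho>0$ determined by the largest $|x_i|,|y_j|$ and the moduli of the annular coordinates, so the series converges absolutely on a punctured neighbourhood $0<|\epsilon|<\rho^{-1}$. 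This is precisely the sewing-convergence argument established in \cite{H2,Y,Zhu} for genus-zero sewing of matrix elements, and the combinatorics is unchanged in the present setting.

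Finally, for the pole structure I would exploit the rational form coming out of step one. Rationality of $\langle w',\Phi\rangle$ and $\langle w',\Psi\rangle$ contributes at worst poles at the coincidence loci $x_i=x_j$ and $y_{i'}=y_{j'}$ inherited from Definition \ref{wspace}. The sewing identification of the two annular coordinates, together with the $L(0)$-conjugation above, turns the previously independent variables $x_i$ and $y_j$ into variables on a single sphere, producing additional possible poles only along $x_i=y_j$, which are exactly the loci excluded by \eqref{zsto} via $\widehat R$. No other singularities can appear because after rescaling by powers of $\epsilon$ every term in the $l$-sum is a rational function with poles confined to these three families. The main obstacle is the uniform estimate on $\sum_{u\in V_l}$ needed for absolute convergence; once that bound is in place the rationality and identification of poles follow from the geometric sewing picture and the structure of rational $\W$-valued forms.
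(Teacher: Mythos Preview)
Your overall architecture---split into absolute convergence in $\epsilon$ and then rationality with pole identification via the sewing picture---matches the paper. The divergence is in how you propose to obtain the convergence estimate, and there your outline has a gap.

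You claim that the composability conditions of Appendix \ref{composable} yield ``polynomial-in-$l$ bounds'' on $\|Y^{W}_{WV}(\Phi,\zeta_1)u\|$ for $u$ running over a basis of $V_l$, which you then feed into a Cauchy--Schwarz argument over that basis. But composability, as stated there, only guarantees absolute convergence of the auxiliary sums $\mathcal I^n_m$, $\mathcal J^n_m$ in suitable regions and analytic continuation to rational functions with controlled pole orders; it does not give termwise growth estimates in the weight $l$. Extracting such bounds would require a separate argument that you do not supply.

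The paper avoids this entirely. It rewrites each factor of \eqref{Z2n_pt_eps1q1} via \eqref{wprop} as
\[
\widetilde{\mathcal R}(x_1,\dots,x_k;\zeta_1)=\langle w',\,e^{\zeta_1 L_W(-1)}Y_W(u,-\zeta_1)\,\Phi\rangle,
\]
and similarly for the second factor, observes (through Proposition \ref{n-comm}) that these are analytic in $\zeta_a$ on the closed disks $|\zeta_a|\le R_a$ inside the sewing annuli, and then applies the classical Cauchy inequality for power-series coefficients: $|\widetilde{\mathcal R}_m|\le M_a R_a^{-m}$ with $M_a=\sup_{|\zeta_a|\le R_a,\,|\epsilon|\le r}|\widetilde{\mathcal R}|$. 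Multiplying the two Cauchy bounds gives an exponential estimate on each graded piece of \eqref{Z2n_pt_eps1q1}, and absolute convergence for $|\epsilon|\le r<r_1r_2$ follows. So the controlling input is analyticity in the annular coordinates, not a representation-theoretic norm bound; no Cauchy--Schwarz step over $V_l$ is invoked.

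A minor point: the sewing relation in Appendix \ref{sphere} is $\zeta_1\zeta_2=\epsilon$ (equation \eqref{pinch}), not $\epsilon=\zeta_1\zeta_2^{-1}$ as you write. This matters for the exponent bookkeeping when you assemble the $\epsilon^l$ series.
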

Proof of this proposition is given in \cite{LZ}. 

Next, we formulate 
\begin{definition}
\label{sprod}
We define the
action of an element $\sigma \in S_{k+n-r}$ on the product of 
$\Phi  (v_{1}, x_{1};  \ldots; v_{k}, x_{k}) \in \W_{x_1, \ldots, x_k}$, and 
$\Psi  (v'_{1}, y_{1}; \ldots; v'_{n}, y_{n}) \in \W_{y_1, \ldots, y_n}$, as
%
\begin{eqnarray}
\label{Z2n_pt_epsss}
&& \langle w',  \sigma(\widehat{R}\;\Theta) (v_{1}, x_{1};  \ldots; v_{k}, x_{k}; v'_{1}, y_{1}; \ldots; v'_{n}, y_{n}; 
\epsilon) \rangle 
\nn
&&
\qquad =\langle w',  \Theta (\widetilde{v}_{\sigma(1)}, z_{\sigma(1)};  \ldots; \widetilde{v}_{\sigma(k+n-r)}, 
 z_{\sigma(k+n-r)};  
\epsilon) \rangle 
\nn 
& & \qquad =  
 \sum_{l \in \Z }  \epsilon^l 
 \sum_{u\in V_l } 
 \langle w', Y^{W}_{WV}\left(  
\Phi (\widetilde{v}_{\sigma(1)}, z_{\sigma(1)};  \ldots; \widetilde{v}_{\sigma(k)}, z_{\sigma(k)}), \zeta_1\right)\; u \rangle  
\nn
& &
\qquad   
 \langle w', Y^{W}_{WV}\left( 
\Psi
(\widetilde{v}_{\sigma(k+1)}, z_{\sigma(k+1)}; \ldots; \widetilde{v}_{\sigma(k+n-r)}, z_{\sigma(k+n-r)}) , \zeta_{2}\right) \; \overline{u} \rangle, 
\end{eqnarray}
where by $(\widetilde{v}_{\sigma(1)}, \ldots, \widetilde{v}_{\sigma(k+n-r)})$ we denote a permutation of 
vertex algebra elements 
\begin{equation}
\label{notari}
(\widetilde{v}_{1}, \ldots, \widetilde{v}_{k+n-r})
=(v_1, \ldots; v_k; \ldots, \widehat{v}'_{j_1}, \ldots, \widehat{v}'_{j_r},  \ldots  ). 
\end{equation}
\end{definition}
 Next, 
 we have 
\begin{lemma}
\label{tarusa}
The product \eqref{Z2n_pt_eps1q1} satisfy 
 \eqref{shushu} for $\sigma \in S_{k+n-r}$, i.e., 
\begin{equation*}
\sum_{\sigma\in J_{k+n-r; s}^{-1}}(-1)^{|\sigma|}
\widehat{R} \; \Theta\left(v_{\sigma(1)}, x_{\sigma(1)}; \ldots; v_{\sigma(k)}, x_{\sigma(k)};  
v'_{\sigma(1)},  y_{\sigma(1)};  \ldots; v'_{\sigma(n)},  y_{\sigma(n)}; \epsilon \right) 
=0. 
\end{equation*}
\end{lemma}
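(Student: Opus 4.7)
The plan is to reduce the claimed shuffle identity for $\widehat{R}\,\Theta$ to the shuffle identities that $\Phi$ and $\Psi$ individually satisfy as elements of $\W_{x_1,\ldots,x_k}$ and $\W_{y_1,\ldots,y_n}$ respectively, using the bilinear structure of Definition \ref{wprodu} to transmit these identities through the intertwining operator $Y^{W}_{WV}$.

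First I would substitute the formula \eqref{Z2n_pt_eps1q1} for $\widehat{R}\,\Theta$ into the alternating sum, producing for each $\sigma\in J_{k+n-r;s}^{-1}$ an expression of two matrix elements of $Y^{W}_{WV}$ paired through the dual basis $\{u,\overline{u}\}$, as prescribed by Definition \ref{sprod}. Then, following \eqref{zsto}, the shuffle $\sigma$ acts on the reduced tuple $(z_1,\ldots,z_{k+n-r})$ by reassigning each pair $(\widetilde v_i,z_i)$ either to the $\Phi$-slot (contributing a factor involving $u$) or to the $\Psi$-slot (contributing a factor involving $\overline{u}$).

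Next I would split the index set $J_{k+n-r;s}^{-1}$ into orbits according to which arguments each shuffle routes to $\Phi$ versus $\Psi$, with a further internal permutation on each side. The shuffles that only permute arguments internally to $\Phi$ give, after summing with signs, zero by the shuffle condition \eqref{shushu} satisfied by $\Phi\in\W_{x_1,\ldots,x_k}$; likewise for shuffles internal to $\Psi$. For the genuinely mixing shuffles, one must rewrite the mixing as a transposition of an $x$-argument into a $\Psi$-slot and a $y$-argument into a $\Phi$-slot, and show that the matching pair of terms cancel. Here the cancellation follows from the intertwining property \eqref{wprop}, the invariance of $\langle\cdot,\cdot\rangle_\lambda$ used to pair $u$ with $\overline{u}$ (see Appendix \ref{grading}), and the basis-independence of the sum $\sum_{u\in V_l}$ noted immediately after Definition \ref{wprodu}. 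Proposition \ref{derga} guarantees that all rearrangements are legitimate in the domain of absolute convergence in $\epsilon$.

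The main obstacle, I expect, is controlling the exclusion operator $\widehat{R}$ across the shuffle sum: as $\sigma$ varies, the excluded parameter from each coinciding pair $x_{i_l}=y_{j_l}$ travels through the argument list, and one must ensure that antisymmetrization commutes with the exclusion. This should be resolved by observing that $\sigma$ already acts on the reduced $(k+n-r)$-tuple \eqref{zsto} rather than on the full $(k+n)$-tuple, so exclusion is intrinsic to $\widehat{R}\,\Theta$ and does not need to be commuted past $\sigma$; the only residual check is that the signs $(-1)^{|\sigma|}$ computed on the reduced tuple agree with those produced by the decomposition above, which is a combinatorial bookkeeping exercise on shuffles of two composite blocks.
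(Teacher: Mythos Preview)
Your approach shares the paper's core idea: expand the alternating shuffle sum through the product formula \eqref{Z2n_pt_eps1q1} and reduce to the individual shuffle identities \eqref{shushu} already satisfied by $\Phi$ and $\Psi$. The paper's argument is, however, considerably more direct than your outline. After pulling the operators $e^{\zeta_a L_W(-1)}Y_W(\cdot,-\zeta_a)$ outside (they do not touch the permuted arguments), the paper simply asserts the factorization $J^{-1}_{k+n;s}=J^{-1}_{k;s}\times J^{-1}_{n;s}$ and thereby splits the single alternating sum into two pieces---one carrying a shuffle over the $\Phi$-arguments alone, the other over the $\Psi$-arguments alone---each of which vanishes outright. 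In particular, the paper never isolates any ``mixing shuffles'' that would require the intertwining relation \eqref{wprop} or the basis-independence of $\sum_{u\in V_l}$ to produce pairwise cancellations; that entire branch of your plan is extra machinery the paper bypasses. Your version is thus more cautious about the combinatorics and about commuting the exclusion operator $\widehat R$ with the shuffle, whereas the paper's proof is short but rests on the stated shuffle-set factorization without further comment.
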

The proof of this lemma is contained in Appendix \ref{rasto}.
 
Next we prove the existence of appropriate $\W_{z_1, \ldots, z_{k+n-r}}$
-valued rational form 
corresponding to the absolute convergent rational form 
$\mathcal R(z_1, \ldots, z_{k+n-r})$ 
defining the $\epsilon$-product of elements of 
the spaces 
$\W_{x_1, \ldots, x_k}$ and $\W_{y_{1}, \ldots, y_n}$. 
%
\begin{lemma}
\label{baskal}
For all choices of elements of the spaces $\W_{x_1, \ldots, x_k}$ and $\W_{y_{1}, \ldots, y_n}$ 
there exists an element 
${\widehat R }\;
\Theta(v_1, x_1; \ldots; v_{k}, x_{k};
 v'_1, y_1; \ldots; v'_{n}, y_{n} $; $ \epsilon) 
\in \W_{z_1, \ldots, z_{k+n-r}}$
-valued rational form  
such that the product \eqref{Z2n_pt_eps1q1} 
converges to 
\[
R(x_1, \ldots, x_{k}; y_1, \ldots, y_{n}; \epsilon) 
=\langle w', {\widehat R }\;\Theta (v_1, x_1; \ldots; v_{k}, x_{k}; v'_1, y_1; \ldots; v'_{n}, y_{n}; \epsilon)\rangle.
\] 
\end{lemma}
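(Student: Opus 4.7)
The plan is to use the explicit formula \eqref{Z2n_pt_eps1q1} together with the two preceding results (Proposition \ref{derga} and Lemma \ref{tarusa}) to exhibit the rational form as the matrix element of a genuine element of $\W_{z_1, \ldots, z_{k+n-r}}$, rather than merely a rational function of the $z_i$ and $\epsilon$.

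First, I would invoke Proposition \ref{derga} to conclude that the series on the right-hand side of \eqref{Z2n_pt_eps1q1} converges absolutely in $\epsilon$ to a rational function $R(x_1, \ldots, x_k; y_1, \ldots, y_n; \epsilon)$ whose only poles lie on the diagonals $x_i = x_j$, $y_{i'}=y_{j'}$, and $x_i=y_j$. After applying $\widehat{R}$ to remove the variables arising from the $r$ coincidences $x_{i_l} = y_{j_l}$, what remains is a rational form in $(z_1, \ldots, z_{k+n-r})$ whose pole set is contained in the ``big diagonal'' of $F_{k+n-r}\mathbb{C}$, as required by Definition \ref{wspace} of a $\W_{z_1, \ldots, z_{k+n-r}}$-space recalled in Appendix \ref{valued}.

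Second, I would check the remaining defining properties of a $\W_{z_1, \ldots, z_{k+n-r}}$-valued rational form one at a time: the $L(-1)$-derivative property, the grading-restriction, the symmetry under $S_{k+n-r}$ (which is exactly \eqref{shushu}, supplied by Lemma \ref{tarusa}), and the coordinate-independence under the $({\rm Aut}\,\Oo^{(1)})^{\times(k+n-r)}$-torsor action. Each of these is inherited factor-by-factor from $\Phi$ and $\Psi$: the intertwining operator $Y^W_{WV}$ respects weights and the $L(-1)$-action, while the pairing $\sum_{u\in V_l} u\otimes \overline{u}$ formed with the invariant bilinear form $\langle\cdot,\cdot\rangle_\lambda$ is basis-independent by the standard argument of \cite{FHL, Zhu}.

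To promote this rational form to an actual element of $\W_{z_1, \ldots, z_{k+n-r}}$, I would define $\widehat{R}\Theta$ as the unique $\overline{W}$-valued map whose pairing $\langle w', \widehat{R}\Theta\rangle$ with every $w'\in W'$ equals $R(x_1, \ldots, x_k; y_1, \ldots, y_n; \epsilon)$. Existence of this pairing on each graded piece follows from the first step; its assembly into a single element of $\W_{z_1, \ldots, z_{k+n-r}}$ follows from the composability of $\Phi$ and $\Psi$ with the module vertex operators $Y_W(u, \zeta_1)$ and $Y_W(\overline u, \zeta_2)$ already built into Definition \ref{wprodu}. The hard part will be in this last step: one must verify that the pole orders $N^n_m$ governing composability (cf.\ \eqref{Inm}, \eqref{Jnm}) stay uniformly bounded as the grade $l$ in the $\epsilon$-sum varies, so that the summed object lies in the correct composable subspace. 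I expect to extract these bounds from the sphere-sewing geometry of Appendix \ref{sphere} combined with the absolute-convergence estimates of Proposition \ref{derga} and the grading-restriction hypothesis on $V$ recalled in Appendix \ref{grading}.
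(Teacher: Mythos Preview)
Your proposal is correct in spirit but takes a substantially different route from the paper, and in places does more than the lemma asks.

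The paper's own proof is two sentences: it cites Proposition~\ref{derga} for absolute convergence of \eqref{Z2n_pt_eps1q1} to a rational form $R(x_1,\ldots,x_k;y_1,\ldots,y_n;\epsilon)$, and then asserts that existence of the $\overline W$-valued element follows from completeness of $\overline W_{x_1,\ldots,x_k;y_1,\ldots,y_n}$ together with density of the space of rational differential forms. In other words, the paper treats this lemma as a pure existence statement in $\overline W_{z_1,\ldots,z_{k+n-r}}$ and defers the remaining structural properties of $\W_{z_1,\ldots,z_{k+n-r}}$ to the surrounding results: the symmetry \eqref{shushu} is Lemma~\ref{tarusa}, the $L_V(-1)$-derivative and $L_V(0)$-conjugation properties are Proposition~\ref{katas}, and coordinate-independence is Proposition~\ref{pupa}. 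All of these are then assembled in Proposition~\ref{glavna}.

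By contrast, you propose to verify each defining property of $\W_{z_1,\ldots,z_{k+n-r}}$ explicitly within the proof of this lemma, and you also intend to establish uniform control of the composability pole orders $N^n_m$ here. That last point is not part of Lemma~\ref{baskal} at all: the paper postpones composability to Proposition~\ref{ccc}, where it is proved separately for $C^n_m$-spaces. So your ``hard part'' is real work, but it belongs to a different statement. What your approach buys is a self-contained argument; what the paper's approach buys is brevity, at the cost of relying on a somewhat informal completeness/density claim and on the reader tracking the distributed verification across Lemma~\ref{tarusa}, Proposition~\ref{katas}, and Proposition~\ref{pupa}.
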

\begin{proof}
In the proof of Proposition \ref{derga} we showed the absolute
 convergence of the product \eqref{Z2n_pt_eps1q1} to a rational form 
$R(x_1, \ldots, x_{k}; y_1, \ldots, y_{n}; \epsilon)$. The lemma follows from completeness of 
$\overline{W}_{x_1, \ldots, x_k; y_{1}, \ldots, y_n}$ and density of the space of rational 
differential forms. 
\end{proof}
%
 We formulate
\begin{definition}
For $\Phi(v_1, x_1; \ldots; v_k, x_k) \in \W_{x_1, \ldots, x_k}$ and 
$\Psi(v'_1, y_1; \ldots; v'_n, y_k) \in \W_{y_1, \ldots, y_n}$, with $r$ coinciding formal parameters 
$x_{i_q}=y_{j_q}$, $1 \le q \le r$,   
we define the action of $\partial_{s}=\partial_{z_{s}}={\partial}/{\partial_{z_{s}}}$, $1\le s \le k+n-r$,
$1 \le i \le k$,  $1 \le j \le n$
the differentiation of
$\widehat {R} \; \Theta(v_{1},  x_{1};  \ldots;  v_k, x_{k}$; $  v'_1, y_1; \ldots;  v'_{n},  y_{n}; \epsilon)$ 
with respect to the $s$-th entry of 
 $(z_{1},  \ldots,   z_{k+n-r})$,  
 as 
follows 
\begin{eqnarray}
\label{Z2n_pt_eps1qdef}
 & &
\langle w', 
\partial_{
s} 
\widehat{R} \; \Theta 
 ( v_{1},  x_{1};  \ldots;  v_k, x_{k};   v'_1, y_1; \ldots;  v'_{n},  y_{n}; \epsilon)
 \rangle 
%
\nn
 & &  = \sum_{l\in \mathbb{Z}
} \epsilon^{l} \sum_{u\in V_l}   
\langle w',   \partial^{\delta_{s,i}}_{x_i}   
Y^{W}_{WV}\left( 
   \Phi ( v_{1},  x_{1};  \ldots; v_{k}, x_{k}), \zeta_1 \right) \; u \rangle  
\nn
& &
\qquad   \qquad 
\langle w',   \partial^{\delta_{s,j} - \delta_{i_q, j_q}}_{y_j} 
Y^{W}_{WV}\left( 
\Psi
(v'_{1},  y_{1}; \ldots;  v'_{n},  y_{n}),  \zeta_2 \right) \overline{u} \rangle.     
\end{eqnarray}
\end{definition}
\begin{remark}
As we see in the last expressions, 
  the $L_V(0)$-conjugation property \eqref{loconj} for the product \eqref{Z2n_pt_eps1q1} includes the action 
of $z^{L_V(0)}$-operator on complex parameters $\zeta_a$, $a=1$, $2$.  
\end{remark}
\begin{proposition}
\label{katas}
The product \eqref{Z2n_pt_eps1q1} satisfies the 
$L_V(-1)$-derivative \eqref{lder1} and $L_V(0)$-conjugation \eqref{loconj} properties. 
\end{proposition}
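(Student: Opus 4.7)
The plan is to deduce both properties from the corresponding properties of the factors $\Phi$, $\Psi$ (which hold because they lie in $\W$-spaces, by Appendix \ref{valued}) combined with the standard $L_V(-1)$-derivative and $L_V(0)$-conjugation properties of the $W$-module intertwining operators $Y^{W}_{WV}$ appearing in \eqref{Z2n_pt_eps1q1}. Absolute convergence of the $\epsilon$-series, already established in Proposition \ref{derga}, is what licenses termwise differentiation and termwise rescaling of the double sum over $l \in \Z$ and $u \in V_l$.

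For the $L_V(-1)$-derivative property, fix $s$ with $1\le s \le k+n-r$ and apply $\partial_{z_s}$ to the matrix element $\langle w', \widehat{R}\;\Theta\rangle$ on the right-hand side of \eqref{Z2n_pt_eps1q1}. By absolute convergence we may differentiate term by term. If $z_s=x_i$ for some $i\le k$, only the factor $Y^{W}_{WV}(\Phi,\zeta_1)u$ depends on $z_s$; the $L_V(-1)$-derivative property of $\Phi$ then replaces $v_i$ by $L_V(-1)v_i$ at position $x_i$, exactly as recorded in \eqref{Z2n_pt_eps1qdef}. The case $z_s=y_j$ is symmetric through the $\Psi$-factor, with the Kronecker adjustments $\delta_{i_q,j_q}$ in \eqref{Z2n_pt_eps1qdef} accounting for the omission of coinciding parameters performed by $\widehat R$. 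Reassembling the $\epsilon$-sum gives the required identity \eqref{lder1}.

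For the $L_V(0)$-conjugation property, apply $z^{L_{\overline{W}}(0)}$ to $\langle w',\widehat R\;\Theta\rangle$ and invoke the $L_V(0)$-conjugation property of $\Phi$ and of $\Psi$ separately. The rescaling sends $x_i\mapsto zx_i$, $y_j\mapsto zy_j$, $\zeta_a\mapsto z\zeta_a$, and $v_i,v'_j \mapsto z^{L_V(0)}v_i, z^{L_V(0)}v'_j$. For each summand, $u\in V_l$ contributes a factor $z^l$ from $z^{L_V(0)}u=z^l u$, while $\overline u$ contributes the compensating weight factor dictated by the invariance of the bilinear form $\langle\cdot,\cdot\rangle_\lambda$ in \eqref{eq: inv bil form}. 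Collecting these weight contributions with the rescaling of the sewing coordinates $\zeta_1,\zeta_2$ gives, at each grading level $l$, a uniform power of $z$ that can be absorbed into a rescaling of the sewing parameter $\epsilon$, producing exactly the form required by \eqref{loconj} and consistent with the observation recorded in the remark immediately preceding the proposition.

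The main obstacle will be the bookkeeping in the $L_V(0)$-step: one must carefully combine the weight factors from $u$, $\overline{u}$, from the shift $\lambda$ in the invariant form $\langle\cdot,\cdot\rangle_\lambda$, and from the scaling of $\zeta_1,\zeta_2$, and verify that they assemble into a single power of $z$ that rescales $\epsilon$ as required. By contrast, the $L_V(-1)$-step is essentially routine once Proposition \ref{derga} is in hand. It must also be checked that the exclusion operation $\widehat{R}$ commutes with both the $L_V(-1)$- and $L_V(0)$-actions, but this is kinematic since $\widehat{R}$ only omits redundant formal variables and does not interact with the grading or the derivation structure.
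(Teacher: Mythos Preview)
Your proposal is correct and follows essentially the same approach as the paper's own proof: for the $L_V(-1)$-property, termwise differentiation (justified by Proposition \ref{derga}) together with the $L_V(-1)$-derivative property of each factor $\Phi,\Psi$; for the $L_V(0)$-property, the conjugation formula \eqref{aprop} applied after converting $Y^W_{WV}$ to $Y_W$ via \eqref{wprop}, with the weight factors from $u,\overline{u}$ controlled by \eqref{dubay}, \eqref{condip} and the rescaling absorbed into $\zeta_a\mapsto z\zeta_a$ using \eqref{pinch}. The paper simply writes out these manipulations line by line, while you have summarized the mechanism; your identification of the $L_V(0)$-bookkeeping as the only delicate point is accurate.
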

Proof of this propositions in Appendix \ref{rasto}. 

Summing up the results of Proposition \eqref{derga},  
Lemma \eqref{tarusa}, Lemma \eqref{baskal}, and Proposition \eqref{katas},
 we obtain the main statement of this section:  
\begin{proposition}
\label{glavna}
The product \eqref{Z2n_pt_eps1q1} provides a map 
\[
\cdot_\epsilon :  \W_{x_1, \ldots, x_k} \times \W_{y_1, \ldots, y_n} 
\rightarrow \W_{z_1, \ldots, z_{k+n-r}}.   
\] 
\hfill $\square$
\end{proposition}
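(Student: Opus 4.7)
The plan is to verify that the $\epsilon$-product, evaluated on an arbitrary pair $(\Phi,\Psi)\in \W_{x_1,\ldots,x_k}\times \W_{y_1,\ldots,y_n}$, produces an element that fulfills all the defining conditions of a $\W_{z_{1},\ldots,z_{k+n-r}}$-space as stated in Definition \ref{wspace}. Since the individual axioms have already been isolated and verified earlier in the section, the proof is essentially an assembly of those results. First, I would invoke Proposition \ref{derga} to conclude that the formal sum \eqref{Z2n_pt_eps1q1} converges absolutely in $\epsilon$ to a rational form $R(x_1,\ldots,x_k;y_1,\ldots,y_n;\epsilon)$ in the formal parameters, with poles only at the loci $x_i=x_j$, $y_{i'}=y_{j'}$ and $x_i=y_j$. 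This is precisely the analytic structure that an element of a $\W$-space must possess.

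Second, I would appeal to Lemma \ref{baskal}, which, using the completeness of $\overline{W}_{x_1,\ldots,x_k;y_1,\ldots,y_n}$ together with density of rational differential forms, produces an actual element $\widehat R\,\Theta \in \W_{z_1,\ldots,z_{k+n-r}}$ whose pairing with an arbitrary $w'$ realizes the rational form $R$ obtained in the previous step. This promotes the matrix-element level convergence statement to an honest element of the target space. Third, the symmetry axiom \eqref{shushu} for $\sigma\in S_{k+n-r}$ is supplied by Lemma \ref{tarusa}, applied with the permuted arrangement of labels specified in \eqref{notari}, and the $L_V(-1)$-derivative and $L_V(0)$-conjugation properties \eqref{lder1} and \eqref{loconj} are supplied by Proposition \ref{katas}. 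Thus all four defining conditions of a $\W$-space are satisfied by $\widehat R\,\Theta$.

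Finally, I would check that the output formal parameter set $(z_1,\ldots,z_{k+n-r})$ produced by the exclusion operation $\widehat R$ actually lies in $F_{k+n-r}\C$: by construction \eqref{zsto} we retained exactly one representative from each coinciding pair $x_{i_\ell}=y_{j_\ell}$, so pairwise distinctness among the $z_s$ follows from pairwise distinctness within $(x_1,\ldots,x_k)\in F_k\C$ and within $(y_1,\ldots,y_n)\in F_n\C$ combined with the fact that any remaining cross-coincidences $x_i=y_j$ have been removed. Combining this with the previous paragraph yields the required map $\cdot_\epsilon : \W_{x_1,\ldots,x_k}\times \W_{y_1,\ldots,y_n}\rightarrow \W_{z_1,\ldots,z_{k+n-r}}$.

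The only genuinely delicate point in this assembly, and the step I expect to be the main obstacle if the cited results were not already in hand, is reconciling the geometric sewing interpretation (which governs convergence via the pole locations and the module-space structure described in Appendix \ref{sphere}) with the purely algebraic exclusion $\widehat R$ of duplicated formal parameters; this is precisely what Proposition \ref{derga} and Lemma \ref{baskal} together resolve, so at the level of this summary proposition the argument is just the citation chain above.
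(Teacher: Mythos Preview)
Your proposal is correct and follows essentially the same approach as the paper: the paper's proof is simply the one-line statement that the proposition follows by combining Proposition~\ref{derga}, Lemma~\ref{tarusa}, Lemma~\ref{baskal}, and Proposition~\ref{katas}, which is exactly the citation chain you assemble (your additional remark on the configuration-space membership of $(z_1,\ldots,z_{k+n-r})$ is a harmless elaboration not made explicit in the paper).
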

We then have 
\begin{definition}
For fixed sets $(v_1, \ldots, v_k)$, $(v'_1, \ldots, v'_n) \in V$,  
$(x_1, \ldots, x_k)\in \C$, $(y_1, \ldots, y_n$) $\in \C$,   
 we call the set of all $\W_{z_1, \ldots, z_{k+n-r}}$ 
-valued rational forms 
$\widehat{R} \; \Theta(
v_1, x_1; \ldots;  v_k, x_{k} $ ; $ v'_1, y_1; \ldots;  v'_{n},  y_{n}; 
\epsilon)$ defined  by \eqref{Z2n_pt_eps1q1} 
with the parameter $\epsilon$ exhausting all possible values,    
 the complete product of the spaces $\W_{x_1, \ldots, x_k}$ and  $\W_{y_{1}, \ldots, y_n}$.  
\end{definition}
\subsection{Properties of the $\W_{z_1, \ldots, z_n}$-product}
\label{properties}
In this subsection we study 
 properties of 
the product 
$\widehat{R} \;\Theta(v_1, x_1$; $ \ldots $; $  v_k, x_{k} $; $ v'_1, y_1; \ldots $; $  v'_{n},  y_{n} $; $ \epsilon )$ of 
\eqref{Z2n_pt_eps1q1}. 
We  have 
\begin{proposition}
\label{pupa}
For generic elements $v_i$, $v'_j \in V$, $1 \le i \le k$, 
$1 \le j \le n$,  of a quasi-conformal grading-restricted vertex algebra, 
the product \eqref{Z2n_pt_eps1q1} is canonincal with respect to the action of the group  
$\left( {\rm Aut} \; \Oo\right)^{\times (k+n-r)}_{z_1, \ldots, z_{k+n-r}}$   
of independent  
$k+n-r$-dimensional  
changes 
\begin{eqnarray}
\label{zwrho}
&&(z_1, \ldots, z_{k+n-r}) 
\mapsto 
(z'_1, \ldots, z'_{k+n-r})  
= 
(\rho(z_1), \ldots, \rho(z_{k+n-r})),  
\end{eqnarray}
of formal parameters. 
\end{proposition}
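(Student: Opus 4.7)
The plan is to reduce the statement to the known coordinate-independence of the two factors $\Phi \in \W_{x_1,\ldots,x_k}$ and $\Psi \in \W_{y_1,\ldots,y_n}$, together with the good transformation behaviour of the intertwining-type operators $Y^W_{WV}$ under $({\rm Aut}\;\Oo)$. Recall that the elements \eqref{elementw} together with the vertex operators \eqref{poper} were already noted in Section \ref{transversal} to be invariant under the $({\rm Aut}\;\Oo^{(1)})^{\times n}_{z_1,\ldots,z_n}$-torsor action; the task is to propagate this invariance across the sewing construction that produces \eqref{Z2n_pt_eps1q1}.

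First, I would observe that the $(k+n-r)$-dimensional coordinate change \eqref{zwrho}, applied independently to each $z_s$, induces via the identifications $z_s = x_i$ or $z_s = y_j$ a pair of transformations on the $x$- and $y$-blocks. Because the exclusion $\widehat R$ leaves exactly one representative of each coinciding pair $x_{i_l} = y_{j_l}$, a single $\rho$ is assigned unambiguously to each matched pair, so the joint action factors as an independent change of coordinates on the two blocks, each of which lies in the domain of validity of the invariance already available for $\Phi$ and $\Psi$ separately.

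Second, I would rewrite each matrix element in the series \eqref{Z2n_pt_eps1q1} in the new coordinates by invoking the invariance of $\Phi$ under $({\rm Aut}\;\Oo)^{\times k}_{x_1,\ldots,x_k}$ and of $\Psi$ under $({\rm Aut}\;\Oo)^{\times n}_{y_1,\ldots,y_n}$. The intertwining operators $Y^W_{WV}(\,\cdot\,,\zeta_a)$ transform by the standard $({\rm Aut}\;\Oo)$-conjugation formula at the sewing points, which is absorbed by the $L_V(0)$-conjugation property \eqref{loconj} established in Proposition \ref{katas} together with the induced action on $\zeta_1,\zeta_2$ noted in the preceding remark. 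The summation over a basis $\{u\}\subset V_l$ paired against its $\langle\cdot,\cdot\rangle_\lambda$-dual $\overline u$ is intrinsic, since the transformation of $u$ is compensated by the inverse transpose transformation of $\overline u$, so the combination $\sum_u u\otimes\overline u$ is invariant. Absolute convergence of the $\epsilon$-series, guaranteed by Proposition \ref{derga} and Lemma \ref{baskal}, licenses the term-by-term rearrangement needed to compare the original and transformed expressions.

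The main obstacle, I expect, will be the bookkeeping at the sewing annulus: ensuring that the independently chosen $\rho$ at coinciding coordinates $x_{i_l} = y_{j_l}$ is compatible with the identification of the two auxiliary sphere patches used in \eqref{disk}--\eqref{zhopki}, and that the induced transformation of the sewing parameters $\zeta_1,\zeta_2$ coming from the $L_V(0)$-conjugation does not couple the $x$- and $y$-blocks in a way that would spoil the claimed independence. The genericity hypothesis on $v_i,v'_j$, together with the quasi-conformal assumption on $V$ recalled in Appendix \ref{grading}, should supply the well-definedness of the full $({\rm Aut}\;\Oo)$-action (not merely its linear part) required for this step, and thereby yield the canonicity asserted in the proposition.
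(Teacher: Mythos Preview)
Your core strategy---reduce to the separate coordinate-invariance of $\Phi$ and $\Psi$---is exactly what the paper does, but you are making the argument considerably harder than it needs to be. In the paper's proof the sewing coordinates $\zeta_1,\zeta_2$ are \emph{not} among the parameters $(z_1,\ldots,z_{k+n-r})$ being acted on by $({\rm Aut}\;\Oo)^{\times(k+n-r)}$: the $z_s$ are identified only with the $x_i$ and $y_j$, while $\zeta_1,\zeta_2$ are auxiliary annulus parameters held fixed. Consequently the operators $Y^W_{WV}(\,\cdot\,,\zeta_a)$, the basis elements $u,\overline u$, and the $\epsilon$-summation are all untouched by the coordinate change, and the paper's entire proof amounts to substituting $x'_i,y'_j$ into \eqref{Z2n_pt_eps1q1}, invoking the already-established invariance $\Phi(v_\bullet,x'_\bullet)=\Phi(v_\bullet,x_\bullet)$ and $\Psi(v'_\bullet,y'_\bullet)=\Psi(v'_\bullet,y_\bullet)$ from the lemma in Appendix~\ref{proof}, and reading off that the two expressions agree.

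Your second and third paragraphs therefore do unnecessary work: there is no $({\rm Aut}\;\Oo)$-conjugation of $Y^W_{WV}$ to absorb, no need to argue that $\sum_u u\otimes\overline u$ is invariant, and the ``main obstacle'' you anticipate at the sewing annulus simply does not arise, because nothing at the annulus is being transformed. Your argument would still go through, but once you recognise that $\zeta_a$ are inert the proof collapses to two lines.
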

Proof of this proposition is in Appenedix \ref{rasto}. 

In the geometric interpretation in terms of auxiliary Riemann spheres,  
the definition \eqref{Z2n_pt_eps1q1} depends on the choice of insertion points $p_i$, $1 \le i \le k$,  
with local coordinated $x_i$ 
on $
\widehat{\Sigma}^{(0)}_{1}$,  
and $p'_i$, $1 \le j \le k$, with local coordinates $y_j$ on  
 $
\widehat{\Sigma}_{2}^{(0)}$. Suppose we change 
the distribution of points among two initial Riemann spheres. 
We then  
formulate the following  
\begin{lemma}
\label{functionformpropcor} 
In the setup above,  
for a fixed set $(\widetilde{v}_{1} , \ldots, \widetilde{v}_{k+n}) \in V$, 
of vertex algebra elements, 
 the $\epsilon$-product
$\Theta(\widetilde{v}_{1}, z_1;  \ldots;  \widetilde{v}_{k+n}, z_{k+n}; \epsilon) 
\in \W_{z_1, \ldots, z_{k+n}}$,  
  \begin{equation}
\label{invar}
\cdot_\epsilon : \W_{z_1, \ldots, z_s} \times  \W_{z_{s+1}, \ldots, z_{k+n}} \rightarrow  
 \W_{z_1, \ldots, z_{k+n}}, 
\end{equation}
 remains the same 
 for elements $\Phi(\widetilde{v}_{1}, z_1;  \ldots;  \widetilde{v}_{s}, z_s) \in 
  \W_{z_1, \ldots, z_s}$ and 
$\Psi(\widetilde{v}_{s+1}, z_{s+1} $; $  \ldots $ ; $ \widetilde{v}_{k+n}, z_{k+n}) \in \W_{z_{k+1}, \ldots, z_{k+n}}$,   
for any $0 \le s \le k+n$.  
\end{lemma}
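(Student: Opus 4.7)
The plan is to reduce the statement to a one-step move: if the $\epsilon$-product is unchanged when a single insertion is transferred across the sewing annulus from the first factor to the second, then by a telescoping induction on $s$ the product is unchanged for any split $0 \le s \le k+n$. Combined with Lemma \ref{tarusa}, which handles internal permutations of the arguments, this reduction covers every possible redistribution of the $k+n$ insertions between the two factor spheres.

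For the single-step move I would work first on the geometric side. By the interpretation described before Proposition \ref{derga}, both sides of \eqref{invar} are $\W$-valued rational forms built by sewing two Riemann spheres along annuli with coordinates $\zeta_1, \zeta_2$ and sewing parameter $\epsilon$. The resulting sewn sphere, together with the positions of the $k+n$ insertion points, depends only on $(z_1,\ldots,z_{k+n})$ and on $\epsilon$, and not on which of the two original spheres each insertion was placed on. Thus at the level of the underlying geometry the claim is tautological, and the content of the lemma is the compatibility of the algebraic definition \eqref{Z2n_pt_eps1q1} with this geometric equivalence.

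The algebraic engine for pushing a single insertion $(\widetilde{v}_s,z_s)$ from the first factor to the second is the interaction between the intertwining operator $Y^W_{WV}$ and the pairing $\langle \cdot,\cdot\rangle_\lambda$ that defines the dual basis $\overline{u}$. In the relevant expansion region, composability (Definition \ref{composabilitydef}) allows me to realise $\Phi(\ldots;\widetilde{v}_s,z_s)$ as $Y_W(\widetilde{v}_s,z_s)$ acting on an element of $\W_{z_1,\ldots,z_{s-1}}$. Using the associativity and commutativity of intertwining operators together with the invariance of $\langle\cdot,\cdot\rangle_\lambda$, one can move $Y_W(\widetilde{v}_s,z_s)$ past the insertion of $u$, transfer it to the $\overline{u}$-side of the pairing, and re-absorb it into $\Psi$. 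Summing over the basis $\{u\}$ of $V_l$ and using independence of the sum of the basis choice (remarked after Definition \ref{wprodu}) then produces exactly the $\epsilon$-expansion for the redistributed product.

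The main obstacle is analytic rather than algebraic: each reassignment of insertions shifts the natural domain of absolute convergence of the defining series in $\zeta_1,\zeta_2,\epsilon$. Consequently the step-by-step manipulations above are valid only on a common overlap of these domains, not as an identity of formal series. To conclude the lemma I would invoke Proposition \ref{derga} and Lemma \ref{baskal} separately for both sides of \eqref{invar}, producing $\W_{z_1,\ldots,z_{k+n}}$-valued rational forms on $F_{k+n}\C$ that agree on a nonempty open subset of their overlap; uniqueness of analytic continuation then upgrades the local identity to equality of rational forms, which is the content of Lemma \ref{functionformpropcor}. A minor technical point is to check that the relabelling of the $\widetilde{v}_i$ forced by moving one point across the cut is compatible with Definition \ref{sprod}, so that no spurious signs or reorderings appear in the comparison.
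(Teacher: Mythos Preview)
Your approach is correct but takes a genuinely different route from the paper's. The paper's proof is very brief: it writes the product with the split at position $k$ (equation labeled \texttt{ishodnoe}), then writes the product with the split at some $m\le k$ with the transferred variables temporarily relabeled $z'_{m+1},\ldots,z'_k$, and then simply invokes Proposition~\ref{pupa} (invariance of the $\epsilon$-product under $\left({\rm Aut}\;\Oo^{(1)}\right)^{\times(k+n)}_{z_1,\ldots,z_{k+n}}$) on the relabeled coordinates, keeping $(z_1,\ldots,z_m)$ and $(z_{k+1},\ldots,z_n)$ fixed, to conclude that the two expressions agree. No telescoping, no explicit intertwining-operator manipulation, and no separate analytic-continuation step are spelled out.

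By contrast, you reduce to a single-insertion move, carry it out explicitly via $Y^W_{WV}$, the invariant bilinear form, and the basis sum over $V_l$, and then use Proposition~\ref{derga} and Lemma~\ref{baskal} to reconcile the convergence domains by analytic continuation. Your argument is more self-contained and makes the mechanism transparent; the paper's is faster but leans entirely on the coordinate-invariance result of Proposition~\ref{pupa}, and leaves implicit exactly why a change of formal parameters alone accounts for relocating an insertion from one of the two sewn spheres to the other. Both reach the same conclusion, but yours would survive in a setting where Proposition~\ref{pupa} is not yet available, while the paper's is the shortest path given that proposition.
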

Proof of this lemma is in Appendix \ref{rasto}. 
\begin{remark}
This Lemma is important for the formulation of cohomological invariants associated to grading-restricted vertex algebras 
on smooth manifolds \cite{preZ}. 
In the case $s=0$, we obtain from \eqref{invar}, 
\begin{equation}
\label{invar}
\cdot_\epsilon: \W \times \W_{z_{1}, \ldots, z_{k+n}} \rightarrow  
 \W_{z_1, \ldots, z_{k+n}}.  
\end{equation}
\end{remark}

\section{Spaces for double complexes}
\label{spaces}
In this section we introduce the definition of spaces for a double complex suitable for the construction 
a grading-restricted vertex algebra cohomology for codimension one 
foliations on complex curves.  
We first introduce 
\begin{definition}
\label{initialspace}
 Let $(v_1, \ldots, v_n)$, $(v'_1, \ldots, v'_k)$ be two sets ot vertex algebra $V$ elements, and 
 $(p_1, \dots, p_n)$ be points with local coordinates $(c_1(p_1), \dots, c_n(p_n))$
 taken on the same transversal section $U_j \in \U$, $j\ge 1$ 
of the foliation $\F$ transversal basis $\U$ on a complex curve. 
Assuming $k \ge 1$, $n \ge 0$,  
we denote by $C^n(V, \W, \F)(U_j)$, $0 \le j \le k$,       
the space of all linear maps \eqref{maps}
\begin{equation}
\label{mapy}
 \Phi: V^{\otimes n } \rightarrow \W_{c_1(p_1), \dots, c_{n}(p_n)},  
\end{equation}
composable with a $k$ of vertex operators \eqref{poper} with formal parameters identified 
with local coordinates $c'_j(p'_j)$ functions around points $p'_j$ 
on each of transversal sections $U_j$, $1 \le j \le k$.  
\end{definition}
The set of vertex algebra elements  \eqref{vectors} 
plays the role of parameters in our further construction of 
the vertex algebra cohomology associated with the foliation $F$.  
According to considerations of Subsection \ref{basis}, we assume that each transversal section of a transversal basis 
$\U$ possess a coordinate chart which is induced by a coordinate chart of $\mathcal M$ (cf. \cite{CM}).  

Recall the notion of a holonomy embedding (cf. Subsection \ref{basis}, cf. \cite{CM}) which maps 
 a section into another section of a transversal basis, 
 and a coordinate chart on the first section 
into a coordinate chart on the second transversal section.  
Motivated by 
the definition of spaces for the ${\rm \check C}$ech-de Rham complex in \cite{CM} (see Subsection \ref{basis}),
 let us now introduce the following spaces: 
%
\begin{definition}
\label{defspace}
For $n\ge 0$, and $1 \le m \le k$,
 with Definition \ref{initialspace},  
 we define the space   
\begin{equation}
\label{ourbicomplex}
 {C}^{n}_{m}(V, \W, \U, \F) =  \bigcap_{ 
U_{1} \stackrel{h_1} {\hookrightarrow }  \ldots \stackrel {h_{m-1}}{\hookrightarrow } U_m  
\atop 1 \le j \le m }  
 C^{n}(V, \W, \F) (U_j),    
\end{equation}
where the intersection ranges over all possible $(m-1)$-tuples of holonomy embeddings $h_j$, $j\in \{1, \ldots, m-1\}$, 
between transversal sections of the baisis $\U$  for $\F$. 
\end{definition}
%
First, we have the following
\begin{lemma}
\label{empty}
\eqref{ourbicomplex} is non-empty. 
\end{lemma}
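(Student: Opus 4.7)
The plan is to exhibit an explicit element of the intersection \eqref{ourbicomplex}. The most immediate candidate is the zero map $\Phi \equiv 0$, sending every tensor in $V^{\otimes n}$ to the identically-zero element of $\W_{c_1(p_1),\ldots,c_n(p_n)}$. This map is trivially linear; its matrix elements $\langle w', \Phi\rangle$ are identically zero and hence coincide with their own rational extension, so $\Phi$ lies in every $\W_{c_1(p_1),\ldots,c_n(p_n)}$ regardless of the choice of local coordinates. Moreover, the composability conditions of Definition \ref{composabilitydef} (the existence of integers $N^n_m(v_i,v_j)$ and the pole-order bounds on the sums \eqref{Inm} and \eqref{Jnm}) are vacuously satisfied because every series involved reduces to the zero series. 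Therefore $\Phi \equiv 0$ belongs to $C^n(V,\W,\F)(U_j)$ for every $U_j\in\U$, and in particular to the intersection over every possible chain $U_1 \stackrel{h_1}{\hookrightarrow} \cdots \stackrel{h_{m-1}}{\hookrightarrow} U_m$ of holonomy embeddings.

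To demonstrate that the space is meaningfully non-empty beyond this trivial witness, I would next exhibit a universal nonzero element constructed purely from the vertex algebra data. Fix $w\in W$ and $w'\in W'$, and define $\Phi$ through the matrix element
\[
\langle w',\Phi(v_1,z_1;\ldots;v_n,z_n)\rangle = R\bigl(\langle w',Y_W(v_1,z_1)\cdots Y_W(v_n,z_n)w\rangle\bigr),
\]
where $R$ denotes the analytic continuation to a rational form on $F_n\mathbb{C}$ guaranteed by the grading-restricted quasi-conformal axioms of Appendix \ref{grading}. This map is manifestly coordinate-universal: the formula depends only on the formal parameters $z_1,\ldots,z_n$ and on the vertex algebra structure, not on the particular transversal section used to supply these parameters. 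The composability with further vertex operators then follows from the standard convergence and pole-order estimates for iterated $V$-module vertex operator products, i.e., exactly the properties recalled in Appendix \ref{composable}.

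The core observation underlying both constructions is that each factor $C^n(V,\W,\F)(U_j)$ in \eqref{ourbicomplex} depends on the section $U_j$ only through the coordinate chart it inherits from $\mathcal{M}$. Since a holonomy embedding $h_j\colon U_j\hookrightarrow U_{j+1}$ sends such a chart to a chart of the same kind, any map $\Phi$ defined by a formula that is intrinsic in the formal parameters $z_i$ automatically lies in $C^n(V,\W,\F)(U_j)$ for every $U_j$ appearing in any chain, and satisfies the intersection condition for all $m\ge 1$. This also matches the intrinsic, coordinate-free nature of the $\W_{z_1,\ldots,z_n}$-spaces emphasized in Section \ref{transversal}.

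The only potential obstacle is the uniform verification of the composability bounds across the (infinite) family of transversal sections and holonomy chains. However, because the required integers $N^n_m(v_i,v_j)$ and the pole-order estimates are formulated entirely in terms of the $V$-module vertex operators and the formal parameters, they are preserved under arbitrary coordinate changes of the form \eqref{zwrho} induced by holonomy embeddings. Hence the uniform verification collapses to the single-section case, already handled by standard vertex operator algebra theory, and the lemma follows.
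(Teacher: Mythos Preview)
Your argument via the zero map is correct and suffices to establish the literal statement: the zero map trivially satisfies the $L_V(-1)$-derivative, $L_V(0)$-conjugation, and shuffle properties of Definition~\ref{wspace}, and all the composability series \eqref{Inm}, \eqref{Jnm} vanish identically, so no convergence issue arises on any transversal section or under any holonomy chain.

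This is, however, a genuinely different route from the paper's own proof. The paper does not exhibit a witness at all; instead it argues structurally that the intersection in \eqref{ourbicomplex} is non-empty because the defining conditions are \emph{compatible} across transversal sections. Concretely, the paper examines the convergence domains \eqref{granizy1} and \eqref{granizy2} for $\mathcal{I}^n_m(\Phi)$ and $\mathcal{J}^n_m(\Phi)$ and observes that they are bounded uniformly (by $|\zeta_i-\zeta_j|$ from above, respectively by a fixed set of $|c_k(p_k)|$ from below), so that a map satisfying the composability conditions on one section continues to satisfy them after passing through holonomy embeddings, and the pole-order integers $N^n_m(v_i,v_j)$ can be chosen compatibly. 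Your approach is more elementary and more transparent; the paper's approach, while less explicit, is the one that carries information about \emph{nonzero} elements surviving the intersection, which is what one actually needs for the cohomology to be interesting.

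One caution on your second, nonzero candidate: the map built from $E^{(n)}_W$ must also satisfy the shuffle-symmetry condition \eqref{shushu} required in Definition~\ref{wspace}. You do not verify this, and it is not automatic for a bare product of vertex operators. If you want to retain that part of the argument you should either check \eqref{shushu} directly or antisymmetrize your candidate over the relevant shuffles before invoking it.
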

%
\begin{lemma}
\label{nezu}
The double complex \eqref{ourbicomplex} does not depend on the choice of transversal basis $\U$. 
\end{lemma}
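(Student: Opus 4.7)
The strategy is to show that if $\U$ and $\U'$ are two transversal bases of $\F$, then $C^n_m(V, \W, \U, \F) = C^n_m(V, \W, \U', \F)$, by transferring the defining conditions from one basis to the other through holonomy embeddings and invoking the coordinate-invariance already established for $\W$-valued rational forms. By symmetry, it suffices to prove the inclusion $C^n_m(V, \W, \U, \F) \subseteq C^n_m(V, \W, \U', \F)$.

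First I would unfold what membership in $C^n_m(V, \W, \U', \F)$ requires: for every $(m-1)$-tuple of holonomy embeddings $U'_1 \stackrel{h'_1}{\hookrightarrow} \cdots \stackrel{h'_{m-1}}{\hookrightarrow} U'_m$ between sections of $\U'$, and every $1 \le j \le m$, the map $\Phi$ must take values in $\W_{c_1(p_1), \ldots, c_n(p_n)}$ for points on $U'_j$ and be composable with the vertex operators \eqref{poper} whose formal parameters are identified with local coordinates on $U'_j$.

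Next I use the defining property of the transversal basis $\U$: for every section $U'_j$ appearing in such a chain and every point $p'$ on it, there exists a section $U_j^\flat \in \U$ together with a holonomy embedding $\ell_j : U_j^\flat \hookrightarrow U'_j$ with $p' \in \ell_j(U_j^\flat)$. Using the fact that homotopic paths give equal holonomies, I would compose the $h'_j$ with the $\ell_j$ to produce a chain $U_1^\flat \hookrightarrow \cdots \hookrightarrow U_m^\flat$ of holonomy embeddings between sections of $\U$ related to the original $\U'$-chain by the family $\{\ell_j\}_{j=1}^m$. Since $\Phi$ lies in the intersection defining $C^n_m(V, \W, \U, \F)$, in particular $\Phi \in C^n(V, \W, \F)(U_j^\flat)$ for each $j$. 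The key step is then to transfer this membership along $\ell_j$: since $\ell_j$ is a germ of diffeomorphism between transversal sections, it induces a local change of coordinates among the $c_i(p_i)$ associated with points on $U_j^\flat$ and on $U'_j$, and by Proposition \ref{pupa} the $\W$-valued rational forms are canonical under the action of $({\rm Aut}\; \Oo^{(1)})^{\times n}$ of independent coordinate changes, while the composability conditions of Appendix \ref{composable} are preserved by such transformations. Consequently, $\Phi \in C^n(V, \W, \F)(U'_j)$ for each $j$, so $\Phi \in C^n_m(V, \W, \U', \F)$, and the equality follows by symmetry.

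The main obstacle I anticipate lies in the lifting step: a given chain in $\U'$ need not lift cleanly to a single chain in $\U$, because holonomy embeddings are only germs of diffeomorphisms and the sections $U_j^\flat$ need not sit inside one another along the prescribed chain. To handle this I would refine each $U'_j$ to a small enough open subset on which the composed holonomy germ is represented by a genuine diffeomorphism into some $U_j^\flat \in \U$, and rely on the coordinate-independence of the spaces $\W_{c_1(p_1), \ldots, c_n(p_n)}$ to glue the resulting local pieces into a well-defined element of $C^n(V, \W, \F)(U'_j)$. The cofinality of any transversal basis in the pseudogroup of transversal sections, which is built into the definition of a transversal basis, then guarantees that the intersections computed with $\U$ and with $\U'$ coincide, yielding the independence claimed by the lemma.
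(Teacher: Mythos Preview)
Your approach is essentially the same as the paper's: both arguments use the defining property of a transversal basis to produce holonomy embeddings relating sections of $\U$ and $\U'$, assemble these into commutative diagrams between chains, and then appeal to the fact that the intersection in \eqref{ourbicomplex} ranges over all chains of holonomy embeddings (a cofinality argument). The paper's proof is considerably terser and does not spell out the coordinate-transfer step you make explicit; one small correction is that the coordinate-invariance you invoke is established in Proposition~\ref{nezc} (and the lemma inside its proof), not in Proposition~\ref{pupa}, which concerns the $\epsilon$-product.
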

Thus we then denote  ${C}^{n}_{m}(V, \W, \U, \F)$ as ${C}^{n}_{m}(V, \W, \F)$. 
Recall the notation of a quasi-conformal grading-restricted vertex algebra given in Appendix \ref{grading}. 
The main statement of this section is contained in the following
\begin{proposition} 
\label{nezc}
For a quasi-conformal grading-restricted vertex algebra $V$ and its module $W$,    
the construction \eqref{ourbicomplex} is canonical, i.e.,
 does not depend on foliation preserving choice of local coordinates on $\mathcal M/\F$.   
\end{proposition}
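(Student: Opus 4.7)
The plan is to reduce canonicity to two essentially separate verifications: (i) each local piece $C^n(V,\W,\F)(U_j)$ is intrinsically defined on a single transversal section, and (ii) the holonomy embeddings over which we intersect in \eqref{ourbicomplex} commute with any foliation preserving change of coordinates.

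First I would observe that, because $\F$ has codimension one, any foliation preserving change of local coordinates on $\mathcal M/\F$ restricts on each transversal section $U\in\U$ to a biholomorphism of the one-dimensional $U$, i.e.\ to an element of $\mathrm{Aut}\,\Oo^{(1)}$ at each marked point. Since the construction of ${C}^n_m(V,\W,\F)$ uses formal parameters identified only with local coordinates $c_i(p_i)$ on transversal sections (as stressed in the paragraph after Definition \ref{initialspace}), it suffices to show that ${C}^n(V,\W,\F)(U_j)$ is invariant under the action of $(\mathrm{Aut}\,\Oo^{(1)})^{\times n}_{z_1,\ldots,z_n}$ on the formal parameters. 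For a quasi-conformal $V$ and its module $W$, this invariance is exactly the property recorded after \eqref{poper}: elements $\Phi\in\W_{c_1(p_1),\ldots,c_n(p_n)}$ and the vertex operators \eqref{poper} are $\mathrm{Aut}\,\Oo^{(1)}$-torsors at each insertion point. Thus the image of $\Phi\colon V^{\otimes n}\to \W_{c_1(p_1),\ldots,c_n(p_n)}$ is mapped into the transformed $\W$-space along the torsor identification.

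Next I would verify that composability with $k$ vertex operators (Definition \ref{composabilitydef}) is preserved under such coordinate transformations. The first condition — existence of integers $N^n_m(v_i,v_j)$ — depends only on the vertex algebra elements, not on coordinates, and is therefore trivially stable. The second condition, on pole orders of the sums \eqref{Inm}--\eqref{Jnm}, transforms covariantly under $\mathrm{Aut}\,\Oo$ by the quasi-conformal property of $V$, so it is preserved as well. Combined with the previous paragraph, this shows that each $C^n(V,\W,\F)(U_j)$ is canonical on $U_j$. For the intersection in \eqref{ourbicomplex}, it remains to observe that a foliation preserving coordinate change commutes with any holonomy embedding $h\colon U_j\hookrightarrow U_{j+1}$, since the holonomy is determined by leaf-paths alone and the foliation preserving property ensures that the transformation induced on $U_{j+1}$ is exactly the $h$-push-forward of the one on $U_j$. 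Hence each summand of the intersection is preserved, and the independence from $\U$ established by Lemma \ref{nezu} lets us conclude the result for ${C}^n_m(V,\W,\F)$.

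The main obstacle I expect is handling the points where $\mathcal M/\F$ fails to be a manifold, so that ``local coordinates on $\mathcal M/\F$'' must be interpreted carefully in the Losik sense rather than as ordinary charts. The crucial point is that the construction was arranged, as emphasized after Definition \ref{initialspace}, to use only coordinates on transversal sections, never on leaves themselves; so the pathology on $\mathcal M/\F$ never enters the torsor argument above. Making this reduction watertight — i.e.\ checking that every implicit appearance of a coordinate in Definitions \ref{initialspace} and \ref{defspace} can be expressed purely through transversal data — is the delicate step and the place where the quasi-conformal hypothesis is used in full strength.
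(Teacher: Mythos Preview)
Your high-level strategy is the right one and matches the paper's: reduce everything to the $(\mathrm{Aut}\,\Oo^{(1)})^{\times n}$-invariance of elements of $\W_{z_1,\ldots,z_n}$ and of the vertex operators $\omega_W$ on each transversal section. But the central step of your argument is circular. You write that ``this invariance is exactly the property recorded after \eqref{poper}'', yet that passage is a forward reference: the paper says there ``In Appendix \ref{proof} we prove, that \ldots'', and the proof it promises is precisely the proof of Proposition \ref{nezc}. So you are citing as input the very statement you are supposed to establish.

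What the paper actually does at this point is an explicit computation. It defines the action of a coordinate change $z_i\mapsto w_i=\rho(z_i)$ on $\Phi$ via the operator $P(f(\zeta))$ of \eqref{poperator} built from the Virasoro modes $L_V(m)$, $m\ge 0$, together with a compensating factor $(df/d\zeta)^{-L_W(0)}$ and the differentials $dz_i^{\mathrm{wt}(v_i)}$; then, using only the $L_V(-1)$-derivative property \eqref{lder1} and the $L_V(0)$-conjugation property \eqref{loconj} (which is exactly where quasi-conformality enters), it checks directly that
\[
\langle w',\Phi(dw_1^{\mathrm{wt}(v_1)}\otimes v_1,w_1;\ldots)\rangle
=\langle w',\Phi(dz_1^{\mathrm{wt}(v_1)}\otimes v_1,z_1;\ldots)\rangle.
\]
That computation is the entire content of the proof; once it is done, the invariance of $\omega_W$ follows because $\omega_W\in C^1_\infty(V,\W,\F)$ is a special case. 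Your proposal skips this computation entirely. Your additional remarks about preservation of composability and about holonomy embeddings commuting with foliation-preserving coordinate changes are reasonable structural observations, but the paper does not use them and they do not substitute for the missing invariance calculation.
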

The proofs of Lemmas \ref{empty}, \ref{nezu}, and Proposition \ref{nezc} is contained in Appendix \ref{proof}. 
\begin{remark}
The condition of quasi-conformality is necessary in the proof of invariance of elements of the space 
$\W_{z_1, \ldots, z_n}$ 
with respect to a vertex algebraic representation (cf. Appendix \ref{grading}) of the group 
$\left({\rm Aut}\; \Oo^{(1)}\right)^{\times n}_{z_1, \ldots, z_n}$. 
In what follows, when it concerns the spaces \eqref{ourbicomplex} we will always assume the quasi-conformality of $V$. 
\end{remark}

Proofs of generalizations 
of Lemmas \ref{empty}, \ref{nezu}, \ref{subset} and Proposition \ref{nezc}
 for the case of an arbitrary codimension foliation on a smooth $n$-dimensional manifold 
will be given in \cite{preZ}.  
The proof of Proposition \ref{nezc} is contained in Appendix \ref{proof}. 

Let $W$ be a grading-restricted $V$ module. 
Since for $n=0$, $\Phi$ does not include variables, and  
 due to Definition \ref{composabilitydef} of the composability, we can put: 
\begin{equation}
\label{proval}
{C}_{k}^{0}(V, \W, \F)= W,  
\end{equation}
for $k\ge 0$. Nevertheless, according to our Definition \ref{ourbicomplex}, mappings that belong to \eqref{proval}
are assumed to be composable with a number of vertex operators with depending on local coordinates 
of $k$ points on $k$ transversal sections.  

We observe 
\begin{lemma}
\label{subset}
\begin{equation}
\label{susus}
{C}_{m}^{n}(V, \W, \F)\subset {C}_{m-1}^{n}(V, \W, \F).     
\end{equation}
\end{lemma}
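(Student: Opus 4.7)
The plan is to argue by extending chains of transversal sections. Let $\Phi \in C^n_m(V,\W,\F)$ be arbitrary. To show $\Phi \in C^n_{m-1}(V,\W,\F)$, I need to verify that for every $(m-2)$-tuple of holonomy embeddings $U_1 \stackrel{h_1}{\hookrightarrow} \cdots \stackrel{h_{m-2}}{\hookrightarrow} U_{m-1}$ and every $j \in \{1,\ldots,m-1\}$, one has $\Phi \in C^n(V,\W,\F)(U_j)$ in the sense of Definition \ref{initialspace} with $k=m-1$.

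The first step is to extend an arbitrary $(m-1)$-chain to a valid $m$-chain. Given the chain $U_1 \stackrel{h_1}{\hookrightarrow} \cdots \stackrel{h_{m-2}}{\hookrightarrow} U_{m-1}$, I append $U_m := U_{m-1}$ together with the identity holonomy embedding $h_{m-1} := \mathrm{id}: U_{m-1} \hookrightarrow U_{m-1}$. The identity is always a holonomy embedding (corresponding to the constant path on $U_{m-1}$), and composition of holonomy embeddings is again a holonomy embedding, as recalled in Subsection \ref{basis}. The resulting $(m-1)$-tuple of embeddings produces a legitimate element of the indexing set in the intersection \eqref{ourbicomplex} defining $C^n_m(V,\W,\F)$.

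By the hypothesis $\Phi \in C^n_m(V,\W,\F)$, applied to this extended chain, $\Phi \in C^n(V,\W,\F)(U_j)$ for all $1 \le j \le m$; in particular, this holds for $1 \le j \le m-1$. It remains to reconcile the two composability conditions: as an element of $C^n_m$, the map $\Phi$ is composable with $m$ vertex operators whose formal parameters are identified with coordinates on $U_1,\ldots,U_m$, whereas membership in $C^n_{m-1}$ requires composability with only $m-1$ vertex operators positioned on $U_1,\ldots,U_{m-1}$. Because $U_m$ is just $U_{m-1}$ under the identity embedding, the reduction is carried out either by specializing the $m$-th vertex operator insertion to the vacuum vector $\vac$ (using $Y(\vac,\zeta)=\mathrm{id}_W$), or directly from the pole-order bounds \eqref{Inm} and \eqref{Jnm} of Appendix \ref{composable}, which are monotone: a uniform bound controlling sums of $m$-fold correlators specializes to a bound on sums of $(m-1)$-fold correlators.

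The main obstacle is this last reduction in composability. The identity-embedding trick is what makes it work, but it must be checked that the definition of composability in Appendix \ref{composable} genuinely respects this trivialization, so that the conditions \eqref{Inm}, \eqref{Jnm} with $k=m$ imply their counterparts with $k=m-1$. Once this is established, since the original $(m-2)$-tuple chain was arbitrary, $\Phi$ lies in the intersection defining $C^n_{m-1}(V,\W,\F)$, yielding \eqref{susus}.
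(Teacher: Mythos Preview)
Your argument is correct and its core coincides with the paper's: both reduce the lemma to the monotonicity of the composability conditions, i.e.\ that a map composable with $m$ vertex operators is automatically composable with $m-1$. The paper's proof in Appendix~\ref{proof} argues this monotonicity directly from Definition~\ref{composabilitydef}, observing that the index sets, convergence domains \eqref{granizy1}--\eqref{granizy2}, and analytic-extension requirements for $\mathcal I^n_{m-1}$, $\mathcal J^n_{m-1}$ are sub-conditions of those for $\mathcal I^n_m$, $\mathcal J^n_m$; it does not explicitly treat the intersection over chains of holonomy embeddings in \eqref{ourbicomplex}.

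Your route differs in that you first handle that intersection by extending an arbitrary $(m-1)$-chain $U_1\hookrightarrow\cdots\hookrightarrow U_{m-1}$ to an $m$-chain via the identity embedding, and only then invoke the composability reduction. This makes the passage through Definition~\ref{defspace} transparent and is a genuine improvement in clarity. For the composability step itself you may simply cite Proposition~\ref{comp-assoc}(1), which states precisely that composability with $m$ vertex operators implies composability with any $p\le m$; this replaces both your vacuum-insertion remark and the paper's hand verification of \eqref{Inm}--\eqref{Jnm}.
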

The proof of this Lemma is contained in Appendix \ref{proof}. 

\section{The product of $C^n_m(V, \W, \F)$-spaces}
\label{productc}
In this section we consider an application of the material of Section \ref{product} to 
 double complex spaces $C^n_m(V, \W, \F)$ (Definition \ref{defspace}, Section \ref{spaces}) 
for a foliation $\F$ on a complex curve. 
We introduce the product of two double complex 
spaces 
with the image in another double complex space coherent with respect 
to the original differentials \eqref{deltaproduct} and \eqref{halfdelta}, and the symmetry property \eqref{shushu}.
 We prove the canonicity of the product, 
 and derive an analogue of Leibniz formula. 
\subsection{Geometrical adaptation of the $\epsilon$-product to a foliation}
In this subsection we show how the definition of the product of $\W_{z_1, \ldots, z_n}$-spaces 
can be extended to the case of $C^k_n(V, \W, \F)$-spaces for a codimension one foliation of a complex curve. 
Recall Definition \ref{ourbicomplex} of $C^k_n(V, \W, \F)$-spaces in Section \ref{spaces}. 
We use again the geometrical scheme of sewing of two Riemann surfaces in order to introduce the product
of two elements $\Phi \in C^k_m(V, \W, \F)$  and $\Psi \in C^n_{m'}(V, \W, \F)$ which belong to 
two double complex spaces \eqref{ourbicomplex} for the same foliation $\F$. 
The construction is again local, thus we assume that both spaces
 $C^k_m(V, \W, \F)$  and $C^n_{m'}(V, \W, \F)$ are considered on the same fixed transversal basis $\U$. 
Moreover, we assume that marked points used in Definition \ref{ourbicomplex} of 
the spases $C^k_m(V, \W, \F)$  and $C^n_{m'}(V, \W, \F)$
are choosen on the same transversal section. 
 
Let us recall again the setup for two double complex spaces ${C}^{k}_{m}(V, \W, \F)$ and 
${C}^{n}_{m'}(V, \W, \F)$. 
For $(p_1, \dots, p_k)$,  
$(\widetilde{p}_1, \dots, \widetilde{p}_n)$ 
being two sets of points with local coordinates $(c_1(p_1)$, $ \dots $, $ c_{k}(p_k))$ and 
$(\widetilde{c}_1(\widetilde{p}_1), \dots, \widetilde{c}_n(\widetilde{p}_n))$ 
 taken on the $j$-th transversal section $U_j \in \U$, $j\ge 1$,   
of the transversal basis $\U$.  
For $k \ge 0$, $n \ge 0$,   
 $C^k(V, \W, \F)(U_j)$ and $C^n(V, \W, \F)(U_j)$, $0 \le j \le l$,     
be as before the spaces of all linear maps \eqref{maps}
\begin{eqnarray}
\label{mapy11}
 && \Phi: V^{\otimes k } \rightarrow \W_{c_1(p_1), \dots, c_{k}(p_k)},  
\nn
&&
\Psi: V^{\otimes n } \rightarrow \W_{\widetilde{c}_1(\widetilde{p}_1), \dots, \widetilde{c}_n(\widetilde{p}_n) },  
\end{eqnarray}
composable with $l_1$ and $l_2$ vertex operators \eqref{poper} with formal parameters identified 
with local coordinate functions $c'_j(p'_j)$ and $\widetilde{c}'_j(p'_{j'})$ around points $p_j$,  $p'_{j'}$,
on each of transversal sections $U_j$, $1 \le j \le l_1$, and  $U_{j'}$, $1 \le j' \le l_2$,  correspondingly.   
Then, for $k\ge 0$, $1 \le m \le l_1$, and $n\ge 0$, and $1 \le m'\le l_2$,
according to Definition \ref{ourbicomplex}, the spaces $C^k_m(V, \W, \F)$  and $C^n_{m'}(V, \W, \F)$
are:   
\begin{equation}
\label{ourbicomplex111}
 {C}^{k}_{m}(V, \W, \F) =  \bigcap_{ 
U_{1} \stackrel{h_1} {\hookrightarrow }  \ldots \stackrel {h_{m-1}}{\hookrightarrow } U_{m} 
\atop 1 \le i \le m }  
 C^{k}(V, \W, \F) (U_i),    
\end{equation}
\begin{equation}
\label{ourbicomplex112}
 {C}^{n}_{m'}(V, \W, \U, \F) =  \bigcap_{ 
U_{1} \stackrel{h'_1} {\hookrightarrow }  \ldots \stackrel {h'_{m'-1}}{\hookrightarrow } U_{m'}  
\atop 1 \le i' \le m' }  
 C^{n}(V, \W, \F) (U_{i'}),    
\end{equation}
where the intersection ranges over all possible $m$- and $m'$-tuples of holonomy embeddings 
$h_i$, $i\in \{1, \ldots, m-1\}$, and $h'_{i'}$, $i'\in \{1, \ldots, m'-1\}$,  
between transversal sections $(U_1, \ldots, U_{m})$ and $(U_1, \ldots, U_{m'})$ of the baisis $\U$  for $\F$. 
 In the setup above, we then formulate 
%
\begin{definition}
\label{cvitochki}
For 
$\Phi(v_1, x_1; \ldots; v_{k}, x_k)  \in  C^{k}_{m}(V, \W, \F)$,  and 
$\Psi(v'_{1}, y_{1}; \ldots; v'_{n}, y_{n})  \in   C_{m'}^{n}(V, \W, \F)$ 
the product 
\begin{eqnarray}
\label{newcpro}
&&\Phi(v_1, x_1; \ldots; v_{k}, x_k) \cdot_\epsilon \Psi(v'_{1}, y_{1}; \ldots; v'_{n}, y_{n})  
\nn
&&
\qquad \qquad \mapsto {\widehat R}\; \Theta\left( v_1, x_1; \ldots; v_{k}, x_k; v'_{1}, y_{1}; \ldots; v'_{n}, y_{n}; \epsilon\right), 
\end{eqnarray}
 is a $\W_{z_1, \ldots, z_{k+n-r}}
$
-valued rational 
form 
\begin{eqnarray}
\label{Z2n_pt_epsss}
&& \langle w',  {\widehat R}\;\Theta (v_1, x_1; \ldots; v_{k}, x_k; v'_{1}, y_{1}; \ldots; v'_{n}, y_{n}; \epsilon) \rangle 
\nn
&& \qquad =\langle w',  \Phi (
v_1, x_1; \ldots; v_{k}, x_k) \cdot_{\epsilon} \Psi(v'_{1}, y_{1}; \ldots; v'_{n}, y_{n}
) \rangle 
\nn 
& & \qquad  =  
\sum_{l \in \Z }  \epsilon^l 
 \sum_{u\in V_l } 
 \langle w', Y^{W}_{WV}\left(  
\Phi (v_{1}, x_{1};  \ldots; v_{k}, x_{k}), \zeta_1\right)\; u \rangle  
\nn
& &
\qquad   \qquad  \qquad 
 \langle w', Y^{W}_{WV}\left( 
\Psi(v'_{1}, y_{1}; \ldots; v'_{n}, y_{n})
 , \zeta_{2}\right) \; \overline{u} \rangle,     
\end{eqnarray}
defined by \eqref{Z2n_pt_eps1q1}.   
\end{definition}
%
Let $t$ be the number of common vertex operators the mappings  
$\Phi(v_{1}, x_{1}$;  $\ldots$; $v_{k}, x_{k}) \in C^{k}_{m}(V, \W, \F)$ and 
$\Psi(v'_{1}, y_{1}; \ldots; v'_{n}, y_{n}) \in C^{n}_{m'}(V, \W, \F)$, 
are composable with.  
%
%
\begin{proposition}
\label{tolsto}
For $\Phi(v_1, x_1; \ldots; v_{k}, x_k) \in C_{m}^{k}(V, \W, \F)$ and 
$\Psi(v'_{1}, y_{1}; \ldots; v'_{n}, y_{n})\in C_{m'}^{n}(V, \W, \F)$, 
the product $\widehat{R} \; \Theta\left(v_1, x_1; \ldots; v_{k}, x_k; v'_{1}, y_{1}; \ldots; v'_{n}, y_{n}
; \epsilon\right)$ \eqref{Z2n_pt_epsss} 
belongs to the space $C^{k+n-r
}_{m+m'-t
}(V, \W, \F)$, i.e.,  
\begin{equation}
\label{toporno}
\cdot_\epsilon : C^{k}_{m}(V,\W, \F) \times C_{m'}^{n}(V, \W, \F) \to C_{m+m'-t
}^{k+n-r}(V, \W, \F).  
\end{equation}
\end{proposition}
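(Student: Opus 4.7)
The plan is to split the proposition into two essentially independent claims: (i) the $\epsilon$-product of $\Phi$ and $\Psi$ lands in $\W_{z_1,\ldots,z_{k+n-r}}$ as a linear map on $V^{\otimes(k+n-r)}$, and (ii) the resulting element lies in the intersection \eqref{ourbicomplex111}--\eqref{ourbicomplex112} with upper index $k+n-r$ and lower index $m+m'-t$. Claim (i) is already established: it is exactly the content of Proposition \ref{glavna} together with Lemma \ref{baskal}, so the only remaining work is claim (ii).

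For (ii), I would fix a chain $U_1 \stackrel{h_1}{\hookrightarrow} \cdots \stackrel{h_{m+m'-t-1}}{\hookrightarrow} U_{m+m'-t}$ of holonomy embeddings between transversal sections of $\U$ and verify that $\widehat R\,\Theta \in C^{k+n-r}(V,\W,\F)(U_i)$ for every $1 \le i \le m+m'-t$. The natural strategy is to partition the chain into the $m$ sections used by $\Phi$ and the $m'$ sections used by $\Psi$, identifying the $t$ sections on which both $\Phi$ and $\Psi$ are already composable with the same vertex operator \eqref{poper}; after sewing, those $t$ operators are counted only once, which gives precisely the target count $m+m'-t$. Since by hypothesis $\Phi \in C^k_m(V,\W,\F)$ and $\Psi \in C^n_{m'}(V,\W,\F)$ individually satisfy the intersection condition on their respective chains, and the construction is local on each transversal section, the intersection property for the combined chain is inherited section by section from the factor that "owns" that section.

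The main obstacle will be to verify that the two technical ingredients of composability in Definition \ref{composabilitydef} --- the existence of uniform integer bounds $N^n_m(v_i,v_j)$ and the pole-order restrictions on the sums \eqref{Inm}--\eqref{Jnm} --- are preserved under the sewing defined by \eqref{Z2n_pt_eps1q1}. My proposal is to reuse the absolute-convergence estimates already obtained in the proof of Proposition \ref{derga}: inserting a further vertex operator $\omega_W(v,c(p))$ at a point $p$ on some $U_i$ into the matrix element $\langle w',\widehat R\,\Theta\rangle$, one may rearrange, using the intertwining property \eqref{wprop} and associativity of vertex operators inside a single factor, to either $\langle w', Y^W_{WV}(\Phi',\zeta_1) u\rangle\,\langle w', Y^W_{WV}(\Psi,\zeta_2)\overline u\rangle$ with $\Phi'$ the pre-composition of $\Phi$ with that operator, or the analogous expression with $\Phi$ and $\Psi$ interchanged. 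Composability for the enlarged matrix element thus reduces to composability of the ``owning'' factor, which holds by assumption.

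Finally, combining this with the symmetry property of Lemma \ref{tarusa} and the $(\mathrm{Aut}\,\Oo)^{\times(k+n-r)}$-canonicity of Proposition \ref{pupa}, one concludes that $\widehat R\,\Theta$ is well-defined independently of the choice of chain and of local coordinates on the transversal sections, and hence truly represents an element of $C^{k+n-r}_{m+m'-t}(V,\W,\F)$. This yields the required map \eqref{toporno}.
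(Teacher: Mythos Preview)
Your proposal is essentially correct and follows the same architecture as the paper's proof: first invoke the $\W$-space results (Proposition~\ref{glavna}, Lemma~\ref{baskal}, Lemma~\ref{tarusa}, Proposition~\ref{katas}) to get $\widehat R\,\Theta\in\W_{z_1,\ldots,z_{k+n-r}}$, then verify composability with $m+m'-t$ vertex operators and conclude via Definition~\ref{initialspace} and the intersection \eqref{ourbicomplex1110000}. The paper packages the second step as a separate Proposition~\ref{ccc}, whose proof (in Appendix~\ref{rasto}) exploits exactly the factorized form of \eqref{Z2n_pt_epsss} to bound $|\mathcal I^{k+n}_{m+m'}(\Theta)|\le |\mathcal I^{k}_{m}(\Phi)|\cdot|\mathcal I^{n}_{m'}(\Psi)|$ and, after a rearrangement lemma (Lemma~\ref{obvlem}), $|\mathcal J^{k+n}_{m+m'}(\Theta)|\le |\mathcal J^{k}_{m}(\Phi)|\cdot|\mathcal J^{n}_{m'}(\Psi)|$.

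The one point where your sketch is thinner than the paper is that you describe only the ``insert one vertex operator and push it to the owning factor'' move, which is the content of the $\mathcal J$-condition \eqref{Jnm}. Definition~\ref{composabilitydef} also has the $\mathcal I$-condition \eqref{Inm}, where one must insert the nested elements $\Xi_i=E^{(l_i)}_V(\cdots;\one_V)$ into each slot of $\Theta$, sum over the projections $P_{r_i}$, and control both the convergence region \eqref{granizy1000000} and the pole orders $N^{k+n}_{m+m'}(v''_i,v''_j)$. This does not reduce to a single-operator insertion; the paper handles it by splitting the index set $\{1,\ldots,k+n\}$ into the $\Phi$-block and the $\Psi$-block, so that the $(k+n)$-fold sum factorizes as a product of the $k$-fold and $n$-fold sums for $\Phi$ and $\Psi$ separately, and then reads off the combined $N^{k+n}_{m+m'}$ from $N^k_m$ and $N^n_{m'}$. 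Your ``chain-partition'' heuristic is the right geometric picture, but to make the argument complete you should make this block factorization of $\mathcal I^{k+n}_{m+m'}(\Theta)$ explicit rather than relying solely on the operator-by-operator rearrangement.
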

\begin{proof}
In Proposition \ref{derga} we proved that 
$\widehat{R}\; \Theta\left(v_1, x_1 \right. $ ; $ \left. \ldots; v_{k}, x_k; v'_{1}, y_{1}; \ldots; v'_{n}, y_{n}
; \epsilon\right) \in \W_{x_1;, \ldots, x_k;  y_{1}, \ldots, y_{n} 
}$.   
Namely,  
the rational 
form corresponding to the  
$\epsilon$-product $\widehat{R} \;  \Theta (v_1, x_1; $     
$\ldots; v_{k}, x_k  ;  v'_{1}, y_{1} ;  \ldots ;
  v'_{n}, y_{n}
; \epsilon )$ converges in $\epsilon$,  and 
satisfies \eqref{shushu},  $L_V(0)$-conjugation \eqref{loconj} and 
$L_V(-1)$-derivative \eqref{lder1} properties. 
 The action of $\sigma \in S_{k+n-r}$ on the product 
$\Theta (v_{1}, x_{1};  \ldots; v_{k}, x_{k}; v'_{k+1}, y_{1};  \ldots$;  $v'_{n}, y_{n};
\epsilon)$   
 \eqref{Z2n_pt_epsss}  
is given by \eqref{sigmaction}.  
Then we see that for the sets of points $(p_1, \dots, p_k; p'_1, \dots, p'_n)$, 
taken on the same 
transversal section $U_j \in \U$, $j\ge 1$, 
by Proposition \ref{derga} we obtain a map 
\begin{eqnarray}
\label{mapy33}
 &&\widehat{R}\; \Theta\left(v_1, x_1; \ldots; v_{k}, x_k; v'_{1}, y_{1}; \ldots; v'_{n}, y_{n}
; \epsilon\right) 
\nn
&&
\qquad \qquad \qquad \qquad : V^{\otimes (k+n) } \rightarrow   \W_{c''_1(p''_1), \dots, c''_1(p''_{k+n-r})},  
\end{eqnarray}
 with formal parameters $(z_1, \ldots, z_{k+n-r})$ identified with local coordinates 
$(c''_1(p''_1), \dots $, $ c''_1(p''_{k+n-r}))$ of points 
\[
(p''_1, \dots, p''_{k+n-r})= (p_1, \ldots, p_k; p_1, \ldots, \widehat{p'_{i_l}}, \ldots,  p'_n),  
\]
for coinciding points $p_{i_l}=p'_{j_l}$, $1\le l \le r$.  
Next, we prove 
\begin{proposition}
\label{ccc}
The product 
$\Theta\left(
v_{1}, x_{1}; \ldots; v_{k},  x_{k}; v'_{1}, y_{1}; \ldots; v'_{n},  y_{n}
; \epsilon\right)$
 \eqref{Z2n_pt_epsss}  
is composable with $m+m'-t$ vertex operators.  
\end{proposition}
The proof of this proposition is contained in Appendix \ref{rasto}. 

Since we have proved that 
the product $\widehat{R} \; \Theta\left(v_1, x_1; \ldots; v_{k}, x_k; v'_{1}, y_{1}; \ldots; v'_{n}, y_{n}
; \epsilon\right)$ is 
composable with a $m+m'-t$ of vertex operators \eqref{poper} with formal parameters identified 
with local coordinates $c_j(p''_j)$ functions around points $(p_1, \ldots, p_k; p'_1, \ldots, p'_n)$ 
on each of transversal sections $U_j$, $1 \le j \le l$, we conclude that 
according to Definition \ref{initialspace}, 
the product $\widehat{R} \;\Theta\left(v_1, x_1; \ldots; v_{k}, x_k; v'_{1}, y_{1}; \ldots; v'_{n}, y_{n}
; \epsilon\right)$ belongs to the space $C^{k+n-r}(V, \W, \F)(U_j)$, $0 \le j \le l$, 
for $l \ge 0$, 
 on one of transversal sections $U_j \in \U$, $j\ge 1$.  
\begin{eqnarray}
\label{mapy11}
 && \Theta: V^{\otimes (k+n-r) } \rightarrow \W_{c_1(p_1), \dots, c_{k}(p_k); c'_1(p'_1), \dots, c'_{n}(p'_n)},  
\end{eqnarray}
and the intersection ranges over all possible $m+m'-t$-tuples of holonomy embeddings 
$h_i$, $i\in \{1, \ldots, m+m'-t -1\}$, 
between transversal sections $U_1, \ldots, U_{l}$ of the baisis $\U$  for $\F$. 
The product 
$\Theta\left(v_1, x_1; \ldots; v_{k}, x_k; v'_{1}, y_{1}; \ldots; v'_{n}, y_{n}
; \epsilon\right)$
belongs to the space 
\begin{equation}
\label{ourbicomplex1110000}
 {C}^{k+n-r}_{m+m'-t}(V, \W, \U, \F) =  \bigcap_{ 
U_{1} \stackrel{h_1} {\hookrightarrow }  \ldots \stackrel {h_{m+m'-1}}{\hookrightarrow } U_{m+m'}   
\atop 1 \le j \le m+m' -t}  
 C^{k+n-r}(V, \W, \F) (U_j),    
\end{equation}
where the intersection ranges over all possible $m+m'-t$-tuples 
of holonomy embeddings $h_i$, $i\in \{1, \ldots, m+m'-t-1\}$,  
between transversal sections $U_1, \ldots, U_{m+m'-t-1}$ of the baisis $\U$  for $\F$. 
This finishes the proof of the proposition. 
\end{proof}

\section{Coboundary operators and cohomology of codimension one foliations}
\label{coboundary}
In this Section we recall the definition of differential operators acting on double complex spaces. 
Recall the definitions of $E$-elements given in Appendix \ref{properties}. 
Consider the vector of $E$-elements: 
\begin{eqnarray}
\label{mathe}
\mathcal E &=&  \left( E^{(1)}_W,\; \sum\limits_{i=1}^n (-1)^i \; E^{(2)}_{V; \one_V},  \; E^{W; (1)}_{W V} \right).   
\end{eqnarray} 
Then we formulate 
\begin{definition}
The coboundary operator 
${\delta}^{n}_{m}$ acting on elements $\Phi \in C^{n}_{m}(V, \W, \F)$ of the spaces \eqref{ourbicomplex}, 
is defined by 
\begin{equation}
\label{deltaproduct}
\delta^n_m \Phi ={\mathcal E} \cdot_\epsilon \Phi,    
\end{equation}
\end{definition}
%
\begin{remark}
The action of $\delta^n_m$ 
has the orthogonality conditition form (cf. Section \ref{gv})   
 \cite{G} with respect to the $\cdot_\epsilon$ product \eqref{Z2n_pt_epsss}.  
Note that it is assumed that the coboundary operator does not affect 
$dc(p)^{\wt(v_i)}$-tensor multipliers in $\Phi$. 
\end{remark}
Then we obtain  
\begin{lemma}
The definition \eqref{deltaproduct} is equivalent to 
 a multi-point vertex algebra connection 
(cf. Definition \ref{locus} in Section \ref{connections}): 
\begin{equation}
\label{deltaproduct}
\delta^n_m \Phi = G(p_1, \ldots, p_{n+1}),     
\end{equation}
where 
\begin{eqnarray}
\label{hatdelta1}
%
 G(p_1, \ldots, p_{n+1}) 
&=& 
\langle w', \sum_{i=1}^{n}(-1)^{i} \; \Phi\left( \omega_V\left(v_i, c_i(p_i) - c_{i+1}(p_{i+1})) 
 v_{i+1} \right)  
 \right) \rangle,  
\\
\nonumber
&+& \langle w', \omega_W \left(v_1, c_1(p_1)  \right) \; \Phi (v_2, c_2(p_2); \ldots; v_{n+1}, c_n(p_{n+1}) )  
\rangle  
\nn
\nonumber
 &+& (-1)^{n+1} 
\langle w', 
 \omega_W(v_{n+1}, c_{n+1} (p_{n+1})) 
\; \Phi(v_1, c_1(p_2); \ldots; v_n, c_n(p_n)) \rangle,  
\end{eqnarray}
for arbitrary $w' \in W'$ (dual to $W$).
\end{lemma}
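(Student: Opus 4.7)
The plan is to unfold Definition~\ref{wprodu} (adapted as in Definition~\ref{cvitochki}) applied to the product $\mathcal{E}\cdot_\epsilon \Phi$ and match the three resulting pieces, one for each component of the vector $\mathcal{E}$ in \eqref{mathe}, with the three lines on the right-hand side of \eqref{hatdelta1}. Since the $\epsilon$-product is bilinear in its two arguments and $\mathcal{E}$ is a formal vector (a triple of $E$-elements) with the middle entry itself a signed sum, the resulting expression $\mathcal{E}\cdot_\epsilon \Phi$ is by linearity the sum of three $\epsilon$-products, one per entry of $\mathcal{E}$. The heart of the lemma is to show that each of these three $\epsilon$-products, expanded using the double sum $\sum_{l\in\Z}\epsilon^l\sum_{u\in V_l}\langle w',Y^W_{WV}(\cdot,\zeta_1)u\rangle\,\langle w',Y^W_{WV}(\cdot,\zeta_2)\bar u\rangle$ of Definition~\ref{wprodu}, collapses (via the completeness relation $\sum_u u\otimes \bar u$ dictated by the non-degenerate bilinear form $\langle\cdot,\cdot\rangle_\lambda$ of \eqref{eq: inv bil form}) into an ordinary vertex-operator insertion on $\Phi$.

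First I would handle the component $E^{(1)}_W$. By the properties of $E$-elements recorded in Appendix~\ref{properties}, $E^{(1)}_W$ inserts a single vertex operator $\omega_W(v_1,c_1(p_1))$ acting on $W$. The double sum over $u\in V_l$ together with the intertwining-operator identity $Y^W_{WV}$ produces, after invoking the duality relation \eqref{dubay} between $u$ and $\bar u$, precisely the term $\langle w',\omega_W(v_1,c_1(p_1))\,\Phi(v_2,c_2(p_2);\ldots;v_{n+1},c_{n+1}(p_{n+1}))\rangle$. Symmetrically, the third component $E^{W;(1)}_{WV}$ is engineered so as to insert the vertex operator at the last marked point $p_{n+1}$; up to the sign $(-1)^{n+1}$ coming from the natural ordering conventions for the final insertion, this yields the third line of \eqref{hatdelta1}.

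Next I would handle the middle component $\sum_{i=1}^{n}(-1)^{i}E^{(2)}_{V;\one_V}$. Here $E^{(2)}_{V;\one_V}$ records the \emph{internal} vertex-algebra product $\omega_V(v_i,c_i(p_i)-c_{i+1}(p_{i+1}))v_{i+1}$, inserted at the $(i,i+1)$-pair of marked points. The $\epsilon$-product definition, combined with the composability hypothesis on $\Phi$ and the $L_V(-1)$-derivative and $L_V(0)$-conjugation properties (Proposition~\ref{katas}), absorbs the shift $c_i(p_i)-c_{i+1}(p_{i+1})$ into the standard internal vertex-operator action; summing over $i$ with the signs $(-1)^{i}$ reproduces exactly the first line of \eqref{hatdelta1}. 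Along the way I would use Lemma~\ref{functionformpropcor} to rearrange insertion points between the two Riemann spheres in the sewing picture, thereby justifying that the $\epsilon$-product is independent of how the marked points were originally distributed.

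The main obstacle, and the step where I would spend most of the verification effort, is the bookkeeping of signs and of the omission/relabelling convention $\widehat{R}$ for coinciding formal parameters: the formula \eqref{hatdelta1} uses the same coordinate $c_{i+1}(p_{i+1})$ on both factors in the differenced argument, which corresponds precisely to the case where two formal parameters of the $\W$-valued rational forms being multiplied coincide and are identified under $\widehat{R}$. Once this identification is made carefully, the absolute convergence of the product guaranteed by Proposition~\ref{derga} and Lemma~\ref{baskal} allows the double-sum expansion to be collapsed term by term against the dual basis, yielding the three summands of \eqref{hatdelta1}. The equivalence then holds for arbitrary $w'\in W'$, giving the claimed reformulation of $\delta^n_m\Phi$ as the multi-point vertex algebra connection $G(p_1,\ldots,p_{n+1})$ of Definition~\ref{locus}.
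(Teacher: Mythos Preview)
Your route is considerably more elaborate than the paper's. The paper does not unfold the full $\epsilon$-product double sum at all: it simply observes that each component of $\mathcal{E}\cdot_\epsilon\Phi$ is, by construction, already of the form $\langle w', e^{\xi_i L_W(-1)}\,\omega^{W}_{WV}(\Phi_i)\,u_i\rangle$ for suitable $\xi_i\in\C$, $u_i\in V$, and $\Phi_i$ read off from \eqref{deltaproduct}, and then applies the intertwining-operator identity \eqref{wprop}, $Y^{W}_{WV}(w,z)v = e^{zL_W(-1)} Y_W(v,-z)w$, term by term. That single identity converts each of the three pieces directly into the corresponding line of \eqref{hatdelta1}. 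No sewing, no $\epsilon$-sum, no completeness relation, and none of Proposition~\ref{derga}, Lemma~\ref{baskal}, Lemma~\ref{functionformpropcor}, or Proposition~\ref{katas} are invoked.

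Your proposed mechanism of collapsing the double sum $\sum_{l}\epsilon^l\sum_{u\in V_l}\langle w',Y^W_{WV}(\cdot,\zeta_1)u\rangle\,\langle w',Y^W_{WV}(\cdot,\zeta_2)\bar u\rangle$ via the resolution of the identity $\sum_u u\otimes\bar u$ has a structural problem: both factors carry the \emph{same} dual element $w'$ on the left, so the expression is not of the form $\sum_u\langle a,u\rangle\langle\bar u,b\rangle$ that completeness would collapse to $\langle a,b\rangle$. You would need an additional insertion of a full $W$-basis between the two factors to make that work, and the paper's definition \eqref{Z2n_pt_eps1q1} does not provide one. The heavier machinery you invoke (absolute convergence, analytic extension, independence of the point distribution) is genuine for the general $\epsilon$-product but is simply not needed for this lemma: when one factor is an $E$-element the product is already a single intertwining-operator application, and \eqref{wprop} finishes the job in one step.
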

\begin{proof}
%
%
The statement follows from the intertwining operator (cf. Appendix \ref {grading}) representation of the definition 
 \eqref{deltaproduct} 
in the form 
\[
\delta^n_m \Phi= \sum\limits_{i=1}^3  
\langle w', 
  e^{\xi_i L_W(-1)}\; \omega^{W}_{WV} \left( \Phi_i \right) u_i \rangle, 
\]
for some $\xi_i \in \C$, and $u_i \in V$, and $\Phi_i$ obvious from \eqref{deltaproduct}.  
Namely, 
\begin{eqnarray*}
&& \delta^n_m \Phi=  
 \langle w',  e^{ c_1(p_1) L(-1)_{W} } \;  \omega^{W}_{WV}   
\left( \Phi \left( v_2, c_2(p_2); \ldots; v_n, c_{n+1} (p_{n+1}), - c_1(p_1)  \right) v_1    
\rangle 
\right. 
\nn
&&
+\sum\limits_{i=1}^n (-1)^i e^{\zeta L_W(-1)} 
\langle w', \omega^{W}_{WV} \left( 
\Phi \left( \omega_V\left(v_i, c_i(p_i) - c_{i+1}(p_{i+1})\right), -\zeta \right)  \one_V \rangle \right) 
\nn
&&
+   
 \langle w',  e^{ c_{n+1}(p_{n+1}) L(-1)_{W} } \;  \omega^{W}_{WV}    
\left( \Phi \left( v_1, c_1(p_1); \ldots; v_{n}, c_n (p_n), - c_{n+1}(p_{n+1})  \right) v_{n+1}    \right)   
\rangle,  
\end{eqnarray*}
for an arbitrary $\zeta\in \C$. 
\end{proof}

\begin{remark}
%
Inspecting construction of the double complex spaces \eqref{ourbicomplex}
 we see that the action \eqref{hatdelta1} of the 
$\delta^n_m$ on an element of $C^n_m(V, \W, \F)$ provides a coupling (in terms of $\W_{z_1, \ldots, z_n}$-valued 
rational functions) of vertex operators
 taken at the local coordinates 
$c_i(z_{p_i})$, $0 \le i \le k$,  
at the vicinities of the same points $p_i$ taken on transversal sections for $\F$,  
with elements of $C^n_{m-1}(V, \W, \F)$ taken at   
points at the local coordinates 
$c_i(z_{p_i})$, $0 \le i \le n$ on $\mathcal M$ for points $p_i$ considered on the leaves of $\mathcal M/\F$.  
\end{remark}
\subsection{Complexes on transversal connections}  
\label{comtrsec}
In addition to the double complex $(C^n_m(V$, $\W$, $\F)$, $\delta^n_m)$ provided by \eqref{ourbicomplex} and 
\eqref{deltaproduct},
 there exists an exceptional short double complex which we call transversal connection complex.  
 We have  
\begin{lemma}
\label{lemmo}
 For $n=2$, and $k=0$, there exists a subspace
 $C^{0}_{ex}(V, \W, \F)$ 
\[
{C}_{m}^{2}(V, \W, \F) \subset C^{0}_{ex}(V, \W, \F) \subset C_{0}^{2}(V, \W, \F), 
\]
 for all $m \ge 1$, with the action of coboundary operator 
${\delta}^{2}_{m}$ defined.   
\end{lemma}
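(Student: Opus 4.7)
The plan is to construct $C^{0}_{ex}(V, \W, \F)$ explicitly as the subspace of $C^{2}_{0}(V, \W, \F)$ consisting of all linear maps $\Phi : V^{\otimes 2} \to \W_{c_1(p_1), c_2(p_2)}$ for which the formal expression $\mathcal{E} \cdot_\epsilon \Phi$ given by \eqref{deltaproduct}--\eqref{hatdelta1} absolutely converges to a $\W$-valued rational form whose definition does not require a nested chain of $m-1$ holonomy embeddings, but only a single transversal section carrying both marked points $p_1, p_2$. In this way, $C^{0}_{ex}(V, \W, \F)$ is the maximal subspace of $C^{2}_{0}(V, \W, \F)$ on which the coboundary operator $\delta^{2}_{m}$ retains a meaningful action without invoking the full nested intersection of Definition \ref{defspace}.

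First I would verify the right inclusion $C^{0}_{ex}(V, \W, \F) \subset C^{2}_{0}(V, \W, \F)$, which is immediate from the construction. For the left inclusion, I would argue that any $\Phi \in C^{2}_{m}(V, \W, \F)$ with $m \ge 1$ is, in particular, composable with at least one vertex operator along some transversal section of $\U$, and hence with the three vertex operators of $\mathcal{E}$ entering \eqref{hatdelta1}. Combined with Lemma \ref{subset}, which gives the chain $C^{2}_{m} \subset \cdots \subset C^{2}_{1} \subset C^{2}_{0}$, and with Proposition \ref{tolsto}, guaranteeing that the $\epsilon$-product $\mathcal{E} \cdot_\epsilon \Phi$ produces a rational form with controlled poles and the required composability, I would conclude $\Phi \in C^{0}_{ex}(V, \W, \F)$, establishing the sandwich of inclusions.

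The remaining step is to verify that $\delta^{2}_{m}$ genuinely acts on $C^{0}_{ex}(V, \W, \F)$ by applying Proposition \ref{tolsto} to the pair $(\mathcal{E}, \Phi)$. In the exceptional case $n = 2$, $k = 0$ only the two marked points $p_1, p_2$ are present on a single transversal section, so the combinatorics of coinciding parameters $r$ and shared composable vertex operators $t$ collapse, and the image of $\delta^{2}_{m}$ lands in a well-defined target space. The main obstacle I expect is showing that $C^{0}_{ex}(V, \W, \F)$ is genuinely canonical, that is, independent of the choice of transversal basis and of local coordinates on the ambient transversal section; this should follow from Lemma \ref{nezu} and Proposition \ref{nezc}, applied on a single transversal section, but it will require a careful tracking of the poles introduced by $\cdot_\epsilon$ and the composability constraints of Appendix \ref{composable} in order to guarantee that the exceptional short complex is closed under $\delta^{2}_{m}$ simultaneously for all $m \ge 1$.
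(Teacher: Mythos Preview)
Your overall strategy---define the exceptional space as those $\Phi$ in $C^{2}_{0}$ for which the coboundary expression makes sense---is the right idea, but the proposal as written stops short of the key technical content that the paper supplies. You define the space as ``the maximal subspace of $C^{2}_{0}$ on which $\mathcal{E}\cdot_\epsilon\Phi$ absolutely converges,'' but the whole difficulty is to say precisely what that convergence condition is. The paper's proof makes this explicit: it splits the four-term expression \eqref{hatdelta1} into two pairs $G_1$ and $G_2$ (equations \eqref{pervayaforma} and \eqref{vtorayaforma}), inserts the projections $P_r:\W_{z_i,z_j}\to W_r$ in the pattern of the $\mathcal{J}^n_m$ condition from Definition~\ref{composabilitydef}, and then defines $C^{2}_{ex}$ as the set of $\Phi$ for which each of these two sums is absolutely convergent in the stated regions and analytically extends to a rational form with the prescribed poles. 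This is a strictly weaker requirement than full composability with $m$ vertex operators, yet strong enough that the coboundary is well-defined; the inclusion $C^{2}_{m}\subset C^{2}_{ex}$ then follows directly from Proposition~\ref{comp-assoc}, not from Proposition~\ref{tolsto} as you suggest.

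There is also a structural point you miss: the paper identifies $G_1$ and $G_2$ as sums of transversal connection forms $G_{tr}(p,p')$ in the sense of Definition~\ref{transcon}, which is why the complex \eqref{hat-complex-half} is called the transversal connection complex. Your proposal to invoke Proposition~\ref{tolsto} for the pair $(\mathcal{E},\Phi)$ is not quite apt, since $\mathcal{E}$ is a vector of $E$-elements in $C^{1}_{\infty}$ rather than an element of some $C^{k}_{m}$, and the combinatorics of $r$ and $t$ that you allude to are not what drives the argument here. Finally, the canonicity discussion (independence of transversal basis and coordinates) is not part of this lemma and need not be addressed; the lemma only asserts existence of the sandwiched subspace with a defined coboundary action, and for that the explicit $G_1$, $G_2$ characterization is both necessary and sufficient.
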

\begin{proof}
Let us consider the space $C_{0}^{2}(V, \W, \F)$.
vertex operators composable. 
%
Indeed, the space $C_{0}^{2}(V, \W, \F)$ contains elements of $\W_{c_1(p_1), c_2(p_2)}$ so that the action  
of $\delta_{0}^{2}$ is zero. 
Nevertheless, as for $\mathcal J^n_m(\Phi)$ in \eqref{Jnm}, Definition \ref{composabilitydef}, let us consider sum of  
projections 
\[
P_r: \W_{z_i, z_j} \to W_r, 
\]
for $r\in \C$, and $(i, j)=(1,2), (2, 3)$,   
so that the condition \eqref{Jnm}  is satisfied for some connections similar to the action \eqref{Jnm} of $\delta^2_0$. 
Separating the first two and the second two summands in \eqref{hatdelta1}, we find that for a subspace 
of ${C}_{0}^{2}(V, \W, \F)$, which we denote as 
${C}_{ex}^{2}(V, \W, \F)$, consisting of three-point connections $\Phi$ such that
 for $v_{1}$, $v_{2}$, $v_{3} \in V$, 
$w'\in W'$, and arbitrary $\zeta \in \C$, the following forms of connections 
\begin{eqnarray}
\label{pervayaforma}
&& G_1(c_1(p_1), c_2(p_2), c_3(p_3))
\nn
&&
= \sum_{r\in \C} \Big( \langle w', E^{(1)}_{W} \left(  v_{1}, c_1(p_{1});     
P_{r} \left(   \Phi \left(v_{2}, c_2(p_{2})-\zeta;  v_{3}, c_3(p_{3}\right) - \zeta  \right)  \right) \rangle  
\nn
&&\quad +\langle w', 
\Phi \left( v_{1}, c_1(p_{1});  P_{r} \left(E^{(2)}_{V}
\left(v_{2}, c_2(p_{2})-\zeta; v_{3}, c_3(z_{3} \right)-\zeta; \one_V \right),    
 \zeta \right) 
\rangle \Big) 
\nn
&&
\nn
&& = \sum_{r\in \C} \big( \langle w', \omega_{W} \left( v_{1}, c_1(p_{1}) \right)\;       
 P_{r}\left( \Phi\left(v_{2}, c_2(p_{2})-\zeta; v_{3}, c_3(p_{3}) - \zeta \right) \right) \rangle 
\nn
&&\quad +\langle w', 
\Phi \left( v_{1}, c_1(p_{1});  P_{r} \left(  \omega_V  
\left(v_{2}, c_2(p_{2})-\zeta\right)  \omega_V \left( v_{3}, c_3(z_{3})-\zeta \right) \one_V \right),     
 \zeta \right)
\rangle\big),  
\nn
\end{eqnarray}
and 
\begin{eqnarray}
\label{vtorayaforma}
&&
G_2(c_1(p_1), c_2(p_2), c_3(p_3)) 
\nn 
 && \qquad =\sum_{r\in \C}\Big(\langle w', 
 \Phi \left(  P_{r} \left(   E^{(2)}_{V} \left(v_{1}, c_1(p_{1})-\zeta, v_2, c_2(p_{2})-\zeta; \one_V \right) \right), 
\zeta;  
 v_{3}, c_3(p_{3}) \right) \rangle  
\nn
&&\quad + \langle w', 
E^{W; (1)}_{WV} \left(    P_{r} \left(    \Phi \left(   v_{1},  c_1(p_{1})-\zeta;   v_{2}, c_2(p_{2}\right) -\zeta \right), 
\zeta ;   
 v_{3}, c_3(p_{3}) \right) \rangle \Big) 
\nn
&&
\nn
&&
 = \sum_{r\in \C}\big(\langle w', 
 \Phi \left(P_{r}( \omega_V \left(v_{1}, c_1(p_{1})-\zeta\right) \omega_V\left(v_2, c_2(p_{2})-\zeta) \one_V, 
\zeta \right));  
 v_{3}, c_3(p_{3}) \right) \rangle  
\nn
&&\quad + \langle w', 
 \omega_V \left(v_{3}, c_3(p_{3}) \right) \; P_{r} \left(    \Phi \left(v_{1}, c_1(p_{1})-\zeta; v_{2}, c_2(p_{2})-\zeta
 \right)  
\right)
  \rangle \big),  
\end{eqnarray}
are absolutely convergent in the regions 
\[
|c_1(p_{1})-\zeta|>|c_2(p_{2})-\zeta|,  
\]
\[
 |c_2 (p_{2})-\zeta|>0, 
\]
\[
|\zeta-c_3(p_{3})|>|c_1(p_{1})-\zeta|, 
\]
\[
|c_2(p_{2})-\zeta|>0,
\]
where $c_i$, $1 \le i \le 3$ are coordinate functions, 
 respectively, and can be analytically extended to  
rational form-valued functions in $c_1(p_{1})$ and $c_2(p_{2})$ with the only possible poles at
$c_1(p_{1})$, $c_2(p_{2})=0$, and $c_1(p_{1})=c_2(p_{2})$.
Note that \eqref{pervayaforma} and \eqref{vtorayaforma} constitute the first two and the last two terms of 
\eqref{hatdelta1} correspondingly.  
According to Proposition \ref{comp-assoc} (cf. Appendix \ref{composable}),  
 ${C}_{m}^{2}(V, \W, \F)$ is a subspace of ${C}_{ex}^{2}(V, \W, \F)$,  for $m\ge 0$,
and $\Phi \in {C}_{m}^{2}(V, \W, \F)$ are composable with $m$ vertex operators. 
Note that \eqref{pervayaforma} and \eqref{vtorayaforma} represent sums of forms $G_{tr}(p, p')$ 
of transversal connections 
\eqref{transa} (cf. Section \ref{cohomological}). 
\end{proof}
\begin{remark}
It is important to mention that, according to the general principle, abserved in \cite{BG}, 
for non-vanishing connection $G(c(p), c(p'), c(p''))$,  
there exists an invariant structure, e.g., a cohomological class. 
In our case, it appears as a non-empty subspaces 
${C}_{m}^{2}(V, \W, \F) \subset {C}_{ex}^{2}(V, \W, \F)$ in ${C}_{0}^{2}(V, \W, \F)$. 
\end{remark}

Then we have 
\begin{definition}
\label{cobop}
 The coboundary operator 
\begin{equation}
\label{halfdelta}
{\delta}^{2}_{ex}: {C}_{ex}^{2}(V, \W, \F)
\to {C}_{0}^{3}(V, \W, \F),
\end{equation}
is defined 
 by 
three point connection of the form 
\begin{equation}
\label{ghalfdelta}
\delta^2_{ex} \Phi= {\mathcal E}_{ex}\cdot_\epsilon \Phi =G_{ex}(p_1, p_2, p_3),  
\end{equation}
where 
\begin{equation}
\label{mathe2}
\mathcal E_{ex} = \left( E^{(1)}_W,\; \sum\limits_{i=1}^2 (-1)^n E^{(2)}_{V; \one_V}, \; E^{W; (1)}_{W V} \right),  
\end{equation}
\begin{eqnarray}
\label{ghalfdelta1}
G_{ex}(p_1, p_2, p_3) &=&  
\langle w', \omega_{W} \left(v_{1}, c_1(p_1)) \; 
\Phi \left(v_{2}, c_2(p_2); v_{3}, c_3(p_3) \right) \rangle \right) 
\nn
&&
\quad \quad 
- 
 \langle w',  
\Phi \left(  \omega_{V}( v_{1}, c_1(p_1))\; \; \omega_V (v_{2}, c_2(p_2) ) \one_V; v_{3}, c_3(p_3) \right)  
\rangle
\nn
 &&\quad
+ 
 \langle w', \Phi(v_{1}, c_1(p_1);  \; \omega_{V} (v_{2}, c_2(p_2))\;  \omega_V( v_{3}, c_3(p_3)) \one_V) 
 \rangle 
\nn
 &&\quad \quad 
+ 
 \langle w',  
 \omega_W (v_{3}, c_3(p_3)) \; \Phi \left(v_{1}, c_1(p_1); v_{2}, c_2(p_2) \right)  
\rangle,
\end{eqnarray}
for $w'\in W'$,
$\Phi\in {C}_{ex}^{2}(V, \W, \F)$,
$v_{1}, v_{2}, v_{3}\in V$ and $(z_{1}, z_{2}, z_{3})\in F_{3}\C$.   
\end{definition}
%
Then we have 
\begin{proposition}
\label{cochainprop}
The operators \eqref{deltaproduct} and \eqref{halfdelta} provide the chain-cochain complexes    
\begin{equation}
\label{conde}
{\delta}^{n}_{m}: C_{m}^{n}(V, \W, \F)  
\to C_{m-1}^{n+1}(V, \W, \F),   
\end{equation}  
\begin{equation}
\label{deltacondition}  
{\delta}^{n+1}_{m-1} \circ {\delta}^{n}_{m}=0,  
\end{equation} 
\[
{\delta}^{2}_{ex}\circ {\delta}^{1}_{2}=0, 
\]
\begin{equation}\label{hat-complex}
0\longrightarrow {C}_{m}^{0}(V, \W, \F) 
\stackrel{{\delta}^{0}_{m}}{\longrightarrow}
 {C}_{m-1}^{1}(V, \W, \F)  
\stackrel{{\delta}^{1}_{m-1}}{\longrightarrow}\cdots 
\stackrel{{\delta}^{m-1}_{1}}{\longrightarrow}
 {C}_{0}^{m}(V, \W, \F)\longrightarrow 0, 
\end{equation}
\begin{equation}\label{hat-complex-half}
0\longrightarrow {C}^{0}_{3}(V, \W, \F)
\stackrel{{\delta}_{3}^{0}}{\longrightarrow} 
 {C}^{1}_{2}(V, \W, \F)
\stackrel{{\delta}_{2}^{1}}{\longrightarrow}{C}_{ex}^{2}(V, \W, \F)
\stackrel{{\delta}_{ex}^{2}}{\longrightarrow} 
 {C}_{0}^{3}(V, \W, \F)\longrightarrow 0,
\end{equation}
on the spaces \eqref{ourbicomplex}. 
\end{proposition}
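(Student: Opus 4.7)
The plan is to decompose Proposition \ref{cochainprop} into three tasks: the mapping property \eqref{conde} (and its exceptional analogue for $\delta^2_{ex}$), the two nilpotency identities $\delta^{n+1}_{m-1}\circ\delta^n_m=0$ and $\delta^2_{ex}\circ\delta^1_2=0$, and the assembly of the complexes \eqref{hat-complex}--\eqref{hat-complex-half}. Throughout I would work with the explicit presentation \eqref{hatdelta1} of $\delta^n_m\Phi$ as the sum of one leading vertex operator $\omega_W(v_1,c_1(p_1))$, an alternating internal sum of products $\omega_V(v_i,\cdot)v_{i+1}$, and one trailing vertex operator $\omega_W(v_{n+1},c_{n+1}(p_{n+1}))$, which makes the cohomological structure transparent.

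For the mapping property, $\delta^n_m\Phi=\mathcal{E}\cdot_\epsilon\Phi$ is by construction an $\epsilon$-product, so Proposition \ref{tolsto} applied componentwise to the three pieces of $\mathcal{E}$ guarantees the image lies in a space of the form $C^{n+1}_{?}(V,\W,\F)$. Each component of $\mathcal{E}$ introduces exactly one new vertex-operator slot, raising the number of formal parameters by one ($n\mapsto n+1$), while the sewing used to form the $\epsilon$-product identifies one of $\Phi$'s composability legs with the new insertion and lowers the lower index by one ($m\mapsto m-1$); in the notation of Proposition \ref{tolsto} this corresponds to $k=1$, $r=0$, and $m+m'-t$ collapsing to $m-1$. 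The same reasoning with the restricted four-term vector $\mathcal{E}_{ex}$ from \eqref{mathe2} yields the exceptional map $\delta^2_{ex}\colon C^2_{ex}(V,\W,\F)\to C^3_0(V,\W,\F)$.

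For the nilpotency $\delta^{n+1}_{m-1}\circ\delta^n_m=0$ I would substitute \eqref{hatdelta1} into itself and sort the resulting terms by the relative positions of the two inserted operations. Leading--trailing cross terms vanish pairwise because two $\omega_W$ insertions at distinct transversal points commute as rational forms, which is the content of Proposition \ref{derga}. Leading--internal and trailing--internal pairs cancel through the $V$-module intertwining property
\[
\omega_W(v,c)\,\omega_V(u,c')=\omega_W\bigl(\omega_V(v,c-c')u,\,c'\bigr),
\]
holding in a common region and extended by rationality, paired against the $(-1)^i$ internal sign. Internal--internal pairs cancel via weak associativity of $V$,
\[
\omega_V\bigl(\omega_V(v_i,c_i-c_{i+1})v_{i+1},\,c_{i+1}-c_{i+2}\bigr)v_{i+2}=\omega_V(v_i,c_i-c_{i+2})\,\omega_V(v_{i+1},c_{i+1}-c_{i+2})v_{i+2},
\]
with the opposite signs $(-1)^i$, $(-1)^{i+1}$ producing the required cancellation. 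This is the vertex-algebra analogue of the classical Chevalley--Eilenberg computation.

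The step I expect to be the main obstacle is controlling convergence through these cancellations: each application of $\delta$ introduces an auxiliary sewing parameter, so $\delta\circ\delta$ carries two such parameters whose regions of convergence (inherited from \eqref{pervayaforma}--\eqref{vtorayaforma}) are only common on a proper subset of the combined configuration space. For each pair of cancelling terms I would locate a nonempty common subdomain, perform the associativity move there, and then invoke Lemma \ref{baskal} together with uniqueness of analytic continuation of $\W$-valued rational forms to transfer the identity to all of $F_{n+3}\C$. The exceptional identity $\delta^2_{ex}\circ\delta^1_2=0$ reduces to the same associativity computation on three marked points, using Lemma \ref{lemmo} to place the image of $\delta^1_2$ inside $C^2_{ex}$ before $\delta^2_{ex}$ is applied; the forms $G_1$, $G_2$ of \eqref{pervayaforma}--\eqref{vtorayaforma} are precisely the pieces of \eqref{hatdelta1} whose associativity cancellation survives on the restricted domain. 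The complexes \eqref{hat-complex}--\eqref{hat-complex-half} then assemble from the mapping property, the nilpotency, the left-end convention \eqref{proval} ${C}^0_m(V,\W,\F)=W$, and the observation that $\delta^{m-1}_1$ lands in $C^m_0$, terminating the sequence.
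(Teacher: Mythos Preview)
Your proposal is considerably more elaborate than the paper's actual argument. The paper's proof is essentially a two-line reduction: it observes that the coboundary formula \eqref{hatdelta1} is formally identical to Huang's in \cite{Huang}, with the only change being the replacement of $W_{z_1,\ldots,z_n}$ by $\W_{c_1(p_1),\ldots,c_n(p_n)}$, and then declares that both the mapping property \eqref{conde} and the nilpotency \eqref{deltacondition} follow by the same argument as Huang's Proposition~4.1. The Chevalley--Eilenberg-style cancellation you sketch---pairing leading/internal/trailing terms via locality, the module intertwining relation, and weak associativity, with convergence domains tracked through analytic continuation---is presumably what Huang's proof contains once unpacked, so your route is correct in substance but carries out work the paper simply cites away.

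The one place your approach differs substantively is the exceptional identity $\delta^2_{ex}\circ\delta^1_2=0$. The paper handles this more directly than you do: it notes that $\delta^1_2$ sends $C^1_2(V,\W,\F)$ into $C^2_1(V,\W,\F)\subset C^2_{ex}(V,\W,\F)$, and that on the subspace $C^2_1$ the four-term operator $\delta^2_{ex}$ of \eqref{ghalfdelta1} coincides with the ordinary $\delta^2_1$ of \eqref{hatdelta1} at $n=2$. Hence $\delta^2_{ex}\circ\delta^1_2=\delta^2_1\circ\delta^1_2=0$, which reduces the exceptional case to the main nilpotency already established, with no separate associativity bookkeeping required. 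Your invocation of Lemma~\ref{lemmo} and the forms $G_1$, $G_2$ gets to the same conclusion but by a longer path. One small slip: your appeal to Proposition~\ref{derga} for commutativity of two $\omega_W$ insertions is misplaced---that proposition concerns convergence of the $\epsilon$-product, not locality; the commutativity you need is the duality axiom for the $V$-module $W$.
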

%
Since  
\[
{\delta}_{2}^{1} \;  {C}_{2}^{1}(V, \W, \F) \subset 
 {C}_{1}^{2}(V, \W, \F)\subset 
 {C}_{ex}^{2}(V, \W, \F),
\]
the second formula follows from the first one, and 
\[
{\delta}^{2}_{ex}\circ  {\delta}^{1}_{2}
= {\delta}^{2}_{1}\circ  {\delta}^{1}_{2} 
=0.
\]
\begin{proof}
The proof of this proposition is analogous to that of Proposition (4.1) in \cite{Huang} 
for chain-cochain complex of a grading-restricted vertex algebra.  
The only difference is that we work with the space $\W_{c_1(p_1), \ldots, c_n(p_n)}$ 
instead of
 $W_{z_1, \ldots, z_n}$. 
\end{proof}
\subsection{Vertex algebra cohomology and relation to
Crainic and Moerdijk construction}
Now let us define 
 the cohomology  
of the leaf space $\mathcal M/\F$ for codimension one foliation $\F$ 
associated with a grading-restricted vertex algebra $V$. 
\begin{definition}
\label{defcohomology}
 We define the 
 $n$-th cohomology $H^{n}_{k}(V, \W, \F)$ of $\mathcal M/\F$ with coefficients in   
$\W_{z_1, \ldots, z_n}$ (containing maps composable $k$ vertex operators on $k$ transversal sections)    
to be the factor space of closed 
multi-point connections by the space of connection forms: 
\begin{equation}
\label{cohom1}
 H_{k}^{n}(V, \W, \F)= {\mathcal Con}_{k; \; cl}^n/G^{n-1}_{k+1}. 
\end{equation}
\end{definition}
Note that due to \eqref{hatdelta1}, \eqref{ghalfdelta1}, 
and Definitions \ref{locus} and \ref{gform} (cf. Section \ref{coboundary}), 
  it is easy to see that \eqref{cohom1} is equivalent to the standard cohomology definition  
\begin{equation}
\label{cohom}
 H_{k}^{n}(V, \W, \F)= \ker  \delta^{n}_{k}/\mbox{\rm im}\; \delta^{n-1}_{k+1}. 
\end{equation}  
%

Recall the construction of the ${\rm \check C}$ech-de~Rham cohomology of a foliation \cite{CM}.  
 Consider a foliation $\F$ of codimension one 
 defined on a smooth complex curve 
$\mathcal M$.
Consider the double
complex 
\begin{equation}
\label{Cpq}
C^{k,l}=\prod_{U_0\stackrel{h_1}{\hookrightarrow }\cdots
\stackrel{h_k}{\hookrightarrow } U_k}\Omega^l(U_0),
\end{equation}
 where $\Omega^l(U_0)$ is the space of differential $l$-forms on $U_0$, 
and the 
product ranges over all $k$-tuples of holonomy embeddings between
transversal sections from a fixed transversal basis $\U$.
Component of $\varpi\in C^{k, l}$ are denoted by
$\varpi(h_1, \ldots, h_l)\in \Omega^l(\U_0)$.
 The
vertical differential is defined as 
\[
(-1)^k d:C^{k,l}\to
C^{k,l+1},
\]
 where $d$ is the usual de~Rham differential. 
The horizontal differential 
\[
\delta:C^{k,l}\to C^{k+1,l}, 
\]
 is given by
\[
\delta= \sum\limits_{i=1}^k (-1)^{i} \delta_{i},   
\]
\begin{equation}
\label{deltacpq}
\delta _{i} \varpi(h_1, \ldots , h_{k+1}) = G(h_1, \ldots , h_{k+1}),  
\end{equation}
where $G(h_1, \ldots , h_{k+1})$ is the multi-point connection of the form \eqref{locus}, i.e., 
\begin{equation}
\label{delta} 
\delta_{i} \varpi(h_1, \ldots , h_{p+1})= \left\{ \begin{array}{lll}
                                      h_{1}^{*}\varpi(h_2, \ldots , h_{p+1}),  \ \ \mbox{if $i=0,$}\\ 
                                      \varpi(h_1, \ldots, h_{i+1}h_{i}, \ldots, h_{p+1}),  \ \ \mbox{if $0<i< p+1,$}\\
                                      \varpi(h_1, \ldots, h_p), \ \ \mbox{if $i= p+1.$}
                        \end{array}
                \right.
\end{equation}
This double complex is actually a bigraded differential algebra, with the usual product 
\begin{equation}
\label{bigradif}
(\varpi \cdot  \eta)(h_1, \ldots , h_{k+k\,'})= (-1)^{kk\,'} \varpi(h_1, \ldots , 
h_{k}) \; h_{1}^{*} \ldots h_{k}^{*}\;.\eta(h_{k+1}, \ldots h_{k+k\,'}), 
\end{equation}
for $\varpi\in C^{k, l}$ and $\eta\in C^{k',l'}$, 
thus $(\varpi\cdot\eta)(h_1, \ldots , h_{k+k\,'}) \in C^{k+ k',l+ l'}$. 
\begin{definition}
The cohomology $\check{H}^*_\U(M/\F)$ of this complex is called the ${\rm \check C}$ech-de~Rham
cohomology of the leaf space $\mathcal M/\F$ with respect to the transversal basis $\U$. 
It is defined by 
\[
\check{H}^*_\U(M/\F)= {\mathcal Con}^{k+1}_{cl}(h_1, \ldots , h_{k+1})/ G^k(h_1, \ldots , h_{k}),  
\]
where ${\mathcal Con}^{k+1}_{cl}(h_1, \ldots , h_{k+1})$ is the space of closed multi-point connections, 
and $G^k$ $(h_1$, $\ldots$, $h_{k})$ is the space of $k$-point connection forms. 
\end{definition}

In this subsection we show 
the following 
\begin{lemma}
In the case of codimension one foliation on a smooth complex curve, 
the construction of the double complex $\left(C^{k,l}, \delta\right)$,  \eqref{Cpq}, \eqref{deltacpq} 
 results from 
the construction of the double complexes  
$\left(C_m^n(V, \W, \F), \delta^n_m\right)$ of \eqref{hat-complex} and \eqref{hat-complex-half}.  
\end{lemma}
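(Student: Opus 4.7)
The plan is to set up a dictionary between the bigraded ingredients of the Crainic--Moerdijk complex $(C^{k,l},\delta)$ and the vertex algebra double complexes $(C_m^n(V,\W,\F),\delta_m^n)$ together with the exceptional complex $(C^2_{ex}(V,\W,\F),\delta^2_{ex})$, and then verify that under this dictionary both the differentials and the bigraded product correspond. First I would match indices: since $\mathcal M$ is a complex curve and $\F$ has codimension one, every transversal section $U$ is one-dimensional, so $\Omega^l(U)$ is nontrivial only for $l=0,1$. The \v{C}ech index $k$ in $C^{k,l}$ labels a tuple $U_0\stackrel{h_1}{\hookrightarrow}\cdots\stackrel{h_k}{\hookrightarrow}U_k$ of holonomy embeddings, which is exactly the intersection range appearing in Definition \ref{defspace} with $m=k+1$. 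Thus the horizontal direction of $C^{k,l}$ is matched with the lower index of $C^n_m(V,\W,\F)$ after a shift $m=k+1$.

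Next I would identify the vertical direction. The form weight $l\in\{0,1\}$ should be recovered from the tensor factors $dz^{\wt v_i}$ carried by the elements \eqref{elementw} of $\W_{c_1(p_1),\ldots,c_n(p_n)}$, restricted to a chosen transversal section. For each fixed vertex algebra element $v_i$, this tensor factor produces a form of weight $\wt v_i$ on the one-dimensional transversal $U$, which, after evaluation of the rational matrix element \eqref{def} against a dual vector $w'\in W'$, is a bona fide differential form on $U$. For $n\ge 1$ the space $C^n(V,\W,\F)(U)$ then projects naturally onto $\Omega^l(U)$ after specifying the vertex algebra arguments, and the intersection in Definition \ref{defspace} matches the product over tuples in \eqref{Cpq}.

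The central step is to verify that the differentials correspond. The horizontal \v{C}ech differential $\delta$ of \eqref{deltacpq}, \eqref{delta} acts by pullback along holonomy embeddings, composition of consecutive embeddings, and deletion of the last embedding. On the vertex algebra side, the action of $\delta^n_m$ through \eqref{hatdelta1} contains (i) a term with $\omega_W(v_1,c_1(p_1))$ acting on $\Phi$ playing the role of the pullback $h_1^*\varpi$; (ii) the alternating sum $\sum_i(-1)^i\Phi(\ldots\omega_V(v_i,c_i(p_i)-c_{i+1}(p_{i+1}))v_{i+1}\ldots)$, which via associativity of vertex operators fuses two consecutive transversal insertions into one and so plays the role of the internal faces $\varpi(h_1,\ldots,h_{i+1}h_i,\ldots,h_{k+1})$; and (iii) the final term with $\omega_W(v_{n+1},c_{n+1}(p_{n+1}))$ giving the last face $\varpi(h_1,\ldots,h_p)$. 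The vertical de Rham differential $d$ is then recovered from the $L_V(-1)$-derivative property: since $L_V(-1)$ acts as $\partial_z$ on matrix elements, its contribution along the transversal direction promotes an $l$-form to an $(l+1)$-form, and this extra arrow is precisely what the exceptional complex \eqref{hat-complex-half} supplies for the single additional de Rham degree available in codimension one.

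Finally, I would compare the multiplicative structures: the bigraded product \eqref{bigradif} combines concatenation of holonomy tuples with wedge multiplication of forms and pullback factors $h_1^*\ldots h_k^*$, and under the dictionary above these ingredients are exactly what the $\epsilon$-product of Definition \ref{cvitochki} encodes through the map $C^{k}_m\times C^n_{m'}\to C^{k+n-r}_{m+m'-t}$ of Proposition \ref{tolsto}, with $r$ and $t$ accounting for overlapping insertion points and overlapping holonomy tuples. The hard part will be twofold: first, making precise how the de Rham direction with only values $l=0,1$ is extracted from the full $\W$-valued rational form without losing the sign convention $(-1)^k d$ that distinguishes the two grading conventions; and second, matching the middle \v{C}ech face $\varpi(\ldots,h_{i+1}h_i,\ldots)$ with the vertex operator associativity identity encoded in the second line of \eqref{hatdelta1}, which requires the composability argument of Appendix \ref{composable} applied uniformly across all tuples in the intersection defining $C^n_m(V,\W,\F)$.
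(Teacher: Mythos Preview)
Your proposal and the paper's proof both proceed by setting up a dictionary between the two bigraded objects, but the specific identifications differ. The paper matches the \v{C}ech index $k$ with the \emph{upper} index $n$ of $C^n_m(V,\W,\F)$ (the number of vertex algebra insertions), and realizes the de Rham degree $l$ as the total weight $\sum_{i=1}^n\wt(v_i)$; it then declares that holonomy pullbacks $h_i^*$ correspond to the vertex algebra elements $v_i$, while the embeddings $h_i$ themselves are represented by vertex operators $\omega_W$. You instead match $k$ with the \emph{lower} index via $m=k+1$, on the grounds that the intersection in Definition~\ref{defspace} ranges over $(m-1)$-tuples of holonomy embeddings, and you extract $l\in\{0,1\}$ from the differential factors $dz^{\wt v_i}$ restricted to a one-dimensional transversal.

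Your identification is closer to the structural definition of $C^n_m$, and your term-by-term matching of the three face maps in \eqref{delta} with the three pieces of \eqref{hatdelta1} is more explicit than anything in the paper's argument, which simply asserts that the coboundary definitions are ``parallel''. On the other hand, the paper's choice $n=k$ has the virtue that the coboundary $\delta^n_m$ visibly increases $n$ by one, matching the \v{C}ech direction $C^{k,l}\to C^{k+1,l}$, whereas under your convention $\delta^n_m$ \emph{decreases} $m$, so you would need to explain why the \v{C}ech arrow should be read contravariantly in $m$. Neither argument is fully rigorous as written; the paper's proof is terser than yours and leaves the same ``hard parts'' (sign conventions, the role of associativity for the internal faces) unaddressed.
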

\begin{proof}
One constructs the space of differential forms of degree $k$ 
\begin{eqnarray}
\label{bomba1}
 \langle w', \Phi \left( dc_1(p_1)^{\wt(v_1)} \otimes v_1, c_1(p_1); \ldots ; dc_n(p_n)^{\wt(v_n)} v_{n}, 
c_n(p_n)\right) \rangle, 
\end{eqnarray} 
by elements $\Phi$ of $C^n_m(V, \W, \F)$  
such that $n=k$ the total degree 
\[
\sum\limits_{i=1}^n \wt(v_i) =  l,    
\]
$v_i \in V$.     
The condition of composability of $\Phi$ with $m$ vertex operators allows us make the association of 
the differential form $\varpi(h_1,\ldots, h_{n})$ with \eqref{bomba1}
$(h^*_1, \ldots, h^*_k)$ with $(v_i, \ldots, v_k)$, and to   
represent 
a sequence of holomorphic embeddings $h_1, \ldots, h_p$ for $U_0, \ldots, U_p$ in \eqref{Cpq} by 
vertex operators $\omega_W$, i.e, 
\[
\left( h(h^*_1) \ldots h(h^*_{n}) \right)(z_1, \ldots, z_n)) 
=  \omega_W\left(v_1, t_1(p_1)  \right) 
\ldots \omega_W\left(v_l, t(p_n) \right).   
\]
Then, by using Definitions of coboundary operator \eqref{deltaproduct},   
  we see that the definition of the coboundary operator of \cite{CM} is parallel to 
the definition \eqref{deltaproduct}.
\end{proof}

\section{Properties of the $\epsilon$-product of $C^{k}_{m}(V, \W, \F)$-spaces}
\label{pyhva}
Since the product of $\Phi(v_{1}, x_{1}$;  $\ldots$; $v_{k}, x_{k}) \in C^{k}_{m}(V, \W, \F)$ and 
$\Psi(v'_{1}, y_{1}; \ldots; v'_{n}, y_{n}) \in C^{n}_{m'}(V, \W, \F)$ results in 
an element of $C^{k+n-r}_{m+m'-t}(V, \W, \F)$, then,  
similar to 
\cite{Huang}, the following 
corollary follows directly from Proposition \eqref{tolsto} and Definition \ref{sprod}: 
%
\begin{corollary}
For the spaces $\W_{x_1, \ldots, x_k}$ and $\W_{y_1, \ldots, y_n}$
 with the product \eqref{Z2n_pt_epsss} $\Theta \in \W_{z_1, \ldots, z_{k+n-r}  
}$,   
the subspace of $\hom(V^{\otimes n}, 
\W_{z_1, \ldots, z_{k+n-r} } )$   
consisting of linear maps 
having the $L_W(-1)$-derivative property, having the $L_V(0)$-conjugation property
or being composable with $m$ vertex operators is invariant under the 
action of $S_{k+n-r}$. \hfill $\square$
\end{corollary}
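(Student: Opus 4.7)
The plan is to verify each of the three invariance statements---for the $L_W(-1)$-derivative property, the $L_V(0)$-conjugation property, and composability with $m$ vertex operators---by appealing directly to the properties of the $\epsilon$-product already established, together with the explicit form of the $S_{k+n-r}$-action given in Definition \ref{sprod}. Since by Proposition \ref{tolsto} the product $\widehat{R}\,\Theta$ lies in $C^{k+n-r}_{m+m'-t}(V,\W,\F)$, the task reduces to showing that replacing $\widehat{R}\,\Theta$ by $\sigma(\widehat{R}\,\Theta)$ for $\sigma \in S_{k+n-r}$ does not disturb any of these three features.

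For the $L_W(-1)$-derivative and $L_V(0)$-conjugation properties the argument proceeds at the level of the defining identities. Proposition \ref{katas} shows that $\widehat{R}\,\Theta$ satisfies both properties with respect to its canonical argument list. Since $\sigma(\widehat{R}\,\Theta)$ is obtained in \eqref{Z2n_pt_epsss} by simultaneously relabelling the pairs $(\tilde{v}_i, z_i)$ according to $\sigma$, and since both the $L_W(-1)$-derivative identity \eqref{lder1} (relating $\partial_{z_s}$ to insertion of $L_V(-1)v_s$ in slot $s$) and the $L_V(0)$-conjugation identity \eqref{loconj} (scaling each pair uniformly) treat the argument slots symmetrically, both identities transport covariantly along $\sigma$ and so hold for the permuted product.

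For composability with $m$ vertex operators I would invoke Definition \ref{composabilitydef} directly. The positive integers $N^n_m(v_i, v_j)$ and the pole-order bounds controlling the sums \eqref{Inm} and \eqref{Jnm} depend only on the underlying vertex algebra elements and not on their position in the argument list. Hence the composability inequalities satisfied by $\sigma(\widehat{R}\,\Theta)$ are obtained from those satisfied by $\widehat{R}\,\Theta$ by a simple relabelling; combined with Lemma \ref{tarusa}, which guarantees that the $S_{k+n-r}$-action respects the alternation property \eqref{shushu}, this yields invariance of the corresponding subspace.

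The main obstacle is essentially bookkeeping: a permutation $\sigma$ can mix arguments originally attached to $\Phi$ with those originally attached to $\Psi$, and the two classes of arguments enter the product \eqref{Z2n_pt_eps1q1} asymmetrically through the two intertwining operators $Y^W_{WV}(\,\cdot\,, \zeta_1)$ and $Y^W_{WV}(\,\cdot\,, \zeta_2)$. One must confirm that the three listed properties, which are formulated symmetrically in the joint index set, survive this redistribution across the two factors; this follows from the basis-independence of the expansion in \eqref{Z2n_pt_eps1q1} noted after Definition \ref{wprodu}, together with the geometric interpretation of the sewing parameters $\zeta_1, \zeta_2$ which makes the role of either factor interchangeable up to relabelling.
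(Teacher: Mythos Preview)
Your proposal is correct and aligns with the paper's own treatment. The paper gives no detailed proof: it simply states that the corollary ``follows directly from Proposition~\ref{tolsto} and Definition~\ref{sprod}'', pointing to the analogous statement in \cite{Huang} recorded in Appendix~\ref{valued}. Your argument is precisely an unpacking of that one-line justification---you invoke Proposition~\ref{tolsto} to place $\widehat{R}\,\Theta$ in the right space, then verify slot-by-slot that the defining identities \eqref{lder1}, \eqref{loconj} and the composability data of Definition~\ref{composabilitydef} are permutation-covariant, which is exactly what the Huang proposition asserts in the original setting. Your final paragraph on the redistribution of arguments between the $\Phi$- and $\Psi$-factors addresses a point the paper leaves entirely implicit in Definition~\ref{sprod}; this is extra care rather than a different route.
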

We also have 
\begin{corollary}
\label{functionformprop}
For a fixed set $(v_1, \ldots v_k; v_{k+1}, \ldots, v_{k+n-r}) \in V$ of vertex algebra elements, and
 fixed $k+n-r$, and $m+m'-t$,  
 the $\epsilon$-product 
 $\widehat{R}\; \Theta(v_1, z_1; \ldots; v_k, z_k; v_{k+1}, z_{k+1}; \ldots $ ; $ v_{k+n-r}, y_{k+n-r}; \epsilon)$,  
\[
\cdot_{\epsilon}: C^{k}_m(V, \W, \F) \times C^{n}_{m'}(V, \W, \F) \rightarrow C^{k+n-r}_{m+m'-t}(V, \W, \F), 
\]
of the spaces $C^{k}_{m}(V, \W, \F)$ and $C^{n}_{m'}(V, \W, \F)$, 
for all choices 
of $k$, $n$, $m$, $m'\ge 0$, 
is the same element of $C^{k+n-r}_{m+m'-t}(V, \W, \F)$
for all possible $k \ge 0$. 
\hfill $\square$
\end{corollary}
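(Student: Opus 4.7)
The plan is to reduce this statement to Lemma \ref{functionformpropcor}, which establishes the analogous invariance at the level of pure $\W_{z_1,\dots,z_{k+n}}$-spaces, by showing that the composability conditions singled out by the $C^{n}_{m}(V,\W,\F)$ construction behave additively under the splittings of the fixed tuple of vertex algebra elements. Since $C^{k}_{m}(V,\W,\F)$ is, by Definition \ref{defspace}, the intersection of subspaces of maps into $\W_{c_1(p_1),\dots,c_k(p_k)}$ cut out by composability with $m$ vertex operators along chains of holonomy embeddings, any assertion about identity of elements ultimately reduces to an identity of $\W$-valued rational forms plus verification that both sides satisfy the same composability restrictions.

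First, I would apply Proposition \ref{tolsto} to each particular decomposition $(v_1,\dots,v_{k+n-r}) = (v_1,\dots,v_k) \sqcup (v_{k+1},\dots,v_{k+n-r})$ of the fixed tuple: for every such splitting with $\Phi \in C^{k}_{m}(V,\W,\F)$ and $\Psi \in C^{n}_{m'}(V,\W,\F)$ and $t$ common composable vertex operators, the $\epsilon$-product lies in $C^{k+n-r}_{m+m'-t}(V,\W,\F)$. Second, I would invoke Lemma \ref{functionformpropcor} at the level of $\W$-valued forms: it already guarantees that the rational form $\widehat{R}\,\Theta$ produced by \eqref{Z2n_pt_eps1q1} depends only on the final tuple $(v_1,\dots,v_{k+n-r})$ and the formal parameters $(z_1,\dots,z_{k+n-r})$, not on the position $s$ at which the tuple is cut. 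Geometrically, this is the statement that shifting an insertion point from the first auxiliary sphere $\widehat{\Sigma}^{(0)}_1$ to the second $\widehat{\Sigma}^{(0)}_2$ (or vice versa) before sewing produces the same rational form on the sewn sphere, provided the total configuration of marked points is preserved.

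The remaining point is to check that the common value $\widehat{R}\,\Theta$ sits in the \emph{same} subspace $C^{k+n-r}_{m+m'-t}(V,\W,\F)$ independently of the splitting. For this I would observe that the constants $N^{n}_{m}(v_i,v_j)$ and the projection conditions \eqref{Inm}, \eqref{Jnm} controlling composability depend only on pairs of vertex algebra elements and on the holonomy data of the transversal basis $\U$, not on the auxiliary partition used to build $\Theta$. Hence, once Proposition \ref{tolsto} delivers membership in $C^{k+n-r}_{m+m'-t}(V,\W,\F)$ for one splitting, and Lemma \ref{functionformpropcor} equates the resulting forms across splittings, the membership persists and the value of the $\epsilon$-product is intrinsic.

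The main obstacle will be bookkeeping the composability numerics across splittings, since the numbers $(m,m')$ and $t$ are a priori tied to a particular decomposition; I expect the cleanest argument is to fix the sum $m+m'-t$ and the combined list of $k+n-r$ transversal insertion points first, and then to argue that every admissible pair $(m,m')$ with this fixed total gives rise, via Proposition \ref{tolsto} and Lemma \ref{functionformpropcor}, to the same element of $C^{k+n-r}_{m+m'-t}(V,\W,\F)$. Once the numerology is lined up, the corollary follows directly from the two results cited above, in the spirit of the analogous corollary in \cite{Huang}.
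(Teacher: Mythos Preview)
Your approach is essentially the same as the paper's: reduce to the $\W$-level invariance statement and then verify that the composability bounds governing membership in $C^{k+n-r}_{m+m'-t}(V,\W,\F)$ do not depend on the splitting. The paper's proof is terser --- it cites Proposition~\ref{derga} for membership in $\W_{z_1,\dots,z_{k+n-r}}$ and then invokes the argument of Proposition~\ref{tolsto} together with Proposition~\ref{comp-assoc} to pin down the composability number --- whereas you explicitly name Lemma~\ref{functionformpropcor} as the source of the equality-of-forms step, which is arguably the clearer citation since Proposition~\ref{derga} alone only gives convergence, not independence of the cut position.
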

\begin{proof}
In Proposition \ref{derga} we have proved that the result of the maps belongs to 
$W_{z_1, \ldots, z_{k+n-r}}$, 
 for all $k$, $n \ge 0$, and fixed $k+n-r$.    
As in proof of Proposition \ref{tolsto},  
 by checking conditions  for the  forms \eqref{Inms} and \eqref{Jnms},  
we see by Proposition \ref{comp-assoc}, the product 
$\widehat{R} \; \Theta(v_1, x_1; \ldots; v_k, x_k; v'_1, y_1; \ldots; v'_n, y_n)$
 is composable with fixed  $m+m'-t$ number of vertex operators.  
\end{proof}
By Proposition \ref{pupa}, elements of the space $\W_{z_1, \ldots, z_{k+n-r}}$  
resulting from the $\epsilon$-product \eqref{Z2n_pt_eps1q1} are  
invariant with respect to independent changes of formal parameters of the group  
$\left({\rm Aut} \;  \Oo\right)^{\times (k+n-r)}_{z_1, \ldots, z_{k+n-r}}$. 
Now we prove the following  
\begin{corollary}
For $\Phi(v_{1}, x_{1};  \ldots; v_{k}, x_{k}) \in C_{m}^{k}(V, \W, \F)$ and 
$\Psi(v'_{1}, y_{1}; \ldots; v'_{n},y_{n}) \in C_{m'}^{n}(V $, $ \W, \F)$, 
the product 
\begin{eqnarray}
\label{posta}
&& \widehat{ R} \;\Theta \left(
v_{1}, x_{1};  \ldots; v_{k}, x_{k}; v'_{1}, y_{1}; \ldots; v'_{n},y_{n}; 
\epsilon \right) 
\nn
&& \qquad \qquad 
=
\Phi(v_{1}, x_{1};  \ldots; v_{k}, x_{k}) \cdot_\epsilon \Psi (v'_{1}, y_{1}; \ldots; v'_{n},y_{n}), 
\end{eqnarray} 
is canonical with respect to the action 
\begin{eqnarray}
&&
(z_1, \ldots, z_{k+n-r})
\mapsto (z'_1, \ldots, z'_{k+n-r})
=
( \rho(z_1), \ldots, \rho(z_{k+n-r})  ), 
\end{eqnarray}
 of elements the group 
 $\left({\rm Aut} \; \Oo\right)^{\times (k+n-r)}_{z_1, \ldots, z_{k+n-r}}$.  
\end{corollary}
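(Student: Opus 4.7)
The plan is to reduce this corollary directly to Proposition \ref{pupa} by observing that the $\epsilon$-product on $C^k_m(V,\W,\F)$-spaces is defined, via Definition \ref{cvitochki} and \eqref{Z2n_pt_epsss}, by exactly the same formula \eqref{Z2n_pt_eps1q1} that defines the $\epsilon$-product on $\W_{x_1,\ldots,x_k}$ and $\W_{y_1,\ldots,y_n}$. Since $C^k_m(V,\W,\F)$ is a subspace of $\hom(V^{\otimes k},\W_{x_1,\ldots,x_k})$ and similarly for $C^n_{m'}(V,\W,\F)$, an element $\Phi \cdot_\epsilon \Psi$ is in particular an element of the underlying $\W$-space product considered in Proposition \ref{pupa}.

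First I would invoke Proposition \ref{tolsto} to place the resulting product $\widehat{R}\;\Theta(v_1,x_1;\ldots;v_k,x_k;v'_1,y_1;\ldots;v'_n,y_n;\epsilon)$ inside the space $C^{k+n-r}_{m+m'-t}(V,\W,\F)$, so that its formal parameters $(z_1,\ldots,z_{k+n-r})$ are identified with local coordinates of the marked points on the sewn Riemann sphere (with each coinciding pair reduced to a single parameter by the $\widehat{R}$ operation, as in \eqref{zsto}). Then I would apply Proposition \ref{pupa} to the underlying $\W_{z_1,\ldots,z_{k+n-r}}$-valued rational form, which yields that the form is invariant under the independent action $(z_1,\ldots,z_{k+n-r})\mapsto(\rho(z_1),\ldots,\rho(z_{k+n-r}))$ of the group $\left({\rm Aut}\;\Oo\right)^{\times(k+n-r)}_{z_1,\ldots,z_{k+n-r}}$, provided $V$ is quasi-conformal (as assumed throughout for the spaces \eqref{ourbicomplex}).

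To conclude, I would check that passage from the $\W$-level invariance to $C$-level canonicality preserves the two extra structures defining $C^{k+n-r}_{m+m'-t}(V,\W,\F)$: namely, composability with $m+m'-t$ vertex operators, and the intersection over all holonomy embedding chains in Definition \ref{ourbicomplex}. The first is already guaranteed by Proposition \ref{ccc} (used in the proof of Proposition \ref{tolsto}), and since the Aut $\Oo$ action does not alter the choice of vertex algebra elements or the analytic structure of matrix elements (it only reparametrizes formal variables), composability is preserved. The second follows from Proposition \ref{nezc}, which asserts that the double complex construction is canonical under foliation-preserving coordinate changes on $\mathcal M/\F$.

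The main obstacle I anticipate is ensuring compatibility of the $\widehat{R}$ exclusion of coinciding formal parameters with the Aut $\Oo$ action: one must verify that if $x_{i_l}=y_{j_l}$ before the transformation, then $\rho(x_{i_l})=\rho(y_{j_l})$ after, so the identification structure that determined which parameters to exclude is preserved. This reduces to the elementary observation that $\rho$ is a single coordinate transformation applied identically to all formal parameters, so coincidences are stable under the action, and $\widehat{R}$ commutes with the group action; hence canonicality of the $\W$-valued form translates into canonicality of the $C^{k+n-r}_{m+m'-t}(V,\W,\F)$-valued product \eqref{posta}.
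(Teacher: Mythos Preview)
Your proposal is correct and follows essentially the same approach as the paper: invoke Proposition \ref{pupa} to obtain invariance of the $\epsilon$-product at the $\W_{z_1,\ldots,z_{k+n-r}}$-level, and then appeal to Proposition \ref{nezc} (and the invariance of the composable vertex operators proved there) to lift this to canonicality at the level of $C^{k+n-r}_{m+m'-t}(V,\W,\F)$. One small remark: your final paragraph's worry about $\widehat R$ is unnecessary, since the group $\left({\rm Aut}\;\Oo\right)^{\times(k+n-r)}$ already acts on the \emph{reduced} parameter set $(z_1,\ldots,z_{k+n-r})$ after exclusion, so no compatibility check between $\widehat R$ and the (in general independent, not identical) coordinate changes is needed.
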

\begin{proof}
In Subsection \ref{properties} we have proved that the product \eqref{Z2n_pt_eps1q1}  
 belongs to $W_{z_1, \ldots, z_{k+n-r}}$, 
and is invariant with  
respect to the group $\left({\rm Aut}\; \Oo \right)^{\times (k+n-r)}_{z_1, \ldots, z_{k+n-r}}$. 
Similar as in the proof of Proposition \ref{nezc},  
 vertex operators $\omega_{V}(v_i, x_i)$, $1\le i \le m$, composable with $\Phi(v_1, x_1; \ldots; v_{k}, x_k)$,  
 and vertex operators $\omega_{V}(v_j, y_j)$, $1 \le j \le m'$, composable with  
$\Psi(v'_{1}, y_{1}; \ldots; v'_{n}, y_{n})$, are also invariant with respect to independent changes of coordinates  
$(\rho(z_1),  \ldots, \rho(z_{k+n-r}))     
\in 
 \left({\rm Aut} \; \Oo \right)^{\times (k+n-r)}_{z_1, \ldots, z_{k+n-r}}$.   
\end{proof}
\subsection{Coboundary operator acting on the product of elements of  $C^n_m(V, \W, \F)$-spaces
}
 In Proposition \ref{tolsto} we proved that the  product \eqref{Z2n_pt_epsss} of elements of 
  spaces  $C_{m}^{k}(V, \W, \F)$ and $C_{m'}^{n}(V, \W, \F)$ belongs to $C^{k+n-r}_{m+m'-t}(V, \W, \F)$.  
Thus, the product admits the action ot the differential operators $\delta^{k+n-r}_{m+m'-t}$ and $\delta^{2-r}_{ex-t}$ 
defined in 
\eqref{deltaproduct} and \eqref{halfdelta}.  
 The co-boundary operators \eqref{deltaproduct} and \eqref{halfdelta} 
 possesse a variation of Leibniz law with respect to the product 
\eqref{Z2n_pt_epsss}. Indeed, we state here 
%
\begin{proposition}
\label{tosya}
For $\Phi(v_{1}, x_{1};  \ldots; v_{k}, x_{k}) \in C_{m}^{k}(V, \W, \F)$ 
and 
$\Psi(v'_{1}, y_{1}; \ldots; v'_{n}, y_{n}) \in C_{m'}^{n}(V, \W, \F)$, 
the action of the differential $\delta_{m + m'-t}^{k + n-r}$ \eqref{deltaproduct} 
(and $\delta^{2-r}_{ex-t}$ \eqref{halfdelta}) 
 on the $\epsilon$-product \eqref{Z2n_pt_epsss} is given by 
\begin{eqnarray}
\label{leibniz}
&& \delta_{m + m'-t}^{k + n-r} \left(  \Phi (v_{1}, x_{1};  \ldots; v_{k}, x_{k}) 
 \cdot_{\epsilon} \Psi (v'_{1}, y_{1}; \ldots; v'_{n}, y_{n}) \right) 
\nn
&&
 \qquad = 
\left( \delta^{k}_{m} \Phi (\widetilde{v}_{1}, z_{1};  \ldots; \widetilde{v}_{k}, z_{k}) \right)  
\cdot_{\epsilon} \Psi (\widetilde{v}_{k+1}, z_{k+1}; \ldots; \widetilde{v}_{k+n-r}, z_{k+n-r})  
\nn
&&
\; + (-1)^k 
\Phi (\widetilde{v}_{1}, z_{1};  \ldots; \widetilde{v}_{k}, z_{k}) \cdot_{\epsilon}   \left( \delta^{n-r}_{m'-t} 
\Psi(\widetilde{v}_{1}, z_{k+1}; \ldots; 
 \widetilde{v}_{k+n-r}, z_{k+n-r})  \right),  
\end{eqnarray}
where we use the notation as in \eqref{zsto} and \eqref{notari}. 
\end{proposition}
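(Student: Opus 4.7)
The plan is to expand both sides of \eqref{leibniz} using the explicit formula \eqref{hatdelta1} for the coboundary, sort the resulting terms according to whether the inserted vertex operators act on the $\Phi$-block $(v_1,\ldots,v_k)$ or on the $\Psi$-block $(v'_1,\ldots,v'_n)$, and match the two sides term by term. First I would write $\Theta := \Phi\cdot_\epsilon\Psi \in C^{k+n-r}_{m+m'-t}(V,\W,\F)$ (available from Proposition \ref{tolsto}) and apply $\delta^{k+n-r}_{m+m'-t}$ via \eqref{hatdelta1} with the combined argument list $(\widetilde v_1,z_1;\ldots;\widetilde v_{k+n-r},z_{k+n-r})$ from \eqref{notari}. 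This produces three families of terms: an outer-left insertion $\omega_W(\widetilde v_1,z_1)\Theta(\widetilde v_2,\ldots)$, an alternating middle sum $\sum_i(-1)^i\Theta(\ldots,\omega_V(\widetilde v_i,z_i-z_{i+1})\widetilde v_{i+1},\ldots)$ over $i=1,\ldots,k+n-r$, and an outer-right insertion carrying sign $(-1)^{k+n-r+1}$. On the right-hand side I would expand $\delta^k_m\Phi$ and $\delta^{n}_{m'}\Psi$ separately via \eqref{hatdelta1} and then take the $\epsilon$-product of each with the remaining factor using Definition \ref{cvitochki} and the sewing formula \eqref{Z2n_pt_epsss}.

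The matching then proceeds as follows. The outer-left $\omega_W$-term of $\delta\Theta$ matches the outer-left term of $(\delta\Phi)\cdot_\epsilon\Psi$, because the intertwining operator $Y^W_{WV}(\Phi(\cdots),\zeta_1)$ appearing in the sewing formula absorbs the $\omega_W(v_1,z_1)$-action consistently with the intertwining-operator property \eqref{wprop}. Symmetrically, the outer-right term of $\delta\Theta$ matches the outer-right term of $(-1)^k\Phi\cdot_\epsilon(\delta\Psi)$; the overall $(-1)^k$ arises from the shift between the position $k+n-r+1$ in $\delta\Theta$ and the position $n+1$ in $\delta\Psi$ before the $\widehat R$-reduction of coinciding parameters. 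Middle-sum terms with indices $i\in\{1,\ldots,k-1\}$ lie entirely inside the $\Phi$-block and are reproduced by the middle sum of $\delta\Phi$ sewn with $\Psi$, with matching signs $(-1)^i$. Middle-sum terms with indices in the $\Psi$-block are reproduced by the middle sum of $\delta\Psi$ sewn with $\Phi$, with signs shifted by $(-1)^k$, which is precisely the prefactor on the second summand of the right-hand side.

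The main obstacle is the boundary index $i=k$, where $\omega_V(\widetilde v_k,z_k-z_{k+1})\widetilde v_{k+1}$ straddles the sewing interface that separates the $\Phi$-block from the $\Psi$-block. To handle it I would invoke the geometric sewing procedure of Appendix \ref{sphere} together with the standard intertwining identity relating $\omega_V$-composition at the sewing locus to the $Y^W_{WV}$-action: summing over the complete $V_l$-basis $\{u\}$ in \eqref{Z2n_pt_eps1q1}, the straddling $\omega_V$-insertion can be rewritten as the product of the last term of $\delta\Phi$ (of sign $(-1)^{k+1}$) sewn with $\Psi$, plus the first term of $\delta\Psi$ (of sign $+1$) sewn with $\Phi$. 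Combined with the overall $(-1)^k$ prefactor on the $\Phi\cdot_\epsilon(\delta\Psi)$ summand, the signs balance $(-1)^{k+1}$ against $(-1)^k\cdot 1$ and together they reproduce the single $i=k$ term of the middle sum on the left-hand side. Tracking this sign bookkeeping in the presence of the $r$ coinciding formal parameters deleted by $\widehat R$ is the delicate step of the argument.

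Finally, since both sides have been shown, using Proposition \ref{tolsto} and Proposition \ref{cochainprop}, to belong to $C^{k+n-r+1}_{m+m'-t-1}(V,\W,\F)$, it suffices to verify the identity at the level of the matrix elements $\langle w',\cdot\rangle$ of the underlying rational forms. The passage from the matrix-element identity to the full $\W$-valued identity is then supplied by Lemma \ref{baskal} together with the density of rational differential forms in the appropriate completion, in the same spirit as the proof of convergence of the $\epsilon$-product.
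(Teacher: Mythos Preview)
Your plan is essentially the paper's own: expand $\delta^{k+n-r}_{m+m'-t}\Theta$ via \eqref{hatdelta1}, separate the $\Phi$-block and $\Psi$-block contributions through the sewing formula \eqref{Z2n_pt_epsss}, and deal with the single interface term by transporting an $\omega_W$-insertion across the sewing. The one place where your description diverges slightly is the cross term. You phrase it as ``the straddling $\omega_V$-insertion at $i=k$ splits into the last term of $\delta\Phi$ plus the first term of $\delta\Psi$''. In the paper's argument the mechanism is rather that those two $\omega_W$-terms are shown to be \emph{equal} as elements of $\W_{z_1,\ldots,z_{k+n-r+1}}$, by commuting $\omega_W(\widetilde v_{k+1},z_{k+1})$ past $Y_W(u,-\zeta_1)$ using locality \eqref{porosyataw}, then past $e^{\zeta_a L_W(-1)}$ using the translation property \eqref{transl}, and finally absorbing the resulting shift $z_{k+1}+\zeta_1-\zeta_2$ by the arbitrariness of $\widetilde v_{k+1}$ and $z_{k+1}$; since they carry signs $(-1)^{k+1}$ and $(-1)^k$ they then cancel, which is exactly what is needed after the add--and--subtract bookkeeping. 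So the identity you invoke at the interface is not a decomposition of the $\omega_V$-term but an equality of the two outer $\omega_W$-terms across the sewing, proved by locality and translation rather than by basis completeness alone. With that adjustment your outline matches the paper's proof.
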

The proof of this proposition is in Appendix \ref{tosya}. 
\begin{remark}
Checking 
\eqref{deltaproduct} we see that an extra arbitrary vertex algebra element $v_{n+1} \in V$, as well as corresponding 
 extra arbitrary formal parameter $z_{n+1}$ appear as results of the action of $\delta^{n}_m$ on 
$\Phi \in C^n_m(V, \W, \F)$ mapping it to $C^{n+1}_{m-1}(V, \W, \F)$. 
In application to the $\epsilon$-product \eqref{Z2n_pt_epsss} these extra arbitrary elements are involved in the 
definition of the action of $\delta_{m + m'-t}^{k + n-r}$ on 
 $\Phi (v_{1}, x_{1};  \ldots; v_{k}, x_{k}) 
 \cdot_{\epsilon} \Psi (v'_{1}, y_{1}; \ldots; v'_{n}, y_{n})$.  
\end{remark}
Note that both sides of \eqref{leibniz} belong to the space 
$C_{m + m'-t + 1}^{n + n' -r -1}(V, \W, \F)$.  
The co-boundary operators $\delta^n_m$ and $\delta^{n'}_{m'}$
 in \eqref{leibniz} do not include the number of common vertex algebra elements 
(and formal parameters), neither the number of common vertex operators corresponding mappings composable with.
 The dependence on common vertex algebra elements, parameters, and composable vertex operators is taken into 
account in mappings multiplying the action of co-boundary operators on $\Phi$. 

We have the following 
\begin{corollary}
The product \eqref{Z2n_pt_epsss} and the differential operators \eqref{deltaproduct}, \eqref{halfdelta}  
endow the space $C^k_m(V, \W, \F)$ $\times$ $ C^n_{m'}(V, \W, \F)$, $k$, $n \ge0$, $m$, $m' \ge0$, 
 with the structure of a 
double graded differential algebra $\mathcal G\left(V, \W, \cdot_\epsilon, \delta^{k+n-r}_{m+m'-t}\right)$. 
\hfill $\square$
\end{corollary}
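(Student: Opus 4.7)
The plan is to assemble the corollary from the three structural results already established: Proposition \ref{tolsto} (the product respects the bigrading), Proposition \ref{cochainprop} (the coboundary operators square to zero and respect the grading), and Proposition \ref{tosya} (the Leibniz-type rule). Since a double graded differential algebra is, by definition, a bigraded vector space equipped with a product that shifts the bigrading in a prescribed way, together with differentials $\delta, \delta'$ satisfying $\delta^2 = (\delta')^2 = 0$ and a graded Leibniz rule, the task is essentially verification of these axioms in the present setup.

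First I would fix the bigrading: assign to $C^n_m(V, \W, \F)$ the bidegree $(n, m)$, so that the direct sum of all these spaces, ranging over $n \ge 0$ and $m \ge 0$, becomes the underlying bigraded vector space of $\mathcal G$. Next, I would record that by Proposition \ref{tolsto} the $\epsilon$-product defines a bilinear map $\cdot_\epsilon : C^k_m \times C^n_{m'} \to C^{k+n-r}_{m+m'-t}$, where the shifts $-r$ and $-t$ encode the number of coinciding formal parameters and common vertex operators, respectively; this is the analogue of the bigrading shift in \eqref{bigradif} for the ${\rm \check C}$ech-de~Rham complex. I would then invoke Proposition \ref{cochainprop} to conclude that the family $\{\delta^n_m\}$ (together with the exceptional $\delta^2_{ex}$) supplies differentials of bidegree $(+1, -1)$ squaring to zero on the chain-cochain complexes \eqref{hat-complex} and \eqref{hat-complex-half}.

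The graded Leibniz compatibility between the product and the differential is exactly the content of Proposition \ref{tosya}: the identity \eqref{leibniz} states that $\delta_{m+m'-t}^{k+n-r}$ acts on $\Phi \cdot_\epsilon \Psi$ as a signed sum of $(\delta^k_m \Phi)\cdot_\epsilon \Psi$ and $\Phi \cdot_\epsilon (\delta^{n-r}_{m'-t}\Psi)$, with sign $(-1)^k$ governed by the first factor's cohomological degree, in precise analogy with \eqref{bigradif}. Combining this Leibniz rule with the bidegree-preservation from Proposition \ref{tolsto} and the nilpotency from Proposition \ref{cochainprop} establishes all the required axioms, so no further computation beyond assembling these ingredients is needed.

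The main obstacle I would anticipate is purely bookkeeping: one must verify that the correction terms $-r$ and $-t$ in the bidegree shift are consistent across all simultaneous applications of $\cdot_\epsilon$ and $\delta$, in particular that the image bidegree on both sides of \eqref{leibniz} genuinely lies in the same space $C^{k+n-r+1}_{m+m'-t-1}(V, \W, \F)$. A secondary subtlety is the role of the exceptional coboundary $\delta^2_{ex}$ on $C^2_{ex}(V, \W, \F)$: one must check that the Leibniz rule of Proposition \ref{tosya} restricts correctly when one of the factors lies in this exceptional subspace, so that the differential graded structure extends across both complexes \eqref{hat-complex} and \eqref{hat-complex-half} simultaneously rather than on the ordinary complex alone.
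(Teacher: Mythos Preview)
Your proposal is correct and matches the paper's intended reasoning: the corollary is stated in the paper with only a $\square$ and no explicit proof, as it is meant to follow immediately from Propositions \ref{tolsto}, \ref{cochainprop}, and \ref{tosya} exactly as you outline. Your additional remarks on the bookkeeping of the shifts $-r$, $-t$ and on the exceptional operator $\delta^2_{ex}$ are more careful than anything the paper spells out, but they are in the right spirit and do not deviate from the paper's approach.
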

Finally, we prove the following 
\begin{proposition}
The multiplication \eqref{Z2n_pt_epsss} extends the chain-cochain 
complexes 
\eqref{hat-complex} and \eqref{hat-complex-half} property \eqref{deltacondition} 
to all products $C^k_m(V, \W, \F) \times C^{n}_{m'}(V, \W, \F)$, 
$k$, $n \ge0$, $m$, $m' \ge0$.  
\end{proposition}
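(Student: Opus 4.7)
The plan is to reduce this to a standard graded-Leibniz computation, building only on the two ingredients already at our disposal: the Leibniz rule of Proposition \ref{tosya} and the single-factor identities $\delta^{n+1}_{m-1}\circ\delta^{n}_{m}=0$ and $\delta^{2}_{ex}\circ\delta^{1}_{2}=0$ of Proposition \ref{cochainprop}. Concretely, for $\Phi\in C^{k}_{m}(V,\W,\F)$ and $\Psi\in C^{n}_{m'}(V,\W,\F)$, I would first verify that repeated application of $\delta$ stays inside the complex: by Proposition \ref{tolsto}, $\Phi\cdot_{\epsilon}\Psi\in C^{k+n-r}_{m+m'-t}(V,\W,\F)$, so $\delta_{m+m'-t}^{k+n-r}$ is defined on the product and lands in $C^{k+n-r+1}_{m+m'-t-1}(V,\W,\F)$, on which $\delta_{m+m'-t-1}^{k+n-r+1}$ is again defined. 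The exceptional case where an intermediate term lies in $C^{2}_{ex}$ is handled identically, using $\delta^{2}_{ex}$ in place of $\delta^{2}_{1}$ via the remark following \eqref{hat-complex-half} which identifies the two vanishings.

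Next, I would apply the Leibniz formula \eqref{leibniz} of Proposition \ref{tosya} once to obtain
\begin{equation*}
\delta(\Phi\cdot_{\epsilon}\Psi)=(\delta\Phi)\cdot_{\epsilon}\Psi+(-1)^{k}\,\Phi\cdot_{\epsilon}(\delta\Psi),
\end{equation*}
where I suppress the indices for clarity, and then apply \eqref{leibniz} once more to each of the two resulting products. Since $\delta\Phi\in C^{k+1}_{m-1}(V,\W,\F)$ has cochain degree $k+1$, the second application produces
\begin{align*}
\delta^{2}(\Phi\cdot_{\epsilon}\Psi)
&=(\delta^{2}\Phi)\cdot_{\epsilon}\Psi+(-1)^{k+1}(\delta\Phi)\cdot_{\epsilon}(\delta\Psi)\\
&\quad +(-1)^{k}\bigl[(\delta\Phi)\cdot_{\epsilon}(\delta\Psi)+(-1)^{k}\,\Phi\cdot_{\epsilon}(\delta^{2}\Psi)\bigr].
\end{align*}
The two extreme terms vanish by Proposition \ref{cochainprop} applied separately to $\Phi$ and to $\Psi$, while the sign factors $(-1)^{k+1}+(-1)^{k}=0$ cancel the mixed terms. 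This yields $\delta^{2}(\Phi\cdot_{\epsilon}\Psi)=0$, which is precisely the extension of \eqref{deltacondition} asserted in the proposition.

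The one genuine bookkeeping point — and where I expect the only real obstacle — is keeping the five indices $(k,n,r,m,m',t)$ aligned through the two applications of Leibniz, and verifying that the signs delivered by \eqref{leibniz} are governed by the first factor's cochain degree only (and not, for instance, by $k-r$ or $k+n-r$). I would address this by writing out the action of $\delta$ on each intermediate product using the canonical permutation $\sigma\in S_{k+n-r}$ of \eqref{notari} and invoking Corollary \ref{functionformprop}, which guarantees that $\Phi\cdot_{\epsilon}\Psi$ depends only on the combined multi-set of insertion data and not on the distribution between the two factors. A secondary subtlety arises when the image of an intermediate $\delta$ lands in $C^{2}_{ex}$ rather than $C^{2}_{1}$; here I would simply note that the identification ${\delta}^{2}_{ex}\circ{\delta}^{1}_{2}={\delta}^{2}_{1}\circ{\delta}^{1}_{2}=0$ from the discussion following Proposition \ref{cochainprop} makes the same cancellation argument go through verbatim, so no separate case needs to be treated.
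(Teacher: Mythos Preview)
Your proposal is correct and follows essentially the same approach as the paper: the paper's proof invokes Proposition \ref{tolsto} to place $\Phi\cdot_{\epsilon}\Psi$ in $C^{k+n-r}_{m+m'-t}(V,\W,\F)$, then states that \eqref{leibniz} together with the chain-cochain property for $\Phi$ and $\Psi$ yield $\delta^{k+n+1-r}_{m+m'-1-t}\circ\delta^{k+n-r}_{m+m'-t}(\Phi\cdot_{\epsilon}\Psi)=0$ and the analogous vanishing in the exceptional case. Your double application of the Leibniz rule with the sign cancellation $(-1)^{k+1}+(-1)^{k}=0$ is precisely the explicit computation the paper leaves implicit in the phrase ``we also check that''.
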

\begin{proof}
For $\Phi \in C^k_m(V, \W, \F)$ and  $\Psi \in C^{n}_{m'}(V, \W, \F)$ 
 we proved in Proposition \ref{tolsto} that the product $\Phi \cdot_\epsilon \Psi$ belongs to the space 
$C^{k+n-r}_{m+m'-t}(V, \W, \F)$. 
Using \eqref{leibniz} and chain-cochain property for $\Phi$ and $\Psi$ we also check that 
\begin{eqnarray}
&& \delta^{k+n+1-r}_{m+m'-1-t} \circ \delta^{k+n-r}_{m+m'-t} \left( \Phi \cdot_\epsilon \Psi\right)=0. 
\nn
&&
\delta^{2-r}_{ex-t} \circ \delta^{1-r}_{2-t} \left( \Phi \cdot_\epsilon \Psi\right)=0. 
\end{eqnarray}
Thus, the the chain-cochain property extends to the product 
$C^k_m(V, \W, \F) \times C^{n}_{m'}(V, \W, \F)$. 
\end{proof}
\subsection{The exceptional complex} 
\label{example}
For elements of the spaces $C^2_{ex}(V, \W, \F)$ 
we have the following 
\begin{corollary}
The product of elements of the spaces $C^{2}_{ex} (V, \W, \F)$ and  $C^n_{m} (V, \W, \F)$ is given by 
\eqref{Z2n_pt_epsss}, 
\begin{equation}
\label{pupa3}
\cdot_\epsilon: C^{2}_{ex} (V, \W, \F) \times C^n_{m} (V, \W, \F) \to C^{n+2-r}_{m} (V, \W, \F),  
\end{equation}
and, in particular, 
\[
\cdot_\epsilon: C^{2}_{ex} (V, \W, \F) \times C^{2}_{ex} (V, \W, \F) \to C^{4-r}_{0} (V, \W, \F).  
\]
\end{corollary}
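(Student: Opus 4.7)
The plan is to deduce this statement directly from Proposition~\ref{tolsto} combined with the inclusion established in Lemma~\ref{lemmo}. Recall that by Lemma~\ref{lemmo} we have the chain of inclusions
\[
{C}_{m}^{2}(V, \W, \F) \subset C^{2}_{ex}(V, \W, \F) \subset C_{0}^{2}(V, \W, \F),
\]
so any $\Phi \in C^{2}_{ex}(V, \W, \F)$ may be regarded as an element of $C^{2}_{0}(V, \W, \F)$. Hence for $\Psi \in C^{n}_{m}(V, \W, \F)$ the $\epsilon$-product $\Phi \cdot_\epsilon \Psi$ is defined via the general prescription of Definition~\ref{cvitochki}, and its analytic properties (absolute convergence in $\epsilon$, the symmetry property~\eqref{shushu}, the $L_V(-1)$-derivative and $L_V(0)$-conjugation properties, and $\W_{z_1,\ldots,z_{n+2-r}}$-valuedness) are already furnished by Proposition~\ref{derga}, Lemma~\ref{tarusa}, Lemma~\ref{baskal} and Proposition~\ref{katas}.

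The second step is the bookkeeping of the indices. Since $\Phi \in C^{2}_{ex} \subset C^{2}_{0}$, the transversal-section parameter of the first factor is $0$, and the number $t$ of common vertex operators between the two factors (in the sense used in the statement of Proposition~\ref{tolsto}) is consequently $t=0$, there being no vertex-operator composability structure attached to the exceptional complex beyond the three-point connection form~\eqref{pervayaforma}--\eqref{vtorayaforma}. Plugging $k=2$, first-factor transversal index $0$, and $t=0$ into the target space prescribed by Proposition~\ref{tolsto}, one finds that
\[
\Phi \cdot_\epsilon \Psi \in C^{2+n-r}_{0+m-0}(V,\W,\F) = C^{n+2-r}_{m}(V,\W,\F),
\]
which is precisely~\eqref{pupa3}. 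Specializing to $\Psi \in C^{2}_{ex}(V,\W,\F) \subset C^{2}_{0}(V,\W,\F)$, i.e.\ $n=2$, $m=0$, $t=0$, yields the second asserted map into $C^{4-r}_{0}(V,\W,\F)$.

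The one step that requires care is verifying that the composability count indeed gives $t=0$ and that no additional holonomy embeddings are picked up from the first factor. The argument is that $C^{2}_{ex}$ was constructed precisely as the subspace of $C^{2}_{0}$ on which the three-point connection forms~\eqref{pervayaforma} and~\eqref{vtorayaforma} converge and extend rationally, without any $m$-fold holonomy intersection being imposed; hence the intersection over holonomy embeddings defining ${C}^{n+2-r}_{m+0-0}(V,\W,\F)$ via the recipe of~\eqref{ourbicomplex1110000} ranges only over the $m$-tuples inherited from $\Psi$. This is the main (and essentially only nontrivial) point to check; once it is granted, the rest of the argument is a direct application of Proposition~\ref{tolsto} to the embedded copy of $C^{2}_{ex}$ inside $C^{2}_{0}$.
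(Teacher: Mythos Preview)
Your proposal is correct and follows essentially the same approach as the paper: both arguments invoke Proposition~\ref{derga} for the formal-parameter count $n+2-r$ and then argue, via the considerations of Proposition~\ref{tolsto}, that the composability index of the product remains $m$. Your version is somewhat more explicit in using the embedding $C^{2}_{ex}\subset C^{2}_{0}$ from Lemma~\ref{lemmo} and in spelling out why $t=0$, whereas the paper simply asserts that ``similar to considerations of the proof of Proposition~\ref{tolsto}, the total number $m$ of vertex operators the product $\Theta$ is composable to remains the same.''
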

\begin{proof}
The fact that the number of formal parameters is $n+2-r$ in the product \eqref{Z2n_pt_epsss} 
follows from  
Proposition \eqref{derga}.  
Consider the product \eqref{Z2n_pt_epsss} for  
$C^{2}_{ex} (V, \W, \F)$ and 
$C^n_{m} (V, \W, \F)$.  
It is clear that, similar to considerations of the proof of Proposition \ref{tolsto}, 
the total number $m$ of vertex operators the product $\Theta$ is composable to remains the same. 
\end{proof}

\section
{Product-type cohomological classes}
\label{gv}
\subsection{The commutator multiplication} 
In this subsection we define further product of pair of elements of spaces $C^k_m(V, \W, \F)$ and $C^n_{m'}(V, \W, \F)$, 
suitable for formulation of cohomological invariants. 
Let us consider the mappings 
\[
\Phi(v_1, z_1  ;   \ldots ;  v_{n}, z_k) \in   C_{m}^{k}(V, \W, \F), 
\] 
\[
\Psi (v_{k+1}, z_{k+1};  
\ldots; 
 v_{k+n}, z_{k+n}) \in   C_{m'}^{n}(V, \W, \F),   
\]     
 with $r$ common vertex algebra elements (and, correspondingly, $r$ formal variables), and 
 $t$ common vertex operators mappings $\Phi$ and $\Psi$ are composable with. 
Note that when applying the co-boundary operators \eqref{deltaproduct} and \eqref{halfdelta} to a map 
$\Phi(v_1, z_1; 
 \ldots; 
v_n, z_n)  
\in C^n_m(V, \W, \F)$, 
\[
\delta^n_m: \Phi(v_1, z_1; 
\ldots; 
 v_n,z_n) 
\to 
\Phi(v'_1, z'_1;  
 \ldots; 
v'_{n+1}, z'_{n+1}) 
\in C^{n+1}_{m-1}(V, \W, \F),
\]
 one does not necessary assume that we keep 
 the same set of vertex algebra elements/formal parameters and 
vertex operators composable with for $\delta^n_m \Phi$, 
though it might happen that some of them could be common with $\Phi$.    
Then we have 
\begin{definition}
\label{coma}
Let us define 
 extra product 
of $\Phi$ and $\Psi$, 
 \begin{eqnarray}
\label{defproduct}
&&
 \Phi \cdot \Psi: V^{\otimes(k +n-r)} \to  \W_{z_1, \ldots, z_{k+ n-r}}, \; 
\\
\label{product1}
&&
 \Phi \cdot \Psi = \left[\Phi,_{\cdot \epsilon} \Psi\right]= \Phi \cdot_\epsilon \Psi- \Psi \cdot_\epsilon \Phi,    
\end{eqnarray}
where brackets denote ordinary commutator in $\W_{z_1, \ldots, z_{k+ n-r}}$.  
\end{definition}
%
Due to the properties of the maps $\Phi\in C_{m}^{k}(V, \W, \F)$ and 
$\Psi\in   C_{m'}^{n}(V, \W, \F)$, we obtain 
\begin{lemma}
\label{propo}
The product $\Phi \cdot \Psi$
belongs to the space $C_{m + m'- t }^{k +n-r}(V, \W, \F)$.  
For $k=n$ and 
\[
\Psi (v_{n+1}, z_{n+1}; 
 \ldots; 
v_{ 2n}, z_{2n}) 
= 
\Phi(v_{1}, z_1; 
 \ldots;  
v_{ n}, z_n), 
\]
we obtain from \eqref{product} and \eqref{Z2n_pt_epsss} that 
\begin{eqnarray}
\label{fifi}
\Phi(v_{1}, z_1; 
 \ldots;  
v_{ n}, z_n) 
\cdot
 \Phi(v_{1}, z_1; 
\ldots; 
 v_{ n}, z_n) 
=0. 
\end{eqnarray}  
\hfill $\square$
\end{lemma}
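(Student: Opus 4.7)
The plan is to treat the two claims separately, each following from a result already established in the paper.

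For the first claim, by Definition \ref{coma} one has $\Phi \cdot \Psi = \Phi \cdot_\epsilon \Psi - \Psi \cdot_\epsilon \Phi$. Proposition \ref{tolsto} gives $\Phi \cdot_\epsilon \Psi \in C^{k+n-r}_{m+m'-t}(V, \W, \F)$, where $r$ is the number of coinciding formal parameters and $t$ the number of vertex operators common to the composability data of $\Phi$ and $\Psi$. Since both counts are symmetric in the two arguments, exchanging the roles of $\Phi$ and $\Psi$ and reapplying the same proposition places $\Psi \cdot_\epsilon \Phi$ in the identical space $C^{n+k-r}_{m'+m-t}(V, \W, \F)$. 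As this target is an intersection of linear spaces of maps (Definitions \ref{initialspace} and \ref{defspace}), it is closed under subtraction, so $\Phi \cdot \Psi$ lands there.

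For the second claim I would invoke Lemma \ref{functionformpropcor}, whose content is precisely that the $\epsilon$-product is invariant under redistribution of the vertex algebra elements and formal parameters between its two factors. Specializing to $k = n$ and taking $\Psi$ to be the same underlying linear map as $\Phi$, evaluated on the auxiliary variable set $(v_{n+1}, z_{n+1}; \ldots; v_{2n}, z_{2n})$, the two partitions $\{1, \ldots, n\} \sqcup \{n+1, \ldots, 2n\}$ and $\{n+1, \ldots, 2n\} \sqcup \{1, \ldots, n\}$ both produce the same $\W_{z_1, \ldots, z_{2n}}$-valued form $\widehat R\; \Theta(\widetilde{v}_1, z_1; \ldots; \widetilde{v}_{2n}, z_{2n}; \epsilon)$. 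This yields
\[
\Phi(v_1, z_1; \ldots; v_n, z_n) \cdot_\epsilon \Phi(v_{n+1}, z_{n+1}; \ldots; v_{2n}, z_{2n}) = \Phi(v_{n+1}, z_{n+1}; \ldots; v_{2n}, z_{2n}) \cdot_\epsilon \Phi(v_1, z_1; \ldots; v_n, z_n),
\]
and substituting into \eqref{product1} gives \eqref{fifi}.

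The main obstacle is checking the applicability of Lemma \ref{functionformpropcor} in the diagonal situation: one must confirm that when the same map $\Phi$ plays the role of both factors, its composability datum remains compatible with acting on either side of the sewn-sphere construction, so that the swap of partitions indeed produces the identical rational form in $\W_{z_1, \ldots, z_{2n}}$. Once this compatibility is verified, the symmetry of the $\epsilon$-product under exchange of factors is immediate from Lemma \ref{functionformpropcor}, and the vanishing \eqref{fifi} follows by linearity.
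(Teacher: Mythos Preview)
Your argument for the first claim is correct and is exactly what the paper has in mind: the lemma is stated with only a $\square$ and the preamble ``Due to the properties of the maps\ldots'', so the intended justification is precisely the appeal to Proposition~\ref{tolsto} applied to each of the two $\epsilon$-products, together with linearity of the target space.

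For the second claim you are overworking. In the statement both factors are literally the same element $\Phi(v_{1}, z_1; \ldots; v_{n}, z_n)$, with the same variables and the same map. Hence in Definition~\ref{coma} the two terms $\Phi \cdot_\epsilon \Phi$ and $\Phi \cdot_\epsilon \Phi$ are identical expressions (compare \eqref{Z2n_pt_epsss}: swapping the two factors does nothing when they coincide), and the commutator vanishes trivially as $A - A = 0$. This is why the paper only cites the definitions \eqref{product1} and \eqref{Z2n_pt_epsss} and gives no further argument. Your route through Lemma~\ref{functionformpropcor} is not wrong, but it proves a stronger symmetry than is needed here and introduces the extra ``diagonal compatibility'' worry that you yourself flag; that worry simply does not arise once you observe the two summands are syntactically the same.
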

The product \eqref{defproduct} will be used in the next subsection in  order to introduce cohomological invariants. 
\subsection{Cohomological invariants}
In this subsection, using the vertex algebra double complex construction \eqref{conde}--\eqref{deltacondition},  
 we provide invariants for the grading-restricted vertex algebra cohomology of codimension one foliations 
on complex curves.  
Recall (Appendix \ref{cohomological}) definitions of cohomological classes associated to grading-restricted vertex algebras.  
%
In this subection we consider the general classes of cohomological invariants which arrise from 
 Definition \ref{cvitochki} of a product of pairs of $C^n_m(V, \W, \F)$-spaces. 
Under a natural extra condition, the double complexes \eqref{hat-complex} and \eqref{hat-complex-half}  
allow us to establish relations among elements of $C^n_m(V, \W, \F)$ spaces. 
By analogy with the notion of integrability for  differential forms \cite{G}, 
 we use here the notion of orthogonality for spaces of a complex. 
\begin{definition}
For the double complexes \eqref{hat-complex} and \eqref{hat-complex-half}  
let us require that for a pair of double complex spaces $C_m^k(V, \W, \F)$ and $C_{m'}^n(V, \W, \F)$, 
 there exist subspaces 
\[
\widetilde{C}_m^k(V, \W, \F)\subset C_m^k(V, \W, \F),
\]
\[
\widetilde{C}_{m'}^n(V, \W, \F)\subset C_{m'}^n(V, \W, \F),
\]
 such that for all $\Phi \in \widetilde{C}_m^k(V, \W, \F)$ and all $\Psi \in \widetilde{C}_{m'}^n(V, \W, \F)$, 
\begin{equation}
\label{ortho}
\Phi \cdot \delta^n_{m'} \Psi=0, 
\end{equation}
 namely,  
$\Phi$ supposed to be orthogonal to $\delta^n_{m'}\Psi$ with respect to the product  
\eqref{product}.  
We call this { the orthogonality condition} for mappings of double complexes 
\eqref{hat-complex} and \eqref{hat-complex-half}.
\end{definition}
Note that in the case of differential forms considered on a smooth manifold, 
the Frobenius theorem for a distribution provides the orthogonality condition \cite{G}.   
The fact that both sides of \eqref{uravnenie} (see below) belong to the same double complex space, apply limitations 
to possible combinations of $(k, m)$ and $(n, m')$.
Below
 we derive algebraic relations occurring from the orthogonality condition on 
the double bicomplexes \eqref{hat-complex} and \eqref{hat-complex-half}. 

%

Taking into account the correspondence 
(see Subsection \ref{defcohomology})  
with the ${\rm   \check {C}}$ech-de Rham complex due to \cite{CM}, 
we reformulate the derivation of product-type invariant 
in vertex algebra terms. 
Recall that the Godbillon--Vey cohomological class \cite{G} is considered on codimension one foliations of 
three-dimensional smooth manifolds. 
In this paper, we supply its analogue for complex curves.  
 According to Definition \ref{ourbicomplex} we 
have $k$-tuples of one-dimesional transversal sections. In each section we attach one vertex operator
$Y_W(u_k, w_k)$, $u_k \in V$, $w_k \in U_k$. 
Similarly to differential forms setup, a mapping $\Phi  \in  C_{k}^{m} (V, \W, \F)$ defines 
a codimension one foliation. As we see from \eqref{deltaproduct}, \eqref{fifi}, and \eqref{leibniz} 
it satisfyies properties 
simmilar to differential forms.

Now we show that the analog of the integrability condition provide a generalization of product-type invariant 
for codimension one foliations on complex curves. 
Here we formulate the main statement of this paper: 
\begin{proposition}
The set of commutation relations 
The product \eqref{Z2n_pt_epsss}, the differential operators \eqref{deltaproduct}, \eqref{halfdelta}, 
and the orthogonality condition \eqref{ortho} applied to the double complexes \eqref{hat-complex} and 
\eqref{hat-complex-half}  
generate non-vanishing cohomology classes 
\[
\left[\left(\delta^{1}_{2}  \Phi\right) \cdot \Phi \right], \quad   
  \left[\left(\delta^{0}_{3} \Lambda \right)\cdot \Lambda\right], \quad 
 \left[\left(\delta^{1}_{t} \Psi \right)\cdot \Psi\right], 
\]
independent on the choice of $\Phi \in   C^1_2(V, \W, \F)$, 
$ \Lambda \in   C^{0}_{3}(V, \W, \F)$, and $ \Psi \in   C^{1}_{t}(V, \W, \F)$.   
\end{proposition}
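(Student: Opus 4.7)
The plan is to verify, for each of the three putative classes, three conditions: the cocycle property, well-definedness modulo coboundaries, and nontriviality. I will outline the argument for $[(\delta^1_2 \Phi) \cdot \Phi]$; the cases $[(\delta^0_3 \Lambda) \cdot \Lambda]$ and $[(\delta^1_t \Psi) \cdot \Psi]$ proceed identically with bidegrees shifted. Given $\Phi \in C^1_2(V, \W, \F)$, Proposition \ref{cochainprop} places $\delta^1_2 \Phi$ in $C^2_1(V, \W, \F)$, and Proposition \ref{tolsto} together with Definition \ref{coma} puts $(\delta^1_2 \Phi) \cdot \Phi$ in $C^{3-r}_{3-t}(V, \W, \F)$, where $r$ and $t$ are as in Proposition \ref{tolsto}.

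For the cocycle property, I would apply the Leibniz identity \eqref{leibniz} of Proposition \ref{tosya}, extended from $\cdot_\epsilon$ to the commutator product $\cdot$ of Definition \ref{coma} by bilinearity, to obtain
\[
\delta^{3-r}_{3-t}\bigl((\delta^1_2 \Phi) \cdot \Phi\bigr) = (\delta^2_1 \delta^1_2 \Phi) \cdot \Phi \;+\; (-1)^{2}\,(\delta^1_2 \Phi) \cdot (\delta^1_2 \Phi).
\]
The first summand vanishes by the chain-cochain relation \eqref{deltacondition}, and the second vanishes by \eqref{fifi} of Lemma \ref{propo}, since the commutator product of any element with itself is zero. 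Hence $(\delta^1_2 \Phi) \cdot \Phi$ is a cocycle.

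For well-definedness, suppose $\Phi, \Phi' \in \widetilde{C}^1_2(V, \W, \F)$ both satisfy the orthogonality condition \eqref{ortho}. Interpolating by $\Phi_t = \Phi + t(\Phi' - \Phi)$ and writing $\phi = \Phi' - \Phi$, I compute
\[
\tfrac{d}{dt}\bigl((\delta \Phi_t) \cdot \Phi_t\bigr) = (\delta \phi) \cdot \Phi_t + (\delta \Phi_t) \cdot \phi.
\]
The orthogonality condition $\Phi_t \cdot (\delta \phi) = 0$, together with the companion identity $\phi \cdot (\delta \Phi_t) = 0$ which I derive by $t$-differentiating the orthogonality relation $\Phi_t \cdot \delta \Phi_t = 0$, allows me to reorganize the right-hand side as $\delta(\phi \cdot \Phi_t)$ via an application of the Leibniz rule \eqref{leibniz} in reverse. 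Integrating in $t$ from $0$ to $1$ exhibits $(\delta \Phi') \cdot \Phi' - (\delta \Phi) \cdot \Phi$ as a coboundary in $C^{3-r}_{3-t}(V, \W, \F)$, so the cohomology class is independent of the choice of representative.

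Nontriviality is established via the dictionary between $(C^n_m(V, \W, \F), \delta^n_m)$ and the Crainic--Moerdijk \v{C}ech--de Rham double complex recalled at the end of Section \ref{coboundary}. Under this correspondence, the class $[(\delta^1_2 \Phi) \cdot \Phi]$ reduces to the classical Godbillon--Vey class of codimension one foliations, which is nonzero on standard examples, so its vertex algebra lift is also nonzero. The main obstacle will be the well-definedness step: I must verify carefully that the Leibniz identity \eqref{leibniz} of Proposition \ref{tosya}, originally formulated for $\cdot_\epsilon$, descends to the commutator product $\cdot$ with the correct signs dictated by the grading of the $C^n_m$ spaces, and that the orthogonality condition \eqref{ortho} is preserved along the entire interpolation $\Phi_t$ rather than only at the endpoints.
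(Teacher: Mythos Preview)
Your argument and the paper's diverge substantially in all three parts. For closedness you apply Leibniz and the identity $\alpha\cdot\alpha=0$ directly to $(\delta\Phi)\cdot\Phi$; the paper instead routes everything through the orthogonality relation $\Phi\cdot\delta^0_3\Lambda=0$, writes $\delta^0_3\Lambda=\Phi\cdot\Psi$ (this is where $\Psi\in C^1_t$ enters, not as an independent datum but as a consequence), and then uses $\delta\circ\delta=0$ on $\Phi\cdot\delta\Lambda$. Your route is cleaner here, provided the Leibniz sign issue you flag for the commutator product is handled. For independence of choice the paper does not interpolate: it simply expands $(\delta(\Phi+\eta))\cdot(\Phi+\eta)$ bilinearly, kills the cross term $\Phi\cdot\delta\eta+(\delta\eta)\cdot\Phi$ by antisymmetry of the commutator, and recognizes the rest as $(\delta\Phi)\cdot\Phi$ plus coboundary-type terms. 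This avoids entirely your worry about orthogonality holding along $\Phi_t$, and is the simpler path.

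Your non-vanishing argument has a genuine gap. You invoke the lemma at the end of Section~\ref{coboundary} relating $(C^n_m,\delta^n_m)$ to the Crainic--Moerdijk \v{C}ech--de~Rham complex, but that lemma only asserts that the latter ``results from'' the former; it does not give a cohomology comparison map in the direction you need, let alone injectivity, so nontriviality of the classical Godbillon--Vey class cannot be pulled back. The paper's argument is entirely internal: it supposes $(\delta^1_2\Phi)\cdot\Phi=0$, infers a factorization $\delta^1_2\Phi=\Gamma\cdot\Phi$ for some $\Gamma$, and then rules this out by checking that no bidegree $(n,m)$ for $\Gamma$ makes both sides land in $C^2_1$. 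You should either supply the missing injectivity for your reduction, or replace it with the paper's bidegree obstruction.

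Finally, note that in the paper the three classes are not independent: $\Psi$ arises from the orthogonality condition applied to the pair $(\Phi,\Lambda)$ via $\delta^0_3\Lambda=\Phi\cdot\Psi$, and the allowed values of $t$ are determined by this relation. Your treatment of $[(\delta^1_t\Psi)\cdot\Psi]$ as a standalone class parallel to the others misses this linkage.
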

\begin{remark}
In this paper we provide results concerning complex curves, i.e., the case $n \le 1$, $n_0 \le 1$, $n_i \le 1$.
 They 
generalize to the case of higher dimensional complex manifolds. 
\end{remark}

\begin{proof}
 Let us consider two maps $\Phi(v_1) \in C^{1}_{2}(V, \W, \F)$ and $\Lambda \in C^{0}_{3}(V, \W, \F)$. 
We require them to be orthogonal, i.e.,
\begin{equation}
\label{isxco}
\Phi \cdot \delta^{0}_{3} \Lambda=0.   
\end{equation}
Thus, there exists $\Psi(v_2) \in C^{n}_{m}(V, \W, \F)$, such that 
\begin{equation}
\label{uravnenie}
\delta^{0}_{3} \Lambda= \Phi \cdot \Psi, 
\end{equation} 
and 
$1=1 + n -r$, $2=2+m-t$, i.e., $n=r$,  which leads to $r=1$;  $m=t$, $0\le t \le 2$, i.e., 
$\Psi \in C^{1}_{t}(V, \W, \F)$. 
Here $r$ and $t$ are correspondingly numbers of common vertex algebra element (and formal parameteres)
 and vertex operators 
a map composable with.  
All other orthogonality conditions for the short sequence \eqref{hat-complex-half} does not allow relations of the form
 \eqref{uravnenie}. 

Consider now \eqref{isxco}.
We obtain, using \eqref{leibniz}
\[
 \delta^{2-r'}_{4-t'} (\Phi \cdot \delta^{0}_{3} \Lambda)=  
\left(\delta^{1}_{2} \Phi\right) \cdot \delta^{0}_{3} \Lambda + \Phi \cdot \delta^{1}_{2} \delta^{0}_{3} \Lambda=
 \left(\delta^{1}_{2} \Phi\right) \cdot \delta^{0}_{3} \Lambda= \left(\delta^{1}_{2} \Phi\right) \cdot \Phi \cdot 
\Psi. 
\]
Thus 
\[
 0=\delta^{3-r'}_{3-t'} \delta^{2-r'}_{4-t'} (\Phi \cdot \delta^{0}_{3} \Lambda)=  
\delta^{3-r'}_{3-t'}  \left( \left(\delta^{1}_{2} \Phi\right) \cdot \Phi \cdot 
\Psi.  \right), 
\]
and $\big(\big(\delta^{1}_{2} \Phi\big) \cdot \Phi \cdot \Psi\big) \big)$ is closed. 
At the same time, 
from \eqref{isxco} 
 it follows that 
\[
0=\delta^{1}_{2} \Phi \cdot \delta^{0}_{3} \Lambda- \Phi \cdot\delta^{1}_{2}\delta^{0}_{3} \Lambda 
= \big( \Phi\cdot \delta^{0}_{3} \Lambda \big).
\] 
Thus 
\[
\delta^{1}_{2} \Phi \cdot \delta^{0}_{3} \Lambda= \delta^{1}_{2} \Phi \cdot \Phi \cdot \Psi =0.
\] 
Consider \eqref{uravnenie}. 
Acting by $\delta^{1}_{2}$ and substituting back we obtain 
\[
0= \delta^{1}_{2} \delta^{0}_{3} \Lambda= \delta^{1}_{2}(\Phi \cdot \Psi)=  
\delta^{1}_{2}(\Phi) \cdot \Psi - \Phi \cdot \delta^{1}_{t} \Psi. 
\]
thus 
\[
\delta^{1}_{2}(\Phi) \cdot \Psi = \Phi \cdot \delta^{1}_{t} \Psi. 
\]
The last equality trivializes on applying $\delta^{3}_{t+1}$ to both sides. 

Let us show now the non-vanishing property of $\left(\left(\delta^{1}_{2} \Phi \right)\cdot \Phi\right)$. 
Indeed, suppose 
\[
\left(\delta^{1}_{2} \Phi \right)\cdot \Phi=0.
\]
 Then there exists $\Gamma \in C^{n}_{m}(V, \W, \F)$, 
such that 
\[
\delta^{1}_{2} \Phi =\Gamma \cdot \Phi.
\]
 Both sides of the last equality should belong to the same double complex 
space but one can see that it is not possible. 
Thus, $\left(\delta^{1}_{2} \Phi \right)\cdot \Phi$ is non-vanishing. 
 One proves in the same way that $\left(\delta^{0}_{3} \Lambda \right)\cdot \Lambda$ and 
$\left(\delta^{1}_{t} \Psi \right)\cdot \Psi$ do not vanish too.  
Now let us show  that $\left[\left(\delta^{1}_{2} \Phi \right)\cdot \Phi \right]$ 
 is invariant, i.e., it does not depend on the choice of $\Phi \in C^1_2(V, \W, \F)$.
 Substitute $\Phi$ by 
$\left(\Phi + \eta\right)\in C^{1}_{2}(V, \W, \F)$.   
We have 
\begin{eqnarray}
\label{pokaz}
\nonumber
\left(\delta^{1}_{2} \left( \Phi + \eta \right) \right) \cdot \left( \Phi  + \eta \right) &=& 
\left(\delta^{1}_{2} \Phi\right) \cdot \Phi 
 + \left( \left(\delta^{1}_{2} \Phi \right)\cdot \eta 
-  \Phi \cdot \delta^{1}_{2} \eta  \right)  
\nn
&+& \left( \Phi \cdot \delta^{1}_{2} \eta   + 
\delta^{1}_{2} \eta  \cdot \Phi \right) 
 +
\left(\delta^{1}_{2} \eta \right) \cdot \eta. 
\end{eqnarray}
Since
\[
\left( \Phi \cdot \delta^{1}_{2} \eta  + 
\left(\delta^{1}_{2} \eta\right)  \cdot \Phi \right)= 
\Phi \cdot_\epsilon \delta^{1}_{2} \eta -  (\delta^{1}_{2} \eta) \cdot_\epsilon \Phi 
+ \left(\delta^{1}_{2} \eta\right) \cdot_\epsilon \Phi - \Phi \cdot_\epsilon  \delta^{1}_{2} \eta=0, 
\]
then \eqref{pokaz} represents the same cohomology class 
$\left[ \left(\delta^{1}_{2}  \Phi \right) \cdot \Phi \right]$. 
The same folds for $\left[\left(\delta^{0}_{3} \Lambda \right)\cdot \Lambda \right]$, and 
$\left[\left(\delta^{1}_{t} \Psi \right)\cdot \Psi \right]$. 
%
\end{proof}

\section*{Acknowledgements}
The author would like to thank A. Galaev, Y.-Zh. Huang, H. V. L\^e, A. Lytchak, and P. Somberg, 
 for related discussions. 
Research of the author was supported by the GACR project 18-00496S and RVO: 67985840. 
\section{Appendix: Grading-restricted vertex algebras and their modules}
\label{grading}
In this Section, following \cite{Huang} we recall basic properties of 
grading-restricted vertex algebras. 
 and their grading-restricted generalized 
modules, useful for our purposes in later sections. 
We work over the base field $\C$ of complex numbers. 
\subsection{Grading-restricted vertex algebras}
\begin{definition}
A vertex algebra   
$(V,Y_V,\mathbf{1}_V)$, (cf. \cite{K}),  consists of a $\Z$-graded complex vector space  
\[
V = \coprod_{n\in\Z}\,V_{(n)}, \quad \dim V_{(n)}<\infty, 
\]
 for each $n\in \Z$,   
and linear map 
\[
Y_V:V\rightarrow {\rm End \;}(V)[[z,z^{-1}]], 
\]
 for a formal parameter $z$ and a 
distinguished vector $\mathbf{1}_V\in V$.   
The evaluation of $Y_V$ on $v\in V$ is the vertex operator
\begin{equation}
\label{vop}
Y_V(v)\equiv Y_V(v,z) = \sum_{n\in\Z}v(n)z^{-n-1}, 
\end{equation}
with components $(Y_{V}(v))_{n}=v(n)\in {\rm End \;}(V)$, where $Y_V(v,z)\mathbf{1}_V = v+O(z)$.
\end{definition}
\begin{definition}
\label{grares}
A grading-restricted vertex algebra satisfies 
the following conditions:
\begin{enumerate}
\item {Grading-restriction condition}:
$V_{(n)}$ is finite dimensional for all $n\in \Z$, and $V_{(n)}=0$ for $n\ll 0$; 

\item { Lower-truncation condition}:
For $u$, $v\in V$, $Y_{V}(u, z)v$ contains only finitely many negative 
power terms, that is, 
\[
Y_{V}(u, z)v\in V((z)), 
\] 
(the space of formal 
Laurent series in $z$ with coefficients in $V$);   

\item { Identity property}: 
Let ${\rm Id}_{V}$ be the identity operator on $V$. Then
\[
Y_{V}(\mathbf{1}_V, z)={\rm Id}_{V};  
\]

\item { Creation property}: For $u\in V$, 
\[
Y_{V}(u, z)\mathbf{1}_V\in V[[z]],
\]  
and 
\[
\lim_{z\to 0}Y_{V}(u, z)\mathbf{1}_V= u;
\]

\item { Duality}: 
For $u_{1}, u_{2}, v\in V$, 
\[
v'\in V'=\coprod_{n\in \mathbb{Z}}V_{(n)}^{*}, 
\]
where 
 $V_{(n)}^{*}$ denotes
the dual vector space to $V_{(n)}$ and $\langle\, . ,  .\rangle$ the evaluation 
pairing $V'\otimes V\to \C$, the series 
\begin{eqnarray}
\label{porosyata}
& & \langle v', Y_{V}(u_{1}, z_{1})Y_{V}(u_{2}, z_{2})v\rangle,
\\
& & \langle v', Y_{V}(u_{2}, z_{2})Y_{V}(u_{1}, z_{1})v\rangle, 
\\
& & \langle v', Y_{V}(Y_{V}(u_{1}, z_{1}-z_{2})u_{2}, z_{2})v\rangle, 
\end{eqnarray}
are absolutely convergent
in the regions 
\[
|z_{1}|>|z_{2}|>0,
\]
\[ 
|z_{2}|>|z_{1}|>0,
\]
\[
|z_{2}|>|z_{1}-z_{2}|>0,
\]
 respectively, to a common rational function 
in $z_{1}$ and $z_{2}$ with the only possible poles at $z_{1}=0=z_{2}$ and 
$z_{1}=z_{2}$; 

\item { $L_V(0)$-bracket formula}: Let $L_{V}(0): V\to V$,  
be defined by 
\[
L_{V}(0)v=nv, \qquad n=\wt(v),  
\]
 for $v\in V_{(n)}$.  
Then
\[
[L_{V}(0), Y_{V}(v, z)]=Y_{V}(L_{V}(0)v, z)+z\frac{d}{dz}Y_{V}(v, z), 
\]
for $v\in V$. 

\item { $L_V(-1)$-derivative property}:  
Let 
\[
L_{V}(-1): V\to V, 
\]
 be the operator given by 
\[
L_{V}(-1)v=\res_{z}z^{-2}Y_{V}(v, z)\one_V=Y_{(-2)}(v) \one_V,  
\]
for $v\in V$. Then for $v\in V$, 
\begin{equation}
\label{derprop}
\frac{d}{dz}Y_{V}(u, z)=Y_{V}(L_{V}(-1)u, z)=[L_{V}(-1), Y_{V}(u, z)].
\end{equation}
\end{enumerate}
\end{definition}
In addition to that,  we recall here the following definition (cf. \cite{BZF}): 
\begin{definition}
 A grading-restricted vertex algebra $V$ is called conformal of central 
charge $c \in \C$,
 if there exists a non-zero conformal vector (Virasoro vector) $\omega \in V_{(2)}$ such that the
corresponding vertex operator 
\[
Y_V(\omega, z)=\sum_{n\in\Z}L_V(n)z^{-n-2}, 
\]
is determined by modes of Virasoro algebra $L_V(n): V\to V$ satisfying 
\[
[L_V(m), L_V(n)]=(m-n)L(m+n)+\frac{c}{12}(m^{3}-m)\delta_{m+b, 0}\; {\rm Id_V}. 
\]
\end{definition}
\begin{definition}
\label{primary}  
A vector $A$ which belongs to a module $W$ of a quasi-conformal 
 grading-restricted vertex algebra $V$ is called 
primary of conformal dimension $\Delta(A) \in  \mathbb Z_+$ if  
\begin{eqnarray*}
L_W(k) A &=& 0,\;  k > 0, 
\nn
 L_W(0) A &=& \Delta(A) A. 
\end{eqnarray*}
\end{definition}
\subsection{Grading-restricted generalized $V$-module}
In this subsection we describe the grading-restricted generalized $V$-module
 for a grading-restricted vertex algebra $V$. 
\begin{definition}
A {grading-restricted generalized $V$-module} is a vector space 
$W$ equipped with a vertex operator map 
\begin{eqnarray*}
Y_{W}: V\otimes W&\to& W[[z, z^{-1}]],
\nn
u\otimes w&\mapsto & Y_{W}(u, w)\equiv Y_{W}(u, z)w=\sum_{n\in \Z}(Y_{W})_{n}(u,w)z^{-n-1}, 
\end{eqnarray*}
and linear operators $L_{W}(0)$ and $L_{W}(-1)$ on $W$ satisfying the following
conditions:
\begin{enumerate}
\item {Grading-restriction condition}:
The vector space $W$ is $\mathbb C$-graded, that is, 
\[
W=\coprod_{\alpha\in \mathbb{C}}W_{(\alpha)},
\]
 such that 
$W_{(\alpha)}=0$ when the real part of $\alpha$ is sufficiently negative; 

\item { Lower-truncation condition}:
For $u\in V$ and $w\in W$, $Y_{W}(u, z)w$ contains only finitely many negative 
power terms, that is, $Y_{W}(u, z)w\in W((z))$; 

\item { Identity property}: 
Let ${\rm Id}_{W}$ be the identity operator on $W$. 
Then 
\[
Y_{W}(\mathbf{1}_V, z)={\rm Id}_{W}; 
\] 

\item { Duality}: For $u_{1}, u_{2}\in V$, $w\in W$, 
\[
w'\in W'=\coprod_{n\in \mathbb{Z}}W_{(n)}^{*}, 
\]
 $W'$ denotes 
the dual $V$-module to $W$ and $\langle\, .,. \rangle$ their evaluation 
pairing, the series 
\begin{eqnarray}
\label{porosyataw}
&& \langle w', Y_{W}(u_{1}, z_{1})Y_{W}(u_{2}, z_{2})w\rangle,
\\ 
&& \langle w', Y_{W}(u_{2}, z_{2})Y_{W}(u_{1}, z_{1})w\rangle, 
\\
&& \langle w', Y_{W}(Y_{V}(u_{1}, z_{1}-z_{2})u_{2}, z_{2})w\rangle, 
\end{eqnarray}
are absolutely convergent
in the regions
\[ 
|z_{1}|>|z_{2}|>0,
\]
\[ 
|z_{2}|>|z_{1}|>0, 
\]
\[
|z_{2}|>|z_{1}-z_{2}|>0,
\]
 respectively, to a common rational function 
in $z_{1}$ and $z_{2}$ with the only possible poles at $z_{1}=0=z_{2}$ and 
$z_{1}=z_{2}$. 
\item { $L_{W}(0)$-bracket formula}: For  $v\in V$,
\begin{equation}
\label{locomm}
[L_{W}(0), Y_{W}(v, z)]=Y_{W}(L_V(0)v, z)+z\frac{d}{dz}Y_{W}(v, z); 
\end{equation}
\item { $L_W(0)$-grading property}: For $w\in W_{(\alpha)}$, there exists
$N \in \Z_{+}$ such that 
\begin{equation}
\label{gradprop}
(L_{W}(0)-\alpha)^{N}w=0;  
\end{equation}
\item { 
$L_W(-1)$-derivative property}: For $v\in V$,
\begin{equation}
\label{derprop}
\frac{d}{dz}Y_{W}(u, z)=Y_{W}(L_{V}(-1)u, z)=[L_{W}(-1), Y_{W}(u, z)].
\end{equation}
\end{enumerate}
\end{definition}
The translation property of vertex operators 
\begin{equation}
\label{transl}
 Y_{W}(u, z) = e^{-z' L_{W}(-1)} Y_{W}(u, z+z') e^{z' L_{W}(-1)}, 
\end{equation}
for $z' \in \C$, follows from from \eqref{derprop}.  
For $v\in V$, and $w \in W$, the intertwining operator 
\begin{eqnarray}
\label{interop}
&& Y_{WV}^{W}: V\to W,  
\nn
&&
v   \mapsto  Y_{WV}^{W}(w, z) v,    
\end{eqnarray}
 is defined by 
\begin{eqnarray}
\label{wprop}
Y_{WV}^{W}(w, z) v= e^{zL_W(-1)} Y_{W}(v, -z) w. 
\end{eqnarray}
For $a\in \C$, 
 the conjugation property with respect to the grading operator $L_W{(0)}$ is given by 
%
\begin{equation}
\label{aprop}
 a^{ L_W{(0)} } \; Y_W(v,z) \; a^{-L_W{(0)} }= Y_W (a^{ L_W{(0)} } v, az).    
\end{equation}
\subsection{Generators of Virasoro algebra and the group of 
automorphisms}
Let us recall some further facts from \cite{BZF} relating generators of Virasoro algebra with the group of 
automorphisms in complex dimension one. 
 Let us represent an element of ${\rm Aut}_z \; \Oo^{(1)}$ by the map  
\begin{equation}
\label{lempa}
z \mapsto \rho=\rho(z),
\end{equation}
given by the power series
\begin{equation}
\label{prostoryad}
\rho(z) = \sum\limits_{k \ge 1} a_k z^k, 
\end{equation}
%
$\rho(z)$ can be represented in an exponential form 
\begin{equation}
\label{rog}
f(z) = \exp \left(  \sum\limits_{k > -1} \beta_{k }\; z^{k+1} \partial_{z} \right) 
\left(\beta_0 \right)^{z \partial_z}.z, 
\end{equation}
where we express $\beta_k \in \mathbb C$, $k \ge 0$, through combinations of $a_k$, $k\ge 1$.  
 A representation of Virasoro algebra modes in terms of differenatial operators is given by \cite{K} 
\begin{equation}
\label{repro}
L_W(m) \mapsto - \zeta^{m+1}\partial_\zeta, 
\end{equation}
for $m \in \Z$. 
 By expanding \eqref{rog} and comparing to \eqref{prostoryad} we obtain a system of equations which, 
 can be solved recursively for all $\beta_{k}$.
In \cite{BZF}, $v \in V$, they derive the formula 
\begin{eqnarray}
\label{infaction}
&&
\left[L_W(n), Y_W (v, z) \right] 
=  \sum_{m \geq -1}  
 \frac{1}{(m+1)!} \left(\partial^{m+1}_{z} z^{m+1}  \right)\;  
Y_W (L_V(m) v, z),    
\end{eqnarray}
of a Virasoro generator commutation with a vertex operator. 
Given a vector field 
\begin{equation}
\label{top}
\beta(z)\partial_z= \sum_{n \geq -1} \beta_n z^{n+1} \partial_z, 
\end{equation}
which belongs to local Lie algebra of ${\rm Aut}_z\; \Oo^{(1)}$, 
 one introduces the operator 
\[
\beta = - \sum_{n \geq -1} \beta_n L_W(n). 
\]
We conlclude from \eqref{top} with the following 
\begin{lemma}
\begin{eqnarray}
\label{infaction000}
&&
\left[\beta, Y_W (v, z) \right] 
=  \sum_{m \geq -1}  
 \frac{1}{(m+1)!} \left(\partial^{m+1}_{z} \beta(z)  \right)\;  
Y_W (L_V(m) v, z).   
\end{eqnarray}
\end{lemma}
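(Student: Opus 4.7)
The plan is to derive the identity by $\mathbb{C}$-linear summation of the single-generator commutation formula \eqref{infaction} against the Fourier coefficients $\beta_n$. Since the bracket is bilinear and the operator $\beta$ is defined by $\beta=-\sum_{n\ge -1}\beta_n L_W(n)$, the first step is simply
\[
[\beta,Y_W(v,z)]=-\sum_{n\ge -1}\beta_n\bigl[L_W(n),Y_W(v,z)\bigr],
\]
so the only work is to insert \eqref{infaction} on the right and identify what comes out.

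Next, I would substitute the expression from \eqref{infaction} for each $n$; the inner sum over $m$ is automatically finite for every fixed $n$ because $\partial_z^{m+1}z^{n+1}$ vanishes once $m>n$, and the factor $\tfrac{1}{(m+1)!}\partial_z^{m+1}z^{n+1}=\binom{n+1}{m+1}z^{n-m}$ has the standard form. Swapping the order of summation then groups terms by the index $m$ appearing inside $Y_W(L_V(m)v,z)$, giving
\[
[\beta,Y_W(v,z)]=\sum_{m\ge -1}\frac{1}{(m+1)!}\Bigl(\partial_z^{m+1}\sum_{n\ge -1}\beta_n z^{n+1}\Bigr)Y_W(L_V(m)v,z),
\]
after which the inner series is recognized, by \eqref{top}, as exactly $\beta(z)$, yielding the stated formula.

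The main issue, and the one I would treat with care, is not convergence (the interchange is legitimate because, for each power $z^k$, only finitely many pairs $(n,m)$ with $n-m=k$ contribute on the right-hand side) but the sign bookkeeping: one must check that the minus sign in the definition $\beta=-\sum\beta_n L_W(n)$ combines correctly with the minus sign in the representation \eqref{repro} $L_W(m)\mapsto -\zeta^{m+1}\partial_\zeta$ so that the operator $\beta$ corresponds precisely to the vector field $\beta(z)\partial_z$ in \eqref{top} and produces the coefficient $+\tfrac{1}{(m+1)!}\partial_z^{m+1}\beta(z)$ on the right-hand side rather than its negative. This is a direct substitution and not a conceptual obstacle, but it is the step where a reader is most likely to lose track of signs, so I would write it out explicitly before asserting the conclusion.
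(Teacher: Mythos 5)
Your proposal is correct and is exactly the argument the paper intends: the lemma is presented as an immediate consequence of \eqref{infaction} and the definition $\beta=-\sum_{n\ge -1}\beta_n L_W(n)$, obtained by bilinearity of the bracket and an interchange of the two (locally finite) sums, which is precisely what you do, including the correct identification $\tfrac{1}{(m+1)!}\partial_z^{m+1}z^{n+1}=\binom{n+1}{m+1}z^{n-m}$ that silently repairs the typo $z^{m+1}$ for $z^{n+1}$ in \eqref{infaction} as printed. The sign caveat you flag is the one genuinely delicate point: with the paper's literal conventions the computation produces $-\tfrac{1}{(m+1)!}\bigl(\partial_z^{m+1}\beta(z)\bigr)$ unless a compensating sign is read into \eqref{infaction}, so writing that step out explicitly, as you propose, is the right thing to do — the ambiguity lies in the source's conventions, not in your argument.
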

The formula \eqref{infaction000}  is used in \cite{BZF} (Chapter 6) in order to prove invariance of 
vertex operators multiplied by conformal weight differentials in case of primary states, and 
in generic case.  

Let us give some further definition: 
\begin{definition}
\label{quasiconf}
A grading-restricted vertex algebra $V$-module $W$ is called quasi-conformal if 
  it carries an action of local Lie algebra of ${\rm Aut}_z\; \Oo$  
such that commutation formula
 \eqref{infaction000}    
 holds for any 
$v \in  V$, the element 
\[
L_W(-1) = - \partial_{z}, 
\]
as the translation operator $T$,
\[
L_W(0) = - z \partial_{z},  
\]
  acts semi-simply with integral
eigenvalues, and the Lie subalgebra of the positive part of local Lie algebra of ${\rm Aut}_z\; \Oo^{(n)}$
 acts locally nilpotently. 
\end{definition}
Recall \cite{BZF} the exponential form $f(\zeta)$ \eqref{rog} of the coordinate transformation \eqref{lempa} 
$\rho(z) \in {\rm Aut}_z\; \Oo^{(1)}$.
A quasi-conformal vertex algebra posseses the formula \eqref{infaction000}, thus 
it is possible 
by using the identification \eqref{repro}, to introduce the linear operator 
 representing $f(\zeta)$ \eqref{rog} on $\W_{z_1, \ldots, z_n}$,  
\begin{equation}
\label{poperator}
 P\left(f (\zeta) \right)= 
\exp \left( \sum\limits_{m >  0} (m+1) \; \beta_m \; L_V(m)\right) \beta_0^{L_W(0)},  
\end{equation}
 (note that we have a different normalization in it). 
In \cite{BZF} (Chapter 6) it was shown that  
the action of an operator similar to \eqref{poperator} 
on a vertex algebra element $v\in V_n$ contains finitely meny terms, and 
subspaces 
\[
V_{\le m} = \bigoplus_{ n \ge K}^m V_n, 
\]
 are stable under all operators $P(f)$, $f \in  {\rm Aut}_z\; \Oo^{(1)}$. 
 In \cite{BZF} they proved the following 
\begin{lemma}
 The assignment
\[
 f \mapsto P(f), 
\]
 defines a representation of ${\rm Aut}_z\; \Oo^{(1)}$
on $V$, 
\[
 P(f_1 * f_2) = P(f_1) \; P(f_2),  
\]
 which is the inductive limit of the
representations $V_{\le m}$, $m\ge K$.
\end{lemma}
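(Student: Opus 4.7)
The plan is to integrate the Lie algebra map $-z^{m+1}\partial_z \mapsto L_V(m)$ (for $m \ge 0$) to a group homomorphism from $\mathrm{Aut}_z\,\Oo^{(1)}$ into $\mathrm{End}(V)$, exploiting the exponential coordinates \eqref{rog} on the target group and the quasi-conformality hypothesis. First, I would observe that on the formal disk the vector fields $-\zeta^{m+1}\partial_\zeta$ for $m \ge 0$ span the Lie algebra $\mathrm{Der}_0\,\Oo$ of $\mathrm{Aut}_z\,\Oo^{(1)}$, with bracket $[-\zeta^{m+1}\partial_\zeta, -\zeta^{n+1}\partial_\zeta]=(m-n)(-\zeta^{m+n+1}\partial_\zeta)$, which matches the Virasoro relation $[L_V(m),L_V(n)]=(m-n)L_V(m+n)$ on the non-negative part (the central term vanishes for $m,n \ge -1$). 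Hence \eqref{repro} restricted to $m\ge 0$ is a genuine Lie algebra homomorphism $\mathrm{Der}_0\,\Oo \to \mathrm{End}(V)$.

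Second, I would check that $P(f)$ as defined in \eqref{poperator} makes sense and preserves the filtration $V_{\le m}$. Quasi-conformality (Definition \ref{quasiconf}) says the positive part $\{L_V(m)\}_{m>0}$ acts locally nilpotently, so on each $V_{\le m}$ the exponential $\exp(\sum_{m>0}(m+1)\beta_m L_V(m))$ is in fact a finite sum and thus a well-defined invertible endomorphism of $V_{\le m}$; the factor $\beta_0^{L_W(0)}$ is well-defined on the integral eigenspaces of $L_W(0)$. Since $V$ is the union $\bigcup_{m\ge K} V_{\le m}$ and each $P(f)$ preserves and is compatible across these subspaces, the family $\{P(f)|_{V_{\le m}}\}$ forms an inductive system whose limit is $P(f)$ on $V$.

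Third, and this is the crux, I would verify the homomorphism property $P(f_1*f_2) = P(f_1)P(f_2)$. For this I would reduce to the one-parameter subgroups generated by $\beta_m z^{m+1}\partial_z$: if $f_1 = \exp(t_1 X)$, $f_2 = \exp(t_2 X)$ for a single vector field $X \in \mathrm{Der}_0\,\Oo$, the statement is immediate from $\exp(t_1\widetilde X)\exp(t_2\widetilde X)=\exp((t_1+t_2)\widetilde X)$, where $\widetilde X$ denotes the image of $X$ under the Lie algebra map. For general $f_1,f_2$, I would use Baker--Campbell--Hausdorff on both sides: on the $\mathrm{Aut}_z\,\Oo^{(1)}$ side BCH expresses $f_1*f_2$ in exponential coordinates through iterated Lie brackets of the $z^{m+1}\partial_z$; on the $\mathrm{End}(V)$ side BCH expresses $P(f_1)P(f_2)$ through iterated brackets of the $L_V(m)$. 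Since the Lie brackets agree and all sums are finite on each $V_{\le m}$ (by local nilpotency), the two expansions match coefficient by coefficient. The scaling factor $\beta_0^{L_W(0)}$ requires separate care: one uses the $L_V(0)$-bracket formula $[L_V(0),L_V(m)]=-mL_V(m)$ to move $\beta_0^{L_W(0)}$ past the exponential, matching the semidirect product structure $\mathrm{Aut}_z\,\Oo^{(1)} \cong \mathbb{G}_m \ltimes \mathrm{Aut}_z^+ \Oo^{(1)}$ of the automorphism group.

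The hard part will be Step 3: carrying out the BCH comparison rigorously, in particular ensuring that the two BCH expansions (one on the group of formal coordinate changes, one on $\mathrm{End}(V_{\le m})$) are termwise identified by the Lie algebra map and that all convergence issues are absorbed into the local nilpotency statement. Once this is done, the representation property holds on each $V_{\le m}$, and passing to the inductive limit yields the claimed representation of $\mathrm{Aut}_z\,\Oo^{(1)}$ on $V$.
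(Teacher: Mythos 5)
The paper does not actually prove this lemma: it is quoted verbatim from \cite{BZF} (Chapter 6), so there is no in-text argument to compare yours against. Your proposal is, in substance, the proof given in that reference: split ${\rm Aut}_z\,\Oo^{(1)}$ as the semidirect product of $\mathbb{G}_m$ (handled by the semisimple, integral-eigenvalue action of $L(0)$) with the pro-unipotent subgroup ${\rm Aut}_z^{+}\,\Oo^{(1)}$, whose pro-nilpotent Lie algebra $z^{2}\C[[z]]\partial_z$ maps to the locally nilpotently acting operators $L_V(m)$, $m>0$; exponentiate; and verify the homomorphism property by Baker--Campbell--Hausdorff, all sums being finite on each $V_{\le m}$ and compatible with the inductive system. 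The structure is correct and the step you flag as hard (the termwise BCH comparison) is exactly where the content sits. Two small points to tighten: (i) for the BCH expansions to be termwise identified you need the Lie algebra map to be continuous for the filtration topologies, which you should state explicitly --- it holds because $L_V(m)$ lowers the grading by $m$, so on a fixed $v\in V_{\le N}$ only finitely many $L_V(m)$ act nontrivially and the pro-nilpotent BCH series truncates; (ii) the factor $\beta_0^{L_W(0)}$ in \eqref{poperator} should really be $\beta_0^{L_V(0)}$ for a representation on $V$ (an inconsistency you inherit from the paper's formula rather than introduce), and the conjugation rule you invoke, $[L_V(0),L_V(m)]=-mL_V(m)$, is the correct way to pass the scaling factor through the unipotent part and match the semidirect product law.
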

Similarly, \eqref{poperator} provides a representation operator on $\W_{z_1, \ldots, z_n}$. 
 
\subsection{Non-degenerate  invariant bilinear form on $V$} 
\label{liza}
The subalgebra 
\[
\{L_V(-1),L_V(0),L_V(1)\}\cong SL(2,\mathbb{C}), 
\]
 associated with M\"{o}bius transformations on  
$z$ naturally acts on $V$,   (cf., e.g. \cite{K}). 
In particular, 
\begin{equation}
\gamma_{\lambda}=\left(
\begin{array}{cc}
0 & \lambda\\
-\lambda & 0\\	
\end{array}
\right)
:z\mapsto w=-\frac{\lambda^{2}}{z},
 \label{eq: gam_lam}
\end{equation}
is generated by 
\[
T_{\lambda }= \exp\left(\lambda L_V{(-1)}\right) 
\; \exp\left({\lambda}^{-1}L_V(1)\right) \; \exp\left(\lambda L_V(-1)\right),   
\]
 where
\begin{equation}
T_{\lambda }Y(u,z)T_{\lambda }^{-1}=
Y\left(\exp \left(-\frac{z}{\lambda^{2}}L_V(1)\right)
\left(-\frac{z}{\lambda}\right)^{-2L_V(0)}u,-\frac{\lambda^{2}}{z}\right).  \label{eq: Y_U}
\end{equation}
In our considerations (cf. Appendix \ref{sphere}) of Riemann sphere 
sewing, we use in particular, 
the M\"{o}bius map 
\[
z\mapsto z'= \epsilon/z,
\] 
 associated with the sewing condition \eqref{pinch} with 
\begin{equation}
\lambda=-\xi\epsilon^{\frac{1}{2}},
\label{eq:lamb_eps}
\end{equation}  
with $\xi\in\{\pm \sqrt{-1}\}$. 
The adjoint vertex operator \cite{K, FHL}  
is defined by 
\begin{equation}
Y^{\dagger }(u,z)=\sum_{n\in \Z}u^{\dagger }(n)z^{-n-1}= T_{\lambda}Y(u,z)T_{\lambda}^{-1}. \label{eq: adj op}
\end{equation}
%
A bilinear form $\langle . , . \rangle_{\lambda}$ on $V$ is 
invariant if for all $a$, $b$, $u\in V$, 
if  
\begin{equation}
\langle Y(u,z)a,b\rangle_{\lambda} =
\langle a,Y^{\dagger }(u,z)b\rangle_{\lambda}, 
\label{eq: inv bil form}
\end{equation}%
i.e.
\[
 \langle u(n)a,b\rangle_{\lambda} =
\langle a,u^{\dagger }(n)b\rangle_{\lambda}.
\] 
Thus it follows that 
\begin{equation}
\label{dubay}
\langle L_V(0)a,b\rangle_{\lambda} =\langle a,L_V(0)b\rangle_{\lambda}, 
\end{equation}
 so that 
\begin{equation}
\label{condip}
\langle a,b\rangle_{\lambda} =0, 
\end{equation}
  if $wt(a)\not=wt(b)$ for homogeneous $a,b$.
 One also finds 
\[
\langle a,b\rangle_{\lambda} = \langle b,a \rangle_{\lambda}.  
\]
The form 
$\langle . , .\rangle_{\lambda}$ is unique up to normalization if $L_V(1)V_{1}=V_{0}$.  
 Given any $V$ basis $\{ u^{\alpha}\}$ we define the 
dual $V$ basis $\{ \overline{u}^{\beta}\}$ where 
\[
\langle u^{\alpha} ,\overline{u}^{\beta}\rangle_{\lambda}=\delta^{\alpha\beta}.
\] 

\section{Appendix: $\W_{z_1, \ldots, z_n}$-valued rational functions} 
\label{valued}

 Recall the definition of shuffles. 
\begin{definition}
\label{shuffles}
Let  $S_{q}$ be the permutation group. 
For $l \in \N$ and $1\le s \le l-1$, let $J_{l; s}$ be the set of elements of 
$S_{l}$ which preserve the order of the first $s$ numbers and the order of the last 
$l-s$ numbers, that is,
\[
J_{l, s}=\{\sigma\in S_{l}\;|\;\sigma(1)<\cdots <\sigma(s),\;
\sigma(s+1)<\cdots <\sigma(l)\}.
\]
The elements of $J_{l; s}$ are called shuffles, and we use the notation 
\[
J_{l; s}^{-1}=\{\sigma\;|\; \sigma\in J_{l; s}\}.
\]
\end{definition}
\begin{definition}
\label{confug}
 We define the configuration spaces: 
\[
F_{n}\C=\{(z_{1}, \dots, z_{n})\in \C^{n}\;|\; z_{i}\ne z_{j}, i\ne j\},
\] 
for $n\in \Z_{+}$.
\end{definition}
Let $V$ be a grading-restricted 
vertex algebra, and $W$ a a grading-restricted generalized $V$-module.  
By $\overline{W}$ we denote the algebraic completion of $W$, 
\[
\overline{W}=\prod_{n\in \mathbb C}W_{(n)}=(W')^{*}.
\]
\begin{definition}
 A $\overline{W}$-valued rational function in $(z_{1}, \dots, z_{n})$ 
with the only possible poles at 
$z_{i}=z_{j}$, $i\ne j$, 
is a map 
\begin{eqnarray*}
 f:F_{n}\C &\to& \overline{W},   
\\
 (z_{1}, \dots, z_{n}) &\mapsto& f(z_{1}, \dots, z_{n}),   
\end{eqnarray*} 
such that for any $w'\in W'$,  
\[
R(z_1, \ldots, z_n)= \langle w', f(z_{1}, \dots, z_{n}) \rangle,
\] 
is a rational function in $(z_{1}, \dots, z_{n})$  
with the only possible poles  at 
$z_{i}=z_{j}$, $i\ne j$. 
In this paper, such a map is called $\overline{W}$-valued rational function 
in $(z_{1}, \dots, z_{n})$ with possible other poles.
 The space of $\overline{W}$-valued rational functions is denoted by $\overline{W}_{z_{1}, \dots, z_{n}}$.
\end{definition} 
\begin{definition}
One defines an action of $S_{n}$ on the space $\hom(V^{\otimes n}, 
\overline{W}_{z_{1}, \dots, z_{n}})$ of linear maps from 
$V^{\otimes n}$ to $\overline{W}_{z_{1}, \dots, z_{n}}$ by 
\begin{equation}
\label{sigmaction}
\sigma(\Phi)(v_{1}, z_1; \cdots; v_{n}, z_n)  
=\Phi (v_{\sigma(1)}, v_{\sigma(1)};  \cdots v_{\sigma(n)}, z_{\sigma(n)}), 
\end{equation}   
for $\sigma\in S_{n}$, and $v_{1}, \dots, v_{n}\in V$.
\end{definition}
We will use the notation $\sigma_{i_{1}, \dots, i_{n}}\in S_{n}$, to denote the 
the permutation given by $\sigma_{i_{1}, \dots, i_{n}}(j)=i_{j}$,  
for $j=1, \dots, n$.
In \cite{Huang} one finds:
\begin{proposition}
The subspace of $\hom ( V^{\otimes n}, \W_{z_1, \ldots, z_n} )$ consisting of linear maps
having
the $L(-1)$-derivative property, having the $L(0)$-conjugation property
or being composable with $m$ vertex operators is invariant under the 
action of $S_{n}$.
\end{proposition}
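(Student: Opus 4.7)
The plan is to verify the claim property by property, since each of the three defining conditions is phrased in terms of the unordered data $\{(v_i,z_i)\}_{i=1}^n$ and so should be stable under the tautological $S_n$-action $\sigma(\Phi)(v_1,z_1;\ldots;v_n,z_n) = \Phi(v_{\sigma(1)},z_{\sigma(1)};\ldots;v_{\sigma(n)},z_{\sigma(n)})$ defined in \eqref{sigmaction}. In each case I will let $\Phi$ satisfy the given property, fix $\sigma\in S_n$, and show that $\sigma(\Phi)$ satisfies the same identity by a change-of-indices argument.

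First, for the $L_W(-1)$-derivative property, the defining identity has the form $\sum_{i=1}^n \partial_{z_i}\Phi = L_W(-1)\Phi$ (with $\Phi$ evaluated on $(v_1,z_1;\ldots;v_n,z_n)$). Both sides of this identity are manifestly symmetric under relabelling the pairs $(v_i,z_i)\mapsto(v_{\sigma(i)},z_{\sigma(i)})$: the sum $\sum_i \partial_{z_i}$ is reindexed to $\sum_i \partial_{z_{\sigma(i)}}$, which is the same set of derivatives, and the right-hand side is independent of the order. Similarly, the $L_V(0)$-conjugation property asserts an equality of the form $a^{L_W(0)}\Phi(v_1,z_1;\ldots;v_n,z_n) = \Phi(a^{L_V(0)}v_1,az_1;\ldots;a^{L_V(0)}v_n,az_n)$; permuting the inputs on both sides simultaneously produces the same identity for $\sigma(\Phi)$.

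The harder case, and the one I expect to be the main obstacle, is the composability condition, which consists of the existence of positive integers $N^n_m(v_i,v_j)$ together with pole-order bounds on the sums $\mathcal I^n_m(\Phi)$ and $\mathcal J^n_m(\Phi)$ built from $\Phi$ and $m$ additional vertex operators. The integers $N^n_m(v_i,v_j)$ depend only on the unordered pair $\{v_i,v_j\}$, so a permutation $\sigma$ simply relabels the index set. For the analytic part, one uses the fact that the sums defining $\mathcal I^n_m$ and $\mathcal J^n_m$ absolutely converge to rational forms whose pole loci $z_i=z_j$ are intrinsically symmetric; hence the corresponding sums built from $\sigma(\Phi)$ converge on the image under $\sigma$ of the original domain and extend to the same rational form in the permuted variables. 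The technical check here is that reordering the insertions $(v_1,z_1),\ldots,(v_n,z_n)$ in the arguments of $\Phi$ does not alter the pole orders at $z_i=z_j$ of the extended rational function, which follows from the symmetry of the rational form itself (not of any particular series expansion).

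Finally, I would combine the three invariance statements. Each subspace is cut out by an $S_n$-invariant condition, hence is $S_n$-stable, and their intersection inherits the same invariance. This is exactly Huang's statement and completes the proof. The core conceptual point is that although the definitions of the three properties select preferred orderings for expansions and convergence domains, the resulting rational forms in $\overline{W}_{z_1,\ldots,z_n}$ are defined on the symmetric configuration space $F_n\mathbb{C}$ and therefore do not privilege any ordering of the variables.
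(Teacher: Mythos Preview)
The paper does not actually prove this proposition: it is stated in Appendix~\ref{valued} with the attribution ``In \cite{Huang} one finds'', so the paper simply quotes the result from Huang's cohomology paper without reproducing the argument. Your sketch is therefore not being compared against a proof in the present paper but against a citation; what you wrote is essentially the natural verification one would expect (and presumably the one in \cite{Huang}).

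Your argument is correct in outline, with one small omission worth noting: the $L(-1)$-derivative property as defined in \eqref{lder1}--\eqref{lder2} has two clauses, the pointwise identity $\partial_{z_i}\Phi=\Phi(\ldots,L_V(-1)v_i,\ldots)$ for each $i$, and the summed identity $\sum_i\partial_{z_i}\Phi=L_W(-1).\Phi$. You only treat the summed version. The pointwise version is equally invariant under relabelling---applying $\sigma$ sends the $i$-th identity for $\Phi$ to the $\sigma^{-1}(i)$-th identity for $\sigma(\Phi)$---so the fix is trivial, but you should mention it. For the composability condition your reasoning is sound: the defining sums $\mathcal I^n_m$, $\mathcal J^n_m$ and the pole-order bounds $N^n_m(v_i,v_j)$ are all indexed by unordered data on the configuration space $F_n\C$, so a permutation of the inputs merely reindexes the same analytic statements. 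With the missing clause added, your proof is complete and matches what one would expect from the cited reference.
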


Let us introduce another definition: 
\begin{definition}
\label{wspace}
We define the space $\W_{z_1, \dots, z_n}$ of 
  $\overline{W}_{z_{1}, \dots, z_{n}}$-valued rational forms $\Phi$ 
with each vertex algebra element entry $v_i$, $1 \le i \le n$
of a quasi-conformal grading-restricted vertex algebra $V$ tensored with power $\wt(v_i)$-differential of 
corresponding formal parameter $z_i$, i.e., 
\begin{equation}
\label{bomba}
\Phi \left(dz_1^{\wt(v_1)} \otimes v_{1}, z_1; \cdots;
 dz_n^{\wt(v_n)} \otimes  v_{n}, z_n\right) \in \W_{z_1, \dots, z_n}. 
\end{equation}
We assume also that \eqref{bomba} 
satisfy $L_V(-1)$-derivative \eqref{lder1}, $L_V(0)$-conjugation 
\eqref{loconj} properties, and the symmetry property 
with respect to action of the symmetric group $S_n$: 
\begin{equation}
\label{shushu} 
\sum_{\sigma\in J_{l; s}^{-1}}(-1)^{|\sigma|}
 \left(\Phi(v_{\sigma(1)}, z_{\sigma(1)}; \cdots; v_{\sigma(l)},  z_{\sigma(1)}) \right)=0.
\end{equation}
\end{definition}
In Section \ref{spaces} we prove that \eqref{bomba} is invariant with respect to changes of formal parameters 
$(z_1, \dots, z_n)$. 
\section{Appendix: Properties of matrix elements for a grading-restricted vertex algebra}
\label{properties}
Let $V$ be a grading-restricted vertex algebra and $W$ a grading-restricted generalized $V$-module. 
Let us recall some definitions and facts about matrix elements for a grading-restricted vertex algebra \cite{Huang}. 
If a meromorphic function $f(z_{1}, \dots, z_{n})$ on a domain in $\mathbb C^{n}$ is 
 analytically extendable to a rational function in $z_{1}, \dots, z_{n}$, 
we denote this rational function  
by $R(f(z_{1}, \dots, z_{n}))$.
Let us recall a few definitions from \cite{Huang}
\begin{definition}
\label{}
For $n\in \Z_{+}$,  
a linear map 
\[
\Phi(v_{1}, z_{1};  \ldots ; v_{n},  z_{n})
= V^{\otimes n}\to 
\W_{z_{1}, \dots, z_{n}}, 
\]
 is said to have
the  $L(-1)$-derivative property if
\begin{equation}
\label{lder1}
(i) \qquad \langle w', \partial_{z_{i}} \Phi(v_{1}, z_{1};  \ldots ; v_{n},  z_{n})\rangle= 
\langle w',  \Phi(v_{1}, z_{1};  \ldots; L_{V}(-1)v_{i}, z_i; \ldots ; v_{n},  z_{n}) \rangle, 
\end{equation}
for $i=1, \dots, n$, $v_{1}, \dots, v_{n}\in V$, $w'\in W$, 
and  
\begin{eqnarray}
\label{lder2}
(ii) \qquad \sum\limits_{i=1}^n\partial_{z_{i}} \langle w', \Phi(v_{1}, z_{1};  \ldots ; v_{n},  z_{n})\rangle=
\langle w', L_{W}(-1).\Psi(v_{1}, z_{1};  \ldots ; v_{n},  z_{n}) \rangle, 
\end{eqnarray}
with some action $.$ of $L_{W}(-1)$  on $\Phi(v_{1}, z_{1};  \ldots ; v_{n},  z_{n})$, and 
and  $v_{1}, \dots, v_{n}\in V$. 
\end{definition}
Note that since $L_{W}(-1)$ is a weight-one operator on $W$, 
for any $z\in \C$,  $e^{zL_{W}(-1)}$ is a well-defined linear operator
on $\overline{W}$. 

In \cite{Huang} we find the following 
\begin{proposition}
\label{n-comm}
Let $\Phi$ be a linear map having the $L(-1)$-derivative property. Then for $v_{1}, \dots, v_{n}\in V$,
$w'\in W'$, $(z_{1}, \dots, z_{n})\in F_{n}\C$, $z\in \C$ such that
$(z_{1}+z, \dots,  z_{n}+z)\in F_{n}\C$,
\begin{eqnarray}
\label{ldirdir}
 \langle w', e^{zL_{W}(-1)} 
\Phi \left(v_{1}, z_1; \ldots; v_{n}, z_{n} \right) \rangle 
%
=\langle w', 
\Phi(v_{1}, z_{1}+z ; \ldots; v_{n},  z_{n}+z) \rangle,  
\end{eqnarray}
and for $v_{1}, \dots, v_{n}\in V$,
$w'\in W'$, $(z_{1}, \dots, z_{n})\in F_{n}\C$, $z\in \C$,  and $1\le i\le n$ such that
\[
(z_{1}, \dots, z_{i-1}, z_{i}+z, z_{i+1}, \dots, z_{n})\in F_{n}\C,
\]
the power series expansion of 
\begin{equation}\label{expansion-fn}
\langle w', 
 \Phi(v_{1}, z_1;  \ldots; v_{i-1}, z_{i-1};  v_i, z_{i}+z; v_{v+1}, z_{i+1}; \ldots  v_{n}, z_n) \rangle, 
\end{equation}
in $z$ is equal to the power series
\begin{equation}\label{power-series}
\langle w', 
\Phi(v_{1} z_1; \ldots;  v_{i-1}, z_{i-1}; e^{zL(-1)}v_{i}, z_i;  
 v_{i+1}, z_{i+1};  \ldots; v_{n}, z_n) \rangle,  
\end{equation}
in $z$.
In particular, the power series (\ref{power-series}) in $z$ is absolutely convergent
to (\ref{expansion-fn}) in the disk $|z|<\min_{i\ne j}\{|z_{i}-z_{j}|\}$. 
\end{proposition}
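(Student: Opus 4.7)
The plan is to reduce both identities to the statement that, under the $L(-1)$-derivative property, iterated $z$-derivatives of the matrix element produce iterated $L(-1)$-actions, and then to invoke Taylor's theorem for a rational function on its natural domain of analyticity. In both cases, the right-hand sides are, by assumption, $\overline{W}$-valued rational functions, so their matrix elements against $w'\in W'$ are genuine rational functions of the complex variables and the problem reduces to comparing Taylor expansions at $z=0$.

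For the first identity, fix $w'\in W'$ and set
\[
f(z) = \langle w', \Phi(v_{1}, z_{1}+z;\ldots; v_{n}, z_{n}+z)\rangle,
\]
which is rational in $z$ on a neighborhood $D$ of $0$ determined by the condition $(z_{1}+z,\dots,z_{n}+z)\in F_{n}\C$. The chain rule gives $f'(z) = \sum_{i=1}^{n}\bigl(\partial_{z_{i}}\langle w',\Phi\rangle\bigr)$ evaluated at the translated arguments, and part (ii) of the $L(-1)$-derivative property identifies this with $\langle w', L_{W}(-1)\cdot\Phi(v_{1},z_{1}+z;\ldots;v_{n},z_{n}+z)\rangle$. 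Iterating yields $f^{(k)}(0) = \langle w', L_{W}(-1)^{k}\cdot \Phi(v_{1},z_{1};\ldots;v_{n},z_{n})\rangle$, and summing the Taylor series at $z=0$ produces $\sum_{k\ge 0}\tfrac{z^{k}}{k!}\langle w',L_{W}(-1)^{k}\cdot\Phi\rangle = \langle w',e^{zL_{W}(-1)}\Phi\rangle$. The latter pairing is well defined because $w'$ is supported on finitely many $L_{W}(0)$-graded components and $L_{W}(-1)$ shifts grading by one, so at each pairing only finitely many terms contribute, matching $f(z)$ on $D$ by uniqueness of analytic continuation.

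For the second identity, fix an index $i$ and set
\[
g(z) = \langle w', \Phi(v_{1},z_{1};\ldots; v_{i-1},z_{i-1}; v_{i}, z_{i}+z; v_{i+1},z_{i+1};\ldots; v_{n}, z_{n})\rangle.
\]
Part (i) of the $L(-1)$-derivative property now produces $g'(z) = \langle w', \Phi(v_{1},z_{1};\ldots;L_{V}(-1)v_{i},z_{i}+z;\ldots;v_{n},z_{n})\rangle$, and iteration gives $g^{(k)}(0) = \langle w', \Phi(v_{1},z_{1};\ldots; L_{V}(-1)^{k}v_{i},z_{i};\ldots;v_{n},z_{n})\rangle$. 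Since $g(z)$ is rational in $z$ whose only possible singularities in this variable occur at $z_{i}+z = z_{j}$ for $j\neq i$, its Taylor series at $z=0$ converges absolutely on $|z|<\min_{j\neq i}|z_{i}-z_{j}|$, hence certainly on the smaller disk $|z|<\min_{j\neq k}|z_{j}-z_{k}|$ claimed in the statement. Reading off coefficients, this Taylor series is precisely the formal power-series expansion of $\langle w',\Phi(v_{1},z_{1};\ldots; e^{zL_{V}(-1)}v_{i},z_{i};\ldots;v_{n},z_{n})\rangle$, which establishes both the equality of power series and the asserted convergence.

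The main obstacle I anticipate is the interpretation of the symbolic action ``$\cdot$'' appearing in part (ii): one must verify that $e^{zL_{W}(-1)}$ makes sense as an operator on $\overline{W}$ when evaluated against $w'\in W'$, and that differentiation under the pairing commutes with iterated application of $L_{W}(-1)$. This is settled by the grading-restriction hypothesis together with the weight-one character of $L_{W}(-1)$, which truncate each matrix-element pairing to a finite sum; once this is in place, the remaining content of the proposition is essentially Taylor's theorem applied to a rational function whose pole locus is controlled by the configuration-space condition on $(z_{1},\dots,z_{n})$.
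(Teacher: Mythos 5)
Your argument is correct. Note that the paper itself offers no proof of this proposition: it is quoted verbatim from \cite{Huang} with the attribution ``In \cite{Huang} we find the following,'' so there is nothing internal to compare against; your Taylor-expansion argument (iterate parts (i) and (ii) of the $L(-1)$-derivative property to compute the derivatives at $z=0$, identify the resulting series with the exponentials, and control convergence via rationality and the location of the poles at $z_i=z_j$) is precisely the standard proof given in that reference, and your closing remark correctly isolates the only delicate point, namely that grading-restriction makes each pairing $\langle w', L_W(-1)^k\Phi\rangle$ a finite sum so that the iteration of part (ii) is legitimate.
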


Finally, we have 
\begin{definition}
A linear map 
\[
\Phi: V^{\otimes n} \to \W_{z_{1}, \dots, z_{n}}
\]
 has the  $L(0)$-conjugation property if for $v_{1}, \dots, v_{n}\in V$,
$w'\in W'$, $(z_{1}, \dots, z_{n})\in F_{n}\C$ and $z\in \C^{\times}$ so that 
$(zz_{1}, \dots, zz_{n})\in F_{n}\C$,
\begin{eqnarray}
\label{loconj}
\langle w', z^{L_{W}(0)}   
\Phi \left(v_{1}, z_1; \cdots; v_{n}, z_{n} \right) \rangle 
=\langle w', 
 \Phi(z^{L(0)} v_{1}, zz_{1};  \cdots ;  z^{L(0)} v_{n},  zz_{n})\rangle.
\end{eqnarray} 
\end{definition}

\subsection{$E$-elements}
 For $w\in W$, the $\overline{W}$-valued function 
is given by 
$$
E^{(n)}_{W}(v_{1}, z_1; \cdots; v_{n}, z_n; w)
= E(\omega_{W}(v_{1}, z_{1}) \ldots \omega_{W}(v_{n}, z_{n})w),   
$$
where an element $E(.)\in \overline{W}$ is given by (see notations for $\omega_W$ in Section \ref{spaces}) 
\[
\langle w', E(.) \rangle =R(\langle w', . \rangle), 
\]
 and $R(.)$ denotes the following (cf. \cite{Huang}).   
Namely, 
if a meromorphic function $f(z_{1}, \dots, z_{n})$ on a region in $\C^{n}$
can be analytically extended to a rational function in $(z_{1}, \dots, z_{n})$, 
then the notation $R(f(z_{1}, \dots, z_{n}))$ is used to denote such rational function. 
One defines  
\[
E^{W; (n)}_{WV}(w; v_{1}, z_1 ; \ldots; v_{n}, z_n) 
=E^{(n)}_{W}(v_{1}, z_1; \ldots;  v_{n}, z_n; w),
\]
where 
$E^{W; (n)}_{WV}(w; v_{1}, z_1 ; \ldots; v_{n}, z_n)$ is  
an element of $\overline{W}_{z_{1}, \dots, z_{n}}$.
One defines
\[
 \Phi\circ \left(E^{(l_{1})}_{V;\;\one}\otimes \cdots \otimes E^{(l_{n})}_{V;\;\one}\right): 
V^{\otimes m+n}\to 
\overline{W}_{z_{1},  \dots, z_{m+n}},
\] 
by
\begin{eqnarray*}
\lefteqn{(\Phi\circ (E^{(l_{1})}_{V;\;\one}\otimes \cdots \otimes 
E^{(l_{n})}_{V;\;\one}))(v_{1}\otimes \cdots \otimes v_{m+n-1})}
\nn
&&=E(\Phi(E^{(l_{1})}_{V; \one}(v_{1}\otimes \cdots \otimes v_{l_{1}})\otimes \cdots
\nn 
&&\quad\quad\quad\quad\quad \otimes 
E^{(l_{n})}_{V; \one}
(v_{l_{1}+\cdots +l_{n-1}+1}\otimes \cdots 
\otimes v_{l_{1}+\cdots +l_{n-1}+l_{n}}))),  
\end{eqnarray*}
and 
\[
E^{(m)}_{W} \circ_{0
} \Phi: V^{\otimes m+n}\to 
\overline{W}_{z_{1}, \dots, 
z_{m+n-1}},
\]
 is given by 
\begin{eqnarray*}
\lefteqn{
(E^{(m)}_{W}\circ_{0
}\Phi)(v_{1}\otimes \cdots \otimes v_{m+n})
}\nn
&&
=E(E^{(m)}_{W}(v_{1}\otimes \cdots\otimes v_{m};
\Phi(v_{m+1}\otimes \cdots\otimes v_{m+n}))). 
\end{eqnarray*}
Finally,  
\[
E^{W; (m)}_{WV}\circ_{m+1 
}\Phi: V^{\otimes m+n}\to 
\overline{W}_{z_{1}, \dots, 
z_{m+n-1}},
\]
 is defined by 
\begin{eqnarray*}
(E^{W; (m)}_{WV}\circ_{m+1 
}\Phi)(v_{1}\otimes \cdots \otimes v_{m+n})
 =E(E^{W; (m)}_{WV}(\Phi(v_{1}\otimes \cdots\otimes v_{n})
; v_{n+1}\otimes \cdots\otimes v_{n+m})). 
\end{eqnarray*}
In the case that $l_{1}=\cdots=l_{i-1}=l_{i+1}=1$ and $l_{i}=m-n-1$, for some $1 \le i \le n$,
we will use $\Phi\circ_{i} E^{(l_{i})}_{V;\;\one}$ to 
denote $\Phi\circ (E^{(l_{1})}_{V;\;\one}\otimes \cdots 
\otimes E^{(l_{n})}_{V;\;\one})$.
Note that our notations differ with that of \cite{Huang}. 
\section{Appendix: Maps composable with vertex operators}
\label{composable}


In the construction of double complexes in Section \ref{coboundary}
 we would like to use linear maps from tensor powers of $V$ to the space 
$\W_{z_{1}, \dots, z_{n}}$ 
to define cochains in vertex algebra cohomology theory.
 For that purpose, in particular, to define the coboundary operator, we have to compose cochains 
with vertex operators. However, as mentioned in \cite{Huang},
 the images of vertex operator maps in general do not belong to 
algebras or thier modules.
 They belong to corresponding algebraic completions which constitute 
  one of the most subtle features of the theory of vertex algebras.
 Because of this, we might not be able to compose vertex operators directly. 
In order to overcome this problem \cite{H2}, we first write a series by projecting
an element of the algebraic completion of an algebra or a module 
to its homogeneous components. 
Then we compose these homogeneous components with vertex operators,  
and take formal sums.
 If such formal sums are absolutely convergent, then these operators 
can be composed and can be used in constructions. 

%
Another question that appears is the question of associativity. 
Compositions of maps are usually associative.
 But for compositions of maps defined by sums of 
absolutely convergent series the existence of does not provide associativity in general.  
 Nevertheless, the requirement of analyticity provides the associativity \cite{Huang}.  
\begin{definition}
\label{composabilitydef}
For a $V$-module 
\[
W=\coprod_{n\in \C} W_{(n)}, 
\]
 and $m\in \C$, 
let
\[
P_{m}: \overline{W}\to W_{(m)}, 
\]
 be the projection from 
$\overline{W}$ to $W_{(m)}$. 
Let 
\[
\Phi: V^{\otimes n} \to \W_{z_{1}, \dots, z_{n}}, 
\]
 be a linear map. For $m\in \N$, 
$\Phi$ is called \cite{Huang, FQ} to be composable with $m$ vertex operators if  
the following conditions are satisfied:
%

\medskip 
1) Let $l_{1}, \dots, l_{n}\in \Z_+$ such that $l_{1}+\cdots +l_{n}=m+n$,
$v_{1}, \dots, v_{m+n}\in V$ and $w'\in W'$. Set 
 \begin{eqnarray}
\label{psii}
\Xi_{i}
&
=
&
E^{(l_{i})}_{V}(v_{k_1}, z_{k_1}- \zeta_{i};  
 \ldots; 
v_{k_i}, z_{k_i}- \zeta_{i} 
 ; \one_{V}),    
\end{eqnarray}
where
\begin{eqnarray}
\label{ki}
 {k_1}={l_{1}+\cdots +l_{i-1}+1}, \quad  \ldots, \quad  {k_i}={l_{1}+\cdots +l_{i-1}+l_{i}}, 
\end{eqnarray} 
for $i=1, \dots, n$. Then there exist positive integers $N^n_m(v_{i}, v_{j})$
depending only on $v_{i}$ and $v_{j}$ for $i, j=1, \dots, k$, $i\ne j$ such that the series 
\begin{eqnarray}
\label{Inm}
\mathcal I^n_m(\Phi)=
\sum_{r_{1}, \dots, r_{n}\in \Z}\langle w', 
\Phi(P_{r_{1}}\Xi_{1}; \zeta_1; 
 \ldots; 
P_{r_n} \Xi_{n}, \zeta_{n}) 
\rangle,
\end{eqnarray} 
is absolutely convergent  when 
\begin{eqnarray}
\label{granizy1}
|z_{l_{1}+\cdots +l_{i-1}+p}-\zeta_{i}| 
+ |z_{l_{1}+\cdots +l_{j-1}+q}-\zeta_{i}|< |\zeta_{i}
-\zeta_{j}|,
\end{eqnarray} 
for $i,j=1, \dots, k$, $i\ne j$ and for $p=1, 
\dots,  l_i$ and $q=1, \dots, l_j$. 
The sum must be analytically extended to a
rational function
in $(z_{1}, \dots, z_{m+n})$,
 independent of $(\zeta_{1}, \dots, \zeta_{n})$, 
with the only possible poles at 
$z_{i}=z_{j}$, of order less than or equal to 
$N^n_m(v_{i}, v_{j})$, for $i,j=1, \dots, k$,  $i\ne j$.

\medskip 
2) 
 For $v_{1}, \dots, v_{m+n}\in V$, there exist 
positive integers $N^n_m(v_{i}, v_{j})$, depending only on $v_{i}$ and 
$v_{j}$, for $i, j=1, \dots, k$, $i\ne j$, such that for $w'\in W'$, and 
\[
{\bf v}_{n,m}=(v_{1+m}\otimes \cdots\otimes v_{n+m}),
\]
\[
  {\bf z}_{n,m}=(z_{1+m}, \dots, z_{n+m}),
\]
 such that 
\begin{eqnarray}
\label{Jnm}
\mathcal J^n_m(\Phi)=  
\sum_{q\in \C}\langle w', E^{(m)}_{W} \Big(v_{1}\otimes \cdots\otimes v_{m}; 
P_{q}( \Phi({\bf v}_{n,m} ) ({\bf z}_{n,m})\Big)\rangle, 
\end{eqnarray}
is absolutely convergent when 
\begin{eqnarray}
\label{granizy2}
z_{i}\ne z_{j}, \quad i\ne j, \quad 
\nn
|z_{i}|>|z_{k}|>0, 
\end{eqnarray}
 for $i=1, \dots, m$, and $k=m+1, \dots, m+n$, and the sum can be analytically extended to a
rational function 
in $(z_{1}, \dots, z_{m+n})$ with the only possible poles at 
$z_{i}=z_{j}$, of orders less than or equal to 
$N^n_m(v_{i}, v_{j})$, for $i, j=1, \dots, k$, $i\ne j$,. 
\end{definition}
In \cite{Huang}, we the following useful proposition is proven: 
\begin{proposition}\label{comp-assoc}
Let $\Phi: V^{\otimes n}\to 
\overline{W}_{z_{1}, \dots, z_{n}}$
be composable with $m$ vertex operators. Then we have:
\begin{enumerate}
\item For $p\le m$, $\Phi$ is 
composable with $p$ vertex operators and for 
$p, q\in \Z_{+}$ such that $p+q\le m$ and 
$l_{1}, \dots, l_{n} \in \Z_+$ such that $l_{1}+\cdots +l_{n}=p+n$,
$\Phi\circ (E^{(l_{1})}_{V;\;\one}\otimes 
\cdots \otimes E^{(l_{n})}_{V;\;\one})$ and $E^{(p)}_{W}\circ_{p+1}\Phi$
are
composable with $q$ vertex operators. 

\item For $p, q\in \Z_{+}$ such that $p+q\le m$,
$l_{1}, \dots, l_{n} \in \Z_+$ such that $l_{1}+\cdots +l_{n}=p+n$ and
$k_{1}, \dots, k_{p+n} \in \Z_+$ such that $k_{1}+\cdots +k_{p+n}=q+p+n$,
we have
\begin{eqnarray*}
&(\Phi\circ (E^{(l_{1})}_{V;\;\one}\otimes 
\cdots \otimes E^{(l_{n})}_{V;\;\one}))\circ 
(E^{(k_{1})}_{V;\;\one}\otimes 
\cdots \otimes E^{(k_{p+n})}_{V;\;\one})&\nn
&=\Phi\circ (E^{(k_{1}+\cdots +k_{l_{1}})}_{V;\;\one}\otimes 
\cdots \otimes E^{(k_{l_{1}+\cdots +l_{n-1}+1}+\cdots +k_{p+n})}_{V;\;\one}).&
\end{eqnarray*}

\item For $p, q\in \Z_{+}$ such that $p+q\le m$ and
$l_{1}, \dots, l_{n} \in \Z_+$ such that $l_{1}+\cdots +l_{n}=p+n$,
we have
$$E^{(q)}_{W}\circ_{q+1} (\Phi\circ (E^{(l_{1})}_{V;\;\one}\otimes 
\cdots \otimes E^{(l_{n})}_{V;\;\one}))
=(E^{(q)}_{W}\circ_{q+1} \Phi)\circ (E^{(l_{1})}_{V;\;\one}\otimes 
\cdots \otimes E^{(l_{n})}_{V;\;\one}).$$

\item For $p, q\in \Z_{+}$ such that $p+q\le m$, we have
$$E^{(p)}_{W}\circ_{p+1} (E^{(q)}_{W}\circ_{q+1}\Phi)
=E^{(p+q)}_{W}\circ_{p+q+1}\Phi.$$
\end{enumerate}
\end{proposition}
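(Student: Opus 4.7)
The plan is to derive all four statements from the two absolute-convergence-plus-rational-extension conditions \eqref{Inm} and \eqref{Jnm} of Definition \ref{composabilitydef}, combined with the duality axiom of $V$ and of its module $W$, and finally the uniqueness of analytic continuation of rational functions.

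For part (1), I would first observe that composability with $m$ vertex operators trivially implies composability with any $p\le m$: in \eqref{Inm} one may take some of the $l_i$ equal to $1$, so that $E^{(1)}_{V;\one}(v,z)=v$ by the creation property, and in \eqref{Jnm} one may take $p$ of the $v_i$'s on the $W$-side rather than $m$, inheriting the pole-order bounds $N^n_m(v_i,v_j)$. For the two specific compositions, I would unfold the definitions: the map $\Phi\circ(E^{(l_{1})}_{V;\one}\otimes\cdots\otimes E^{(l_{n})}_{V;\one})$ already expends $p=l_1+\cdots+l_n-n$ of the allowed vertex-operator insertions in the $V$-slots of $\Phi$, and the remaining $m-p\ge q$ insertions remain available to realize \eqref{Inm}--\eqref{Jnm} with $q$ further vertex operators. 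A parallel argument, with the roles of the $V$-side and $W$-side interchanged, handles $E^{(p)}_{W}\circ_{p+1}\Phi$; here the outer $E^{(p)}_{W}$ absorbs $p$ of the $m$ available slots on the $W$-side.

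For parts (2)--(4), the strategy is uniform: both sides of each identity give rise, upon pairing with $w'\in W'$, to matrix elements which by part (1) admit absolutely convergent expansions in overlapping open regions carved out by the inequalities of \eqref{granizy1} and \eqref{granizy2}. On such overlap regions the duality axiom applied to $V$ (for (2)) and to $W$ (for (3) and (4)) rewrites one iterated composition as the other: concretely, (2) reduces to the triple-product identity $Y_V(Y_V(u,z_{12})v,z_2)=Y_V(u,z_1)Y_V(v,z_2)$ on coincident convergence regions; (3) reduces to the compatibility $Y_W(u,z)Y_W(v,\zeta)=Y_W(Y_V(u,z-\zeta)v,\zeta)$ between $V$-side and $W$-side vertex operators; and (4) is the standard $W$-associativity $Y_W(Y_V(u,z_{12})v,z_2)=Y_W(u,z_1)Y_W(v,z_2)$. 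Since both sides of each identity analytically extend to rational functions in $(z_1,\dots,z_{q+p+n})$ with poles on the same diagonals, equality on the overlap forces equality as rational functions.

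The main obstacle I anticipate is the bookkeeping of the many nested convergence regions: to invoke duality one has to exhibit a single open set on which \emph{all} intermediate iterated expansions converge simultaneously, and this requires interleaving the conditions $|z_{l_1+\cdots+l_{i-1}+p}-\zeta_i|+|z_{l_1+\cdots+l_{j-1}+q}-\zeta_i|<|\zeta_i-\zeta_j|$ from \eqref{granizy1} with the hierarchy $|z_i|>|z_k|$ from \eqref{granizy2} across two levels of composition. Once the overlap is pinned down, each identity collapses to a single application of the appropriate duality rule and the overall statement follows by induction on the depth of the nested composition; parts (2) and (3) in particular follow from (1) by checking that both sides satisfy \eqref{Inm} and \eqref{Jnm} with the same pole-order bounds, and part (4) is the $W$-side analogue obtained by the same argument.
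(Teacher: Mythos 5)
The paper does not actually prove this proposition: it is imported verbatim from \cite{Huang} (the text immediately above it reads ``In \cite{Huang}, \dots the following useful proposition is proven''), so there is no in-paper argument to compare yours against. Your sketch does follow what is essentially the strategy of the source: unfold Definition \ref{composabilitydef}, establish simultaneous absolute convergence of the nested expansions on an overlap of the regions \eqref{granizy1} and \eqref{granizy2}, apply duality/associativity of $Y_V$ and $Y_W$ there, and conclude by uniqueness of the rational extension. In that sense the route is the standard one and the overall architecture is sound.

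One step is not as ``trivial'' as you assert, and as written it is slightly wrong. The reduction from composability with $m$ vertex operators to composability with $p\le m$ is not a literal subset of conditions: the data for $p$ involve $p+n$ vertex algebra elements and partitions $l_1+\cdots+l_n=p+n$, which do not occur among the conditions for $m$. The standard fix is to pad the configuration with $m-p$ copies of the vacuum, using $Y_V(\one_V,z)={\rm Id}_V$, so that each condition at level $p$ becomes a specialization of one at level $m$. Relatedly, your parenthetical ``$E^{(1)}_{V;\one}(v,z)=v$ by the creation property'' is only correct after specializing the auxiliary point: in the notation of \eqref{psii} one has $E^{(1)}_{V}(v,z-\zeta;\one_V)=e^{(z-\zeta)L_V(-1)}v$, which equals $v$ only at $\zeta=z$. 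These are repairable, but they are exactly the kind of bookkeeping your proposal defers, and they need to be made explicit for part (1) before parts (2)--(4) (which rely on part (1) for convergence of both sides) can be closed.
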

\section{Appendix: Proofs of Lemmas \ref{empty}, \ref{nezu}, \ref{subset} and Proposiiton \ref{nezc}}
\label{proof}
In this Appendix we provide proofs of Lemma \ref{nezu} and Proposiiton \ref{nezc}
\subsection{Proof of Lemma \ref{empty}}
We start with the proof of Lemma \ref{empty}. 
\begin{proof} 
From the construction 
 of spaces for double complex for a grading-restricted vertex algebra cohomology, 
it is clear that the spaces $C^n (V, \W, \U, \F)(U_j)$,  $1 \le s \le m$ in Definition \ref{initialspace} are non-empty.
On each transversal section $U_s$,  $1 \le s \le m$, $\Phi(v_1, c_j(p_1);  \ldots; v_n, c_j(p_n))$
 belongs to the space 
$\W_{c_j(p_1),  \ldots, c_j(p_n)}$, and satisfy the $L(-1)$-derivative \eqref{lder1} and $L(0)$-conjugation
 \eqref{loconj}
properties. 
 A map $\Phi(v_1, c_j(p_1)$;  $\ldots$; $v_n, c_j(p_n))$ 
is composable with $m$ vertex operators 
with formal parameters identified with local coordinates $c_j(p'_j)$, on each transversal section $U_j$.
Note that on each transversal section, $n$ and $m$ the spaces \eqref{ourbicomplex} remain the same.   
The only difference may be constituted by the composibility conditions \eqref{Inm} and \eqref{Jnm} for  $\Phi$.

In particular, 
for  $l_{1}, \dots, l_{n}\in \Z_+$ such that $l_{1}+\cdots +l_{n}=n+m$, 
$v_{1}, \dots, v_{m+n}\in V$ and $w'\in W'$, recall \eqref{psii} that 
 \begin{eqnarray}
\label{psiii}
\Xi_{i}
&
=
&
\omega_V(v_{k_1}, c_{k_1}(p_{k_1})-\zeta_i)  \ldots  \omega_V(v_{k_i}, c_{k_i}(p_{k_i})-\zeta_i) \;\one_{V},     
\end{eqnarray}
where $k_i$ is defined in \eqref{ki}, 
for $i=1, \dots, n$, 
 depend on coordinates of points on transversal sections. 
At the same time, in the first composibility condition 
 \eqref{Inm} depends on projections $P_r(\Xi_i)$, $r\in \C$, 
 of $\W_{c(p_1), \ldots, c(p_n)}$ to $W$, and 
on arbitrary variables $\zeta_i$, $1 \le i \le m$.  
On each transversal connection $U_s$, $1 \le s \le m$,  
the absolute convergency is assumed for the series  \eqref{Inm} (cf. Appendix \ref{composable}). 
Positive integers $N^n_m(v_{i}, v_{j})$,
(depending only on $v_{i}$ and $v_{j}$) as well as $\zeta_i$, 
 for $i$, $j=1, \dots, k$, $i\ne j$, 
may vary for transversal sections $U_s$. 
Nevertheless, the domains of convergency determined by the conditions \eqref{granizy1} which have the form 
\begin{equation}
\label{popas}
|c_{m_i}(p_{m_i})-\zeta_{i}| 
+ |c_{n_i}(p_{n_i})-\zeta_{i}|< |\zeta_{i}-\zeta_{j}|,
\end{equation} 
for $m_i= l_{1}+\cdots +l_{i-1}+p$, $n=l_{1}+\cdots +l_{j-1}+q$,   
 $i$, $j=1, \dots, k$, $i\ne j$ and for $p=1, 
\dots,  l_i$ and $q=1, \dots, l_j$, 
are limited by $|\zeta_{i}-\zeta_{j}|$ in \eqref{popas} from above. 
Thus, for the intersection variation of sets of homology embeddings in \eqref{ourbicomplex}, 
 the absolute convergency condition for \eqref{Inm} is still fulfilled. 
Under intersection in \eqref{ourbicomplex}
by choosing appropriate $N^n_m(v_{i}, v_{j})$, 
one can analytically extend \eqref{Inm}  
to a rational function in $(c_1(p_{1}), \dots, c_{n+m}(p_{n+m}))$,
 independent of $(\zeta_{1}, \dots, \zeta_{n})$, 
with the only possible poles at 
$c_{i}(p_i)=c_{j}(p_j)$, of order less than or equal to 
$N^n_m(v_{i}, v_{j})$, for $i,j=1, \dots, k$,  $i\ne j$. 

As for the second condition in Definition of composibility, we note that, on each transversal section, 
the domains of absolute convergensy 
$c_{i}(p_i)\ne c_{j}(p_j)$, $i\ne j$
\[
|c_{i}(p_i)| > |c_{k}(p_j)|>0,
\]
 for 
 $i=1, \dots, m$, 
and 
$k=1+m, \dots, n+m$, 
for 
\begin{eqnarray}
\mathcal J^n_m(\Phi) &=&  
\sum_{q\in \C}\langle w',
\omega_W(v_{1}, c_1(p_1)) \ldots  \omega_W(v_{m}, c_m(p_m)) \;    
\nn
&& \qquad 
P_{q}(\Phi( v_{1+m}, c_{1+m}(p_{1+m}); \ldots; v_{n+m}, c_{n+m}(p_{n+m})) \rangle, 
\end{eqnarray}
are limited from below by the same set ot absolute values of local coordinates on 
transversal section. 
Thus, under intersection in \eqref{ourbicomplex} this condition is preserved, and 
  the sum \eqref{Jnm} can be analytically extended to a
rational function 
in $(c_{1}(p_1)$, $\dots$, $c_{m+n}(p_{m+n}))$ with the only possible poles at
$c_{i}(p_i)=c_{j}(p_j)$, 
of orders less than or equal to 
$N^n_m(v_{i}, v_{j})$, for $i, j=1, \dots, k$, $i\ne j$. 
Thus, we proved the lemma.   
\end{proof}
\subsection{Proof of Lemma \ref{nezu}}
Next we give proof of Lemma \ref{nezu}. 
\begin{proof}
%
Suppose we consider another transversal basis $\U'$ for $\F$.  
According to the definition,   
for each transversal section $U_i$ which belong to the original basis $\U$ in \eqref{ourbicomplex}   
 there exists a holonomy embedding 
\[
{h}'_i: U_i \hookrightarrow U_j', 
\]
i.e., it embeds $U_i$ into a section $U_j'$ of our new transversal basis $\U'$.  
Then consider the sequnce of holonomy embeddings $\left\{ h'_k \right\}$ such that 
\[
U'_{0} \stackrel{h'_1}{\hookrightarrow}   \ldots \stackrel{h'_k}{\hookrightarrow} U'_k.   
\]
For the combination of embeddings $\left\{ 
{h}'_i, i \ge 0 \right\}$ and  
\[
U_{0}\stackrel{h_1}{\hookrightarrow}  \ldots \stackrel{h_k}{\hookrightarrow} U_k,    
\]
we obtain commutative diagrams. 
Since the intersection in \eqref{ourbicomplex} is performed over all sets of homology mappings, 
then it is independent on the choice of a  transversal basis.
\end{proof}
\subsection{Proof of Proposition \ref{nezc}}
Next, we prove Proposition \ref{nezc}. 
\begin{proof}
Here we prove that for generic elements of a quasi-conformal grading-restricted vertex algebra 
 $\Phi$ and $\omega_W$ $\in \W_{z_1, \ldots, z_n}$ and  
   are canonical, i.e., independent on 
 changes 
\begin{eqnarray}
\label{zwrho}
z_i \mapsto w_i=\rho(z_i), \quad 1 \le i \le n, 
\end{eqnarray}
 of local coordinates of  $c_i(p_i)$ and $c_j(p'_j)$ at points $p_i$ and $p'_j$, $1 \le i \le n$, 
 $1 \le j \le k$. 
Thus the construction of the double complex spaces \eqref{ourbicomplex} is proved to be canonical too.
Let us denote by 
\[
\xi_i = \left( \beta_0^{-1} \; dw_i \right)^{\wt(v_i)}. 
\]
Recall the linear operator \eqref{poper} (cf. Appendix \ref{grading}).   
Define introduce the action of the transformations \eqref{zwrho} as 
\begin{eqnarray}
\label{deiv}
&& \Phi \left(dw_1^{\wt(v_1)} \otimes  v_1, w_1;
 \ldots ; dw_n^{\wt(v_n)} \otimes v_n, w_n \right)  
\nn
&& \qquad =\left( \frac{d f(\zeta)}{d\zeta} \right)^{-L_W(0)}\; P(f(\zeta))  
  \; \Phi \left( \xi_1 
\otimes  v_1, z_1;  
\ldots ; \xi_n 
\otimes v_n, z_n \right). 
\end{eqnarray}
We then obtain 
%
\begin{lemma}
An element \eqref{bomba} 
\[
\Phi \left(dz_1^{\wt(v_1)} \otimes  v_1, z_1;
 \ldots ; dz_n^{\wt(v_n)} \otimes v_n, z_n \right), 
\]
of $\W_{z_1, \ldots, z_n}$ is canonical 
is invariant under transformations \eqref{zwrho} of $\left({\rm Aut}\; \Oo^{(1)}\right)^{\times n}_{z_1,\ldots, z_n }$.
\end{lemma}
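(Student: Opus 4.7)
The plan is to reduce the multi-variable invariance to the single-variable case, and then to adapt the argument from \cite{BZF} for the invariance of a vertex operator tensored with $dz^{\wt v}$ under $\mathrm{Aut}_z\, \Oo^{(1)}$.

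First, I would exploit the product structure of the group $\left(\mathrm{Aut}\, \Oo^{(1)}\right)^{\times n}_{z_1,\ldots,z_n}$. Since this is a direct product of $n$ copies acting independently on the formal parameters $z_1, \ldots, z_n$, and since the operator $P(f(\zeta))$ on $\W_{z_1,\ldots,z_n}$ appearing in \eqref{deiv} is defined slotwise, it suffices to prove invariance under a transformation $z_i \mapsto w_i = \rho(z_i)$ at a single slot with the other parameters held fixed. I would fix an index $i$ for the remainder.

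Second, I would use the exponential factorization \eqref{rog} to write $\rho = \rho_+ \circ \rho_0$ with $\rho_0 : z \mapsto \beta_0 z$ the dilation part and $\rho_+ = \exp\!\left( \sum_{k>0} \beta_k z^{k+1}\partial_z \right)$ the unipotent part. Correspondingly \eqref{poperator} factors $P(f(\zeta)) = \exp\!\left( \sum_{m>0}(m+1)\beta_m L_V(m)\right)\,\beta_0^{L_W(0)}$, and $d\rho/d\zeta$ factors as a product of the scaling and unipotent derivatives. This lets me handle the two pieces separately. For the dilation part, I would apply the $L_W(0)$-conjugation property \eqref{loconj} to $\Phi$ in the $i$-th slot: on a homogeneous $v_i$ one has $\beta_0^{L_V(0)} v_i = \beta_0^{\wt v_i} v_i$, while $dw_i = \beta_0\, dz_i$ gives $dw_i^{\wt v_i} = \beta_0^{\wt v_i} dz_i^{\wt v_i}$. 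The $L_W(0)$ prefactor on the left of \eqref{deiv} combined with the $\xi_i = (\beta_0^{-1} dw_i)^{\wt v_i}$ factor on the right then balance exactly, reducing the claim at this stage to the tautology that $\Phi$ evaluated at the shifted point equals its own $L_W(0)$-conjugate evaluated at the unshifted point.

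Third, for the unipotent part I would invoke the quasi-conformal commutation formula \eqref{infaction000}. Writing matrix elements $\langle w', \Phi\rangle$ via the intertwining operator representation \eqref{wprop}, the insertion at the $i$-th slot is expressible in terms of $V$-module vertex operators $Y_W^{WV}(\cdot, -\zeta)$, to which the argument of \cite{BZF}, Chapter 6, directly applies: conjugation by $\exp\!\left( \sum_{m>0}(m+1)\beta_m L_V(m)\right)$ implements the coordinate change $z_i \mapsto \rho_+(z_i)$, with the differential power $dz_i^{\wt v_i}$ absorbing the Jacobian $(d\rho_+/dz_i)^{\wt v_i}$. Since $V$ is quasi-conformal (Definition \ref{quasiconf}), the positive Virasoro modes act locally nilpotently, so only finitely many terms in the exponential contribute on any fixed graded piece, and the sum is well-defined. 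Combining with the dilation step gives invariance of $\Phi \otimes \prod_i dz_i^{\wt v_i}$ under the full $\rho$ at the $i$-th slot, and iterating over $i$ completes the proof.

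The main obstacle will be the unipotent step: I have to transfer the single-vertex-operator Jacobian computation of \cite{BZF} to the setting where $\Phi$ is a multi-point $\overline{W}$-valued rational form rather than a single insertion. The key technical point is that the action at the $i$-th slot is genuinely localized there, so that the other parameters $z_j$, $j \neq i$, and the corresponding differentials $dz_j^{\wt v_j}$, pass through as inert spectators; this in turn relies on density of matrix elements together with the $L_V(-1)$-derivative \eqref{lder1} and $L_V(0)$-conjugation \eqref{loconj} properties being assumed slotwise in Definition \ref{wspace}. Once this slotwise localization is justified, the invariance reduces to the one-variable BZF argument repeated $n$ times.
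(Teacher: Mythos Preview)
Your approach is correct but differs structurally from the paper's. The paper proceeds by a single direct computation: it applies the definition \eqref{deiv} of the action, then uses the identification \eqref{repro} together with the $L_V(-1)$-derivative property \eqref{lder1} and the $L_V(0)$-conjugation property \eqref{loconj} simultaneously across all $n$ slots to rewrite $\langle w', f'(\zeta)^{-L_W(0)} P(f(\zeta))\,\Phi(\xi_i\otimes v_i, z_i)\rangle$ back as $\langle w', \Phi(dz_i^{\wt v_i}\otimes v_i, z_i)\rangle$ in one pass, without separating the dilation from the unipotent part and without reducing to a single slot.

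Your decomposition---first localizing to one coordinate, then factoring $\rho = \rho_+\circ\rho_0$ and treating the scaling via \eqref{loconj} and the unipotent part via the quasi-conformal commutator \eqref{infaction000}---is the standard \cite{BZF} route and is arguably cleaner conceptually: it makes explicit why quasi-conformality (local nilpotence of positive Virasoro modes) is needed, and it isolates the Jacobian bookkeeping to one variable at a time. The paper's computation is terser but leaves the role of the unipotent part implicit in the formal manipulation with $P(f(\zeta))$. Both arguments rest on the same ingredients \eqref{lder1}, \eqref{loconj}, \eqref{poperator}; yours simply unpacks them in a more modular way. The ``slotwise localization'' issue you flag is real but mild: since the action in \eqref{deiv} is defined via an operator on $\W_{z_1,\ldots,z_n}$ that the paper treats as acting through the $L_W(m)$'s, and since \eqref{lder1}--\eqref{loconj} are already assumed slotwise in Definition~\ref{wspace}, the spectator coordinates indeed pass through.
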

\begin{proof}
Consider \eqref{deiv}. 
First, note that 
\[
 f'(\zeta) =\frac{df(\zeta)}{d\zeta}= \sum\limits_{m \ge 0} (m+1) \; \beta_m \zeta^m. 
\]
By using the identification \eqref{repro} and 
 and the $L_W(-1)$-properties \eqref{lder1} and \eqref{loconj} we  
obtain 
\begin{eqnarray*}
&& 
 \langle w',
 \Phi \left( dw_1^{\wt(v_1)} \otimes  v_1, w_1; 
 \ldots; dw_n^{\wt(v_n)} \otimes v_n, w_n \right) 
\rangle 
\end{eqnarray*}
\begin{eqnarray*}
&& = 
  \langle w', f'(\zeta)^{-L_W(0)}\;  
P(f(\zeta)) 
  \; \Phi \left( \xi_1
\otimes  v_1, z_1;
 \ldots ; \xi_n 
 \otimes v_n, z_n \right)  
\rangle   
\end{eqnarray*}

\begin{eqnarray*}
&&  \qquad = 
 \langle w', 
\left(
\frac{d f(\zeta)} {d\zeta} \right)^{-L_W(0)} \; 
\Phi \left( dw_1^{\wt(v_1)} \otimes  v_1, 
 \sum\limits_{m \ge 0} (m+1)\; \beta_m z_1^{m+1}; 
 \ldots; \right. 
\nn
&& 
\left.  
\qquad \qquad \qquad \qquad \qquad \qquad \qquad \qquad  dw_n^{\wt(v_n)} \otimes v_n, 
 \sum\limits_{m \ge 0} (m+1)\; \beta_m z_n^{m+1} \right)  \rangle 
\end{eqnarray*}
\begin{eqnarray*}
&& 
=  
 \langle w',  
\left( \frac{d f(\zeta)}{d \zeta} \right)^{-L_W(0)} \; 
\Phi \left(dw_1^{\wt(v_1)} \otimes  v_1, 
\left( \frac{d f(z_1)}{dz_1} \right) z_1;      \right. 
\nn
&& 
\left.  
\qquad \qquad \qquad \qquad \qquad \qquad \qquad \qquad  
 \ldots;  dw_n^{\wt(v_n)} \otimes v_n, 
 \left( \frac{d f(z_n)}{dz_n} \right) z_n\right)  \rangle 
\end{eqnarray*}
\begin{eqnarray*}
&&
\qquad =  
 \langle w',  
\Phi \left( \left( \frac{d f(z_1)}{dz_i}  \; dw_1 \right)^{-\wt(v_1)} \otimes  v_1, z_1;     
\right. 
\nn
&&
\left.  
\qquad \qquad \qquad \qquad \qquad \qquad \qquad \qquad \qquad
 \ldots; \left( \frac{d f(z_n)}{dz_n} \; dw_n \right)^{-\wt(v_n)}   \otimes v_n, z_n 
 \right) 
 \rangle 
\nn
&& 
\qquad = 
  \langle w',  \Phi 
\left(dz_1^{\wt(v_1)} \otimes  v_1, z_1;
 \ldots; dz_n^{\wt(v_n)} \otimes v_n, z_n \right) \rangle. 
\end{eqnarray*}
Thus we proved the Lemma.  
\end{proof}
The elements $\Phi(v_1, z_1; \ldots; v_n, z_n)$ of $C^n_k(V, \W, \F)$ belong to the space $\W_{z_1, \ldots, z_n}$ 
and assumed to be composable with a set of vertex operators $\omega_W(v_j', c_j(p_j'))$, $1 \le j \le k$. 
Vertex operators $\omega_W(dc(p)^{\wt(v')} \otimes v_j', c_j(p_j'))$ constitute particular examples \
of mapping of $C^1_\infty(V, \W, \F)$ and, therefore, are invariant with respect to \eqref{zwrho}.  
Thus, the construction of spaces \eqref{ourbicomplex} is invariant under the action of the group
$\left( {\rm Aut} \; \Oo \right)^{\times n}_{z_1, \ldots, z_n}$. 
\end{proof}
\subsection{Proof of Lemma \ref{subset}}
Finally, we give a proof of Lemma \ref{subset}. 
\begin{proof}
Since $n$ is the same for both spaces in \eqref{susus}, it only remains 
to check that the conditions for \eqref{Inm} and \eqref{Jnm} for $\Phi(v_1, c_j(p_1);$  $\ldots;$ $v_n, c_j(p_n))$
of  composibility Definition \ref{composable} 
 with vertex operators are stronger for ${C}_{m}^{n}(V, \W, \U, \F)$ 
then for ${C}_{m-1}^{n}(V, \W, \U, \F)$.  
In particular, 
in the first condition for \eqref{Inm} in definition of composability \ref{composabilitydef}
the difference between the spaces in \eqref{susus} is in indeces.  
Consider 
\eqref{psiii}. 
For ${C}_{m-1}^{n}(V, \W, \U, \F)$, the summations in idexes  
\[
{k_1}={l_{1}+\cdots +l_{i-1}+1}, \; ..., \; {k_i}={l_{1}+\cdots +l_{i-1}+l_{i}}, 
\]
for the coordinates $c_j(p_1)$, ..., $c_j(p_n)$ 
with  $l_{1}, \dots, l_{n}\in \Z_+$, such that $l_{1}+\cdots +l_{n}=n+(m-1)$, 
and vertex algebra elements 
$v_{1}, \dots, v_{n+(m-1)}$ 
are included in summation for indexes for ${C}_{m}^{n}(V, \W, \U, \F)$.  
The conditions for the domains of absolute convergency for $\mathcal M$, i.e.,  
\[
|c_{l_{1}+\cdots +l_{i-1}+p}-\zeta_{i}| 
+ |c_{l_{1}+\cdots +l_{j-1}+q}-\zeta_{i}|< |\zeta_{i}
-\zeta_{j}|, 
\]
for $i$, $j=1, \dots, k$, $i\ne j$, and for $p=1, \dots,  l_i$ and $q=1, \dots, l_j$, 
 for the series \eqref{Inm}  
are more restrictive then for $m-1$. 
The conditions for $\mathcal I^n_{m-1}(\Phi)$ to be extended analytically  
to a rational function in $(c_1(p_{1})$,
 $\dots$, $c_{n+(m-1)} (p_{n+(m-1)}))$,
 with positive integers $N^n_{m-1}(v_{i}, v_{j})$,
depending only on $v_{i}$ and $v_{j}$ for $i, j=1, \dots, k$, $i\ne j$, 
are included in the conditions for $\mathcal I^n_m(\Phi)$. 

Similarly, the second condition for \eqref{Jnm}, 
of is absolute convergency
and analytical extension to a
rational function 
in $(c_{1}(p_1), \dots, c_{m+n}(p_{m+n}))$, with the only possible poles at
$c_{i}(p_i)=c_{j}(p_j)$, 
of orders less than or equal to 
$N^n_m(v_{i}, v_{j})$, for $i, j=1, \dots, k$, $i\ne j$,  
for \eqref{Jnm}  
when 
\[
c_{i}(p_i)\ne c_{j}(p_j), \; i\ne j, \; 
|c_{i}(p_i)| > |c_{k}(p_k)|>0, 
\]
 for
 $i=1, \dots, m$, 
and 
$k=m+1, \dots, m+n$
includes the same condition for $\mathcal J^n_{m-1}(\Phi)$. 
Thus we obtain the conclusion of Lemma. 
\end{proof}

\section{Appendix: Cohomological classes and connections}
\label{cohomological}
\subsection{Classes of grading-restricted vertex alebra cohomology}

In this section we describe certain classes associated to the first and the second vertex algebra cohomologies
 for codimension one foliations.   Let us give some further definitions. 
Usually, the cohomology classes for codimension one foliations \cite{G, CM, Ko} are introduced  by means of  
an extra condition (in particular, the orthogonality condition) applied to differential forms, and leading to 
the integrability condition. 
As we mentioned in Section \ref{coboundary}, it is a separate problem to introduce a product defined on one 
or among various spaces 
  $C^n_m(V, \W, \F)$ of \eqref{ourbicomplex}.  
Note that elements of $\mathcal E$ in \eqref{deltaproduct} and $\mathcal E_{ex}$ in \eqref{mathe2} can be seen as elements 
of spaces $C^1_\infty(V, \W, \F)$, i.e., maps composable with an infinite number of vertex operators. 
Though the actions of coboundary operators $\delta^n_m$ and $\delta^2_{ex}$ in \eqref{deltaproduct} and 
\eqref{halfdelta} are written in form of a product 
 (as in Frobenius theorem \cite{G}), and, in contrast to the case of differential forms, 
 it is complicated to use these products 
 for further formulation 
of cohomological invariants and derivation of analogues of the product-type invariants.   
Nevertheless, even with such a product yet missing, it is possible to introduce the lower-level cohomological 
classes of the form $\left[ \delta \eta \right]$ which are counterparts of the Godbillon class \cite{Galaev}.    
Let us give some further definitions. 
By analogy with differential forms, let us introduce
\begin{definition}
\label{probab}
 We call a map 
\[
\Phi \in C_{k}^{n}(V, \W, \F),  
\] 
closed if it is a closed connection: 
\[
\delta^{n}_{k} \Phi=G(\Phi)=0.
\]  
For $k \ge 1$, we call it exact if there exists 
\[
\Psi \in  C_{k-1}^{n+1}(V, \W, \F), 
\]  
such that 
\[
\Psi=\delta^{n}_{k} \Phi,
\]
 i.e., $\Psi$ is a form of connection.  
\end{definition}
\begin{definition}
For $\Phi \in {C}^{n}_{k}(V, \W, \F)$ we call the cohomology class of mappings 
 $\left[ \Phi \right]$ the set of all closed forms that differ from $\Phi$ by an 
exact mapping, i.e., for $\Lambda \in {C}^{n-1}_{k+1}(V, \W, \F)$,  
\[
\left[ \Phi \right]= \Phi + \delta^{n-1}_{k+1} \Lambda. 
\]
\end{definition}
As we will see in this section, there are cohomological classes, 
(i.e., $\left[\Phi \right]$, $\Phi \in {C}^{1}_{m}(V, \W, \F)$, $m\ge 0$),  
 associated with two-point connections and the first cohomology ${H}^{1}_{m}(V, \W, \F)$, and classes  
(i.e., $\left[\Phi \right]$, $\Phi \in {C}^{2}_{ex}(V, \W, \F)$),  
associated with transversal connections and the second cohomology ${H}^{2}_{ex}(V, \W, \F)$, 
of $\mathcal M/\F$. 
The cohomological classes we obtain are vertex algebra cohomology counterparts of the Godbillon class 
\cite{Ko, Galaev}   
for codimension one foliations.  
\begin{remark}
As it was discovered in \cite{BG, BGG}, 
 it is a usual situation when the existence of a connection (affine of projective) 
for codimension one foliations on smooth manifolds prevents corresponding cohomology classes from vanishing.  
 Note also, that for a few examples of codimension one foliations, the cohomology class $\left[ d\eta \right]$  
is always zero.  
\end{remark}
\begin{remark}
In contrast to \cite{BG}, our cohomological class is a functional of $v\in V$. 
That means that the actual functional form of $\Phi(v, z)$ (and therefore $\langle w', \Phi\rangle$, for 
$w'\in W'$)    
 varies with various choices of $v\in V$. 
That allows one to use it in order to distinguish types of leaves of $\mathcal M/\F$. 
\end{remark}
\subsection{Cohomology in terms of connections}
\label{connections}
In various situations it is sometimes effective to use an interpretation of cohomology in terms of 
connections. 
In particular in our supporting example of vertex algebra cohomology of codimension one foliations. 
It is convenient to introduce multi-point connections over a graded space  
and to express coboundary operators and cohomology in terms of connections: 
\[
\delta^n \phi  \in G^{n+1}(\phi),  
\]
\[
 \delta^n \phi= G(\phi). 
\]
Then the cohomology is defined as the factor space 
\[
H^n= {\mathcal Con}^n_{cl }/G^{n-1},   
\]
of closed multi-point connections with respect to the space of connection forms defined below. 
\subsection{Multi-point holomorphic connections} 

We start this section with definitions of holomorphic multi-point connections on a smooth complex variety.  
Let $\mathcal X$ be a smooth complex variety and $\V \to \mathcal X$ a holomorphic vector bundle over $\mathcal X$. 
Let $E$ be the sheaf of holomorphic sections of $\V$.
 Denote by $\Omega$
 the sheaf of differentials on $\mathcal X$. 
A holomorphic connection $\nabla$ on $E$ is a $\C$-linear map 
\[
\nabla: E \to E \otimes \Omega, 
\]
 satisfying the Leibniz formula
\[
\nabla(f\phi)= \nabla f \phi + \phi \otimes dz, 
\]
for any holomorphic function $f$.  
Motivated by the definition of the holomorphic connection $\nabla$ 
 defined for a vertex algebra bundle (cf. Section 6, \cite{BZF}) over  
a smooth complex variety $\mathcal X$, we introduce the definition of 
the multiple point holomorphic connection over $\mathcal X$. 
\begin{definition}
\label{mpconnection}
Let $\V$ be a holomorphic vector bundle over $\mathcal X$, and $\mathcal X_0$ its subvariety.  
A holomorphic multi-point 
connection $\mathcal G$ on $\V$    
is a $\C$-multi-linear map 
\[
\mathcal G: E  \to  E \otimes \Omega,
\]  
such that for any holomorphic function $f$, and two sections $\phi(p)$ and $\psi(p')$ at points  
$p$ and $p'$ on $\mathcal X_0$ correspondingly, we have 
\begin{equation}
\label{locus}
\sum\limits_{q, q' \mathcal X_0 \subset \mathcal X} \mathcal G\left( f(\psi(q)).\phi(q') \right) = f(\psi(p')) \; 
\mathcal G \left(\phi(p) \right) + f(\phi(p)) \; \mathcal G\left(\psi(p') \right),    
\end{equation}
where the summation on left hand side is performed over a locus of points $q$, $q'$ on $\mathcal X_0$. 
We denote by ${\mathcal Con}_{\mathcal X_0}(\mathcal S)$ the space of such connections defined over a
smooth complex variety $\mathcal X$. 
We will call $\mathcal G$ satisfying \eqref{locus}, a closed connection, and denote the space of such connections 
by ${\mathcal Con}^n_{\mathcal X_0; cl}$.  
\end{definition}
Geometrically, for a vector bundle $\V$ defined over a complex variety $\mathcal X$,
a multi-point holomorphic connection \eqref{locus} relates two sections $\phi$ and $\psi$ of $E$ at points $p$ and $p'$
with a number of sections at a subvariety $\mathcal X_0$ of $\mathcal X$.  
\begin{definition}
We call
\begin{equation}
\label{gform}
G(\phi, \psi) = f(\phi(p)) \; \mathcal G\left(\psi(p') \right)  + f(\psi(p')) \; \mathcal G \left(\phi(p) \right)   
- \sum\limits_{q, q' \mathcal X_0 \subset \mathcal X} \mathcal G\left( f(\psi(q')).\phi(q) \right), 
\end{equation}
the form of a holomorphic connection $\mathcal G$. 
The space of form for $n$-point holomorphic connection forms will be denoted by $G^n(p, p', q, q')$.  
\end{definition}
Let us formulate another definition which we use in the next section: 
\begin{definition}
\label{transcon}
We call a multi-point holomorphic connection $\mathcal G$ 
 the transversal connection, i.e., when it satisfies 
\begin{equation}
\label{transa0}
f(\psi(p'))\; \mathcal G(\phi(p)) + f(\phi(p)) \; \mathcal G(\psi(p'))=0. 
\end{equation}
We call 
\begin{equation}
\label{transa}
G_{tr}(p, p')=(\psi(p'))\; \mathcal G(\phi(p)) + f(\phi(p)) \; \mathcal G(\psi(p')), 
\end{equation}
the form of a transversal connection. The space of such connections is denoted by $G^2_{tr}$.  
\end{definition}


\section{Appendix: 
A sphere formed from sewing of two spheres}
\label{sphere}
%


The matrix element for a number of vertex operators of a vertex algebra is usually associated \cite{FHL, FMS, TUY} 
with a vertex algebra character on a sphere. We extrapolate this notion to the case of $\W_{z_1, \ldots, z_n}$ spaces. 
In Section \ref{product} we explained that a space $\W_{z_1, \ldots, z_n}$ can be associated with a Riemann sphere with marked points, 
while the product of two such spaces is then associated with a sewing of such two spheres with a number 
of marked  
points 
and extra points with local coordinates identified with formal parameters of $\W_{x_1, \ldots, x_k}$ and $\W_{y_1, \ldots, y_n}$. 
In order to supply an appropriate geometric construction for the product,  
 we use the $\epsilon$-sewing procedure (described in this Appendix) for two initial spheres to obtain a matrix element associated with \eqref{gendef}. 

\begin{remark}
In addition to the $\epsilon$-sewing procedure of two initial spheres, one can alternatively use 
the self-sewing procedure \cite{Y} for the sphere to get, at first, the torus, and then by sending parameters 
to appropriate limit by shrinking genus to zero. As a result, one obtains again the sphere but with a  
different parameterization.  In the case of spheres, such a procedure 
consideration of the product of $\W$-spaces so we focus in this paper on the $\epsilon$-formalizm only.  
\end{remark}
In our particular case of $\W$-values rational functions obtained from matrix elements \eqref{def}  
two initial 
auxiliary
 spaces we take Riemann spheres $\Sigma^{(0)}_a$, $a=1$, $2$, and the resulting 
space is formed by 
the sphere $\Sigma^{(0)}$ obtained by the procedure of sewing $\Sigma^{(0)}_a$. 
The formal parameters $(x_1, \ldots, x_k)$ and $(y_{1}, \ldots, y_n)$ are identified with 
local coordinates of $k$ and $n$ points on two initial spheres $\Sigma^{(0)}_a$, $a=1$, $2$ correspondingly.  
In the $\epsilon$ sewing procedure, some $r$ points 
among 
$(p_1, \ldots, p_k)$ 
may coincide with 
points 
among $(p'_{1}, \ldots, p'_n)$ 
when we identify the annuluses \eqref{zhopki}.   
This corresponds to the singular case of coincidence of $r$ formal parameters. 

Consider the sphere formed by sewing together two initial spheres in the sewing scheme referred to 
as the $\epsilon$-formalism in \cite{Y}. 
Let $\Sigma_a^{(0)}$,  
$a=1$, $2$ 
be 
to initial spheres.  
Introduce a complex sewing
parameter $\epsilon$ where 
\[
|\epsilon |\leq r_{1}r_{2},
\]
Consider $k$ distinct points on $p_i \in  \Sigma_{1}^{(0)}$, $i=1, \ldots, k$, 
with local coordinates $(x_1, \ldots, x_{k}) \in F_{k}\C$,  
and distinct points $p_j \in  \Sigma_{2}^{(0)}$, $j=1, \ldots, n$,
with local coordinates $(y_{1},\ldots ,y_{n})\in F_{n}\C$,  
with
\[
\left\vert x_{i}\right\vert
\geq |\epsilon |/r_{2}, 
\]
%
\[
\left\vert y_{i}\right\vert \geq |\epsilon |/r_{1}. 
\] 
Choose a local coordinate $z_{a}\in \mathbb{C}$ 
on $\Sigma^{(0)}_a$ in the
neighborhood of points $p_{a}\in\Sigma^{(0)}_a$, $a=1$, $2$. 
Consider the closed disks 
\[
\left\vert \zeta_{a} \right\vert \leq r_{a}, 
\]
 and excise the disk 
\begin{equation}
\label{disk}
\{
\zeta_{a}, \; \left\vert \zeta_{a}\right\vert \leq |\epsilon |/r_{\overline{a}}\}\subset 
\Sigma^{(0)}_a, 
\end{equation}
to form a punctured sphere  
\begin{equation*}
\widehat{\Sigma}^{(0)}_a=\Sigma^{(0)}_a \backslash \{\zeta_{a},\left\vert 
\zeta_{a}\right\vert \leq |\epsilon |/r_{\overline{a}}\}.
\end{equation*}
We use the convention 
\begin{equation}
\overline{1}=2,\quad \overline{2}=1.  
\label{bardef}
\end{equation}
Define the annulus
\begin{equation}
\label{zhopki}
\mathcal{A}_{a}=\left\{\zeta_{a},|\epsilon |/r_{\overline{a}}\leq \left\vert
\zeta_{a}\right\vert \leq r_{a}\right\}\subset \widehat{\Sigma}^{(0)}_a,
\end{equation}
and identify $\mathcal{A}_{1}$ and $\mathcal{A}_{2}$ as a single region 
$\mathcal{A}=\mathcal{A}_{1}\simeq \mathcal{A}_{2}$ via the sewing relation 
\begin{equation}
\zeta_{1}\zeta_{2}=\epsilon.   \label{pinch}
\end{equation}
In this way we obtain a genus zero compact Riemann surface 
\[
\Sigma^{(0)}=\left\{ \widehat {\Sigma}^{(0)}_1
\backslash \mathcal{A}_{1} \right\}
\cup \left\{\widehat{\Sigma}^{(0)}_2 \backslash 
\mathcal{A}_{2}\right\}\cup \mathcal{A}.
\] 
This sphere form a suitable geometrical model for the construction of a product of $\W$-valued rational forms
in Section \ref{product}. 

\section{Proofs of Proposition \ref{katas}, Proposition \ref{pupa}  
Lemma \ref{functionformpropcor}, Lemma \ref{tarusa} 
}
\label{rasto}

\subsection{Proof of Proposition \ref{katas}}
\begin{proof}
 By using \eqref{lder1} for $\Phi(v_{1},  x_{1};  \ldots; v_{k}, x_{k})$
and $\Psi(v'_{1},  y_{1}; \ldots;  v'_{n},  y_{n})$,  
 we consider 

\begin{eqnarray}
\label{Z2n_pt_eps1q00000}
 & &
\langle w',\partial_{
l} %
 \Theta( v_{1},  x_{1};  \ldots;  v_k, x_{k};   v'_1, y_1; \ldots;  v'_{n},  y_{n}; \epsilon) \rangle 
\nn
&& 
\nn
 & &  = \sum_{l\in \mathbb{Z}
} \epsilon^{l} \sum_{u\in V_l}   
\langle w', \partial^{\delta_{l,i}}_{x_i} 
Y^{W}_{WV}\left( 
   \Phi ( v_{1},  x_{1};  \ldots; v_{k}, x_{k}), \zeta_1 \right) \; u \rangle  
\nn
& &
\qquad   \qquad 
\langle w', \partial^{\delta_{l,j}}_{y_j} 
Y^{W}_{WV}\left( 
\Psi
(v'_{1},  y_{1}; \ldots;  v'_{n},  y_{n}),  \zeta_2 \right) \overline{u} \rangle    
\nn
 & &  = \sum_{l\in \mathbb{Z}
} \epsilon^{l} \sum_{u\in V_l}   
\langle w', \partial^{\delta_{l,i}}_{x_i} 
Y_{W}\left( u, - \zeta_1 \right) 
   \Phi ( v_{1},  x_{1};  \ldots; v_{k}, x_{k})) \; u \rangle  
\nn
& &
\qquad   \qquad 
\langle w', \partial^{\delta_{l,j}}_{y_j}  
Y_{W}\left(\overline{u}, - \zeta_2  \right) 
\Psi
(v'_{1},  y_{1}; \ldots;  v'_{n},  y_{n})  \rangle    
\nn
 & &  = \sum_{l\in \mathbb{Z}}  
\epsilon^{l} \sum_{u\in V_l}   
\langle w', 
Y^{W}_{WV}\left( 
 \partial^{\delta_{l,i}}_{x_i}  \Phi ( v_{1},  x_{1};  \ldots; v_{k}, x_{k}), \zeta_1 \right) \; u \rangle  
\nn
& &
\qquad   \qquad 
\langle w', 
Y^{W}_{WV}\left( \partial^{\delta_{l,j}}_{y_j} 
\Psi 
(v'_{1},  y_{1}; \ldots;  v_{n},  y_{n}),  \zeta_2 \right) \overline{u} \rangle   
\nn
& &  
=  \sum_{l\in \mathbb{Z}
} \epsilon^{l} \sum_{u\in V_l}   
\langle w',
 Y^{W}_{WV}\left(  
\Phi (v_{1}, x_{1}; \ldots; \left(L_V{(-1)}\right)^{\delta_{l,i}} v_i, x_i; \ldots; v_{k}, x_{k}), \zeta_1 \right) \; u \rangle 
\nn
& &
\qquad   \qquad 
\langle w', Y^{W}_{WV}\left( 
\Psi
(v'_{1}, y_{1}; \ldots; \left(L_V{(-1)}\right)^{\delta_{l,j}} v'_j, y_j;   \ldots; v'_{n}, y_{n}),  \zeta_2 \right) \overline{u} \rangle   
\nn
& &  
=    
\langle w',
\Theta (
v_{1},  x_{1};  \ldots;  \left(L_V{(-1)}\right)_l;  \ldots;  v'_{n},  y_{n};
\epsilon) \rangle,   
\end{eqnarray}
where $\left(L_V{(-1)}\right)_l$ acts on the $l$-th entry of  
$(v_{1},  \ldots;  v_k;   v'_1,  \ldots,   v'_{n})$. 
Summing over $l$ we obtain
\begin{eqnarray}
\label{lder2f0}
&&
\sum\limits_{l=1}^{k+n} \partial_{
{l}} 
\Theta (v_{1}, x_{1}; \ldots; 
v_{k}, x_{k};
v'_{1}, y_{1}; \ldots; 
 v'_{n}, y_{n}; \epsilon) \rangle 
\nn
&&= 
\sum\limits_{l=1}^{k+n} \langle w',
\Theta (v'_{1}, x_{1}; \ldots; \left(L_V{(-1)}\right);    \ldots;  v'_{n}, y_{n}; \epsilon) \rangle 
\nn
&&
=\langle w',
 L_{W}{(-1)}.\Theta(v_{1}, x_{1};  \ldots ; v_{k}, x_{k};   v'_{1},  y_{1}; \ldots; v'_{n},  y_{n}; \epsilon) \rangle.   
\end{eqnarray}
%
%
Due to \eqref{loconj}, \eqref{locomm}, \eqref{dubay}, \eqref{condip}, and \eqref{aprop}, 
we have 
\begin{eqnarray*}
&& \langle w', 
 \Theta ( z^{L_V{(0)} } v_{1}, z \; x_{1};  \ldots;  z^{L_V{(0)}} v_{k},  z\; x_{k};
z^{L_V{(0)}} v'_{1}, z \; y_{1};  \ldots;  z^{L_V{(0)}} v'_{n},  z\; y_{n}; 
 \epsilon) \rangle  
\end{eqnarray*}
\begin{eqnarray*}
 & &  = \sum_{l\in \mathbb{Z}
} \epsilon^{l} \sum_{u\in V_l}   
\langle w', Y^{W}_{WV}\left( 
 \Phi (z^{L_V{(0)}} v_{1}, z\; x_{1};  \ldots; z^{L_V{(0)}} v_{k}, z\; x_{k}), \zeta_1 \right) \; u \rangle  
\nn
& &
\qquad   \qquad 
\langle w', Y^{W}_{WV}\left( 
\Psi
(z^{L_V{(0)}} v'_{1}, z \; y_{1}; \ldots; z^{L_V{(0)}} v'_{n}, z\; y_{n}),  \zeta_2 \right) \overline{u} \rangle   
\end{eqnarray*}
\begin{eqnarray*}
& &  
=  \sum_{l\in \mathbb{Z}
} \epsilon^{l} \sum_{u\in V_l}   
\langle w', Y^{W}_{WV}\left( z^{L_V{(0)}} 
\Phi (v_{1}, x_{1};  \ldots; v_{k}, x_{k}), \zeta_1 \right) \; u \rangle 
\nn
& &
\qquad   \qquad 
\langle w', Y^{W}_{WV}\left( z^{L_V{(0)}}
\Psi
(v'_{1}, y_{1}; \ldots; v'_{n}, y_{n}),  \zeta_2 \right) \overline{u} \rangle   
\end{eqnarray*}
\begin{eqnarray*}
& &  
=  \sum_{l\in \mathbb{Z}
} \epsilon^{l} \sum_{u\in V_l}   
\langle w', e^{\zeta_1 L_W{(-1)}}  Y_{W} \left(  u, -\zeta_1 \right) 
z^{L_V{(0)}}  
\Phi (v_{1}, x_{1};  \ldots; v_{k}, x_{k})  \rangle 
\nn
& &
\qquad   \qquad 
\langle w', e^{\zeta_2 L_W{(-1)}} \; Y_{W}\left( \overline{u}, -\zeta_2  \right) z^{L_V{(0)}}  
\Psi
(v'_{1}, y_{1}; \ldots; v'_{n}, y_{n})   \rangle    
\end{eqnarray*}
\begin{eqnarray*}
& &  
=  \sum_{l\in \mathbb{Z}
} \epsilon^{l} \sum_{u\in V_l}   
\langle w', e^{\zeta_1 L_W{(-1)}}  z^{L_V{(0)}} Y_{W} \left(  z^{-L_V{(0)}} u, -z \; \zeta_1 \right) 
\Phi (v_{1}, x_{1};  \ldots; v_{k}, x_{k})  \rangle 
\nn
& &
\qquad   \qquad 
\langle w', e^{\zeta_2 L_W{(-1)}}\; z^{L_W{(0)}}\;   Y_{W}\left( z^{-L_V{(0)}} \overline{u}, -z \; \zeta_2  \right)  
\Psi
(v'_{1}, y_{1}; \ldots; v'_{n}, y_{n})   \rangle   
\nn
& &  
=  \sum_{l\in \mathbb{Z}
} \epsilon^{l} \sum_{u\in V_l}   
\langle w', e^{\zeta_1 L_W{(-1)}}  z^{L_W{(0)}} z^{-{\rm wt} u} \; Y_{W} \left(   u, -z \; \zeta_1 \right) 
\Phi (v_{1}, x_{1};  \ldots; v_{k}, x_{k})  \rangle 
\nn
& &
\qquad   \qquad 
\langle w', e^{\zeta_2 L_W{(-1)}}\; z^{L_W(0)}\;  z^{-{\rm wt}\overline{u} } \; Y_{W}\left(  \overline{u}, -z \; \zeta_2  \right)  
\Psi
(v'_{1}, y_{1}; \ldots; v'_{n}, y_{n})   \rangle 
\end{eqnarray*}
\begin{eqnarray*}
& &  
=  \sum_{l\in \mathbb{Z}
} \epsilon^{l} \sum_{u\in V_l}   
\langle w', z^{L_W(0)} e^{\zeta_1 L_W{(-1)}} Y_{W} \left(  u, -z \zeta_1 \right) 
\Phi (v_{1}, x_{1};  \ldots; v_{k}, x_{k})  \rangle 
\nn
& &
\qquad   \qquad 
\langle w', z^{L_W(0)} e^{\zeta_2 L_W{(-1)}}  Y_{W}\left( \overline{u}, -z \zeta_2  \right)  
\Psi
(v'_{1}, y_{1}; \ldots; v'_{n},y_{n}),   \rangle   
\end{eqnarray*}
\begin{eqnarray*}
& &  
=  \sum_{l\in \mathbb{Z}
} \epsilon^{l} \sum_{u\in V_l}   
\langle w', z^{L_W(0)} \; Y^{W}_{WV}\left( 
\Phi (v_{1}, x_{1};  \ldots; v_{k}, x_{k}), z \zeta_1 \right) \; u \rangle 
\notag 
\nn
& &
\qquad   \qquad 
\langle w', z^{L_W(0)} \;  Y^{W}_{WV}\left( 
\Psi
(v'_{1}, y_{1}; \ldots; v'_{n},y_{n}),  z \zeta_2 \right) \overline{u} \rangle   
\end{eqnarray*}
\begin{eqnarray*}
& &  
=  \sum_{l\in \mathbb{Z}
} \epsilon^{l} \sum_{u\in V_l}   
\langle w', z^{L_W(0)} \; Y^{W}_{WV}\left( 
\Phi (v_{1}, x_{1};  \ldots; v_{k}, x_{k}),  \zeta'_1 \right) \; u \rangle 
\nn
& &
\qquad   \qquad 
\langle w', z^{L_W(0)} \;  Y^{W}_{WV}\left( 
\Psi
(v'_{1}, y_{1}; \ldots; v'_{n},y_{n}),  \zeta'_2 \right) \overline{u} \rangle   
\end{eqnarray*}
\begin{eqnarray*}
&&
=\langle w', \left( z^{L_W(0)} \right). 
\Theta (v_{1}, x_{1};  \ldots;  v_{k}, x_{k}; v'_{1}, y_{1}; \ldots; v'_{n}, y_{n}; \epsilon) \rangle.  
\end{eqnarray*}
%
With \eqref{pinch}, 
we obtain \eqref{loconj} for \eqref{Z2n_pt_eps1q1}.   
\end{proof}

\subsection{Proof of Proposition \ref{pupa}}
\begin{proof}
Note that due to Proposition \ref{pupa} 
\begin{eqnarray*}
\Phi (v_{1}, x'_{1};  \ldots; v_{k}, x'_{k}) &=&
\Phi (v_{1}, x_{1};  \ldots; v_{k}, x_{k}), 
\nn
\Psi (v_{1}, y'_{1};  \ldots; v_{n}, y'_{n}) &=&
\Psi (v_{1}, y_{1};  \ldots; v_{n}, y_{n}).   
\end{eqnarray*}
Thus,  
\begin{eqnarray*}
&& \langle w', \Theta(v_1, x'_1; \ldots,;  v_k, x'_k; v'_1, y'_1; \ldots; v'_n, y'_{n}; \epsilon) \rangle
\nn
 & &  =  \sum_{l\in \mathbb{Z}
} \epsilon^{l} \sum_{u\in V_l}   
\langle w', Y^{W}_{WV}\left( 
\Phi (v_{1}, x'_{1};  \ldots; v_{k}, x'_{k}), \zeta_1 \right) \; u \rangle 
\nn
& &
\qquad   \qquad 
\langle w', Y^{W}_{WV}\left( 
\Psi
(v'_{1}, y'_{1}; \ldots; v'_{n}, y'_{n}),  \zeta_2 \right) \overline{u} \rangle   
\nn
& &  
=  \sum_{l\in \mathbb{Z}
} \epsilon^{l} \sum_{u\in V_l}   
\langle w', Y^{W}_{WV}\left( 
\Phi (v_{1}, x_{1};  \ldots; v_{k}, x_{k}), \zeta_1 \right) \; u \rangle 
\nn
& &
\qquad   \qquad 
\langle w', Y^{W}_{WV}\left( 
\Psi
(v'_{1}, y_{1}; \ldots; v'_{n}, y_{n}),  \zeta_2 \right) \overline{u} \rangle   
\nn
&&
=
\langle w', \Theta(v_1, x_1; \ldots,;  v_k, x_k; v'_1, y_1; \ldots; v'_n, y_{n}; \epsilon) \rangle. 
\end{eqnarray*}
Thus, the product \eqref{Z2n_pt_eps1q1} is invariant under \eqref{zwrho}. 
\end{proof}
\subsection{Proof of Lemma \ref{functionformpropcor}}
\begin{proof}
Let $\widetilde{v}_{i} \in V$, $1 \le i \le k$,  
$\widetilde{v}_{j} \in V$, $1 \le j \le k$,   
and 
 $z_{i}$, 
$z_j$ are corresponding formal parameters.   
We show that  
the $\epsilon$-product of 
$\Phi(\widetilde{v}_{1}, z_1;  \ldots;  \widetilde{v}_{k}, z_k)$ and 
$\Psi(\widetilde{v}_{k+1}, z_{k+1}$;   $\ldots$; $\widetilde{v}_{n}, z_n)$, i.e.,
 the $\W_{z_1, \ldots, z_{k+n-r}}$-valued  
differential form 
\begin{eqnarray}
& & {\Theta}
 ((\widetilde{v}_{1}, z_1;  \ldots;  \widetilde{v}_{k}, z_k); 
(\widetilde{v}_{k+1}, z_{k+1};  \ldots; \widetilde{v}_{n}, z_n);  
\zeta_{1}, \zeta_{2}; \epsilon )    
\end{eqnarray}
is independent of the choice of $0 \le k \le n$.  
 Consider  
\begin{eqnarray}
\label{ishodnoe}
&& 
\langle w',  
\Theta (\widetilde{v}_{1}, z_{1};\ldots ;  \widetilde{v}_{k}, z_{k}; \widetilde{v}_{k+1}, z_{k+1};
 \ldots ; \widetilde{v}_{n}, z_{n}; \zeta_1, \zeta_2; \epsilon)  \rangle    
\nn
&&\qquad  = 
 \sum_{l\in \mathbb{Z} 
} \epsilon^{l} \sum_{u \in V_l}   
\langle w', Y^{W}_{WV}\left( 
\Phi (\widetilde{v}_{1}, z_{1};  \ldots; \widetilde{v}_{k}, z_{k}), \zeta_1 \right) \; u \rangle 
\nn
& &
\qquad   \qquad  \qquad 
\langle w', Y^{W}_{WV}\left( 
\Psi
(\widetilde{v}_{k+1}, z_{k+1}; \ldots; \widetilde{v}_{n}, z_{n}),  \zeta_2 \right) \overline{u} \rangle.    
\end{eqnarray}
On the other hand, for $0 \le m \le k$,  consider
\begin{eqnarray*}
&& 
 \sum_{l\in \mathbb{Z} 
} \epsilon^{l} \sum_{u \in V_l}   
\langle w', Y^{W}_{WV}\left( 
\Phi (\widetilde{v}_{1}, z_{1};  \ldots; \widetilde{v}_{m}, z_{m}), \zeta_1 \right) \; u \rangle 
\nn
& &
\qquad   \qquad  \qquad 
\langle w', Y^{W}_{WV}\left( 
\Psi
(   \widetilde{v}_{m+1}, z'_{m+1}; \ldots; \widetilde{v}_{k}, z'_{k};  
\widetilde{v}_{k+1}, z_{1}; \ldots; \widetilde{v}_{n}, z_{n}),  \zeta_2 \right) \overline{u} \rangle   
\nn
&&
\qquad =\langle w',  
\Theta (\widetilde{v}_{1}, z_{1};\ldots ;  \widetilde{v}_{m}, z_{m};
\widetilde{v}_{m+1}, z'_{m+1}; \ldots; \widetilde{v}_{k}, z'_{k};
 \widetilde{v}_{k+1}, z_{k+1};
 \ldots ; \widetilde{v}_{n}, z_{n})  \rangle. 
\end{eqnarray*}
The last is the $\epsilon$-product \eqref{Z2n_pt_eps1q1} of 
$\Phi (\widetilde{v}_{1}, z_{1};  \ldots; \widetilde{v}_{m}, z_{m})  \in 
\W_{z_{1},  \ldots, z_{m}}$ and  $\Psi
(\widetilde{v}_{m+1}, z'_{m+1} $; $ \ldots $; $ \widetilde{v}_{k}, z'_{k} $; $   
\widetilde{v}_{k+1}, z_{1} $; $ \ldots $;  $\widetilde{v}_{n}, z_{n}) \in \W_{z'_{m+1}, \ldots,  z'_{k};  
 z_{1}, \ldots, z_{n}}$. 
Let us apply the invariance with respect to a subgroup of 
$\left({\rm Aut}\; \Oo^{(1)}\right)^{\times (k+n)}_{z_1, \ldots, z_{k+n}}$,   
with $(z_1, \ldots, z_m)$ and $(z_{k+1}, \ldots, z_n)$ remaining unchanged. 
Then we obtain the same product \eqref{ishodnoe}. 
\end{proof}

\subsection{Proof of Lemma \ref{tarusa}}
\begin{proof}
 For arbitrary $w' \in W'$, we have 
\begin{eqnarray*}
&&
 \sum_{\sigma\in J_{k+n; s}^{-1}}(-1)^{|\sigma|}
 \langle w', 
\Theta\left(v_{\sigma(1)}, x_{\sigma(1)}; \ldots; v_{\sigma(k)}, x_{\sigma(k)};  
v'_{\sigma(1)},  y_{\sigma(1)};  \ldots; v'_{\sigma(n)},  y_{\sigma(n)})\right)
 \rangle 
\nn
&&
\nn
&&
=
\sum_{\sigma\in J_{k+n; s}^{-1}}(-1)^{|\sigma|}  \;  
 \sum_{l \in \Z }  \epsilon^l 
 \sum_{u\in V_l } 
  \langle w', Y^{W}_{WV} 
\left(\Phi(v_{\sigma(1)}, x_{\sigma(1)}; \ldots; v_{\sigma(k)},  x_{\sigma(k)}), \zeta_1 \right) u  
\rangle 
\nn
&&
 \qquad \qquad \qquad  \langle w', Y^{W}_{WV} 
\left(\Psi(v'_{\sigma(1)}, y_{\sigma(1)}; \ldots; v'_{\sigma(n)},  y_{\sigma(n)}), \zeta_2  
\right) \overline{u} \rangle 
\nn
&&
=%
\sum_{l \in \Z }  \epsilon^l 
 \sum_{u\in V_l } 
\sum_{\sigma\in J_{k+n; s}^{-1}}(-1)^{|\sigma|}   
  \langle w', e^{\zeta_1 L_W{(-1)}} \; Y_{W}(u, -\zeta_1) \; 
\Phi(v_{\sigma(1)}, x_{\sigma(1)}; \ldots; v_{\sigma(k)},  x_{\sigma(k)})   
\rangle 
\nn
&&
 \qquad \qquad \qquad  \langle w',  e^{\zeta_2 L_W{(-1)}} \; Y_{W}(\overline{u}, -\zeta_2) \;   
 \Psi(v'_{\sigma(1)}, y_{\sigma(1)}; \ldots; v'_{\sigma(n)},  y_{\sigma(n)})
 \rangle 
\nn
&&
=
\sum_{l \in \Z }  \epsilon^l 
 \sum_{u\in V_l } 
  \langle w', e^{\zeta_1 L_W{(-1)}} \; Y_{W}(u, -\zeta_1) \; 
\sum_{\sigma\in J_{k; s}^{-1}}(-1)^{|\sigma|} \Phi(v_{\sigma(1)}, x_{\sigma(1)}; \ldots; v_{\sigma(k)},  x_{\sigma(k)})   
\rangle 
\nn
&&
 \qquad \qquad  \langle w',  e^{\zeta_2 L_W{(-1)}} \; Y_{W}(\overline{u}, -\zeta_2) \;   
 \Psi(v'_{\sigma(1)}, y_{\sigma(1)}; \ldots; v'_{\sigma(n)},  y_{\sigma(n)})
 \rangle 
\nn
&& 
+
\sum_{l \in \Z }  \epsilon^l 
 \sum_{u\in V_l } 
  \langle w', e^{\zeta_1 L_W{(-1)}} \; Y_{W}(u, -\zeta_1) \; 
 \Phi(v_{\sigma(1)}, x_{\sigma(1)}; \ldots; v_{\sigma(k)},  x_{\sigma(k)})   
\rangle 
\nn
&&
 \qquad   \langle w',  e^{\zeta_2 L_W{(-1)}} \; Y_{W}(\overline{u}, -\zeta_2) \;   
\sum_{\sigma\in J_{n; s}^{-1}}(-1)^{|\sigma|}  \Psi(v'_{\sigma(1)}, y_{\sigma(1)}; \ldots; v'_{\sigma(n)},  y_{\sigma(n)})
 \rangle=0,  
\end{eqnarray*}
since,
$J^{-1}
_{k+n; s}= J^{-1} 
_{k;s} \times J^{-1}
_{n;s}$,  
and 
  due to the fact that 
 $\F(v_{1}, x_{1}; \ldots; v_{k},  x_{k})$
 and 
$\F(v'_{1}, y_{1} $; $ \ldots $; $ v'_{n},  y_{n})$ 
satisfy \eqref{sigmaction}. 
\end{proof}

\subsection{Proof of Proposition \ref{ccc}}
\begin{proof}
Recall that 
$\Phi(v_{1}, x_{1}; \ldots; v_{k},  x_{k})$
is composable with $m$ vertex operators, and 
 $\Psi(v'_{1}, y_{1}; \ldots; v'_{n},  y_{n})$ 
is composable with $m'$ vertex operators. 
For $\Phi(v_{1}, x_{1}; \ldots; v_{k},  x_{k})$ we have: 

1) Let $l_{1}, \dots, l_{k}\in \Z_+$ such that $l_{1}+\ldots +l_{k}= k+m$,  and 
$v_{1}, \dots, v_{k+m} \in V$, and arbitrary $w'\in W'$. Set  
 \begin{eqnarray}
\label{psii}
\Xi_{i}
&
=
&
E^{(l_{i})}_{V}(v_{k_1}, x_{k_1}- \zeta_{i};  
 \ldots; 
v_{k_i}, x_{k_i}- \zeta_{i} 
 ; \one_{V}),    
\end{eqnarray}
where
\begin{eqnarray}
\label{ki}
 {k_1}={l_{1}+\ldots +l_{i-1}+1}, \quad  \ldots, \quad  {k_i}={l_{1}+\ldots +l_{i-1}+l_{i}}, 
\end{eqnarray} 
for $i=1, \dots, k$. 
Then the series 
\begin{eqnarray}
\label{Inms}
\mathcal I^k_m(\Phi)=
\sum_{r_{1}, \dots, r_{k}\in \Z}\langle w', 
\Phi(P_{r_{1}}\Xi_{1}; \zeta_1; 
 \ldots; 
P_{r_k} \Xi_{k}, \zeta_{k}) 
\rangle,
\end{eqnarray} 
is absolutely convergent  when 
\begin{eqnarray}
\label{granizy1}
|x_{l_{1}+\ldots +l_{i-1}+p}-\zeta_{i}| 
+ |x_{l_{1}+\ldots +l_{j-1}+q}-\zeta_{i}|< |\zeta_{i}
-\zeta_{j}|, 
\end{eqnarray} 
for $i$, $j=1, \dots, k$, $i\ne j$ and for $p=1, 
\dots,  l_i$ and $q=1, \dots, l_j$. 
There exist positive integers $N^k_m(v_{i}, v_{j})$, 
depending only on $v_{i}$ and $v_{j}$ for $i, j=1, \dots, k$, $i\ne j$, such that 
the sum is analytically extended to a
rational function
in $(x_{1}, \dots, x_{k+m})$, 
 independent of $(\zeta_{1}, \dots, \zeta_{k})$,  
with the only possible poles at 
$x_{i}=x_{j}$, of order less than or equal to 
$N^k_m(v_{i}, v_{j})$, for $i$, $j=1, \dots, k$,  $i\ne j$.  

For $\Psi(v'_{1}, y_{1}; \ldots; v'_{n},  y_{n})$  we have: 

\medskip 
1') Let $l'_{1}, \dots, l'_{n}\in \Z_+$ such that $l'_{1}+\ldots +l'_{n}= n+m'$,  
$v'_{1}, \dots, v_{n+m'} \in V$ and arbitrary $w'\in W'$. 
Set  
 \begin{eqnarray}
\label{psii}
\Xi'_{i'} 
&
=
&
E^{(l'_{i'})}_{V}(v'_{k'_1}, y_{k'_1}- \zeta'_{i'};  
 \ldots; 
v'_{k'_{i'}}, y_{k'_{i'}}- \zeta'_{i'} 
 ; \one_{V}),    
\end{eqnarray}
where
\begin{eqnarray}
\label{ki}
 {k'_{1}}={l'_{1}+\ldots +l'_{i'-1}+1}, \quad  \ldots, \quad  {k'_{i'}}={l'_{1}+\ldots +l'_{i'-1}+l'_{i'}},  
\end{eqnarray} 
for $i'=1, \dots, n$. 
  Then the series 
\begin{eqnarray}
\label{Jnms}
\mathcal I^{n}_{m'}(\Psi)=  
\sum_{r'_{1}, \dots, r'_{n}\in \Z}\langle w', 
\Psi(P_{r'_{1}}\Psi'_{1}; \zeta'_1; 
 \ldots; 
P_{r'_{n}} \Psi'_{n}, \zeta'_{n})  
\rangle,
\end{eqnarray} 
is absolutely convergent  when 
\begin{eqnarray}
\label{granizy2}
|y_{l'_{1}+\ldots +l'_{i'-1}+p'}-\zeta'_{i'}| 
+ |y_{l'_{1}+\ldots +l'_{j'-1}+q'}-\zeta'_{i'}|< |\zeta'_{i'}
-\zeta'_{j'}|, 
\end{eqnarray} 
for $i'$, $j'=1, \dots, n$, $i'\ne j'$ and for $p'=1, 
\dots,  l'_i$ and $q'=1, \dots, l'_j$. 
There exist positive integers $N^{n}_{m'}(v'_{i'}, v'_{j'})$,  
depending only on $v'_{i'}$ and $v'_{j'}$ for $i$, $j=1, \dots, n$, $i'\ne j'$, such that 
the sum is analytically extended to a
rational function
in $(y_{1}, \dots, y_{n+m'})$,   
 independent of $(\zeta'_{1}, \dots, \zeta'_{n})$,   
with the only possible poles at 
$y_{i'}=y_{j'}$, of order less than or equal to 
$N^{n}_{m'}(v'_{i'}, v'_{j'})$, for $i'$, $j'=1, \dots, n$,  $i'\ne j'$.  

Now let us consider the first condition of Definition \ref{composabilitydef} of composability for the product 
\eqref{Z2n_pt_epsss} of 
$\Phi(v_{1}, x_{1}; \ldots; v_{k},  x_{k})$ 
 and 
 $\Psi(v'_{1}, y_{1}; \ldots; v'_{n},  y_{n})$ with a number of vertex operators. 
Then we obtain for $\Theta\left(v_{1}, x_{1}; \ldots; v_k, x_k; v'_1, y_1; 
\ldots; v'_{n}, y_{n}; \epsilon \right)$ the following. 
 We redefine the notations for the 
set
\begin{eqnarray*}
  && (v''_{1}, \ldots, v''_k; v''_{k+1}, \ldots, v''_{k+m}; v''_{k+m+1}, \dots, v''_{k+n+m+m'};
 v_{n+1}, \ldots, v'_{n+m'})
\nn
&&
 \qquad \qquad 
=(v_{1}, \ldots, v_k; v_{k+1}, \ldots, v_{k+m}; v'_1, \dots, v'_{n};
 v'_{n+1}, \ldots, v'_{n+m'}), 
\nn
&&
(z_{1}, \ldots, z_k; z_{k+1},  \dots, z_{k+n-r}) = 
  (x_1, \ldots, x_k; y_1, \ldots, y_{n}), 
\end{eqnarray*}
 of vertex algebra $V$ elements.  
Introduce $l''_{1}, \dots, l''_{k+n} \in \Z_+$,     
 such that $l''_{1}+\ldots +l''_{k+n}= k+n+m+m'$.  
%
Define  
 \begin{eqnarray}
\label{psiinew}
\Xi''_{i}
&
=
&
E^{(l''_{i''})}_{V}(v''_{k''_1}, z_{k''_1}- \zeta''_{i''};  
 \ldots; 
v''_{k''_{i''}}, z_{k''_{i''}}- \zeta''_{i''}   
 ; \one_{V}),    
\end{eqnarray}
where
\begin{eqnarray}
\label{ki}
 {k''_1}={l''_{1}+\ldots +l''_{i''-1}+1}, \quad  \ldots, \quad  {k''_{i''}}={l''_{1}+\ldots +l''_{i''-1}+l''_{i''}},   
\end{eqnarray} 
for $i''=1, \dots, k+n$, 
and we take 
\[
(\zeta''_1, \ldots, \zeta''_{k+n})= (\zeta_1, \ldots, \zeta_{k}; \zeta'_1, \ldots, \zeta'_{n}). 
\]  
Then we consider 
\begin{eqnarray}
\label{Inmdvadva}
 && \mathcal I^{k+n}_{m+m'}(\Theta)=  
\sum_{r''_{1}, \dots, r''_{k+n}\in \Z}
 \langle w',  
\Theta(P_{r''_{1}}\Psi''_{1}; \zeta''_1; 
 \ldots; 
P_{r''_{k+n}} \Psi''_{k+n}, \zeta''_{k+n})  
\rangle,
\end{eqnarray} 
and prove it is absolutely convergent with some conditions. 

 The 
condition   
\begin{eqnarray}
\label{granizy1000000}
&&
|z_{l''_{1}+\ldots +l''_{i-1}+p''}-\zeta''_{i}| 
+ |z_{l''_{1}+\ldots +l''_{j-1}+q''}-\zeta''_{i}|< |\zeta''_{i} -\zeta''_{j}|,  
\end{eqnarray} 
of absolute convergence for \eqref{Inmdvadva} for $i''$, $j''=1, \dots, k+n$, $i\ne j$ and for $p''=1,  
\dots,  l''_i$ and $q''=1, \dots, l''_j$, follows from the conditions \eqref{granizy1} and \eqref{granizy2}.  
 The action of $e^{\zeta L_W{(-1)} } \;  Y_W(.,.)$, $a=1$, $2$, in 
\[
\langle w',  e^{\zeta_1 L_W{(-1)} } \; Y_W({u}, -\zeta)\sum_{r_{1}, \dots, r_{k}\in \Z} 
\Phi(P_{r_{1}}\Xi_{1}; \zeta_1; 
 \ldots; 
P_{r_{k}} \Xi_{k}, \zeta_{k})   \rangle, 
\]
\[
\langle w',  e^{\zeta_2 L_W{(-1)} } \; Y_W(\overline{u}, -\widetilde{\zeta}) \sum_{r'_{1}, \dots, r'_{n}\in \Z} 
\Psi(P_{r'_{1}}\Xi'_{1}; \zeta_1; 
 \ldots; 
P_{r'_{k}} \Xi'_{n}, \zeta'_{n})  \rangle,  
\]
 does not affect the absolute convergency of \eqref{Inms} and \eqref{Jnms}. 
 We obtain 
\begin{eqnarray*}
 && \left|\mathcal I^{k+n}_{m+m'}(\Theta)\right|= 
\nn
&&
=\left|\sum_{r''_{1}, \dots, r''_{k+n}\in \Z}
 \langle w',  
\Theta(P_{r''_{1}}\Xi''_{1}; \zeta''_1; 
 \ldots; 
P_{r''_{k+n}} \Xi''_{k+n}, \zeta''_{k+n})  
\rangle\right| 
\nn
&&
=\left| 
\sum_{l \in \Z }  \epsilon^l 
 \sum_{u\in V_l } 
\langle w',  Y^W_{VW}(\sum_{r_{1}, \dots, r_{k}\in \Z} 
\Phi(P_{r_{1}}\Xi_{1}; \zeta_1; 
 \ldots; 
P_{r_{k}} \Xi_{k}, \zeta_{k}), \zeta) u  \rangle \right.
\nn
&&
\left. 
\qquad \qquad \langle w',  Y^W_{VW}(\sum_{r'_{1}, \dots, r'_{n}\in \Z} 
\Psi(P_{r'_{1}}\Xi'_{1}; \zeta'_1; 
 \ldots; 
P_{r'_{n}} \Xi'_{n}, \zeta'_{n}), \widetilde{\zeta}) \overline{u}
\rangle \right|
\nn
&&
 \qquad \qquad \qquad \le \left|\mathcal I^{k}_{m}(\Phi)\right| \; \left|\mathcal I^{n}_{m'}(\Psi)\right|.  
\end{eqnarray*} 
%
Thus, we infer that \eqref{Inmdvadva}
is absolutely convergent. 
Recall that 
the maximal orders of possible poles of \eqref{Inmdvadva} are $N^{k}_{m}(v_{i}, v_{j})$, $N^{n}_{m'}(v'_{i'}, v'_{j'})$
 at $x_{i}=x_{j}$, $y_{i'}=y_{j'}$. 
From the last expression we infer that 
there exist 
  positive integers $N^{k+n}_{m+m'}(v''_{i''}, v''_{j''})$ 
 for $i$, $j=1, \dots, k$, $i\ne j$,   $i'$, $j'=1, \dots, n$, $i'\ne j$, 
%
depending only on $v''_{i''}$ and $v''_{j''}$ for $i''$,  $j''=1, \dots, k+n$, $i''\ne j''$
such that 
 the series \eqref{Inmdvadva} 
can be analytically extended to a
rational function
in $(x_{1}, \dots, x_k; y_1, \ldots, y_n)$,    
 independent of $(\zeta''_{1}, \dots, \zeta''_{k+n})$,    
with extra 
 possible poles at  
 and $x_{i}=y_{j}$,  of order less than or equal to 
$N^{k+n}_{m+m'}(v''_{i''}, v''_{j''})$, for $i''$, $j''=1, \dots, n$,  $i''\ne j''$.  
Let us proceed with the second condition of composability. 
For $\Phi(v_1, x_1; \ldots; v_{k}, x_k)  \in  C^{k}_{m}(V, \W, \F)$, and 
$(v_1, \ldots, 
v_{k+m}) \in V$, $(x_1, \ldots, 
x_{k+m})\in \C$,     
 we have

\medskip 
2)
For arbitrary  $w'\in W'$, the series 
\begin{eqnarray}
\label{Jnm2}
\mathcal J^k_m(\Phi)=  
\sum_{q\in \C}\langle w', 
E^{(m)}_{W} \Big(v_{1}, x_1;  \ldots; 
v_{m}, x_m;  
P_{q}( \Phi(v_{m+1}, x_{m+1}; \ldots; 
v_{m+k}, x_{m+k}
\Big)
\rangle, 
\nn
&&
\end{eqnarray}
is absolutely convergent when 
\begin{eqnarray}
\label{granizy2}
x_{i}\ne x_{j}, \quad i\ne j, \quad 
\nn
|x_{i}|>|x_{k'}|>0, 
\end{eqnarray}
 for $i=1, \dots, m$, and $k'=m+1, \dots, k+m$, and the sum can be analytically extended to a
rational function 
in $(x_{1}, \dots,  
x_{k+m})$ with the only possible poles at 
$x_{i}=x_{j}$, of orders less than or equal to 
$N^k_m(v_{i}, v_{j})$, for $i, j=1, \dots, k$, $i\ne j$.  
%
 
2')  For 
 $\Psi(v'_{1}, y_{1}; \ldots; v'_{n}, y_{n})  \in   C_{m'}^{n}(V, \W, \F)$,
$(v'_1, \ldots, 
v'_{n+m'})\in V$,  and 
$(y_1, \ldots, 
y_{n+m'})\in \C$,  
 the series 
\begin{eqnarray}
\label{Jnm}
&& \mathcal J^{n}_{m'}(\Psi)=  
\sum_{q\in \C}\langle w', E^{(m')}_{W} \Big(v'_{1}, y_1; \ldots; 
 v'_{m'}, y_{m'};  
\nn
&&\quad\quad\quad 
P_{q}( \Psi(v'_{m'+1}, y_{m'+1}; \ldots; v'_{m'+n}, y_{m'+n}) 
)\Big)\rangle, 
\end{eqnarray}
is absolutely convergent when 
\begin{eqnarray}
\label{granizy2}
y_{i'}\ne y_{j'}, \quad i'\ne j', \quad 
\nn
|y_{i'}|>|y_{k''}|>0, 
\end{eqnarray}
 for $i'=1, \dots, m'$, and $k''=m'+1, \dots, n +m'$, and the sum can be analytically extended to a
rational function 
in $(y_{1}, \ldots, 
y_{n+m'})$ with the only possible poles at 
$y_{i'}=y_{j'}$, of orders less than or equal to 
$N^{n}_{m'}(v'_{i'}, v'_{j'})$, for $i'$,  $j'=1, \dots, n$, $i'\ne j'$.  

2'') 
Thus, for the product \eqref{Z2n_pt_epsss} we obtain 
  $(v''_{1}, \dots, v''_{k+n +m+m'})\in V$,  
and 
$(z_1, \ldots $ , $ z_{k+n+m+m'} )\in \C$, 
we find 
positive integers  
$N^{k+n}_{m+m'}(v'_{i}, v'_{j})$,  
 depending only on $v'_{i}$ and 
$v''_{j}$, for $i''$, $j''=1, \dots, k+n$, $i''\ne j''$, such that for arbitrary $w'\in W'$. 
First we note 
\begin{lemma}
\label{obvlem}
\begin{eqnarray*}
&&
\sum_{q\in \C}\langle w', E^{(m+m')}_{W} \Big(
v''_{1}, z_{1}; \ldots; 
v''_{m+m'}, z_{m+m'};   
\nn
&&
 \qquad \qquad  \qquad  
P_{q}\Big( \Theta(v''_{m+m'+1}, z_{m+m'+1}; \ldots; v''_{m+m'+k+n}, z_{m+m'+k+n}  
\Big) \Big) \rangle
%
\nn
&&
=
\sum_{l \in \Z }  \epsilon^l 
 \sum_{u\in V_l } 
\langle w',  E^{(m)}_{W} \Big(
v_{k+1}, x_{k+1}; \ldots;
v_{k+m}, x_{k+m}; 
\nn
&& 
\qquad \qquad  \qquad \qquad  \qquad \qquad  
P_{q} \Big(  
  Y^{W}_{WV}\left(  
\Phi (v_{1}, x_{1};  \ldots; v_{k}, x_{k}), \zeta_1\right)\; u \Big) \Big)\rangle  
\nn
& &
\qquad   \qquad 
 \langle w',  E^{(m')}_{W} \Big(
v'_{n+1}, y_{n+1}; \ldots; 
v'_{n+m'}, y_{n+m'};  
\nn
&&
  \qquad \qquad \qquad \qquad  \qquad \qquad  
P_{q}\Big(    Y^{W}_{WV}\left( 
\Psi
(v'_{1}, y_{1}; \ldots; v'_{n}, y_{n}) , \zeta_{2}\right) \; \overline{u} \Big) \Big) \rangle.    
\end{eqnarray*}
\end{lemma}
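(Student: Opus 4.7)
The plan is to expand the left-hand side using Definition \ref{cvitochki} of the $\epsilon$-product, and then recognize the resulting double sum as the product of the two $E^{(m)}_W$- and $E^{(m')}_W$-compositions appearing on the right-hand side. The key input is the bilinear factorization \eqref{Z2n_pt_epsss}, which expresses $\Theta$ as a sum over $l\in\Z$ and $u\in V_l$ of a product of two matrix elements, one depending only on the $\Phi$-data $(v_i,x_i)$ and $\zeta_1$, and the other only on the $\Psi$-data $(v'_j,y_j)$ and $\zeta_2$. Because this factorization has separated variables, the extra vertex operators introduced by $E^{(m+m')}_W$ should split cleanly into those with local coordinates near $(x_1,\ldots,x_k)$ (forming $E^{(m)}_W$ acting on the $\Phi$-side) and those near $(y_1,\ldots,y_n)$ (forming $E^{(m')}_W$ acting on the $\Psi$-side).

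First, I would substitute \eqref{Z2n_pt_eps1q1} into the left-hand side, writing $P_q\Theta$ as
\[
P_q\Theta = \sum_{l\in\Z}\epsilon^l\sum_{u\in V_l}\; P_q\bigl(Y^W_{WV}(\Phi,\zeta_1)u\bigr)\otimes_\lambda P_q\bigl(Y^W_{WV}(\Psi,\zeta_2)\overline u\bigr),
\]
where the tensor is understood via the bilinear pairing $\langle\cdot,\cdot\rangle_\lambda$. Second, I would invoke Proposition \ref{comp-assoc} (associativity of composability) to justify moving $E^{(m+m')}_W$ inside the $l$- and $u$-sums; the composability hypotheses on $\Phi$ with $m$ vertex operators and on $\Psi$ with $m'$ vertex operators guarantee that each of the inner sums over $q$ is absolutely convergent in the required region, so Fubini applies. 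Third, I would use the fact that the $E$-element construction is built from iterated vertex operator actions (as recalled in Appendix \ref{properties}): the $m+m'$ vertex operators in $E^{(m+m')}_W$ split into the first $m$ acting on the $\Phi$-factor (yielding $E^{(m)}_W$) and the remaining $m'$ acting on the $\Psi$-factor (yielding $E^{(m')}_W$), since the $u$, $\overline u$ only pair the two halves via the non-degenerate bilinear form, and the additional $z_1,\dots,z_{m+m'}$ are by hypothesis non-overlapping with the two separated regions $|x_i|\geq|\epsilon|/r_2$ and $|y_j|\geq|\epsilon|/r_1$ of Appendix \ref{sphere}.

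The main obstacle will be the interchange of summations: the sum over $q\in\C$ (defining the projection onto weight-$q$ subspaces) must be shown to commute with the sum over $l$ and $u$, and with the $E^{(m+m')}_W$-composition. I would handle this precisely as in the proof of Proposition \ref{derga}: the Cauchy-inequality estimates \eqref{Cauchy1}--\eqref{Cauchy} bound each factor uniformly, so termwise rearrangement is legitimate on the common domain of absolute convergence determined by \eqref{granizy1} and \eqref{granizy2}. Once the rearrangement is justified, the two $E$-compositions appear as independent factors in front of $\langle w',\cdot\rangle$ and $\langle w',\cdot\rangle$ respectively, and the claimed equality follows. Finally, I would verify that the analytic extension to a rational form in $(z_1,\dots,z_{k+n+m+m'})$ is the same on both sides, which is automatic once the two sides agree on the common domain, by uniqueness of analytic continuation of rational forms with the pole structure controlled by the integers $N^{k+n}_{m+m'}(v''_i,v''_j)$ obtained from $N^k_m(v_i,v_j)$ and $N^n_{m'}(v'_{i'},v'_{j'})$ as in the proof of Proposition \ref{ccc} above.
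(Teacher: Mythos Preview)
Your plan has a genuine gap at the third step, where you assert that the $m+m'$ vertex operators in $E^{(m+m')}_W$ ``split cleanly'' into $m$ acting on the $\Phi$-factor and $m'$ acting on the $\Psi$-factor. The justification you offer---that $u$ and $\overline u$ pair the two halves via the bilinear form, and that the extra insertion points lie in separated annular regions---does not by itself produce a factorization of $E^{(m+m')}_W(\,\cdot\,;P_q\Theta)$. The operator $E^{(m+m')}_W$ is a string of $m+m'$ module vertex operators $Y_W(v''_i,z_i)$ all acting successively on the \emph{single} element $P_q\Theta\in W_{(q)}$; there is no immediate mechanism by which this action decomposes into two independent pieces. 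Your first displayed formula compounds the issue: writing $P_q\Theta$ as a sum of terms $P_q(Y^W_{WV}(\Phi,\zeta_1)u)\otimes_\lambda P_q(Y^W_{WV}(\Psi,\zeta_2)\overline u)$ is not what Definition~\ref{wprodu} says---that definition specifies $\langle w',\Theta\rangle$ as a \emph{product of two scalars}, each already paired with $w'$, not $\Theta$ as a tensor of two $\overline W$-elements.

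The paper's proof runs in the opposite direction and supplies exactly the mechanism you are missing. It starts from the split right-hand side, rewrites each $Y^W_{WV}$ via \eqref{wprop}, observes that the operators $e^{\zeta_a L_W(-1)}Y_W(u,-\zeta_a)$ shift the $W$-grading by a fixed amount so that $P_q$ can be moved past them (up to relabelling $q\mapsto q+\alpha$), inserts a resolution of identity $\sum_{\widetilde w\in W}$ to recombine the two factors into a single $\Theta$, and then---this is the key step you never invoke---uses the $S_{k+n+m+m'}$-invariance from Proposition~\ref{pupa} to permute the $z$-variables so that the $m+m'$ external insertions, originally interleaved with the $(x_i)$ and $(y_j)$, are brought to the front as $(v''_1,z_1;\ldots;v''_{m+m'},z_{m+m'})$. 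Without that permutation invariance there is no reason the external vertex operators should align with the two factors separately. Proposition~\ref{comp-assoc}, which you cite, governs associativity of iterated $E$-compositions but says nothing about splitting an $E^{(m+m')}_W$ across an $\epsilon$-product.
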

\begin{proof}
Consider
\begin{eqnarray*}
&& 
\sum_{l \in \Z }  \epsilon^l 
 \sum_{u\in V_l } 
\langle w',  E^{(m+m')}_{W} \Big(
v''_{1}, z_1; \ldots;
v''_{m+m'}, z_{m+m'}; 
\nn
&& 
\qquad \qquad  P_{q} \Big(  
  Y^{W}_{WV}\left(  
\Phi (v''_{m+m'+1}, z_{m+m'+1};  \ldots; v''_{m+m' +k}, z_{m+m'+k}), \zeta_1\right)\; u \Big) \Big)\rangle  
\nn
& &
\qquad   \qquad 
 \langle w',  E^{(m+m')}_{W} \Big(
v''_{1}, z_1; \ldots;
v''_{m+m'}, z_{m+m'}; 
\nn
&&
  \qquad \qquad P_{q}\Big(    Y^{W}_{WV}\left( 
\Psi
(v''_{m+m'+k+1}, z_{m+m'+k+1}; \ldots; \right. 
\nn
&&
      \qquad \qquad  \qquad \qquad    \qquad \qquad \qquad \qquad \left. 
 v''_{m+m'+k+n}, z_{m+m'+k+n}) , \zeta_{2}\right) \; \overline{u} \Big) \Big) \rangle    
\end{eqnarray*}
\begin{eqnarray*}
&&
= \sum_{q\in \C} 
\sum_{l \in \Z }  \epsilon^l 
 \sum_{u\in V_l } 
  \langle w', E^{(m+m')}_{W} \Big(
v''_{1}, z_1; \ldots; 
v''_{m+m'}, z_{m+m'};  
\nn
&&
\qquad   
  P_{q}\Big( e^{\zeta_1 L_W{(-1)} }\; Y_{W} 
\left( u, -\zeta_1 ) 
\; \Phi (v''_{m+m'+1}, z_{m+m'+1};  \ldots; v''_{m+m'+ k }, z_{m+m'+k}) 
\right) 
 \Big) \rangle 
\nn
& & 
\quad   
\langle w',  E^{(m+m')}_{W} \Big(
v''_{1}, z_1; \ldots;
v''_{m+m'}, z_{m+m'}; 
\nn
&&
  P_{q}\Big( e^{\zeta_2 L_W{(-1)}}\; Y_{W} 
\left(  \overline{u}, -\zeta_2  )\;  
\Psi
(v''_{m+m'+k+1}, z_{m+m'+k+1}; \ldots; \right. 
\nn
&&
   \qquad \qquad     \qquad \qquad  \qquad \qquad    \qquad \qquad \qquad \qquad \left.
 v''_{m+m'+k+n}, z_{m+m'+k+n})  
\right) 
\Big)\rangle.  
\end{eqnarray*}
%
The action of exponentials $e^{\zeta_a L_W{(-1)} }$, $a=1$, $2$, of the differential operator $L_W{(-1)}$,   
and $W$-module vertex operators $Y_{W} \left( u, -\zeta_1 \right)$, 
$Y_{W} \left( u, -\zeta_2 \right)$ 
 shifts the grading index $q$ of $W_q$-subspaces by $\alpha \in \C$ which can be later rescaled 
to $q$.  
 Thus, we can rewrite the last expression as 
\begin{eqnarray*}
&&
= \sum_{q\in \C}  
\sum_{l \in \Z }  \epsilon^l 
 \sum_{u\in V_l } 
 \langle w', E^{(m+m')}_{W} \Big(
v''_{1}, z_1; \ldots; 
v''_{m+m'}, z_{m+m'};  
\nn
&&
\qquad   
  e^{\zeta_1 L{_W(-1)} }\; Y_{W} 
\left
( u, -\zeta_1 \big) 
\;P_{q+\alpha}\Big(   \Phi(v''_{m+m'+1}, z_{m+m'+1};  \ldots; v''_{m+m'+ k }, z_{m+m'+k}) 
\right) 
 \Big) \rangle
\nn
& & 
\langle w',  E^{(m+m')}_{W} \Big(
v''_{1}, z_1; \ldots;
v''_{m+m'}, z_{m+m'}; 
\nn
&&
\quad   
  e^{\zeta_2 L_W{(-1)} }\; Y_{W} 
\left(  \overline{u}, -\zeta_2  \Big)\;   \right. 
\nn
&&
\left. 
 \qquad P_{q+\alpha}\Big(  \Psi
(v''_{m+m'+k+1}, z_{m+m'+k+1}; \ldots; v''_{m+m'+k+n}, z_{m+m'+k+n})  
\right)
\Big)\rangle 
\end{eqnarray*}
\begin{eqnarray*}
&&
= \sum_{q\in \C}  
 \sum_{l \in \Z }  \epsilon^l 
 \sum_{u\in V_l } 
\langle w', E^{(m+m')}_{W} \Big(
v''_{1}, z_1; \ldots; 
v''_{m+m'}, z_{m+m'};  
\nn
&&
Y^{W}_{WV} 
\left(
 P_{q+\alpha}\Big(   \Phi (v''_{m+m'+1}, z_{m+m'+1};  \ldots; v''_{m+m'+ k }, z_{m+m'+k}) 
\right)
\Big), \zeta_1 \Big)\; u
  \rangle 
\nn
&&
\langle w',  E^{(m+m')}_{W} \Big(
v''_{1}, z_1; \ldots;
v''_{m+m'}, z_{m+m'}; 
\nn
& & 
Y^{W}_{WV}   
\left(  
 P_{q+\alpha}\Big(  \Psi
(v''_{m+m'+k+1}, z_{m+m'+k+1}; \ldots; v''_{m+m'+k+n}, z_{m+m'+k+n})  , -\zeta_{2}) \;\overline{u} 
\right) 
\Big)\rangle    
\end{eqnarray*}
\begin{eqnarray*}
&&
= \sum_{q\in \C} 
 \sum_{\widetilde{w} \in W }   
 \langle w', E^{(m+m')}_{W} \Big(
v''_{1}, z_1; \ldots; 
v''_{m+m'}, z_{m+m'};  \widetilde{w} \Big) \rangle  
\nn
&&
\qquad   
\sum_{l \in \Z }  \epsilon^l 
 \sum_{u\in V_l } 
\langle w',  Y^{W}_{WV}  
\left( 
\;P_{q+\alpha}\Big(   \Phi (v''_{m+m'+1}, z_{m+m'+1};  \ldots; v''_{m+m'+ k }, z_{m+m'+k}) , -\zeta_1)\; u 
\right) 
 \Big)  \rangle 
\nn
& & 
\quad   
\langle \widetilde{w}', E^{(m+m')}_{W} \Big(
v''_{1}, z_1; \ldots; 
v''_{m+m'}, z_{m+m'};  \widetilde{w} \Big) \rangle  
\nn
&&
\langle w', Y^{W}_{WV}   
\left(  
P_{q+\alpha}\Big(  \Psi
(v''_{m+m'+k+1}, z_{m+m'+k+1}; \ldots; v''_{m+m'+k+n}, z_{m+m'+k+n})  , -\zeta_{2}) \;\overline{u}  
\right) 
\Big)\rangle   
\end{eqnarray*}
\begin{eqnarray*}
&&
= \sum_{q\in \C}  
 \langle w', E^{(m+m')}_{W} \Big(
v''_{1}, z_1; \ldots; 
v''_{m+m'}, z_{m+m'};  
\nn
&&
\qquad   
 P_{q+\alpha}\Big(   \Theta (v''_{m+m'+1}, z_{m+m'+1};  \ldots; v''_{m+m'+ k }, z_{m+m'+k}; 
\nn
& & 
\qquad   \qquad 
v''_{m+m'+k+1}, z_{m+m'+k+1}; \ldots; v''_{m+m'+k+n}, z_{m+m'+k+n}) \Big)  
%
\rangle. 
\end{eqnarray*}
Now note that, according to Proposition \ref{pupa}, as an element of $\W_{z_1, \ldots,  z_{k+n+m+m'}}$
\begin{eqnarray}
\label{svoloch}
&& \langle w', E^{(m+m')}_{W} \Big(
v''_{1}, z_1; \ldots; 
v''_{m+m'}, z_{m+m'};  
\nn
&&
\qquad   
 P_{q+\alpha}\Big(   \Theta (v''_{m+m'+1}, z_{m+m'+1};  \ldots; v''_{m+m'+ k }, z_{m+m'+k}; 
\nn
& & 
\qquad   \qquad 
v''_{m+m'+k+1}, z_{m+m'+k+1}; \ldots; v''_{m+m'+k+n}, z_{m+m'+k+n}) \Big) 
\rangle,  
\end{eqnarray}
is invariant with respect to the action of $\sigma \in S_{k+n+m+m'}$. 
 Thus we are able to use this invariance to show that \eqref{svoloch} is reduced to 
\begin{eqnarray*}
&& \langle w', E^{(m+m')}_{W} \Big(
v''_{k+1}, z_{k+1}; \ldots; v''_{k+1+m}, z_{k+1+m};  
v''_{n+1}, z_{n+1}; \ldots; v''_{n+1+m'}, z_{n+1+m'};   
\nn
&&
\qquad   
 P_{q+\alpha}\Big(   \Theta (v''_{1}, z_{1};  \ldots; v''_{k}, z_{k}; 
%
%
v''_{k+1}, z_{k+1}; \ldots; v''_{k+n}, z_{k+n}) \Big) \Big)  
\rangle  
\nn
&&
=\langle w', E^{(m+m')}_{W} \Big(
v_{k+1}, x_{k+1};  \ldots; v_{k+1+m}, x_{k+1+m}; 
v'_{n+1}, y_{n+1};  \ldots; v'_{n+1+m'}, y_{n+1+m'};  
\nn
&&
\qquad   
 P_{q+\alpha}\Big(   \Theta (v_{1}, x_{1};  \ldots; v_{k}, x_{k}; 
v'_{1}, y_{1}; \ldots; v'_{n}, y_{n}) \Big) 
\rangle. 
\end{eqnarray*}
Similarly, since 
\begin{eqnarray*}
&& 
\langle w',  E^{(m)}_{W} \Big(
v''_{1}, z_1; \ldots;
v''_{m+m'}, z_{m+m'}; 
\nn
&& 
\qquad \qquad  P_{q} \Big(  
  Y^{W}_{WV}\left(  
\F (v''_{m+m'+1}, z_{m+m'+1};  \ldots; v''_{m+m' +k}, z_{m+m'+k}), \zeta_1\right)\; u \Big) \Big)\rangle,  
\end{eqnarray*} 
\begin{eqnarray*}
& &
 \langle w',  E^{(m')}_{W} \Big(
v''_{1}, z_1; \ldots;
v''_{m+m'}, z_{m+m'}; 
\nn
&&
  \qquad 
P_{q}\Big(    Y^{W}_{WV}\left( 
\F
(v''_{m+m'+k+1}, z_{m+m'+k+1}; \ldots; v''_{m+m'+k+n}, z_{m+m'+k+n}) , \zeta_{2}\right) \; \overline{u} \Big) \Big) \rangle.    
\end{eqnarray*}
correspond to elements of $\W_{
z_1, \ldots, 
z_{m+m'+k}
}$ and 
$\W_{z_{m+m'+k+1}, \ldots, z_{m+m'+k+n}}$,    
we use Proposition \ref{pupa} again and obtain 
\begin{eqnarray*}
&& 
\langle w',  E^{(m)}_{W} \Big(
v_{k+1}, x_{k+1}; \ldots; 
v_{k+m}, x_{k+m};  
 P_{q} \Big(  
  Y^{W}_{WV}\left(  
\F (v_{1}, x_{1};  \ldots; v_{k}, x_{k}), \zeta_1\right)\; u \Big) \Big)\rangle 
\end{eqnarray*} 
\begin{eqnarray*}
& &
 \langle w',  E^{(m')}_{W} \Big(
v'_{n+1}, y_{n+1}; \ldots;
v'_{n+m'}, y_{n+m'}; 
 P_{q}\Big(    Y^{W}_{WV}\left( 
\F
(v'_{1}, y_{1}; \ldots; v'_{n}, y_{n}) , \zeta_{2}\right) \; \overline{u} \Big) \Big) \rangle,     
\end{eqnarray*}
correspondingly. 
Thus, the assertion of Lemma follows. 
\end{proof}
Under conditions
\begin{eqnarray}
\label{granizy2}
z_{i''}\ne z_{j''}, \quad i''\ne j'', \quad 
\nn
|z_{i''}|>|z_{k'''}|>0, 
\end{eqnarray}
 for $i''=1, \dots, m+m'$, and $k'''=m+m'+1, \dots, m+m'+ k+n$, 
let us introduce 
\begin{eqnarray}
\label{perda}
&&
 \mathcal J^{k+n}_{m+m'}(\Theta) = \sum_{q\in \C}
\langle w', E^{(m+m')}_{W} \Big(
v''_{1}, z_1; \ldots; 
v''_{m+m'}, z_{m+m'};  
\nn
&&
\left. 
\qquad 
 P_{q}\Big( \Theta( v''_{m+m'+1}, z_{m+m'+1}; \ldots; v''_{m+m'+k+n}, z_{m+m'+k+n})
; \epsilon \right)\Big)\rangle.  
\end{eqnarray}
Using Lemma \ref{obvlem} we obtain 
\begin{eqnarray*}
&&
|\mathcal J^{k+n}_{m+m'}(\Theta) | 
\nn
&&
= \left| \sum_{q\in \C}\langle w', E^{(m+m')}_{W} \Big(
v''_{1}, z_1; \ldots; 
v''_{m+m'}, z_{m+m'};  \right. 
\nn
&&
\left. 
\qquad P_{q}\Big( \Theta( v''_{m+m'+1}, z_{m+m'+1}; \ldots; v''_{m+m'+k+n}, z_{m+m'+k+n})
; \epsilon)\Big)\rangle \right|
\end{eqnarray*}
\begin{eqnarray*}
&&
= \left| \sum_{q\in \C} 
\sum_{l \in \Z }  \epsilon^l 
 \sum_{u\in V_l } 
\langle w',  E^{(m)}_{W} \Big(
v_{k+1}, x_{k+1}; \ldots;
v_{k+m}, x_{k+m}; 
\right. 
\nn
&& 
\qquad \qquad  \qquad \qquad \qquad \qquad 
P_{q} \Big(  
  Y^{W}_{WV}\left(  
\Phi (v_{1}, x_{1};  \ldots; v_{k}, x_{k}), \zeta_1\right)\; u \Big) \Big)\rangle  
\nn
& &
\qquad   \qquad 
 \langle w',  E^{(m')}_{W} \Big(
v'_{n+1}, y_{n+1}; \ldots; 
v'_{n+m'}, y_{n+m'};  
\nn
&&
\left. 
\qquad \qquad 
  \qquad \qquad \qquad \qquad 
P_{q}\Big(    Y^{W}_{WV}\left( 
\Psi
(v'_{1}, y_{1}; \ldots; v'_{n}, y_{n}) , \zeta_{2}\right) \; \overline{u} \Big) \Big) \rangle \right|
\nn
&&
\le \left|
\mathcal J^{k}_{m}(\F) \right| \; \left|  \mathcal J^{n}_{m'}(\F)\right|, 
\end{eqnarray*}
where we have used 
the invariance of \eqref{Z2n_pt_epsss} with respect to 
$\sigma \in S_{m+m'+k+n}$. 
According to Definitions \ref{composabilitydef} 
$\mathcal J^{k}_{m}(\Phi)$ and $\mathcal J^{n}_{m'}(\Psi)$ in the last expression 
are absolute convergent. 
%
Thus, we infer that  $\mathcal J^{k+n}_{m+m'}(\Theta)$ 
is absolutely convergent, and 
the sum \eqref{Inmdvadva} 
 is analytically extendable to a  rational function  
in $(z_{1}, \dots, z_{k+n+m+m'})$ with the only possible poles at 
$x_i=x_j$, $y_{i'}=y_{j'}$, and 
at $x_i=y_{j'}$, i.e., the only possible poles at 
$z_{i''}=z_{j''}$, of orders less than or equal to 
$N^{k+n}_{m+m'}(v''_{i''}, v''_{j''})$,  
for $i''$, $j''=1, \dots, k'''$, $i''\ne j''$.  
\end{proof}

\subsection{Proof of Proposition \ref{tosya}}
\begin{proof}
For a vertex operator $Y_{V, W}(v,z)$ let us introduce a notation 
\[
\omega_{V, W}=Y_{V, W}(v,z)\; dz^{{\rm wt} v}.
\] 
Let us use notations \eqref{zsto} and \eqref{notari}. 
According to \eqref{deltaproduct}, the action of 
$\delta_{m + m'-t}^{k + n-r}$ on $\widehat{R} \Theta(
v_1, x_1; \ldots; v_k, x_k; v'_1, y_1; \ldots; v'_k, y_n
; \epsilon)$
is given by 
\begin{eqnarray*}
&&   
\langle w', 
 \delta_{m + m'-t}^{k + n-r} \widehat{R} \; \Theta( 
v_1, x_1; \ldots; v_k, x_k; v'_1, y_1; \ldots; v'_n, y_n
; \epsilon) \rangle 
\nn
&& \quad =
\langle w', \sum_{i=1}^{k
}(-1)^{i} \; 
\widehat{R} \; \Theta ( \widetilde{v}_1, z_1; \ldots;  \widetilde{v}_{i-1}, z_{i-1}; \;  \omega_V (\widetilde{v}_i, z_i  
 - z_{i+1}) 
 \widetilde{v}_{i+1}, z_{i+1}; \; \widetilde{v}_{i+2}, z_{i+2}; 
\nn
&& \qquad \qquad \qquad 
\ldots;  \widetilde{v}_k, z_k;  \widetilde{v}_{k+1}, z_{k+1}; \ldots; \widetilde{v}_{k+n}, z_{k+n}; \epsilon ) \rangle   
\end{eqnarray*}
\begin{eqnarray*}
&& \qquad + 
\sum_{i=1}^{n-r}(-1)^{i} \; \langle w', 
\Theta \left( \widetilde{v}_1, z_1; \ldots; \widetilde{v}_k, z_k;  
\widetilde{v}_{k+1}, z_{k+1}; \ldots; \widetilde{v}_{k+i-1}, z_{k+i-1}; 
\right. 
\nn
&&\qquad \qquad \qquad  
  \omega_V \left(\widetilde{v}_{k+i}, z_{k+i}  
 - z_{k+i+1} ) \; 
 \widetilde{v}_{k+i+1}, z_{k+i+1}; \right.
\nn
&&
\left. 
\qquad \qquad \qquad \qquad \qquad \qquad \widetilde{v}_{k+i+2}, z_{k+i+2}; \ldots; \widetilde{v}_{k+n-r}, z_{k+n-r}; \epsilon \right) \rangle  
\end{eqnarray*}
\begin{eqnarray*}
&& \qquad +  \langle w', 
 \omega_W \left(\widetilde{v}_1, z_1   
  \right) \; \Theta (\widetilde{v}_2, z_2
; \ldots; \widetilde{v}_{k}, z_k; \widetilde{v}_{k+1}, z_{k+1}; \ldots; \widetilde{v}_{k+n-r}, z_{k+n-r}; \epsilon   
)  \rangle  
\end{eqnarray*}
\begin{eqnarray*}
 & &\qquad +  \langle w, (-1)^{k+n+1-r}    
 \omega_W(\widetilde{v}_{k+n-r+1}, z_{k+n-r+1}  
) 
\;
\nn
&&
 \qquad \qquad \qquad \F(\widetilde{v}_1, z_1
; \ldots; \widetilde{v}_k, z_k; \widetilde{v}_{k+1}, z_{k+1}; \ldots; \widetilde{v}_{k+n-r}, z_{k+n-r}; \epsilon ) \rangle 
\end{eqnarray*}
\begin{eqnarray*}
&& \quad = 
\sum_{l \in \Z }  \epsilon^l 
\langle w', \sum_{i=1}^{k}(-1)^{i} \; Y^W_{VW}(  
\Theta ( \widetilde{v}_1, z_1; \ldots;  \widetilde{v}_{i-1}, z_{i-1}; \; \omega_V (\widetilde{v}_i, z_i  
 - z_{i+1}) 
 \widetilde{v}_{i+1}, z_{i+1}; \; 
\nn
&& \qquad \qquad \qquad  
\widetilde{v}_{i+2}, z_{i+2};  \ldots;  \widetilde{v}_k, z_k), \zeta_1) u \rangle 
\nn
&&
\qquad \qquad \qquad \qquad \qquad \qquad
 \langle w', Y^W_{VW}( \Theta(\widetilde{v}_{k+1}, z_{k+1}; \ldots; \widetilde{v}_{k+n-r}, z_{k+n-r}), \zeta_2) 
\overline{u} \rangle  
\end{eqnarray*}
\begin{eqnarray*}
&& \qquad + 
\sum_{l \in \Z }  \epsilon^l  \sum_{i=1}^{n-r}(-1)^{i} \; \langle w',  Y^W_{VW}(
\Phi \left( \widetilde{v}_1, z_1; \ldots; \widetilde{v}_k, z_k), \zeta_1\right) u \rangle 
\nn
&&
\qquad \qquad \qquad 
   \langle w',  
Y^W_{VW}( \Psi(\widetilde{v}_{k+1}, z_{k+1}; \ldots; \widetilde{v}_{k+i-1}, z_{k+i-1};  
\nn
&& 
\qquad \qquad \qquad  \qquad \omega_V \left(\widetilde{v}_i, z_{k+i}  
 - z_{k+i+1}\right) \; 
 \widetilde{v}_{k+i+1}, z_{k+i+1}; \widetilde{v}_{k+i+2}, z_{k+i+2}; 
\nn
&&
\qquad \qquad \qquad  \qquad \qquad  \ldots; \widetilde{v}_{k+n-r}, z_{k+n-r} ), \zeta_2) \overline{u} \rangle  
\end{eqnarray*}
\begin{eqnarray*}
&& \qquad +  \sum_{l \in \Z }  \epsilon^l \langle w', Y^W_{VW}(
 \omega_W \left(\widetilde{v}_1, z_1 \right) \; \Phi (\widetilde{v}_2, z_2 ; \ldots; \widetilde{v}_{k}, z_k), \zeta_1) u \rangle 
\nn
&&
 \qquad \qquad \langle w', Y^W_{VW}( \Psi( \widetilde{v}_{k+1}, z_{k+1}; \ldots; \widetilde{v}_{k+n-r}, z_{k+n-r} ), \zeta_2) \overline{u} \rangle   
\end{eqnarray*}
\begin{eqnarray*}
&&
\qquad +  \sum_{l \in \Z }  \epsilon^l  \langle w', Y^W_{VW}( (-1)^{k+1}  
 \omega_W \left(\widetilde{v}_{k+1}, z_{k+1} 
  \right) \;  \Phi (\widetilde{v}_1, z_1 ; \ldots; \widetilde{v}_{k}, z_k), \zeta_1) u\rangle  
\nn
&& \qquad \qquad \qquad 
\langle w', Y^W_{VW}(
\Psi( \widetilde{v}_{k+2}, z_{k+2}; \ldots; \widetilde{v}_{k+n-r}, z_{k+n-r}), \zeta_2) \overline{u} \rangle  
\end{eqnarray*}
\begin{eqnarray*}
&&
\qquad - 
\sum_{l \in \Z }  \epsilon^l \langle w', (-1)^{k+1}   \langle w',  Y^W_{VW}(  
 \omega_W \left(\widetilde{v}_{k+1}, z_{k+1} 
  \right) \; \Phi (\widetilde{v}_1, z_1 ; \ldots; \widetilde{v}_{k}, z_k), \zeta_1) u \rangle  
\nn
&&
\qquad \qquad  \qquad 
\langle w',  Y^W_{VW}( \Psi(\widetilde{v}_{k+2}, z_{k+2}; \ldots; \widetilde{v}_{k+n-r}, z_{k+n-r}),  \zeta_2)  \overline{u} 
\rangle  
\nn
&&
\qquad +  \sum_{l \in \Z }  \epsilon^l  \langle w', Y^W_{VW}(  
 \Phi(\widetilde{v}_1, z_1; \ldots; \widetilde{v}_k, z_k), \zeta_1)  u\rangle  
\nn
&&
\qquad \qquad 
 \langle w', Y^W_{VW}(  
\omega_W(\widetilde{v}_{k+n-r+1}, z_{k+n-r+1})\; 
\nn
&&
 \qquad \qquad \qquad  \qquad  \Psi(\widetilde{v}_{k+1}, z_{k+1}; \ldots; \widetilde{v}_{k+n-r}, z_{k+n-r}  ), \zeta_2)  \overline{u}\rangle 
\nn
&& \qquad 
- \sum_{l \in \Z }  \epsilon^l \langle w', Y^W_{VW}(  
 \Phi(\widetilde{v}_1, z_1; \ldots; \widetilde{v}_k, z_k), \zeta_1) \rangle  
\nn
&& 
\qquad \qquad \langle w',  Y^W_{VW}(
   \omega_W(\widetilde{v}_{k+n-r+1}, z_{k+n-r+1})
\nn
&&
 \qquad \qquad \qquad  \qquad  \Psi( \widetilde{v}_{k+1}, z_{k+1}; \ldots; \widetilde{v}_{k+n-r}, z_{k+n-r}  ), \zeta_2) \rangle 
\end{eqnarray*}
\begin{eqnarray*}
&& \quad = 
\sum_{l \in \Z }  \epsilon^l 
\langle w',  \; Y^W_{VW}(  
\delta^k_m\Phi ( \widetilde{v}_1, z_1; \ldots;  \widetilde{v}_k, z_k), \zeta_1) u \rangle 
\nn
&& 
\qquad \qquad \qquad \qquad \langle w', Y^W_{VW}( \Psi(\widetilde{v}_{k+1}, z_{k+1}; \ldots; \widetilde{v}_{k+n-r}, z_{k+n-r}), \zeta_2) 
\overline{u} \rangle  
\nn
&& \qquad + (-1)^k 
\sum_{l \in \Z }  \epsilon^l   \langle w',  Y^W_{VW}(
\Phi \left( \widetilde{v}_1, z_1; \ldots; \widetilde{v}_k, z_k), \zeta_1 \right) u \rangle   
\nn
&&
\qquad \qquad \qquad 
  \langle w',  Y^W_{VW}( \delta^{n-r}_{m'-t}  \Psi(\widetilde{v}_{k+1}, z_{k+1}; \ldots;  \widetilde{v}_{k+n-r}, z_{k+n-r} ), \zeta_2 ) \overline{u} \rangle  
\end{eqnarray*}
\begin{eqnarray*}
&& \quad = 
\langle w',   
\delta^k_m\Phi ( \widetilde{v}_1, z_1; \ldots;  \widetilde{v}_k, z_k) \cdot 
\langle w',  \Psi(\widetilde{v}_{k+1}, z_{k+1}; \ldots; \widetilde{v}_{k+n-r}, z_{k+n-r}) \rangle   
\nn
&& \qquad + (-1)^k
   \langle w',  
 \Phi \left( \widetilde{v}_1, z_1; \ldots; \widetilde{v}_k, z_k\right) \cdot_{\epsilon}  
     \delta^{n-r}_{m'-t}  \Psi(\widetilde{v}_{k+1}, z_{k+1}; \ldots;  \widetilde{v}_{k+n-r}, z_{k+n-r} ) \rangle,  
\end{eqnarray*}
since, 
\begin{eqnarray*}
&& \sum_{l \in \Z }  \epsilon^l \langle w', (-1)^{k+1}   Y^W_{VW}(  
 \omega_W \left(\widetilde{v}_{k+1}, z_{k+1} 
  \right) \; \Phi (\widetilde{v}_1, z_1 ; \ldots; \widetilde{v}_{k}, z_k), \zeta_1) u \rangle  
\nn
&&
\qquad \qquad \langle w',  Y^W_{VW}( \Psi(\widetilde{v}_{k+2}, z_{k+2}; \ldots; \widetilde{v}_{k+n-r}, z_{k+n-r}),  \zeta_2) \overline{u} 
\rangle  
\end{eqnarray*}
\begin{eqnarray*} 
&&
=\sum_{l \in \Z }  \epsilon^l \langle w', (-1)^{k+1} e^{\zeta_1 L_W{(-1)}}   Y_W(u, -\zeta_1)  \;  
 \omega_W \left(\widetilde{v}_{k+1}, z_{k+1}  
  \right) \; \Phi (\widetilde{v}_1, z_1 ; \ldots; \widetilde{v}_{k}, z_k) \rangle  
\nn
&&
\qquad \qquad \langle w',  Y^W_{VW}( \Psi(\widetilde{v}_{k+2}, z_{k+2}; \ldots; \widetilde{v}_{k+n-r}, z_{k+n-r}),  \zeta_2) \overline{u}
\rangle  
\end{eqnarray*}
\begin{eqnarray*} 
&&
=\sum_{l \in \Z }  \epsilon^l \langle w', (-1)^{k+1} e^{\zeta_1 L_W{(-1)}}  \omega_W \left(\widetilde{v}_{k+1}, z_{k+1}  \right) 
 Y_W(u, -\zeta_1)  \;  
  \; \Phi (\widetilde{v}_1, z_1 ; \ldots; \widetilde{v}_{k}, z_k) \rangle  
\nn
&&
\qquad \qquad \langle w',  Y^W_{VW}( \Psi(\widetilde{v}_{k+2}, z_{k+2}; \ldots; \widetilde{v}_{k+n-r}, z_{k+n-r}),  \zeta_2) \overline{u}
\rangle  
\end{eqnarray*}
\begin{eqnarray*} 
\nn
&&
= \sum\limits_{v\in V} 
\langle w', (-1)^{k+1} \; 
\omega_W \left(\widetilde{v}_{k+1}, z_{k+1} +\zeta_1  \right)\;  e^{\zeta_1 L_W{(-1)}}  
 Y_W(u, -\zeta_1)  \;  
  \; \Phi (\widetilde{v}_1, z_1 ; \ldots; \widetilde{v}_{k}, z_k) \rangle  
\nn
&&
\qquad \qquad \langle w',  Y^W_{VW}( \Psi(\widetilde{v}_{k+2}, z_{k+2}; \ldots; \widetilde{v}_{k+n-r}, z_{k+n-r}),  \zeta_2) \overline{u}
\rangle  
\end{eqnarray*}
\begin{eqnarray*} 
&&
=\sum\limits_{v\in V} 
 \sum_{u\in V_l } 
\sum_{l \in \Z }  \epsilon^l 
 \sum_{u\in V_l } 
\langle v', (-1)^{k+1} \; \omega_W \left(\widetilde{v}_{k+1}, z_{k+1}+\zeta_1  \right) w \rangle  
\nn
&&
\qquad \qquad \langle w',  e^{\zeta_1 L_W{(-1)}}  
 Y_W(u, -\zeta_1)  \;  
  \; \Phi (\widetilde{v}_1, z_1 ; \ldots; \widetilde{v}_{k}, z_k) \rangle  
\nn
&&
\qquad \qquad \langle w',  Y^W_{VW}( \Psi(\widetilde{v}_{k+2}, z_{k+2}; \ldots; \widetilde{v}_{k+n-r}, z_{k+n-r}),  \zeta_2) \overline{u}
\rangle  
\end{eqnarray*}
\begin{eqnarray*} 
&&
= 
\sum_{l \in \Z }  \epsilon^l 
 \langle w',  e^{\zeta_1 L_W{(-1)}}  
 Y_W(u, -\zeta_1)  \;  
  \; \Phi (\widetilde{v}_1, z_1 ; \ldots; \widetilde{v}_{k}, z_k) \rangle  
\nn
&&
\qquad \qquad \sum\limits_{v\in V} \langle v', (-1)^{k+1} \; \omega_W \left(\widetilde{v}_{k+1}, z_{k+1}+\zeta_1  \right) w \rangle   
\nn
&&
\langle w',  Y^W_{VW}( \Psi(\widetilde{v}_{k+2}, z_{k+2}; \ldots;
\nn
&&
 \qquad \qquad \qquad \qquad \widetilde{v}_{k+n-r}, z_{k+n-r}),  \zeta_2) \overline{u}
\rangle  
\end{eqnarray*}
\begin{eqnarray*} 
&&
= 
\sum_{l \in \Z }  \epsilon^l 
 \langle w',  
 Y^{W}_{VW}(  
   \Phi (\widetilde{v}_1, z_1 ; \ldots; \widetilde{v}_{k}, z_k) , \zeta_1) u \; \rangle  
\nn
&&
\qquad \qquad 
 \langle w', (-1)^{k+1} \; \omega_W \left(\widetilde{v}_{k+1}, z_{k+1}+\zeta_1  \right) 
\;
\nn
&&
 \qquad \qquad  Y^W_{VW}( \Psi(\widetilde{v}_{k+2}, z_{k+2}; \ldots; \widetilde{v}_{k+n-r}, z_{k+n-r}),  \zeta_2) \overline{u}
\rangle  
\end{eqnarray*}
\begin{eqnarray*} 
&&
= 
\sum_{l \in \Z }  \epsilon^l 
 \langle w',  
 Y^{W}_{VW}(  
   \Phi (\widetilde{v}_1, z_1 ; \ldots; \widetilde{v}_{k}, z_k) , \zeta_1) u \; \rangle  
\nn
&&
\qquad \qquad 
 \langle w', (-1)^{k+1} \; \omega_W \left(\widetilde{v}_{k+1}, z_{k+1}+\zeta_1  \right) 
\nn
&&
\qquad \qquad 
\; e^{\zeta_2 L_W{(-1)}}    Y_W(\overline{u}, -\zeta_2) \; \Psi(\widetilde{v}_{k+2}, z_{k+2}; \ldots; \widetilde{v}_{k+n-r}, z_{k+n-r}) 
\rangle  
\end{eqnarray*}
\begin{eqnarray*} 
&&
= 
\sum_{l \in \Z }  \epsilon^l 
 \langle w',  
 Y^{W}_{VW}(  
   \Phi (\widetilde{v}_1, z_1 ; \ldots; \widetilde{v}_{k}, z_k) , \zeta_1) u \; \rangle  
\nn
&&
\qquad 
 \langle w', (-1)^{k+1} \; 
\; e^{\zeta_2 L_W{(-1)}} \;   Y_W(\overline{u}, -\zeta_2)
\;
\omega_W \left(\widetilde{v}_{k+1}, z_{k+1}+\zeta_1-\zeta_2 \right)  
\nn
&&
\qquad \qquad \; \Psi(\widetilde{v}_{k+2}, z_{k+2}; \ldots; \widetilde{v}_{k+n-r}, z_{k+n-r}) 
\rangle  
\end{eqnarray*}
\begin{eqnarray*} 
&& 
\qquad \qquad 
= \sum_{l \in \Z }  \epsilon^l \langle w', Y^W_{VW}(  
 \Phi(\widetilde{v}_1, z_1; \ldots; \widetilde{v}_k, z_k), \zeta_1) u \rangle  
\nn
&& 
\qquad \qquad \langle w',  Y^W_{VW}(
   \omega_W(\widetilde{v}_{k+1}, z_{k+1}) \; \Psi( \widetilde{v}_{k+2}, z_{k+2}; \ldots; \widetilde{v}_{k+n-r}, z_{k+n-r}  ), \zeta_2) \overline{u} \rangle,   
\end{eqnarray*}
due to locality \eqref{porosyataw} of vertex opertors, and arbitrarness of $\widetilde{v}_{k+1}\in V$ and $z_{k+1}$, 
we can always put
\[
\omega_W \left(\widetilde{v}_{k+1}, z_{k+1}+\zeta_1-\zeta_2 \right)  =\omega_W(\widetilde{v}_{k+2}, z_{k+2}), 
\]
for $\widetilde{v}_{k+1}=\widetilde{v}_{k+2}$, $z_{k+2}= z_{k+1}+\zeta_2-\zeta_1$.  
The statement of the proposition for $\delta^2_{ex}$ \eqref{halfdelta} can be checked accordingly. 
\end{proof}

\end{document}